\newtheorem{theorem}{Theorem}
\newtheorem{remark}[theorem]{Remark}
\newtheorem{lemma}[theorem]{Lemma}
\newtheorem{proposition}[theorem]{Proposition}
\newtheorem{corollary}[theorem]{Corollary}
\newtheorem{definition}[theorem]{Definition}
\newtheorem{example}[theorem]{Example}
\DeclareMathOperator*{\loc}{loc}
\DeclareMathOperator*{\dx}{d\textit{x}}
\DeclareMathOperator*{\dy}{d\textit{y}}
\DeclareMathOperator*{\ds}{d\textit{s}}
\newcommand{\N}{\mathbb{N}}
\newcommand{\R}{\mathbb{R}}
\newcommand*\diff{\mathop{}\!\mathrm{d}}
\newcommand{\hlog}{\ensuremath{\mathcal{H}_{\log}}}
\newcommand{\Lp}[1]{L^{#1}(\Omega)}
\newcommand{\Lprand}[1]{L^{#1}(\partial\Omega)}
\newcommand{\Wp}[1]{W^{1,#1}(\Omega)}
\newcommand{\Wpzero}[1]{W^{1,#1}_0(\Omega)}
\newcommand{\eps}{\varepsilon}
\newcommand{\ph}{\varphi}
\newcommand{\into}{\int_{\Omega}}
\newcommand{\weak}{\rightharpoonup}
\newcommand{\close}{\overline{\Omega}}
\renewcommand{\l}{\left}
\renewcommand{\r}{\right}
\newcommand{\WHlog}{W^{1, \hlog}(\Omega)}
\newcommand{\WHlogzero}{W^{1, \hlog}_0(\Omega)}
\newcommand{\LHlog}{L^{\hlog}(\Omega)}
\def\abs#1{\left|{#1}\right|}
\def\norm#1{\left\|#1\right\|}
\def\normHlog#1{\left\|#1\right\|_{\hlog}}
\def\modHlog#1{\ensuremath{\varrho_{\hlog} \left(#1\right)}}
\def\normoneHlog#1{\left\|#1\right\|_{1,\hlog}}
\def\modoneHlog#1{\ensuremath{\varrho_{1,\hlog} \left(#1\right)}}
\def\normoneHlogzero#1{\left\|#1\right\|_{1,\hlog,0}}
\def\modoneHlogzero#1{\ensuremath{\varrho_{1,\hlog,0} \left(#1\right)}}
\numberwithin{theorem}{section}
\numberwithin{equation}{section}
\title[On logarithmic double phase problems]{On logarithmic double phase problems}
\author[R. Arora]{Rakesh Arora}
\address[R. Arora]{ Department of Mathematical Sciences, Indian Institute of Technology Varanasi (IIT-BHU), Uttar Pradesh 221005, India}
\email{rakesh.mat@iitbhu.ac.in, arora.npde@gmail.com}
\author[\'{A}. Crespo-Blanco]{\'{A}ngel Crespo-Blanco}
\address[\'{A}. Crespo-Blanco]{Technische Universit\"{a}t Berlin, Institut f\"{u}r Mathematik, Stra\ss e des 17.\,Juni 136, 10623 Berlin, Germany}
\email{crespo@math.tu-berlin.de}
\author[P. Winkert]{Patrick Winkert}
\address[P. Winkert]{Technische Universit\"{a}t Berlin, Institut f\"{u}r Mathematik, Stra\ss e des 17.\,Juni 136, 10623 Berlin, Germany}
\email{winkert@math.tu-berlin.de}
\subjclass{35A01, 35J20, 35J25, 35J62, 35J92, 35Q74}
\keywords{Density of smooth functions, logarithmic double phase operator, logarithmic Musielak-Orlicz spaces, Nehari manifold, Poincar\'e-Miranda existence theorem, quantitative deformation lemma, nodal domains, sign-changing solution, Young's inequality for logarithmic terms}
\begin{document}
\begin{abstract}
	In this paper we introduce a new logarithmic double phase type operator of the form
	\begin{align*}
		\begin{aligned}
			\mathcal{G}u & :=- \operatorname{div}\left( \abs{ \nabla u }^{p(x) - 2} \nabla u \right.  \\
			& \left. \qquad\qquad\quad + \mu(x) \left[ \log ( e + \abs{\nabla u} ) + \frac{ \abs{\nabla u} }{q(x) (e + \abs{\nabla u})} \right] \abs{ \nabla u }^{q(x) - 2} \nabla u  \right),
		\end{aligned}
	\end{align*}
	whose energy functional is given by
	\begin{align*}
		u \mapsto \into \left(  \frac{ \abs{\nabla u}^{p(x)} }{p(x)} + \mu(x) \frac{ \abs{\nabla u}^{q(x)} }{q(x)} \log (e + \abs{\nabla u}) \right)  \dx,
	\end{align*}
	where $\Omega \subseteq \R^N$, $N \geq 2$, is a bounded domain with Lipschitz boundary $\partial \Omega$, $p,q \in C(\close)$ with $1<p(x) \leq q(x)$ for all $x \in \close$ and $0\leq\mu(\cdot) \in \Lp{1}$. First, we prove that the logarithmic Musielak-Orlicz Sobolev spaces $\WHlog$ and $\WHlogzero$ with $\hlog (x,t) = t^{p(x)} + \mu(x) t^{q(x)} \log (e + t)$ for $(x,t)\in \close \times [0,\infty)$ are separable, reflexive Banach spaces and $\WHlogzero$ can be equipped with the equivalent norm
	\begin{equation*}
		\inf \left\lbrace \lambda > 0 \colon \into \left[ \abs{ \frac{\nabla u}{\lambda} }^{p(x)}
		+\mu(x) \abs{\frac{\nabla u}{\lambda}} ^{q(x)} \log \left( e + \frac{ \abs{\nabla u} }{\lambda} \right) \right]\dx \leq 1\right\rbrace.
	\end{equation*}
	We also prove several embedding results for these spaces and the closedness of $\WHlog$ and $\WHlogzero$ under truncations. In addition we show the density of smooth functions in $\WHlog$ even in the case of an unbounded domain by supposing Nekvinda's decay condition on $p(\cdot)$. The second part is devoted to the properties of the operator and it turns out that it is bounded, continuous, strictly monotone, of type \textnormal{(S$_+$)}, coercive and a homeomorphism. Also, the related energy functional is of class $C^1$. As a result of independent interest we also present a new version of Young's inequality for the product of a power-law and a logarithm. In the last part of this work we consider equations of the form
	\begin{equation*}
			\mathcal{G}u = f(x,u) \quad \text{in } \Omega,
			\quad u  = 0         \quad \text{on } \partial\Omega
	\end{equation*}
	with superlinear right-hand sides. We prove multiplicity results for this type of equation, in particular about sign-changing solutions, by making use of a suitable variation of the corresponding Nehari manifold together with the quantitative deformation lemma and the Poincar\'e-Miranda existence theorem.
\end{abstract}

\maketitle
\newpage
\section{Introduction}

In recent years double phase problems have received a lot of attention from the mathematical community. These problems typically involve a functional with the shape
\begin{align}\label{double-phase-functional-without-log}
	u \mapsto \into \left(  \frac{ \abs{\nabla u}^{p(x)} }{p(x)} + \mu(x) \frac{ \abs{\nabla u}^{q(x)} }{q(x)} \right)  \dx
\end{align}
or, alternatively, a differential operator of the form
\begin{align}\label{double-phase-operator-without-log}
	- \operatorname{div} \left( \abs{ \nabla u }^{p(x) - 2} \nabla u + \mu(x) \abs{ \nabla u }^{q(x) - 2} \nabla u  \right) .
\end{align}
Naturally, one can see $p,q$ as functions or limit the study to the constant exponents case. These problems are called double phase problems because of their nonuniform ellipticity, with two regions of different behavior. In the set $\{ x \in \Omega \colon \mu(x)\geq \eps > 0\}$ for any fixed $\eps > 0$, the ellipticity in the gradient of the integrand is of order $q(x)$, while in the set $\{ x \in \Omega \colon \mu(x) = 0\}$ that ellipticity is of order $p(x)$.

Let $p$ and $q$ be constants, the double phase energy functional
\begin{align}\label{integral_minimizer}
	u \mapsto \int_\Omega \left(|\nabla  u|^p+\mu(x)|\nabla  u|^q\right)\dx
\end{align}
appeared for the first time in a work of Zhikov \cite{Zhikov-1986} in order to describe models for strongly anisotropic materials in the context of homogenization and elasticity theory, see also Zhikov \cite{Zhikov-1995,Zhikov-2011}. Indeed, in elasticity theory, the modulating coefficient $\mu(\cdot)$ dictates the geometry of composites made of two different materials with distinct power hardening exponents $q(\cdot)$ and $p(\cdot)$, see the mentioned works of Zhikov. We also point out that there are several other applications in physics and engineering of double phase differential operators and related energy functionals, see the works of Bahrouni--R\u{a}dulescu--Repov\v{s} \cite{Bahrouni-Radulescu-Repovs-2019} on transonic flows, Benci--D'Avenia--Fortunato-Pisani \cite{Benci-DAvenia-Fortunato-Pisani-2000} on quantum physics and  Cherfils--Il'yasov \cite{Cherfils-Ilyasov-2005} on reaction diffusion systems.

In recent years functionals of the shape \eqref{integral_minimizer} have been treated in many papers concerning regularity, in particular of local minimizers (also for nonstandard growth). We refer to the works of Baroni--Colombo--Mingione \cite{Baroni-Colombo-Mingione-2015,Baroni-Colombo-Mingione-2018}, Baroni--Kuusi--Mingione \cite{Baroni-Kuusi-Mingione-2015}, Byun--Oh \cite{Byun-Oh-2020}, Byun--Ok--Song \cite{Byun-Ok-Song-2022},  Colombo--Mingione \cite{Colombo-Mingione-2015a,Colombo-Mingione-2015b}, De Filippis--Palatucci \cite{De-Filippis-Palatucci-2019}, Harjulehto--H\"{a}st\"{o}--Toivanen \cite{Harjulehto-Hasto-Toivanen-2017}, Ok \cite{Ok-2018,Ok-2020}, Ra\-gusa--Tachikawa \cite{Ragusa-Tachikawa-2020, Ragusa-Tachikawa-2024}, Tachikawa \cite{Tachikawa-2024} and the references therein. Furthermore, nonuniformly elliptic variational problems and nonautonomous functionals have been studied in the papers of Beck--Mingione \cite{Beck-Mingione-2020},
De Filippis--Mingione \cite{DeFilippis-Mingione-2021-2,DeFilippis-Mingione-2020-2} and H\"{a}st\"{o}--Ok \cite{Hasto-Ok-2019}. We point out that \eqref{integral_minimizer} also belongs to the class of the integral functionals with nonstandard growth condition as a special case of the outstanding papers of Marcellini \cite{Marcellini-1991,Marcellini-1989}, see also the recent papers by Cupini--Marcellini--Mascolo \cite{Cupini-Marcellini-Mascolo-2023} and Marcellini \cite{Marcellini-2023} with $u$-dependence.

However, such works are limited to consider only a power-law type of growth in each of the addends. If one wants to consider other types of growth, the first idea that comes up naturally is to modify power-laws with a logarithm. For this reason, in this paper we consider logarithmic type functionals of the form
\begin{align}\label{log-energy-functional}
	I(u)=\into \left(  \frac{ \abs{\nabla u}^{p(x)} }{p(x)} + \mu(x) \frac{ \abs{\nabla u}^{q(x)} }{q(x)} \log (e + \abs{\nabla u}) \right)  \dx,
\end{align}
and its associated differential operator
\begin{align}
	\label{log-operator}
	\begin{aligned}
		\mathcal{G}u & :=- \operatorname{div}\left( \abs{ \nabla u }^{p(x) - 2} \nabla u \right. \\
		& \left. \qquad\qquad\quad + \mu(x) \left[ \log ( e + \abs{\nabla u} ) + \frac{ \abs{\nabla u} }{q(x) (e + \abs{\nabla u})} \right] \abs{ \nabla u }^{q(x) - 2} \nabla u  \right),
	\end{aligned}
\end{align}
where $\Omega \subseteq \R^N$, $N \geq 2$, is a bounded domain with Lipschitz boundary $\partial \Omega$, $e$ stands for Euler's number, $p,q \in C(\close)$ with $1 < p(x) \leq q(x)$ for all $x \in \close$ and $0\leq \mu(\cdot) \in \Lp{1}$.

One work closely related to ours is Baroni--Colombo--Mingione \cite{Baroni-Colombo-Mingione-2016}, where they prove the local H\"older continuity of the gradient of local minimizers of the functional
\begin{align} \label{log-functional-Mingione1}
	w \mapsto \into \left[  \abs{D w}^p  + a(x) \abs{D w}^p \log (e + \abs{D w}) \right]  \dx
\end{align}
provided that $1 < p < \infty$ and $0 \leq a(\cdot) \in C^{0,\alpha} (\close)$. Note that when we take $p=q$ and constant, \eqref{log-energy-functional} and \eqref{log-functional-Mingione1} are the same functional up to a multiplicative constant. In another recent work of De Filippis--Mingione \cite{DeFilippis-Mingione-2023} the local H\"{o}lder continuity of the gradients of local minimizers of the functional
\begin{align}\label{log-functional-Mingione2}
	w \mapsto \into \big[|D w|\log(1+|D w|)+a(x)|D w|^q\big]\dx
\end{align}
has been shown provided $0 \leq a(\cdot)\in C^{0,\alpha}(\overline{\Omega})$ and $1<q<1+\frac{\alpha}{n}$ whereby $\Omega \subset \R^n$.
The functional \eqref{log-functional-Mingione2} has its origin in the functional with nearly linear growth given by
\begin{align}\label{log-functional-Mingione3}
	w \mapsto \into |D w|\log(1+|D w|)\dx,
\end{align}
which has been studied in Fuchs--Mingione \cite{Fuchs-Mingione-2000} and Marcellini--Papi \cite{Marcellini-Papi-2006}. Note that functionals of the form \eqref{log-functional-Mingione3} appear in the theory of plasticity with logarithmic hardening, see, for example, Seregin--Frehse \cite{Seregin-Frehse-1999} and the monograph of Fuchs--Seregin \cite{Fuchs-Seregin-2000} about variational methods for problems which have their origins in plasticity theory and generalized Newtonian fluids. Another closely related energy functional to \eqref{log-energy-functional} has been considered by Marcellini in \cite{Marcellini-1991} which has the form
\begin{align*}
	w \mapsto \int_{\Omega} (1+|D w|^2)^\frac{p}{2} \log (1 + |D w|) \dx.
\end{align*}
The above functional can also be generalized to a energy functional related to \eqref{log-energy-functional} and a differential operator satisfying  $p,q +\varepsilon$--growth conditions for every a-priori fixed $\varepsilon>0$. Such type of growth conditions was first introduced by Marcellini in \cite{Marcellini-1991} and later on, great attention has been paid to the study of several aspects of elliptic equations involving $p,q$--growth conditions, see the more recent works by Cupini--Marcellini--Mascolo \cite{Cupini-Marcellini-Mascolo-2023} and Marcellini \cite{Marcellini-2023}. However, a detailed study of differential operators satisfying $p,q$--growth conditions, which includes double phase operators, logarithmic double phase operators, and anisotropic operators as specific cases, is far from complete.

To the best of our knowledge, our work is the first one dealing with such logarithmic operator given in \eqref{log-operator} and associated energy functional  \eqref{log-energy-functional} in a very general setting. Indeed, there are many innovations and novelties in this work which we want to summarize below. The first step in studying the operator and its energy functional is the finding of the right function space. For this purpose, we consider logarithmic Musielak-Orlicz Sobolev spaces $\WHlog$ and $\WHlogzero$ with the generalized weak $\Phi$-function $\hlog \colon \close \times [0,\infty) \to [0,\infty)$ given by
\begin{align*}
	\hlog (x,t) = t^{p(x)} + \mu(x) t^{q(x)} \log (e + t).
\end{align*}
We are able to prove that these spaces are separable, reflexive Banach spaces and $\WHlogzero$ can be equipped with the equivalent norm
\begin{equation*}
	\inf \left\lbrace \lambda > 0 \colon \into \left[ \abs{ \frac{\nabla u}{\lambda} }^{p(x)}
	+\mu(x) \abs{\frac{\nabla u}{\lambda}} ^{q(x)} \log \left( e + \frac{ \abs{\nabla u} }{\lambda} \right) \right]\dx \leq 1\right\rbrace.
\end{equation*}
Such norm will be later useful in our existence results for corresponding logarithmic double phase equations. In addition, we prove several embedding results into variable exponent Lebesgue spaces and the closedness of $\WHlog$ and $\WHlogzero$ under truncations. Moreover, we show the density of smooth functions. To be more precise, under the assumptions that $p,q \colon \close \to [1,\infty)$ and $\mu \colon \close \to [0,\infty)$ are H\"older continuous and
\begin{align}\label{double-phase-condition}
	\left( \frac{q}{p} \right)_+ < 1 + \frac{\gamma}{N},
\end{align}
where $\gamma$ is the H\"older exponent of $\mu$, we obtain that $C^\infty (\Omega) \cap \WHlog$ is dense in $\WHlog$. As an result of independent interest, we extend this assertion to the case of unbounded domains under the additional hypothesis that $p$ satisfies Nekvinda's decay condition and $p$ and $q$ are bounded. This condition was first introduced by Nekvinda in the article \cite{Nekvinda-2004} and says that a measurable function $r \colon \Omega \to [1,\infty]$ satisfies Nekvinda's decay condition if there exists $r_\infty \in [1 , \infty]$ and $c \in (0,1)$ such that
\begin{align*}
	\into c^{\frac{1}{ \abs{ \frac{1}{r(x)} - \frac{1}{r_\infty} } }} \dx < \infty.
\end{align*}

Let us come back to the inequality \eqref{double-phase-condition} and suppose the exponents $p$ and $q$ are constants such that $1<p<q<N$ and $\mu$ to be Lipschitz (i.e.\,$\gamma=1$). Then \eqref{double-phase-condition} reads as
\begin{align}\label{double-phase-condition2}
	\frac{q}{p} < 1 + \frac{1}{N}.
\end{align}
Condition \eqref{double-phase-condition2} has been used for regularity results of local minimizers to related energy functionals given by \eqref{double-phase-functional-without-log} (see the above mentioned works) and to guarantee the density of smooth functions in related Musielak-Orlicz Sobolev spaces, see here the work of Colasuonno--Squassina \cite[Proposition 6.5]{Colasuonno-Squassina-2016}. For existence results for double phase problems, condition \eqref{double-phase-condition2} was crucial to have the Poincar\'e inequality, see again \cite[Proposition 2.18 (iv)]{Colasuonno-Squassina-2016}. Later, Crespo-Blanco--Gasi\'{n}ski--Harjulehto--Winkert \cite{Crespo-Blanco-Gasinski-Harjulehto-Winkert-2022} were able to prove the existence of the Poincar\'e inequality under the weaker assumption $q<p^*$. Note that \eqref{double-phase-condition2} implies $q<p^*$. So in the existence theory, not using density results, the condition \eqref{double-phase-condition2} is not needed anymore for most of the treatments. However, we believe that for a global regularity theory, which does not exist so far, condition \eqref{double-phase-condition2} is the main assumption in the double phase setting.

In the second part of this paper we are interested in the properties of the logarithmic double phase operator $\mathcal{G}$ given in \eqref{log-operator} and its corresponding energy functional $I$ in \eqref{log-energy-functional}. We prove that the operator is bounded, continuous, strictly monotone, of type \textnormal{(S$_+$)}, coercive and a homeomorphism. Moreover, the functional $I$ is of class $C^1$. As a result of independent interest we also present a new version of Young's inequality for the product of a power-law and a logarithm, which says that for $s,t \geq 0$ and $r > 1$ it holds
\begin{align*}
	s t^{r-1} \left[ \log (e + t ) + \frac{ t }{r (e + t)} \right]
	\leq \frac{s^r}{r} \log (e + s )
	+ t^r \left[ \frac{r - 1}{r} \log (e + t ) + \frac{ t }{r (e + t)} \right] .
\end{align*}
This inequality is essential for our proof that the operator fulfills the \textnormal{(S$_+$)}-property.

Finally, in the last part of the paper, we are interested in existence and multiplicity results for the equation
\begin{align}
	\label{Eq:Problem}
	\mathcal{G}u  = f(x,u) \quad \text{in } \Omega, \quad u= 0 \quad \text{on } \partial\Omega,
\end{align}
where $\mathcal{G}$ is given in \eqref{log-operator} and $f\colon\Omega\times \R\to\R$ is a Carath\'{e}odory function with subcritical growth which satisfies appropriate conditions, see \eqref{Hf}, \eqref{Asf:QuotientMono}, \eqref{Asf:GrowthZeroAlt} and \eqref{Asf:Nonnegative} for the details. We prove the existence of three nontrivial weak solutions of problem \eqref{Eq:Problem} including determining their sign. More precisely, one solution is positive, one is negative and the third one has changing sign. The existence of the sign-changing solution is the main difficult part in our existence section, the idea is to the use an appropriate variation $\mathcal{N}_0$ of the corresponding Nehari manifold of problem \eqref{Eq:Problem} given by
\begin{align*}
	\mathcal{N} = \left\{ u \in \WHlogzero \colon \langle \ph'(u) , u \rangle = 0, \; u \neq 0 \right\},
\end{align*}
where $\ph$ is the energy functional corresponding to \eqref{Eq:Problem}. The definition of $\mathcal{N}$ is motivated by the works of Nehari \cite{Nehari-1960,Nehari-1961} and the set $\mathcal{N}_0$  consists of all functions of $\WHlogzero$ such that the positive and negative parts are also in $\mathcal{N}$. The idea in the proof is, among other things, a suitable combination of the quantitative deformation lemma and the Poincar\'e-Miranda existence theorem. Such treatment has been applied to double phase problems without logarithmic term and by using the Brouwer degree instead of the Poincar\'e-Miranda existence theorem by the works of Liu--Dai \cite{Liu-Dai-2018} for $p,q$ constants and Crespo-Blanco--Winkert \cite{Crespo-Blanco-Winkert-2022} for the operator given in \eqref{double-phase-operator-without-log} with associated functional \eqref{double-phase-functional-without-log}. Note that the appearance of the logarithmic term in our operator makes the treatment much more complicated than in the works \cite{Liu-Dai-2018} and \cite{Crespo-Blanco-Winkert-2022}.

As mentioned above, to the best of our knowledge, there exists no other work dealing with the logarithmic double phase operator given in \eqref{log-operator}. However, some papers deal with logarithmic terms on the right-hand side for Schr\"odinger equations or $p$-Laplace problems. In 2009, Montenegro--de Queiroz \cite{Montenegro-deQueiroz} studied nonlinear elliptic problem
\begin{align}\label{reference-1}
	-\Delta u=\chi_{u>0}(\log (u)+\lambda f(x,u))\quad \text{in } \Omega, \quad u= 0 \quad \text{on } \partial\Omega,
\end{align}
where $f\colon \Omega \times [0,\infty) \to [0,\infty)$ is nondecreasing, sublinear and $f_u$ is continuous. The authors show that \eqref{reference-1} has a maximal solution $u_\lambda \geq 0$ of type $C^{1,\gamma}(\close)$. Logarithmic Schr\"odinger equations of the shape
\begin{align}\label{reference-2}
	-\Delta u+V(x)u=Q(x)u \log(u^2)\quad \text{in }\mathbb{R}^N
\end{align}
have been studied by Squassina--Szulkin \cite{Squassina-Szulkin-2015} proving that \eqref{reference-2} has infinitely many solutions, where $V$ and $Q$ are $1$-periodic functions of the variables $x_1,\ldots,x_N$ and $Q\in C^1(\mathbb{R}^N)$. Further results for logarithmic Schr\"odinger equations can be found in the works of Alves--de Morais Filho \cite{Alves-deMoraisFilho-2018}, Alves--Ji \cite{Alves-Ji-2020} and Shuai \cite{Shuai-2019}, see also Gao--Jiang--Liu-Wei \cite{Gao-Jiang-Liu-Wei-2023} for logarithmic Kirchhoff type equations and the recent work of Alves--da Silva \cite{Alves-daSilva-2023} about logarithmic Schr\"{o}dinger equations on exterior domains. We also refer to the papers of Alves--Moussaoui--Tavares \cite{Alves-Moussaoui-Tavares-2019} for singular systems with logarithmic right-hand sides driven by the $\Delta_{p(\cdot)}$-Laplacian and Shuai \cite{Shuai-2023} for a Laplace equation with right-hand side $a(x)u\log(|u|)$ with weight function $a$ which may change sign.

To finish this introduction, we would like to mention some famous works in the direction of double phase problems (without logarithmic term) appearing in the last years based on different methods and techniques. We refer to the papers by Aberqi--Bennouna--Benslimane--Ragusa \cite{Aberqi-Bennouna-Benslimane-Ragusa-2023} (elliptic equations on complete compact Riemannian manifolds), Arora--Shmarev \cite{Arora-Shmarev-2023,Arora-Shmarev-2023-b} (parabolic double-phase problems), Bai--Papageorgiou--Zeng \cite{Bai-Papageorgiou-Zeng-2023} (parametric singular problems), Clop--Gentile--Passarelli \cite{Clop-Gentile-Passarelli-2023} (higher differentiability under sub-quadratic growth conditions), Colasuonno--Squa\-ssina \cite{Colasuonno-Squassina-2016} (double phase eigenvalue problems), Fang--R\u{a}dulescu--Zhang  \cite{Fang-Radulescu-Zhang-2024} (equivalence of weak and viscosity solutions), Farkas--Winkert \cite{Farkas-Winkert-2021} (Finsler double phase problems), Gasi\'{n}ski--Papageorgiou \cite{Gasinski-Papageorgiou-2019} (locally Lipschitz right-hand sides), Gasi\'{n}s\-ki--Winkert \cite{Gasinski-Winkert-2020} (convection problems), Ho--Winkert \cite{Ho-Winkert-2023} (new critical embedding results), Liu--Dai \cite{Liu-Dai-2018} (superlinear problems), Liu--Pucci \cite{Liu-Pucci-2023} (multiplicity results without the AR-condition), Papageorgiou--R\u{a}dulescu--Repov\v{s} \cite{Papageorgiou-Radulescu-Repovs-2019-a, Papageorgiou-Radulescu-Repovs-2020} (property of the spectrum and ground state solutions), Perera--Squassina \cite{Perera-Squassina-2018} (Morse theory for double phase problems), Zhang--R\u{a}dulescu \cite{Zhang-Radulescu-2018} and Shi--R\u{a}dulescu--Re\-pov\v{s}--Zhang \cite{Shi-Radulescu-Repovs-Zhang-2020} (double phase anisotropic variational problems with variable exponents), Zeng--Bai--Gasi\'{n}ski--Winkert \cite{Zeng-Bai-Gasinski-Winkert-2020} (implicit obstacle double phase problems), Zeng--R\u{a}dulescu--Winkert \cite{Zeng-Radulescu-Winkert-2022} (implicit obstacle double phase problems with mixed boundary condition), see also the references therein.

The paper is organized as follows. Section \ref{musielak-orlicz-preliminaries} consists of an outline of the properties of variable exponent spaces, Musielak-Orlicz spaces, their associated Sobolev spaces and other mathematical tools used later in the text. These tools include some inequalities, the quantitative deformation lemma and the Poincar\'e-Miranda existence theorem. In Section \ref{functional-space} we introduce the functional space that will be used throughout the rest of the paper and we give its main characteristics. In Section \ref{energy-functional-operator} we provide some strong properties of the differential operator of problem \eqref{Eq:Problem}. These are essential when applying many techniques used to study nonlinear PDEs, including our treatment. After that, in Section \ref{constant-sign-solutions} we prove the existence of two nontrivial solutions of our problem, one of positive sign and one of negative sign. On top of this, in Section \ref{sign-changing-solution} we show the existence of a third nontrivial solution with changing sign using the Nehari manifold technique and variational arguments. As a closing result, in Section \ref{nodal-domains} we give information on the nodal domains of this sign-changing solution.

\section{Musielak-Orlicz spaces and preliminaries}\label{musielak-orlicz-preliminaries}

Some of the natural ingredients to study this kind of operator are the variable exponent Lebesgue and Sobolev spaces. For this reason, we start this section with a small summary of their properties. Later in this section we will also work with the theory of Musielak-Orlicz spaces in order to build an appropriate space for this operator.

Let $1 \leq r \leq \infty$, we denote by $\Lp{r}$ the standard Lebesgue space equipped with the norm $\|\cdot\|_r$ and by $\Wp{r}$ and $\Wpzero{r}$ the typical Sobolev spaces fitted with the norm $\|\cdot\|_{1,r}$ and, for the case $1 \leq r < \infty$, also the norm $\|\cdot\|_{1,r,0}$.

Let us also denote the positive and negative part as follows. Let $t \in \R$, then $t^\pm = \max\{ \pm t , 0 \}$, i.e.\,$t = t^+ - t^-$ and $\abs{t} = t^+ + t^-$. For any function $u \colon \Omega \to \R$, we denote $u^\pm (x) = \left[ u(x) \right] ^\pm$ for all $x \in \Omega$.

For the variable exponent case, we need to introduce some common notation. Let $r \in C(\close)$, we define $r_- = \min_{x \in \close} r(x)$ and $r_+ = \max_{x \in \close} r(x)$ and also the space
\begin{align*}
	C_+ (\Omega) = \{ r \in C(\close) \colon 1 < r_- \}.
\end{align*}
Let $r \in C_+ (\Omega)$ and let $M(\Omega) = \{ u \colon \Omega \to \R \colon u \text{ is measurable}\}$, we denote by $\Lp{r(\cdot)}$ the Lebesgue space with variable exponent given by
\begin{align*}
	\Lp{r(\cdot)} = \left\lbrace u \in M(\Omega) \colon \varrho_{r(\cdot)} (u) < \infty \right\rbrace,
\end{align*}
where the modular associated with $r$ is
\begin{align*}
	\varrho_{r(\cdot)} (u) = \into \abs{u}^{r(x)} \dx
\end{align*}
and it is equipped with its associated Luxemburg norm
\begin{align*}
	\norm{u}_{r(\cdot)} = \inf \left\lbrace \lambda > 0 \colon \varrho_{r(\cdot)} \left( \frac{u}{\lambda} \right)  \leq 1 \right\rbrace .
\end{align*}

These spaces have been investigated in many works, and as a result we nowadays have a comprehensive theory. One can find most of the relevant results in the book by Diening--Harjulehto--H\"{a}st\"{o}--R$\mathring{\text{u}}$\v{z}i\v{c}ka  \cite{Diening-Harjulehto-Hasto-Ruzicka-2011}. We know that $\Lp{r(\cdot)}$ is a separable and reflexive Banach space and its norm is uniformly convex. We also know that $\left[ \Lp{r(\cdot)} \right] ^*=\Lp{r'(\cdot)}$, where $r' \in C_+(\close)$ is the conjugate variable exponent of $r$ and is given by $r'(x) = r(x) / [r(x) - 1]$ for all $x \in \close$, see for example \cite[Lemma 3.2.20]{Diening-Harjulehto-Hasto-Ruzicka-2011}. In these spaces we also have a weaker version of H\"older's inequality, like the one in \cite[Lemma 3.2.20]{Diening-Harjulehto-Hasto-Ruzicka-2011}, which states that
\begin{align*}
	\into |uv| \dx \leq \left[\frac{1}{r_-}+\frac{1}{r'_-}\right] \|u\|_{r(\cdot)}\|v\|_{r'(\cdot)} \leq 2 \|u\|_{r(\cdot)}\|v\|_{r'(\cdot)} \quad \text{for all } u,v\in \Lp{r(\cdot)}.
\end{align*}
Additionally, if $r_1, r_2\in C_+(\close)$ and $r_1(x) \leq r_2(x)$ for all $x\in \close$, it is possible to embed continuously one space in the other like in \cite[Theorem 3.3.1]{Diening-Harjulehto-Hasto-Ruzicka-2011}, meaning $\Lp{r_2(\cdot)} \hookrightarrow \Lp{r_1(\cdot)}$. Finally, the norm and its modular are strongly related as one can see in the following result, it can be found in the paper of Fan--Zhao \cite[Theorems 1.2 and 1.3]{Fan-Zhao-2001}.

\begin{proposition}
	\label{Prop:ModularNormVarExp}
	Let $r\in C_+(\close)$, $\lambda>0$, and $u\in \Lp{r(\cdot)}$, then
	\begin{enumerate}
		\item[\textnormal{(i)}]
			$\|u\|_{r(\cdot)}=\lambda$  if and only if $ \varrho_{r(\cdot)}\l(\frac{u}{\lambda}\r)=1$ with $u \neq 0$;
		\item[\textnormal{(ii)}]
			$\|u\|_{r(\cdot)}<1$ (resp. $=1$, $>1$)  if and only if $\varrho_{r(\cdot)}(u)<1$ (resp. $=1$, $>1$);
		\item[\textnormal{(iii)}]
			if $\|u\|_{r(\cdot)}<1$, then $\|u\|_{r(\cdot)}^{r_+} \leq \varrho_{r(\cdot)}(u) \leq \|u\|_{r(\cdot)}^{r_-}$;
		\item[\textnormal{(iv)}]
			if $\|u\|_{r(\cdot)}>1$, then $\|u\|_{r(\cdot)}^{r_-} \leq \varrho_{r(\cdot)}(u) \leq \|u\|_{r(\cdot)}^{r_+}$;
		\item[\textnormal{(v)}]
			$\|u\|_{r(\cdot)} \to 0$  if and only if  $\varrho_{r(\cdot)}(u)\to 0$;
		\item[\textnormal{(vi)}]
			$\|u\|_{r(\cdot)}\to +\infty$  if and only if  $\varrho_{r(\cdot)}(u)\to +\infty$.
	\end{enumerate}
\end{proposition}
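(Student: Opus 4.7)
The strategy is to establish (i) as the backbone of the proposition, since all other statements follow from it by elementary comparison arguments. The key observation is that, for a fixed nonzero $u \in \Lp{r(\cdot)}$, the function
\begin{align*}
	\Phi_u(\lambda) := \varrho_{r(\cdot)}\!\left(\frac{u}{\lambda}\right) = \into \frac{|u(x)|^{r(x)}}{\lambda^{r(x)}} \dx, \qquad \lambda > 0,
\end{align*}
is continuous, strictly decreasing, with $\Phi_u(\lambda) \to +\infty$ as $\lambda \to 0^+$ and $\Phi_u(\lambda) \to 0$ as $\lambda \to +\infty$. Continuity and the limit at infinity follow from the dominated convergence theorem (with dominating function $(|u|/\lambda_0)^{r(x)}$ for $\lambda \geq \lambda_0$); the blow-up at zero follows from the monotone convergence theorem applied to the integrand $(|u|/\lambda)^{r(x)}$ as $\lambda \downarrow 0$ on $\{u \neq 0\}$, using $r_- > 1$. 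By the intermediate value theorem there is a unique $\lambda_* > 0$ with $\Phi_u(\lambda_*) = 1$, and the definition of the Luxemburg norm as an infimum together with strict monotonicity gives $\|u\|_{r(\cdot)} = \lambda_*$. This proves (i); the case $u = 0$ is trivial since both sides vanish.

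For (ii), I would combine (i) with the monotonicity of $\Phi_u$: if $\|u\|_{r(\cdot)} < 1$ then $\Phi_u(\|u\|_{r(\cdot)}) = 1$ while $\Phi_u(1) < \Phi_u(\|u\|_{r(\cdot)}) = 1$, so $\varrho_{r(\cdot)}(u) < 1$. The converse direction and the cases $=1$ and $>1$ are symmetric.

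For (iii) and (iv), the idea is to write
\begin{align*}
	\varrho_{r(\cdot)}(u) = \into \|u\|_{r(\cdot)}^{r(x)} \left| \frac{u(x)}{\|u\|_{r(\cdot)}} \right|^{r(x)} \dx,
\end{align*}
and then use the trivial bounds $\|u\|_{r(\cdot)}^{r_+} \leq \|u\|_{r(\cdot)}^{r(x)} \leq \|u\|_{r(\cdot)}^{r_-}$ valid when $\|u\|_{r(\cdot)} < 1$ (and the reversed inequalities when $\|u\|_{r(\cdot)} > 1$), together with $\varrho_{r(\cdot)}(u / \|u\|_{r(\cdot)}) = 1$ from (i). This immediately yields both double inequalities. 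Finally, (v) and (vi) are straightforward consequences of (iii) and (iv): convergence $\|u_n\|_{r(\cdot)} \to 0$ eventually gives $\|u_n\|_{r(\cdot)} < 1$, so $\varrho_{r(\cdot)}(u_n) \leq \|u_n\|_{r(\cdot)}^{r_-} \to 0$, and analogously for the reverse implications and for unbounded sequences.

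The only genuinely delicate point is the justification of the two limits of $\Phi_u(\lambda)$ at $0$ and $\infty$ in the proof of (i); the rest is bookkeeping. In particular one must make sure that $\{|u| > 0\}$ has positive measure (automatic for $u \neq 0$ in $M(\Omega)$) so that monotone convergence actually forces $\Phi_u(\lambda) \to \infty$ as $\lambda \to 0^+$. Since all variable exponents are assumed to satisfy $1 < r_- \leq r_+ < \infty$, no further subtleties arise, and the chain (i) $\Rightarrow$ (ii) $\Rightarrow$ (iii)--(iv) $\Rightarrow$ (v)--(vi) closes the argument.
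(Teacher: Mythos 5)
Your proof is correct, and the argument — reduce everything to the continuity, strict monotonicity, and limiting behavior of $\lambda \mapsto \varrho_{r(\cdot)}(u/\lambda)$, then deduce (ii)--(vi) as corollaries of (i) — is the standard one; the paper itself gives no proof and simply cites Fan--Zhao \cite[Theorems 1.2 and 1.3]{Fan-Zhao-2001}, where this same line of reasoning appears. The only cosmetic remark is that the hypothesis $r_->1$ is not actually needed for the blow-up of $\Phi_u$ at $0^+$ (positivity of $r_-$ suffices), and that in the sentence about $u=0$ you really mean both sides of the equivalence fail rather than ``vanish''; neither affects the validity of the argument.
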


For our purposes we will also need the associated Sobolev spaces to the variable exponent Lebesgue spaces. These are also treated in the book by Diening--Harjulehto--H\"{a}st\"{o}--R$\mathring{\text{u}}$\v{z}i\v{c}ka  \cite{Diening-Harjulehto-Hasto-Ruzicka-2011}. Let $r \in C_+(\close)$, the Sobolev space $\Wp{r(\cdot)}$ is given by
\begin{align*}
	\Wp{r(\cdot)}=\l\{ u \in \Lp{r(\cdot)} \colon |\nabla u| \in \Lp{r(\cdot)}\r\},
\end{align*}
on it we can define the modular
\begin{align*}
	\varrho_{1, r(\cdot)} (u) = \varrho_{ r(\cdot)} (u) + \varrho_{ r(\cdot)} ( \nabla u ),
\end{align*}
where $\varrho_{ r(\cdot)} ( \nabla u ) = \varrho_{ r(\cdot)} ( \abs{\nabla u} )$, and it is equipped with its associated Luxemburg norm
\begin{align*}
	\norm{u}_{1, r(\cdot)} = \inf \left\lbrace \lambda > 0 \colon \varrho_{1, r(\cdot)} \left( \frac{u}{\lambda} \right)  \leq 1 \right\rbrace .
\end{align*}
Furthermore, similarly to the standard case, we denote
\begin{align*}
	\Wpzero{r(\cdot)}= \overline{C^\infty _0(\Omega)}^{\|\cdot\|_{1,r(\cdot)}}.
\end{align*}
The spaces $\Wp{r(\cdot)}$ and $\Wpzero{r(\cdot)}$ are both separable and reflexive Banach spaces and the norm $\|\cdot\|_{1,r}$ is uniformly convex.

A Poincar\'{e} inequality of the norms holds in the space $\Wpzero{r(\cdot)}$. One way to see this is the paper by Fan--Shen--Zhao \cite[Theorem 1.3]{Fan-Shen-Zhao-2001}, together with the standard way to derive the Poincar\'e inequality from the compact embedding, see for example the paper by Crespo-Blanco--Gasi\'{n}ski--Harjulehto--Winkert \cite[Proposition 2.18 (ii)]{Crespo-Blanco-Gasinski-Harjulehto-Winkert-2022}.
\begin{proposition}
	Let $r \in C_+(\close)$, then there exists $c_0>0$ such that
	\begin{align*}
		\|u\|_{r(\cdot)} \leq c_0 \|\nabla u\|_{r(\cdot)}
		\quad\text{for all } u \in \Wpzero{r(\cdot)}.
	\end{align*}
\end{proposition}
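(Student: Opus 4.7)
The plan is to argue by contradiction using the compact embedding $\Wpzero{r(\cdot)} \hookrightarrow\hookrightarrow \Lp{r(\cdot)}$ from Fan-Shen-Zhao. Suppose no such constant $c_0$ exists. Then for every $n \in \N$ there is $u_n \in \Wpzero{r(\cdot)}$ with
\begin{align*}
    \|u_n\|_{r(\cdot)} > n \, \|\nabla u_n\|_{r(\cdot)}.
\end{align*}
Normalizing by setting $v_n := u_n / \|u_n\|_{r(\cdot)}$, we obtain $\|v_n\|_{r(\cdot)} = 1$ and $\|\nabla v_n\|_{r(\cdot)} < 1/n$. In particular $(v_n)$ is bounded in $\Wpzero{r(\cdot)}$.

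Since $\Wpzero{r(\cdot)}$ is reflexive, along a subsequence we have $v_n \weak v$ in $\Wpzero{r(\cdot)}$. By the compact embedding cited above, $v_n \to v$ strongly in $\Lp{r(\cdot)}$, so $\|v\|_{r(\cdot)} = 1$; in particular $v \neq 0$. On the other hand, $\nabla v_n \to 0$ strongly in $\Lp{r(\cdot)}$, while simultaneously $\nabla v_n \weak \nabla v$ in $\Lp{r(\cdot)}$, so by uniqueness of weak limits $\nabla v = 0$ a.e.\ in $\Omega$.

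It remains to conclude that $v = 0$, which contradicts $\|v\|_{r(\cdot)} = 1$. Since $v \in \Wpzero{r(\cdot)}$ with $\nabla v = 0$, on each connected component of the Lipschitz domain $\Omega$ the function $v$ is (a.e.\ equal to) a constant; the zero boundary trace then forces this constant to vanish on every component, hence $v = 0$ in $\Omega$. The only subtle point, and likely the main technical obstacle, is actually the invocation of the compactness result $\Wpzero{r(\cdot)} \hookrightarrow\hookrightarrow \Lp{r(\cdot)}$, which rests on the regularity of $\partial\Omega$ and continuity of the exponent; once that is in hand, the contradiction argument is short and standard.
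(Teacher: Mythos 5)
Your proposal is correct and follows exactly the route the paper indicates: the paper's own ``proof'' consists of citing the compact embedding $\Wpzero{r(\cdot)} \hookrightarrow\hookrightarrow \Lp{r(\cdot)}$ from Fan--Shen--Zhao and invoking ``the standard way to derive the Poincar\'e inequality from the compact embedding,'' which is precisely the normalize-and-contradict argument you wrote out. The only cosmetic remark is that a domain is already connected, so the talk of ``each connected component'' is redundant; and the final step that $v\in \Wpzero{r(\cdot)}$ with $\nabla v = 0$ forces $v=0$ can be reduced cleanly to the classical case by noting $\Wpzero{r(\cdot)}\subseteq W^{1,r_-}_0(\Omega)$ and applying the constant-exponent Poincar\'e inequality there.
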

Thus, we can define the equivalent norm on $\Wpzero{r(\cdot)}$ given by $\|u\|_{1,r(\cdot),0}=\|\nabla u\|_{r(\cdot)}$. This norm is also uniformly convex.

Alternatively, assuming an additional monotonicity condition on $r$,  we also have a Poincar\'{e} inequality for the modular, see the paper by Fan--Zhang--Zhao \cite[Theorem 3.3]{Fan-Zhang-Zhao-2005}.
\begin{proposition}
	\label{Prop:PoincareModular}
	Let $r \in C_+(\close)$ be such that there exists a vector $l \in \R^N \setminus \{0\}$ with the property that for all $x \in \Omega$ the function
	\begin{align*}
		h_x (t) = r(x + tl) \quad \text{with } t \in I_x = \{ t \in \R \colon x + tl \in \Omega\}
	\end{align*}
	is monotone. Then there exits a constant $C>0$ such that
	\begin{align*}
		\varrho_{r(\cdot)} (u) \leq C \varrho_{r(\cdot)} (\nabla u)
		\quad \text{for all } u \in \Wp{r(\cdot)},
	\end{align*}
	where $\varrho_{r(\cdot)} (\nabla u) = \varrho_{r(\cdot)} ( | \nabla u | )$.
\end{proposition}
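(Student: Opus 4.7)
The plan is to establish the modular Poincar\'e inequality first on smooth compactly supported functions and then extend by density; note that the statement must be restricted to functions with vanishing trace (e.g.\ $\Wpzero{r(\cdot)}$), since constants trivially violate it on the whole of $\Wp{r(\cdot)}$. By an orthogonal change of coordinates, which preserves the modular, I may assume the monotonicity direction is $l = e_N$, and since $\Omega$ is bounded there exist $a < b$ with $\Omega \subseteq \R^{N-1} \times [a, b]$.

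For $u \in C^\infty_0(\Omega)$, extended by zero, the starting identity is the fundamental theorem of calculus
\begin{align*}
	u(x', x_N) = \int_a^{x_N} \partial_N u(x', s) \diff s,
\end{align*}
which combined with Jensen's inequality applied to the convex map $t \mapsto t^{r(x)}$ gives
\begin{align*}
	\abs{u(x)}^{r(x)} \leq (b-a)^{r(x)-1} \int_a^{x_N} \abs{\partial_N u(x', s)}^{r(x)} \diff s.
\end{align*}
The prefactor is uniformly controlled in terms of $r_\pm$ and $\abs{b-a}$, but the exponent $r(x)$ in the integrand is evaluated at the outer point $x = (x', x_N)$ rather than at the integration point $(x', s)$. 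Recovering $\varrho_{r(\cdot)}(\nabla u)$ after integrating $x_N$ over $[a, b]$ and applying Fubini requires trading $r(x)$ for $r(x', s)$, and this trade is possible precisely because the monotonicity hypothesis guarantees a definite direction of inequality between these two values along the integration segment.

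The heart of the argument, and its main obstacle, is therefore the exponent-shift estimate: assuming for concreteness that $s \mapsto r(x', s)$ is nondecreasing, so that $r(x', s) \leq r(x)$ for $s \leq x_N$, one has to control $\abs{\partial_N u(x', s)}^{r(x)}$ in terms of $\abs{\partial_N u(x', s)}^{r(x', s)}$. This forces a careful case split according to the size of $\abs{\partial_N u(x', s)}$, combined with Young's inequality to absorb the excess exponent fluctuation $r(x) - r(x', s) \in [0, r_+ - r_-]$ while integrating $x_N$ over the segment $[s, b]$, so that the iterated integral arising after Fubini can be reabsorbed into $\varrho_{r(\cdot)}(\nabla u)$ modulo a harmless additive constant. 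That additive constant is then removed via the standard rescaling $u \mapsto \lambda u$ with small $\lambda > 0$ together with the homogeneity relations in Proposition \ref{Prop:ModularNormVarExp}, and a final density step extends the inequality from $C^\infty_0(\Omega)$ to $\Wpzero{r(\cdot)}$. Without the monotonicity, the exponent-shift estimate fails in the large-gradient regime, so the hypothesis enters essentially and cannot be bypassed by continuity of $r$ alone.
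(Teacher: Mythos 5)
You are right that the inequality must be restricted to $\Wpzero{r(\cdot)}$ (constants defeat it on $\Wp{r(\cdot)}$), which is also how the paper actually uses it; the paper itself offers no proof and simply cites Fan--Zhang--Zhao \cite[Theorem 3.3]{Fan-Zhang-Zhao-2005}. Your initial reductions (orthogonal change to $l=e_N$, slab $\Omega\subseteq\R^{N-1}\times[a,b]$, fundamental theorem of calculus, Jensen, Fubini) are the right ingredients and match the one-dimensional reduction in that source.

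The fatal step is the claimed removal of the additive constant via $u\mapsto\lambda u$. The modular $\varrho_{r(\cdot)}$ is not positively homogeneous: the sandwich behind Proposition~\ref{Prop:ModularNormVarExp} only gives $\lambda^{r_+}\varrho_{r(\cdot)}(v)\le\varrho_{r(\cdot)}(\lambda v)\le\lambda^{r_-}\varrho_{r(\cdot)}(v)$ for $\lambda\le1$ (and the reversed bounds for $\lambda\ge1$), so feeding $\varrho_{r(\cdot)}(\lambda u)\le C\varrho_{r(\cdot)}(\lambda\nabla u)+C'$ through them lets either the multiplicative or the additive term blow up as $\lambda\to0$ or $\lambda\to\infty$. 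The best that this scaling can extract is the \emph{norm} Poincar\'e inequality, which already holds without any monotonicity on $r$ (it is stated immediately before this proposition in the paper) and which does not imply the modular inequality with a uniform constant: converting norms back to modulars costs a factor of order $\|\nabla u\|_{r(\cdot)}^{r_+-r_-}$, which is unbounded. So a correct proof must avoid any additive remainder, and this is also exactly where your exponent-shift step is thin: with $r$ nondecreasing along $e_N$ and $s\le x_N$, the replacement $r(x',x_N)\to r(x',s)$ goes the wrong way on $\{|\partial_N u|>1\}$, producing a factor as large as $|\partial_N u|^{r_+-r_-}$ that Young's inequality cannot reabsorb into $\varrho_{r(\cdot)}(\nabla u)$. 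The monotonicity hypothesis is instead exploited by splitting the gradient into the regimes $|\partial_N u|\le1$ and $|\partial_N u|>1$ and anchoring the fundamental-theorem-of-calculus representation at opposite ends of the slab for the two pieces, so that the exponent comparison is favorable in both regimes and no additive constant ever appears.
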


For $r \in C_+ (\close)$ we introduce the critical Sobolev variable exponents $r^*$ and $r_*$ with the following expression
\begin{align*}
	r^*(x) & =
	\begin{cases}
		\frac{N r(x)}{N - r(x) } & \text{if } r(x) < N\\
		+\infty & \text{if } r(x) \geq N
	\end{cases}
	, \quad\text{for all } x \in \close, \\[1ex]
	r_*(x) & =
	\begin{cases}
		\frac{(N -1) r(x)}{N - r(x) } & \text{if } r(x) < N \\
		+\infty & \text{if } r(x) \geq N
	\end{cases}
	, \quad\text{for all } x \in \close.
\end{align*}
Note that for any $r \in C(\close)$ it holds that $(r^*)_-=(r_-)^*$, so we will denote it simply by $r^*_-$.

On the other hand, the space $C^{0, \frac{1}{|\log t|}}(\close)$ is the set of all functions $h\colon \close \to \R$ such that are log-H\"{o}lder continuous, i.e.\,there exists $C>0$ such that
\begin{align*}
	|h(x)-h(y)| \leq \frac{C}{|\log |x-y||}\quad\text{for all } x,y\in \close \text{ with } |x-y|<\frac{1}{2}.
\end{align*}

In the variable exponent setting we also have embeddings analogous to the Sobolev embeddings of the constant exponent case. The next two results can be found in Crespo-Blanco--Gasi\'{n}ski--Harjulehto--Winkert \cite[Propositions 2.1 and 2.2]{Crespo-Blanco-Gasinski-Harjulehto-Winkert-2022} or Ho--Kim--Winkert--Zhang  \cite[Proposition 2.4 and 2.5]{Ho-Kim-Winkert-Zhang-2022}.

\begin{proposition}
	\label{Prop:classicalembedd}
	Let $r\in C^{0, \frac{1}{|\log t|}}(\close) \cap C_+(\close)$ and let $s\in C(\close)$ be such that $1\leq  s(x)\leq r^*(x)$ for all $x\in\close$. Then, we have the continuous embedding
	\begin{align*}
		W^{1,r(\cdot)}(\Omega) \hookrightarrow L^{s(\cdot) }(\Omega ).
	\end{align*}
	If $r\in C_+(\close)$, $s\in C(\close)$ and $1\leq s(x)< r^*(x)$ for all
	$x\in\overline{\Omega}$, then this embedding is compact.
\end{proposition}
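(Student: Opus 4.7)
The result is classical in the theory of variable exponent Sobolev spaces, so my plan is to reduce it to two well-known building blocks: the endpoint Sobolev embedding $\Wp{r(\cdot)} \hookrightarrow \Lp{r^*(\cdot)}$ combined with the monotonicity embedding between variable exponent Lebesgue spaces recorded earlier in Section 2, and a localization argument producing compactness.

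For the continuous embedding, the plan is to isolate the critical case $s \equiv r^*$. Once this case is established, the embedding $\Lp{r^*(\cdot)} \hookrightarrow \Lp{s(\cdot)}$ for $s(x) \leq r^*(x)$ supplied earlier closes the argument. To obtain the endpoint embedding $\Wp{r(\cdot)} \hookrightarrow \Lp{r^*(\cdot)}$ I would follow the approach of Diening--Harjulehto--H\"{a}st\"{o}--R\r{u}\v{z}i\v{c}ka: extend a given $u \in \Wp{r(\cdot)}$ to a compactly supported function on $\R^N$, using the Lipschitz regularity of $\partial\Omega$, then exploit the classical pointwise representation $|u(x)| \leq C\, I_1(|\nabla u|)(x)$ in terms of the Riesz potential $I_1$, and finally apply boundedness of $I_1$ from $L^{r(\cdot)}(\R^N)$ into $L^{r^*(\cdot)}(\R^N)$. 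The last step is where the log-H\"{o}lder assumption on $r$ enters in an essential way, since it is exactly the regularity required for the Hardy--Littlewood maximal operator, and hence $I_1$, to be bounded on variable exponent Lebesgue spaces.

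For the compact embedding, note that the hypothesis is strictly weaker since log-H\"{o}lder continuity is no longer assumed, so I would avoid relying on the endpoint embedding above and instead proceed by localization. Since $r$ and $s$ are continuous on the compact set $\close$ with $s(x) < r^*(x)$ pointwise, one can cover $\close$ by finitely many open sets $U_1,\dots,U_k$ on which the oscillation of $r$ is so small that, writing $a_i = \inf_{U_i} r$, the strict inequality $\sup_{U_i} s < a_i^*$ still holds. On each piece, the continuous inclusion $\Wp{r(\cdot)} \hookrightarrow W^{1,a_i}(U_i)$ together with the classical Rellich--Kondrachov theorem produces compactness into $L^{s_i}(U_i)$ for some constant $s_i$ strictly between $\sup_{U_i}s$ and $a_i^*$. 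A diagonal extraction across the finitely many pieces and a partition of unity subordinate to the cover $\{U_i\}$ then upgrade this to convergence in $\Lp{s(\cdot)}$.

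The main obstacle will be the endpoint embedding into $\Lp{r^*(\cdot)}$: it is the only step that genuinely depends on variable exponent harmonic analysis rather than on soft consequences of classical Sobolev theory and the monotonicity embeddings already available in Section 2. Everything else, including the compactness, is either a direct transfer from constant exponent results or a finite-cover reduction to them.
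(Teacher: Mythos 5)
The paper does not actually prove this proposition; it cites \cite{Crespo-Blanco-Gasinski-Harjulehto-Winkert-2022} and \cite{Ho-Kim-Winkert-Zhang-2022}, and the proofs there follow the same two-pronged strategy you describe: a Riesz-potential/maximal-operator argument under log-H\"older continuity for the critical continuous embedding, and a finite-cover reduction to the constant-exponent Rellich--Kondrachov theorem for the subcritical compact embedding, so your outline is sound and standard. The only detail worth flagging is that when $r(x)\geq N$ on part of $\close$ one has $r^*(x)=+\infty$ there, so $r^*\notin C_+(\close)$ and the monotonicity embedding quoted from Section~\ref{musielak-orlicz-preliminaries} does not literally apply to factor through $\Lp{r^*(\cdot)}$; one should instead handle that region by a local Morrey/H\"older embedding (the same caveat also affects the compact case, though there your localization argument already sidesteps it provided the $U_i\cap\Omega$ are chosen Lipschitz, e.g.\ by using balls).
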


\begin{proposition}
	\label{Prop:classicalembedd:boundary}
	Suppose that $r\in C_+(\close)\cap W^{1,\gamma}(\Omega)$ for some $\gamma>N$. Let $s\in C(\close)$ be such that $1\leq  s(x)\leq r_*(x)$ for all $x\in\close$. Then, we have the continuous embedding
	\begin{align*}
		W^{1,r(\cdot)}(\Omega)\hookrightarrow L^{s(\cdot) }(\partial \Omega).
	\end{align*}
	If $r\in C_+(\close)$, $s\in C(\close)$ and $1\leq s(x)< r_*(x)$ for all
	$x\in\overline{\Omega}$, then the embedding is compact.
\end{proposition}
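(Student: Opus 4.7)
The overall plan is to follow the standard localize-and-flatten strategy used for trace theorems in variable-exponent Sobolev spaces. The hypothesis $r\in W^{1,\gamma}(\Omega)$ with $\gamma>N$ combined with Morrey's embedding $W^{1,\gamma}(\Omega)\hookrightarrow C^{0,\alpha}(\close)$, $\alpha=1-N/\gamma$, gives that $r$ is H\"older continuous on $\close$, which is the minimal regularity needed to control the oscillation of the exponent on small balls. Since $\partial\Omega$ is a finite-measure set, the inclusion $L^{r_*(\cdot)}(\partial\Omega)\hookrightarrow L^{s(\cdot)}(\partial\Omega)$ holds whenever $s(x)\le r_*(x)$ on $\close$, and therefore it suffices to establish the embedding at the critical boundary exponent.

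For the critical continuous embedding $W^{1,r(\cdot)}(\Omega)\hookrightarrow L^{r_*(\cdot)}(\partial\Omega)$, I would cover $\partial\Omega$ by finitely many balls $B_k$ of radius small enough that the oscillation of $r$ on $\Omega_k:=B_k\cap\Omega$ is less than a prescribed $\eta>0$. Using a $C^{0,\alpha}$-diffeomorphism to straighten each piece of boundary and a subordinate partition of unity, the problem reduces to a local estimate on a half-ball with flat piece of boundary $\Gamma_k$. On each patch, with $r_k:=\min_{\overline{\Omega_k}}r$, the classical constant-exponent trace inequality applied modularly yields $\into[\Gamma_k]|u|^{(r_k)_*}\,d\sigma\le C\,\|u\|_{W^{1,r_k}(\Omega_k)}^{(r_k)_*}$. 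Combined with the inclusions $L^{r(\cdot)}(\Omega_k)\hookrightarrow L^{r_k}(\Omega_k)$ and $L^{(r_k)_*}(\Gamma_k)\hookrightarrow L^{r_*(\cdot)}(\Gamma_k)$ (valid up to a loss of order $C\eta$ in the exponent, which can be absorbed by choosing $\eta$ small relative to the H\"older constant of $r$), this produces the required local modular bound. Summing over $k$ and converting between modular and norm via Proposition~\ref{Prop:ModularNormVarExp} yields the critical continuous embedding.

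For the compact embedding under the strict inequality $s(x)<r_*(x)$, compactness of $\close$ and continuity give $\delta>0$ with $s(x)+\delta\le r_*(x)$ on $\close$. Take a bounded sequence $\{u_n\}\subset W^{1,r(\cdot)}(\Omega)$; by reflexivity extract $u_n\weak u$. The classical compact trace $W^{1,r_-}(\Omega)\hookrightarrow\hookrightarrow L^{t}(\partial\Omega)$ for any $t<(r_-)_*$ combined with $W^{1,r(\cdot)}(\Omega)\hookrightarrow W^{1,r_-}(\Omega)$ delivers $u_n\to u$ in $L^{t}(\partial\Omega)$ for a suitable $t\ge 1$. Interpolating this $L^t$-convergence against the $L^{r_*(\cdot)}(\partial\Omega)$-boundedness obtained from the continuous step, and using the characterization of Luxemburg-norm convergence via the modular (Proposition~\ref{Prop:ModularNormVarExp}), one concludes that $\|u_n-u\|_{L^{s(\cdot)}(\partial\Omega)}\to 0$.

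The main obstacle is the critical continuous step, where the modular bound delivered by the constant-exponent trace at level $(r_k)_*$ must be converted, without loss, into a modular bound at the variable exponent $r_*(\cdot)$. This is precisely where the H\"older regularity of $r$ enters quantitatively: the radius of the covering balls has to be calibrated to the H\"older constant of $r$ so that the exponent drift $r_*(x)-(r_k)_*$ is small enough that the residual error can be absorbed by the $L^\infty$ bound on the diffeomorphism and the factor coming from $L^{r_*(\cdot)}\hookrightarrow L^{(r_k)_*-C\eta}$. This is also the reason the hypothesis is formulated as $r\in W^{1,\gamma}(\Omega)$ for $\gamma>N$ rather than merely log-H\"older: the stronger H\"older scale, not just a logarithmic modulus, lets us pass modular estimates across the critical trace without loss.
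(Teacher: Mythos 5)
The paper does not prove this proposition; it quotes it from Crespo-Blanco--Gasi\'nski--Harjulehto--Winkert and Ho--Kim--Winkert--Zhang, so I can only assess your sketch on its own merits. Your treatment of the compact case (strict inequality plus an interpolation against a sub-critical constant-exponent trace) is sound, and recognizing that the critical continuous step is the real difficulty and that $r\in W^{1,\gamma}(\Omega)$ with $\gamma>N$ is needed precisely there is the right instinct. But the mechanism you propose for the critical step does not close.

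The problem is the displayed inclusion $L^{(r_k)_*}(\Gamma_k)\hookrightarrow L^{r_*(\cdot)}(\Gamma_k)$. On a finite-measure set, $L^a\hookrightarrow L^b$ requires $a\geq b$ a.e.; here $(r_k)_*=\min_{\overline{\Omega_k}} r_*(\cdot)$ sits \emph{below} $r_*(\cdot)$, so the inclusion runs the wrong way. Freezing at the local maximum would fix the boundary side but breaks the domain side, since then $W^{1,r(\cdot)}(\Omega_k)\not\hookrightarrow W^{1,\max_{\overline{\Omega_k}}r}(\Omega_k)$. What you actually obtain from the constant-exponent trace is control of $\|u\|_{L^{r_*(\cdot)-C\eta}(\Gamma_k)}$, i.e. a \emph{sub}critical target. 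Shrinking the covering radius so that the oscillation $\eta$ is small makes the subtraction $C\eta$ small but never zero; summing over the patches therefore yields only $W^{1,r(\cdot)}(\Omega)\hookrightarrow L^{r_*(\cdot)-C\eta}(\partial\Omega)$ for every $\eta>0$, which does not imply the statement at $\eta=0$ (the implicit constants blow up as the cover is refined). So the assertion that the exponent drift ``can be absorbed by choosing $\eta$ small'' is exactly the gap; localize-and-freeze does not reach the critical boundary exponent.

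The hypothesis $r\in W^{1,\gamma}(\Omega)$, $\gamma>N$, is not used merely to get H\"older continuity of $r$ for a covering argument. The standard route (Fan, J. Math. Anal. Appl. \textbf{339} (2008)) is a direct one: apply the elementary $W^{1,1}(\Omega)\hookrightarrow L^{1}(\partial\Omega)$ trace to the composite function $v=|u|^{r_*(x)}$ and compute
\begin{align*}
\nabla v = r_*(x)\,|u|^{r_*(x)-2}u\,\nabla u + |u|^{r_*(x)}\log|u|\,\nabla r_*(x).
\end{align*}
The first term is handled by H\"older's inequality in variable exponent spaces together with the interior Sobolev embedding; the second term is where $\nabla r\in L^{\gamma}$ with $\gamma>N$ enters, because one must place $\nabla r_*$ in a high Lebesgue class to absorb the logarithmic factor and still land in $L^1(\Omega)$. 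This is the actual mechanism by which the stronger-than-log-H\"older regularity is consumed, and it is the ingredient your proposal lacks.
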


For the purpose of introducing the functional space mentioned in the Introduction, we present now the main features of Musielak-Orlicz spaces. Almost all definitions and results from this part of the work are from the book by Harjulehto--H\"ast\"o \cite{Harjulehto-Hasto-2019}. We start with some special types of growth. For the rest of this section let us denote by $(A,\Sigma,\mu)$ a $\sigma$-finite, complete measure space with $\mu \not \equiv 0$, while $\Omega$ still denotes a bounded domain in $\R^N$ with $N \geq 2$ and Lipschitz boundary $\partial \Omega$.

\begin{definition}
	Let  $\varphi \colon A \times (0,+\infty) \to \R$. We say that
	\begin{enumerate}
		\item[\textnormal{(i)}]
			$\ph$ is almost increasing in the second variable if there exists $a \geq 1$ such that $\ph(x,s) \leq a \ph(x,t)$ for all $0 < s < t$ and for a.a.\,$x \in A$;
		\item[\textnormal{(ii)}]
			$\ph$ is almost decreasing in the second variable if there exists $a \geq 1$ such that $a \ph(x,s) \geq \ph(x,t)$ for all $0 < s < t$ and for a.a.\,$x \in A$.
	\end{enumerate}

	Let $\ph \colon A \times (0,+\infty) \to \R$ and $p,q>0$. We say that $\ph$ satisfies the property
	\begin{enumerate}[leftmargin=2cm]
		\item[\textnormal{(Inc)}$_p$]
			if $t^{-p}\ph(x,t)$ is increasing in the second variable;
		\item[\textnormal{(aInc)}$_p$]
			if $t^{-p}\ph(x,t)$ is almost increasing in the second variable;
		\item[\textnormal{(Dec)}$_q$]
			if $t^{-q}\ph(x,t)$ is decreasing in the second variable;
		\item[\textnormal{(aDec)}$_q$]
			if $t^{-q}\ph(x,t)$ is almost decreasing in the second variable.
	\end{enumerate}
	Without subindex, that is \textnormal{(Inc)}, \textnormal{(aInc)}, \textnormal{(Dec)} and \textnormal{(aDec)}, it indicates that there exists some $p>1$ or $q<\infty$ such that the condition holds.
\end{definition}

Now we are in the position to give the definition of a generalized $\Phi$-function.

\begin{definition}
	A function $\ph \colon A \times [0,+\infty) \to [0,+\infty]$ is said to be a generalized $\Phi$-function if $\ph$ is measurable in the first variable, increasing in the second variable and satisfies $\ph(x,0)=0$, $\lim_{t\to 0^+} \ph(x,t) = 0$ and $\lim_{t \to +\infty} \ph(x,t) = +\infty$ for a.a.\,$x \in A$. Moreover, we say that
	\begin{enumerate}
		\item[\textnormal{(i)}]
			$\ph$ is a generalized weak $\Phi$-function if it satisfies \textnormal{(aInc)}$_1$ on $A \times (0,+\infty)$;
		\item[\textnormal{(ii)}]
			$\ph$ is a generalized convex $\Phi$-function if $\ph(x,\cdot)$ is left-continuous and convex for a.a.\,$x \in A$;
		\item[\textnormal{(iii)}]
			$\ph$ is a generalized strong $\Phi$-function if $\ph(x,\cdot)$ is continuous in the topology of $[0,\infty]$ and convex for a.a.\,$x \in A$.
	\end{enumerate}
\end{definition}

\begin{remark}
	A generalized strong $\Phi$-function is a generalized convex $\Phi$-function, and a generalized convex $\Phi$-function is a generalized weak $\Phi$-function, check equation (2.1.1) in the book by Harjulehto--H\"ast\"o \cite{Harjulehto-Hasto-2019}.
\end{remark}

Associated to each generalized $\Phi$-function, it is possible to define its conjugate function and its left-inverse.

\begin{definition}
	Let $\ph \colon A \times [0,+\infty) \to [0,+\infty]$. We denote by $\ph^*$ the conjugate function of $\ph$ which is defined for $x \in A$ and $s \geq 0$ by
	\begin{align*}
		\ph^*(x,s) = \sup_{t \geq 0} (ts - \ph(x,t)).
	\end{align*}
	We denote by $\ph^{-1}$ the left-continuous inverse of $\ph$, defined for $x \in A$ and $s \geq 0$ by
	\begin{align*}
		\ph^{-1}(x,s) = \inf \{t \geq 0\colon \ph(x,t) \geq s\}.
	\end{align*}
\end{definition}

The function spaces that we will build based on these generalized $\Phi$-functions can have specially nice properties if these functions fulfill some extra assumptions like the following ones.

\begin{definition}
	Let $\ph \colon A \times [0,+\infty) \to [0,+\infty]$, we say that
	\begin{enumerate}
		\item[\textnormal{(i)}]
			$\ph$ is doubling (or satisfies the $\Delta_2$-condition) if there exists a constant $K \geq 2$ such that
			\begin{align*}
				\ph(x,2t) \leq K \ph(x,t)
			\end{align*}
			for all $t \in (0,+\infty]$ and for a.a.\,$x \in A$;
		\item[\textnormal{(ii)}]
			$\ph$ satisfies the $\nabla_2$-condition if $\ph^*$ satisfies the $\Delta_2$-condition.
	\end{enumerate}

	Let $\Omega \subset \R^N$ and $\ph \colon \Omega \times [0,+\infty) \to [0,+\infty]$ be a generalized $\Phi$-function, we say that it satisfies the condition
	\begin{enumerate}[leftmargin=1.5cm]
		\item[\textnormal{(A0)}]
			if there exists $0 < \beta \leq 1$ such that $\beta \leq \ph^{-1} (x,1) \leq \beta^{-1}$ for a.a.\,$x \in \Omega$;
		\item[\textnormal{(A0)'}]
			if there exists $0 < \beta \leq 1$ such that $\ph (x,\beta) \leq 1 \leq \ph(x,\beta^{-1})$ for a.a.\,$x \in \Omega$;
		\item[\textnormal{(A1)}]
			if there exists $0 < \beta < 1$ such that $\beta \ph^{-1}(x,t) \leq \ph^{-1}(y,t)$ for every $t \in [1 , 1/|B|]$ and  for a.a.\,$x,y \in B \cap \Omega$ with every ball $B$ such that $|B| \leq 1$;
		\item[\textnormal{(A1)'}]
			if there exists $0 < \beta < 1$ such that $\ph(x,\beta t) \leq \ph (y,t)$ for every $t \geq 0$ such that $\ph(y,t) \in [1 , 1/|B|]$ and  for a.a.\,$x,y \in B \cap \Omega$ with every ball $B$ such that $|B| \leq 1$;
		\item[\textnormal{(A2)}]
			if for every $s>0$ there exists $0 < \beta \leq 1$ and $h \in \Lp{1} \cap \Lp{\infty}$ such that $\beta \ph^{-1}(x,t) \leq \ph^{-1}(y,t)$ for every $t \in [h(x) + h(y) , s]$ and for a.a.\,$x,y \in \Omega$;
		\item[\textnormal{(A2)'}]
			if there exists $s>0$, $0 < \beta \leq 1$, $\ph_\infty$ weak $\Phi$-function (that is, constant in the first variable) and $h \in \Lp{1} \cap \Lp{\infty}$ such that $\ph(x,\beta t) \leq \ph_\infty(t) + h(x)$ and $\ph_\infty (\beta t) \leq \ph(x,t) + h(x)$ for a.a.\,$x \in \Omega$ and for all $t\geq0$ such that $\ph_\infty(t) \leq s$ and $\ph(x,t) \leq s$.
	\end{enumerate}
\end{definition}

It is important to notice that some of the conditions we already mentioned are equivalent. The reason behind this is that, depending on the context, some conditions can be easier to check than others. For the following result, see Lemma 2.2.6, Corollary 2.4.11, Corollary 3.7.4, Corollary 4.1.6 and Lemma 4.2.7 in the book by Harjulehto--H\"ast\"o \cite{Harjulehto-Hasto-2019}.

\begin{lemma}\label{Le:equivalences}
	Let $\ph \colon A \times [0,+\infty) \to [0,+\infty]$ be a generalized weak $\Phi$-function, then
	\begin{enumerate}
		\item[\textnormal{(i)}]
			it satisfies the $\Delta_2$-condition if and only if it satisfies \textnormal{(aDec)};
		\item[\textnormal{(ii)}]
			if it is a generalized convex $\Phi$-function, it satisfies the $\Delta_2$-condition if and only if it satisfies \textnormal{(Dec)};
		\item[\textnormal{(iii)}]
			it satisfies the $\nabla_2$-condition if and only if it satisfies \textnormal{(aInc)};
	\end{enumerate}
	Let $\ph \colon \Omega \times [0,+\infty) \to [0,+\infty]$ be a generalized weak $\Phi$-function, then
	\begin{enumerate}
		\item[\textnormal{(iv)}]
			it satisfies the \textnormal{(A0)} condition if and only if it satisfies the \textnormal{(A0)'} condition;
		\item[\textnormal{(v)}]
			if it satisfies the \textnormal{(A0)} condition, the \textnormal{(A1)} condition holds if and only if the \textnormal{(A1)'} condition holds;
		\item[\textnormal{(vi)}]
			it satisfies the \textnormal{(A2)} condition if and only if it satisfies the \textnormal{(A2)'} condition.
	\end{enumerate}
\end{lemma}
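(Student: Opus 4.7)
The plan is to establish each of the six equivalences separately by combining two central techniques available for generalized weak $\Phi$-functions: iteration/doubling estimates on the one hand, and the Galois-type pairing between $\ph$ and its left-continuous inverse $\ph^{-1}$ on the other. Items (i)--(iii) rely on iteration and convex duality; items (iv)--(vi) rely on manipulating $\ph^{-1}$ via the bounds $\ph(x,\ph^{-1}(x,s))\leq s$ and $\ph^{-1}(x,\ph(x,t))\geq t$ that follow from left-continuity and monotonicity.

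For (i), the direction $\Delta_2\Rightarrow\textnormal{(aDec)}$ is handled by iterating $\ph(x,2t)\leq K\ph(x,t)$ to $\ph(x,2^n t)\leq K^n\ph(x,t)$, choosing $q$ with $K\leq 2^q$, and comparing arbitrary $0<s<t$ via the dyadic integer $n$ with $2^n s\leq t<2^{n+1}s$; this yields that $t^{-q}\ph(x,t)$ is almost decreasing with constant $2^q$. The converse is immediate by applying $\textnormal{(aDec)}_q$ to the pair $(t,2t)$. Item (ii) refines (i) under convexity: for $\ph(x,\cdot)$ convex, a left-derivative argument upgrades the almost-monotone quotient $t^{-q}\ph(x,t)$ to a genuinely decreasing quotient, absorbing the multiplicative constant. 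For (iii), I would transfer (i) to the conjugate: computing $\ph^*(x,s)=\sup_{t\geq 0}(ts-\ph(x,t))$ after the substitution $t\mapsto\lambda t$ and using $\ph=\ph^{**}$ shows that $\textnormal{(aInc)}_p$ for $\ph$ corresponds to $\textnormal{(aDec)}_{p'}$ for $\ph^*$, which by (i) is the same as $\ph^*$ satisfying $\Delta_2$, i.e., $\ph$ satisfying $\nabla_2$.

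For (iv) I would apply the pairing $\ph^{-1}(x,s)\leq t\Leftrightarrow s\leq\ph(x,t)$ (at points of continuity) with $s=1$ and $t\in\{\beta,\beta^{-1}\}$, which directly converts the two-sided bound $\beta\leq\ph^{-1}(x,1)\leq\beta^{-1}$ into $\ph(x,\beta)\leq 1\leq\ph(x,\beta^{-1})$; left-continuity of $\ph^{-1}$ ensures that the endpoint cases are harmless. For (v), assuming (A1) and a ball $B$ with $|B|\leq 1$, I would compose $\ph(x,\cdot)$ on both sides of $\beta\ph^{-1}(x,t)\leq\ph^{-1}(y,t)$ and use (A0) to translate the range $t\in[1,1/|B|]$ into the range where $\ph(y,\cdot)\in[1,1/|B|]$, producing (A1)'; the converse reverses this composition and reinvokes (A0) to recover the constant $\beta$.

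The main obstacle is (vi): passing from (A2) to (A2)' requires producing a majorizing weak $\Phi$-function $\ph_\infty$ independent of the first variable. The strategy is to fix a threshold $s>0$, extract from (A2) a uniform family of pointwise comparisons between $\ph(x,\cdot)$ and $\ph(y,\cdot)$ on the intermediate range, and define $\ph_\infty$ as an envelope (for instance a suitable essential infimum over $x$, convexly extended outside the relevant range), with the perturbation $h\in\Lp{1}\cap\Lp{\infty}$ absorbing the error; the reverse implication uses $\ph_\infty$ to bound $\ph(x,\cdot)$ in terms of $\ph(y,\cdot)$ modulo $h(x)+h(y)$, and composition with $\ph_\infty^{-1}$ combined with part (iv) supplies the constant $\beta$ in the inverse formulation. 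Throughout, the technical difficulty is to ensure that the ``almost'' constants do not blow up under composition with $\ph^{-1}$ and that the envelope construction yields a legitimate weak $\Phi$-function; left-continuity of $\ph^{-1}$ and the measurability in the first variable of $\ph$ are the key tools keeping everything well-defined.
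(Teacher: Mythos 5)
The paper does not prove this lemma; it is simply quoted from the Harjulehto--H\"ast\"o monograph (Lemma 2.2.6, Corollary 2.4.11, Corollary 3.7.4, Corollary 4.1.6 and Lemma 4.2.7 there), so there is no in-paper argument to compare yours against. Taking your sketch on its own merits: the dyadic iteration for (i), the derivative/log-derivative bound for (ii), and the overall strategy for (iv)--(vi) via the interplay between $\ph$ and its left-inverse are all in the spirit of the standard proofs, so the outline is reasonable.

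Two steps, however, have genuine gaps as written. In (iii) you invoke $\ph=\ph^{**}$, which fails for a generic generalized weak $\Phi$-function: biconjugation returns the convex minorant, and a weak $\Phi$-function need not be convex. What is true is the weaker fact that $\ph$ and $\ph^{**}$ are \emph{equivalent} (comparable up to a constant dilation of the argument), and the conditions \textnormal{(aInc)}, \textnormal{(aDec)}, $\Delta_2$, $\nabla_2$ are all stable under such equivalence; your duality argument can be salvaged by routing it through this equivalence, but as stated it silently uses an identity that does not hold. In (iv) the pairing you announce, $\ph^{-1}(x,s)\le t \Leftrightarrow s\le\ph(x,t)$, is not valid for the left-continuous inverse of an increasing but possibly discontinuous $\ph(x,\cdot)$ --- a weak $\Phi$-function need not be left-continuous in $t$, so only the one-sided bounds $\ph^{-1}(x,\ph(x,t))\le t$ and the analogues of $\ph(x,\ph^{-1}(x,s))\le s$ under a left-continuity hypothesis are available, and the argument for (iv) (and hence (v)) must be organized around these one-sided inequalities rather than a clean biconditional. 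Finally, (vi) is indeed the delicate item, and your envelope construction is too schematic to assess: the crux is showing the envelope is itself a weak $\Phi$-function and that the error is uniformly controlled by a single $h\in\Lp{1}\cap\Lp{\infty}$, and this part of the sketch would need to be carried out in detail before one could call the proof complete.
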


Now we see how a Musielak-Orlicz space is defined, alongside with which properties it has based on the properties of its associated $\Phi$-function. For this result check Lemma 3.1.3, Lemma 3.2.2, Theorem 3.3.7, Theorem 3.5.2 and Theorem 3.6.6 in the book by Harjulehto--H\"ast\"o \cite{Harjulehto-Hasto-2019}.

\begin{proposition}\label{Prop:AbstractBanach}
	Let $\ph \colon A \times [0,+\infty) \to [0,+\infty]$ be a generalized weak $\Phi$-function and let its associated modular be
	\begin{align*}
		\varrho_\ph (u) = \int_A \ph(x,\abs{u(x)}) \diff \mu (x).
	\end{align*}
	Then, the set
	\begin{align*}
		L^\ph (A) = \{ u \in M(A)\colon \varrho_\ph (\lambda u) < \infty \text{ for some } \lambda > 0 \}
	\end{align*}
	equipped with the associated Luxemburg quasi-norm
	\begin{align*}
		\norm{u}_\ph = \inf \left\lbrace \lambda > 0 \colon \varrho_\ph \left( \frac{u}{\lambda} \right)  \leq 1 \right\rbrace
	\end{align*}
	is a quasi Banach space. Furthermore, if $\ph$ is a generalized convex $\Phi$-function, it is a Banach space; if $\ph$ satisfies \textnormal{(aDec)}, it holds that
	\begin{align*}
		L^\ph (A) = \{ u\in M(A) \colon \varrho_\ph (u) < \infty \};
	\end{align*}
	if $\ph$ satisfies \textnormal{(aDec)} and $\mu$ is separable, then $L^\ph (A)$ is separable; and if $\ph$ satisfies \textnormal{(aInc)} and \textnormal{(aDec)} it possesses an equivalent, uniformly convex norm, hence it is reflexive.
\end{proposition}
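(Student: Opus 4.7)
My plan is to verify each of the five assertions in sequence, exploiting the standard Musielak--Orlicz machinery summarised in the preceding subsection. To show that $(L^\ph(A), \|\cdot\|_\ph)$ is a quasi-Banach space, I would first check that the Luxemburg functional is a quasi-norm: positive homogeneity follows from a change of variable in the defining infimum, and the quasi-triangle inequality comes from \textnormal{(aInc)}$_1$, which supplies a constant $a \geq 1$ with $\ph(x, s+t) \leq 2a(\ph(x,s) + \ph(x,t))$ for $s,t > 0$ and a.a.\,$x$, yielding $\|u+v\|_\ph \leq 2a(\|u\|_\ph + \|v\|_\ph)$. For completeness I would take a Cauchy sequence $(u_n)$, pass to a fast subsequence $(u_{n_k})$ with $\|u_{n_{k+1}} - u_{n_k}\|_\ph \leq 2^{-k}$, observe pointwise a.e.\,convergence of this subsequence (using that $\|u\|_\ph \to 0$ forces $\varrho_\ph(u) \to 0$ and hence convergence in measure on sets of finite measure via $\sigma$-finiteness), and then invoke Fatou's lemma on $\varrho_\ph$ to conclude that the pointwise limit belongs to $L^\ph(A)$ and is the norm limit of $(u_n)$.

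The second claim, that convexity of $\ph$ in its second variable upgrades the quasi-norm to a genuine norm, is immediate: convexity of $\ph(x,\cdot)$ gives convexity of $\varrho_\ph$, which by the standard scaling trick in the Luxemburg infimum produces the triangle inequality $\|u+v\|_\ph \leq \|u\|_\ph + \|v\|_\ph$. For the third claim I would use that \textnormal{(aDec)}$_q$ produces a constant $a \geq 1$ with $\ph(x, \lambda t) \leq a \lambda^q \ph(x,t)$ for all $\lambda \geq 1$, so $\varrho_\ph(\lambda u) \leq a \lambda^q \varrho_\ph(u)$; consequently the existence of a single $\lambda > 0$ with $\varrho_\ph(\lambda u) < \infty$ upgrades automatically to finiteness of $\varrho_\ph(u)$, identifying the two set-theoretic definitions of $L^\ph(A)$.

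Separability under \textnormal{(aDec)} and separability of the measure $\mu$ would be handled by approximating any $u \in L^\ph(A)$ by truncations, then by simple functions with rational coefficients supported on sets drawn from a countable dense subfamily of $\Sigma$ (which exists by separability of $\mu$ together with $\sigma$-finiteness); here \textnormal{(aDec)} is the decisive ingredient, because it guarantees that modular convergence and norm convergence coincide, so that the modular approximation produced by dominated convergence transfers to the Luxemburg norm.

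The reflexivity statement is the main obstacle and the most delicate point. Under \textnormal{(aInc)}$_p$ with $p>1$ and \textnormal{(aDec)}$_q$ my approach would be to construct an equivalent generalized $\Phi$-function $\psi$ satisfying a Clarkson-type modular inequality of the form
\[
\psi\!\left(x, \tfrac{s+t}{2}\right) + c\,\psi\!\left(x, \tfrac{|s-t|}{2}\right) \leq \tfrac{1}{2}\psi(x,s) + \tfrac{1}{2}\psi(x,t)
\]
for some $c>0$ independent of $x$, exploiting \textnormal{(aInc)}$_p$ to control the lower growth like $t^p$ and \textnormal{(aDec)}$_q$ to rule out degeneracy from above. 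This modular inequality then transfers to a uniform convexity estimate on the associated Luxemburg norm via a careful modular-to-norm passage, using Proposition \ref{Prop:ModularNormVarExp}-style relations between $\|\cdot\|_\psi$ and $\varrho_\psi$ that themselves follow from the doubling and $\nabla_2$ equivalences of Lemma \ref{Le:equivalences}. The Milman--Pettis theorem then yields reflexivity. The hard part is precisely the construction of $\psi$: one must simultaneously preserve the generalized convex $\Phi$-function structure, maintain equivalence to $\ph$ at the level of the Luxemburg norm, and secure the quantitative two-sided growth needed for the Clarkson-type inequality, all uniformly in $x \in A$.
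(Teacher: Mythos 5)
The paper does not prove this proposition; it is cited directly from Lemma~3.1.3, Lemma~3.2.2, Theorem~3.3.7, Theorem~3.5.2 and Theorem~3.6.6 of the book by Harjulehto--H\"ast\"o, so there is no in-paper argument to compare against. Your blind sketch reconstructs the textbook route in broad outline, and the parts concerning Banach-ness from convexity, set equality under \textnormal{(aDec)}, separability via simple-function approximation, and the correct identification that the reflexivity assertion rests on manufacturing an equivalent uniformly convex generalized $\Phi$-function and then invoking Milman--Pettis, are all pointing in the right direction (the last one being honestly flagged as the genuinely difficult step, which is fair, since it is the substantive content of the cited Theorem~3.6.6).

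There is, however, a concrete error in your quasi-norm argument. The pointwise inequality $\ph(x,s+t) \leq 2a\bigl(\ph(x,s)+\ph(x,t)\bigr)$ does \emph{not} follow from \textnormal{(aInc)}$_1$. Almost-increase of $t\mapsto \ph(x,t)/t$ controls $\ph$ \emph{from below} at larger arguments, not from above; what you wrote is essentially the doubling property $\Delta_2$, i.e.\ \textnormal{(aDec)}, which is not assumed in this part of the statement. Indeed $\ph(t)=e^{t^2}-1$ satisfies \textnormal{(Inc)}$_1$ but $\ph(2t)/\ph(t)\to\infty$. The correct use of \textnormal{(aInc)}$_1$ is the \emph{scaled-down} inequality: since $(s+t)/(2a)\leq\max\{s,t\}$, almost-increase gives
\begin{align*}
\ph\!\left(x,\tfrac{s+t}{2a}\right) \;\leq\; a\,\frac{(s+t)/(2a)}{\max\{s,t\}}\,\ph\bigl(x,\max\{s,t\}\bigr) \;\leq\; \ph(x,s)+\ph(x,t),
\end{align*}
and it is from this that a rescaling of the Luxemburg infimum yields the quasi-triangle inequality. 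Note further that even if your pointwise bound were true it would not directly deliver $\|u+v\|_\ph\leq 2a(\|u\|_\ph+\|v\|_\ph)$: passing from a modular bound to a Luxemburg-norm bound requires an additional normalisation so that the right-hand modulars sum to at most $1$, which is a second gap in that paragraph.
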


In Musielak-Orlicz spaces it is important to understand with detail the relation between the modular and the norm, because it is often used when dealing with growths or convergences. Because of this, the following result is of importance, one can find it as Lemma 3.2.9 in the book of Harjulehto--H\"ast\"o \cite{Harjulehto-Hasto-2019}.

\begin{proposition}
	\label{Prop:AbstractNormModular}
	Let $\ph \colon A \times [0,+\infty) \to [0,+\infty]$ be a generalized weak $\Phi$-function that satisfies \textnormal{(aInc)}$_p$ and \textnormal{(aDec)}$_q$, with $1 \leq p \leq q < \infty$. Then
	\begin{align*}
		\frac{1}{a} \min \left\lbrace \norm{u}_\ph^p ,  \norm{u}_\ph^q \right\rbrace
		\leq \varrho_\ph (u)
		\leq a \max \left\lbrace \norm{u}_\ph^p ,  \norm{u}_\ph^q \right\rbrace
	\end{align*}
	for all measurable functions $u \colon A \to \R$, where $a$ is the maximum of the constants of \textnormal{(aInc)}$_p$ and \textnormal{(aDec)}$_q$.
\end{proposition}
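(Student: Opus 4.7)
My plan is to reduce the proposition to a pair of almost-homogeneity pointwise estimates for $\ph$ and then to extract the norm--modular inequality from the definition of the Luxemburg quasi-norm as an infimum of sublevel sets of $\varrho_\ph$. The starting point is the pointwise bound
\[
\frac{1}{a}\min\{\lambda^p,\lambda^q\}\,\ph(x,t) \;\leq\; \ph(x,\lambda t) \;\leq\; a\max\{\lambda^p,\lambda^q\}\,\ph(x,t),
\]
valid for every $x\in A$, every $t\geq 0$ and every $\lambda>0$. I would establish this by a case split. When $\lambda\geq 1$, plugging $s=t$ and $t'=\lambda t$ into \textnormal{(aInc)}$_p$ and \textnormal{(aDec)}$_q$ yields respectively $\ph(x,\lambda t)\geq \lambda^p\ph(x,t)/a$ and $\ph(x,\lambda t)\leq a\lambda^q\ph(x,t)$, which match $\min$ and $\max$ since $p\leq q$. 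When $\lambda\leq 1$, applying the same two conditions with $s=\lambda t$ and $t'=t$ instead gives $\ph(x,\lambda t)\leq a\lambda^p\ph(x,t)$ and $\ph(x,\lambda t)\geq \lambda^q\ph(x,t)/a$, again matching $\max$ and $\min$. Integrating over $A$ produces the analogous estimate for the modular, namely $\frac{1}{a}\min\{\lambda^p,\lambda^q\}\,\varrho_\ph(v)\leq \varrho_\ph(\lambda v)\leq a\max\{\lambda^p,\lambda^q\}\,\varrho_\ph(v)$ for all measurable $v$.

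With this in hand, set $m=\|u\|_\ph$ and assume $m>0$ (otherwise $u=0$ a.e.\ and both inequalities are trivial). For the upper bound, the infimum definition produces $m_n\downarrow m$ with $\varrho_\ph(u/m_n)\leq 1$; applying the integrated estimate to $v=u/m_n$ with $\lambda=m_n$ gives
\[
\varrho_\ph(u)\leq a\max\{m_n^p,m_n^q\}\,\varrho_\ph(u/m_n)\leq a\max\{m_n^p,m_n^q\},
\]
and passing to the limit delivers $\varrho_\ph(u)\leq a\max\{m^p,m^q\}$. For the lower bound, I would pick any $0<\nu<m$: then $\nu$ is not admissible in the defining infimum, so $\varrho_\ph(u/\nu)>1$. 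Here Lemma~\ref{Le:equivalences}(i) (via \textnormal{(aDec)}$_q$ with $q<\infty$) ensures the $\Delta_2$-property and in particular that $\varrho_\ph(u/\nu)$ is finite, so the strict inequality is meaningful. The integrated lower estimate then yields
\[
\varrho_\ph(u)\geq \frac{1}{a}\min\{\nu^p,\nu^q\}\,\varrho_\ph(u/\nu)\geq \frac{1}{a}\min\{\nu^p,\nu^q\},
\]
and letting $\nu\uparrow m$ concludes.

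The main obstacle I expect is the four-case bookkeeping in Step 1: gluing \textnormal{(aInc)}$_p$ and \textnormal{(aDec)}$_q$ into a single symmetric $\min/\max$-type inequality requires keeping the two regimes $\lambda\geq 1$ and $\lambda\leq 1$ carefully separated and lining up which exponent gets paired with $\min$ versus $\max$ in each. A secondary technical point is the strict inequality $\varrho_\ph(u/\nu)>1$ for $\nu<m$ used in the lower bound, which rests on finiteness of $\varrho_\ph(u/\nu)$ and hence on the $\Delta_2$ condition guaranteed by \textnormal{(aDec)}$_q$; without that one would have to distinguish the degenerate case in which the infimum defining $\|u\|_\ph$ is over an empty set on the left of $m$.
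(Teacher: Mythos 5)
The paper does not reproduce a proof of this proposition; it is cited directly from Lemma 3.2.9 of Harjulehto--H\"ast\"o. Your argument --- deriving the pointwise almost-homogeneity bound
$\frac{1}{a}\min\{\lambda^p,\lambda^q\}\,\ph(x,t)\leq\ph(x,\lambda t)\leq a\max\{\lambda^p,\lambda^q\}\,\ph(x,t)$
from (aInc)$_p$ and (aDec)$_q$ by the two-regime case split, integrating it into a modular version, and then extracting the two inequalities from the infimum characterization of the Luxemburg quasi-norm --- is correct and is exactly the standard route taken in that reference.
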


There are embedding relations between the Musielak-Orlicz spaces depending of the chosen function. The following result characterizes these relations and can be found as Theorem 3.2.6 of the book by Harjulehto--H\"ast\"o \cite{Harjulehto-Hasto-2019}.

\begin{proposition}
	\label{Prop:AbstractEmbedding}
	Let $\ph, \psi \colon A \times [0,+\infty) \to [0,+\infty]$ be generalized weak $\Phi$-functions and let $\mu$ be atomless. Then $L^\ph (A) \hookrightarrow L^\psi (A)$ if and only if there exits $K>0$ and $h \in L^1 (A)$ with $\norm{h}_1 \leq 1$ such that for all $t \geq 0$ and for a.a.\,$x \in \Omega$
	\begin{align*}
		\psi\left(  x,\frac{t}{K} \right) \leq \ph (x,t) + h(x).
	\end{align*}
\end{proposition}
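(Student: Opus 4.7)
The statement in question is cited from Harjulehto–Hästö, so the paper itself does not reprove it; still, here is the plan I would follow if I had to give a proof from scratch.

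\textbf{Sufficiency ($\Leftarrow$).} Assume the pointwise estimate $\psi(x,t/K)\leq \ph(x,t)+h(x)$ holds. I would work directly with modulars. For $u\in L^\ph(A)$ and any $\lambda>0$, integrating the inequality applied to $t=|u(x)|/\lambda$ gives
\begin{align*}
	\varrho_\psi\!\left(\frac{u}{K\lambda}\right)=\int_A \psi\!\left(x,\frac{|u(x)|}{K\lambda}\right)\diff\mu(x)\leq \varrho_\ph\!\left(\frac{u}{\lambda}\right)+\|h\|_1\leq \varrho_\ph\!\left(\frac{u}{\lambda}\right)+1.
\end{align*}
Taking $\lambda=\|u\|_\ph$ yields $\varrho_\psi(u/(K\lambda))\leq 2$. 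Since $\psi$ is a generalized weak $\Phi$-function, \textnormal{(aInc)}$_1$ (with constant $a\geq 1$) gives $\psi(x,s/c)\leq (a/c)\psi(x,s)$ for $c\geq 1$, so $\varrho_\psi(u/(2aK\lambda))\leq 1$. This means $\|u\|_\psi\leq 2aK\|u\|_\ph$, i.e.\ the continuous embedding.

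\textbf{Necessity ($\Rightarrow$).} Suppose $\|u\|_\psi\leq C\|u\|_\ph$ for every $u\in L^\ph(A)$. I would construct the majorant $h$ essentially as the pointwise excess
\begin{align*}
	h(x)=\sup_{t\geq 0}\bigl(\psi(x,t/K)-\ph(x,t)\bigr)_+,
\end{align*}
with $K=2C$ (up to the \textnormal{(aInc)}$_1$ constant). Measurability of $h$ would be obtained by replacing the supremum over $t\geq 0$ by a supremum over a countable dense set, exploiting the monotonicity of $\psi$ and $\ph$ in the second variable. To control $\|h\|_1$, I would test the embedding on indicator-type functions $u=t\chi_E$: the atomless assumption on $\mu$ is crucial here, because it allows me to chop any set into pieces of arbitrarily small measure and thus to realize, for each $x$, the "worst" $t=t(x)$ producing the excess by a measurable selection argument. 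Integrating the resulting pointwise control and using the embedding constant $C$ then bounds $\int_A h\,\diff\mu\leq 1$ (after possibly enlarging $K$ by a fixed factor). If $h$ turned out to be supported on a set of finite measure one could alternatively use exhaustion by $\sigma$-finite pieces $A_n$ and normalize.

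\textbf{Main obstacle.} The sufficiency part is routine modular manipulation. The real difficulty is the necessity direction, more precisely: (a) producing a measurable function $h$ from a pointwise supremum of "bad" values $t$, and (b) quantifying the embedding constant $C$ into the estimate $\|h\|_1\leq 1$ in a way that is independent of any particular test function. Both require the atomlessness of $\mu$ in an essential way—without it one cannot construct the characteristic functions of small measure needed to saturate the modular bound at every point of $A$, and the pointwise majorization may fail on atoms where one cannot subdivide. The convexity/monotonicity properties collected in Lemma~\ref{Le:equivalences} and Proposition~\ref{Prop:AbstractNormModular} would be the technical workhorses that make the measurable selection and the final integration estimate come out with the explicit constants $K$ and $\|h\|_1\leq 1$.
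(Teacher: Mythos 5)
The paper does not prove this proposition; it cites it directly from Harjulehto--H\"ast\"o (Theorem 3.2.6 of their book), so there is no in-paper argument to compare against. Your proposal therefore has to stand on its own.

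Your sufficiency direction is essentially correct, up to a small imprecision: for a weak $\Phi$-function one only knows $\varrho_\ph(u/\lambda)\leq 1$ for $\lambda>\normHlog{u}$-type norms, i.e.\ for $\lambda>\|u\|_\ph$ and not necessarily at $\lambda=\|u\|_\ph$ itself. So the chain $\varrho_\psi\!\left(\frac{u}{K\lambda}\right)\leq\varrho_\ph\!\left(\frac{u}{\lambda}\right)+1\leq 2$ followed by the \textnormal{(aInc)}$_1$ rescaling should be run for $\lambda>\|u\|_\ph$ and then one lets $\lambda\downarrow\|u\|_\ph$. That is a cosmetic fix; the argument is otherwise complete.

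The necessity direction is where the real content lies, and your proposal only gestures at it. Two concrete issues remain unresolved. First, the candidate $h(x)=\sup_{t\geq 0}\bigl(\psi(x,t/K)-\ph(x,t)\bigr)_+$ is not obviously measurable: weak $\Phi$-functions are merely increasing in $t$ and need not be left- or right-continuous, so one cannot simply replace the supremum over $[0,\infty)$ by a supremum over rationals; a genuine measurable-selection argument, or a reduction to left-continuous envelopes, is needed and is not supplied. Second, and more importantly, no argument is actually given that $\|h\|_1\leq 1$ for a suitable $K$. You correctly identify that one should test the embedding on functions of the form $t\chi_E$ with $\mu(E)$ small (this is where atomlessness enters), but converting this into the quantitative bound $\int_A h\,\diff\mu\leq 1$ requires a careful exhaustion or contradiction argument: one assumes the estimate fails for every $K$ and every admissible $h$, extracts disjoint sets $E_n$ of rapidly shrinking measure on which $\psi(\cdot,t_n/K_n)$ exceeds $\ph(\cdot,t_n)$ by a prescribed amount, and constructs from these a function lying in $L^\ph(A)$ whose $\psi$-modular is infinite at every scale, contradicting the embedding. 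As written, the proposal names the target of that argument but does not carry it out; this is a genuine gap rather than a routine omission.
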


In Musielak-Orlicz spaces we even have a H\"older inequality based on the chosen function, see the following result which can be found as Lemma 3.2.11 of the book by Harjulehto--H\"ast\"o \cite{Harjulehto-Hasto-2019}.

\begin{proposition}
	Let $\ph \colon A \times [0,+\infty) \to [0,+\infty]$ be a generalized weak $\Phi$-function, then
	\begin{align*}
		\int_A \abs{u} \abs{v} \diff \mu (x) \leq 2 \norm{u}_{\ph} \norm{v}_{\ph^*} \quad \text{for all } u \in L^{\ph}(A), v \in L^{\ph^*}(A).
	\end{align*}
	Moreover, the constant $2$ is sharp.
\end{proposition}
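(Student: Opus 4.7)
The plan is to mimic the classical H\"older argument, replacing the pointwise inequality $st \leq t^p/p + s^{p'}/p'$ by the Young-type bound built into the definition of the conjugate function $\ph^*$. First I would record that, directly from $\ph^*(x,s) = \sup_{t \geq 0} (ts - \ph(x,t))$, one obtains the pointwise Young inequality
\begin{align*}
	st \leq \ph(x,t) + \ph^*(x,s) \quad \text{for every } x \in A \text{ and all } s, t \geq 0.
\end{align*}

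Next I would reduce to the normalized case. If $u = 0$ or $v = 0$ a.e.\ there is nothing to prove, so I may assume $\lambda := \norm{u}_{\ph} > 0$ and $\eta := \norm{v}_{\ph^*} > 0$. From the infimum defining the Luxemburg quasi-norm, together with the monotonicity of $\ph(x,\cdot)$ in the second variable, one gets $\varrho_\ph(u/(\lambda + \varepsilon)) \leq 1$ for every $\varepsilon > 0$, and analogously $\varrho_{\ph^*}(v/(\eta + \varepsilon)) \leq 1$. Applying the pointwise Young inequality to the arguments $\abs{u(x)}/(\lambda+\varepsilon)$ and $\abs{v(x)}/(\eta+\varepsilon)$ and integrating over $A$ then yields
\begin{align*}
	\frac{1}{(\lambda+\varepsilon)(\eta+\varepsilon)} \int_A \abs{u}\abs{v} \diff \mu(x)
	\leq \varrho_\ph\!\left(\frac{u}{\lambda + \varepsilon}\right) + \varrho_{\ph^*}\!\left(\frac{v}{\eta + \varepsilon}\right)
	\leq 2.
\end{align*}
Letting $\varepsilon \downarrow 0$ gives the desired bound $\int_A \abs{u}\abs{v}\diff \mu \leq 2\norm{u}_{\ph}\norm{v}_{\ph^*}$.

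The main subtlety is the a priori finiteness of $\int_A \abs{u}\abs{v} \diff \mu$, needed in order to pass to the limit $\varepsilon \downarrow 0$: this comes for free from the integrated Young inequality at a fixed $\varepsilon > 0$, which already delivers a finite bound $2(\lambda + \varepsilon)(\eta + \varepsilon)$ independent of any further assumption, but one has to be careful that the modulars on the right are finite for $u \in L^{\ph}(A)$ and $v \in L^{\ph^*}(A)$ (this is where the Luxemburg norm being finite is used: by definition, $\varrho_\ph(\delta u) < \infty$ for some $\delta > 0$, and then monotonicity of $\ph$ gives $\varrho_\ph(u/(\lambda+\varepsilon)) \leq 1$). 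For the sharpness of the constant $2$, I would exhibit a pair $(u,v)$ for which the pointwise Young inequality is saturated on a set of positive measure while both modulars $\varrho_\ph(u/\lambda)$ and $\varrho_{\ph^*}(v/\eta)$ equal $1$; a standard construction of this type (already available in the classical Orlicz setting) shows that no universal constant strictly smaller than $2$ can work across all generalized weak $\Phi$-functions when the Luxemburg norm is used on both factors.
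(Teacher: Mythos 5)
The paper itself does not prove this proposition; it cites Lemma 3.2.11 of Harjulehto-H\"ast\"o, whose proof is exactly the Young-inequality argument you describe. Your derivation of the inequality is therefore the standard route and is correct: the pointwise bound $st \leq \ph(x,t) + \ph^*(x,s)$ is immediate from the definition of the conjugate; for a generalized \emph{weak} $\Phi$-function the modular need not be left-continuous, so you are right to work at the scale $\lambda + \varepsilon$ (using only the infimum definition of the Luxemburg quasi-norm together with monotonicity of $\ph(x,\cdot)$) rather than at $\lambda$ itself; integrating and letting $\varepsilon \downarrow 0$ is then unobjectionable. One small correction: no ``a priori finiteness'' of $\int_A |u||v|\diff\mu$ is needed to pass to the limit. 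That integral is a fixed element of $[0,\infty]$, your estimate shows it is $\leq 2(\lambda+\varepsilon)(\eta+\varepsilon)$ for every $\varepsilon>0$, and taking the infimum over $\varepsilon$ directly gives the bound with no convergence theorem involved; finiteness of the left-hand side is a \emph{consequence}, not a prerequisite.

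The genuine gap is the sharpness claim, which you only gesture at. It is worth making concrete, and the example is short: take $\ph(x,t)=t^2/2$, so that $\ph^*(x,s)=s^2/2$. Then $\norm{u}_{\ph}=\norm{u}_2/\sqrt{2}$ and $\norm{v}_{\ph^*}=\norm{v}_2/\sqrt{2}$, so $2\norm{u}_{\ph}\norm{v}_{\ph^*}=\norm{u}_2\norm{v}_2$, and choosing $u=v$ to be any nonzero function in $L^2(A)$ (which exists since the measure space is $\sigma$-finite and nontrivial) makes $\int_A |u||v|\diff\mu=\norm{u}_2\norm{v}_2$ an equality. Hence the constant $2$ cannot be lowered across the class of generalized weak $\Phi$-functions. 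Adding this example would close the proof.
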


Lastly, one can define the associated Sobolev spaces to these Musielak-Orlicz spaces analogously to the classical case. The following result can be found as Theorem 6.1.4 and Theorem 6.1.9 of the book by Harjulehto--H\"ast\"o \cite{Harjulehto-Hasto-2019}.

\begin{proposition}
	\label{Prop:AbstractSobolev}
	Let $\ph \colon \Omega \times [0,+\infty) \to [0,+\infty]$ be a generalized weak $\Phi$-function such that $\Lp{\ph} \subseteq L^1_{\loc} (\Omega)$ and $k\geq 1$. Then, the set
	\begin{align*}
		W^{k,\ph} (\Omega) = \{ u \in \Lp{\ph} \colon \partial_\alpha u \in \Lp{\ph} \text{ for all } \abs{\alpha} \leq k \},
	\end{align*}
	where we consider the modular
	\begin{align*}
		\varrho_{k,\ph} (u) = \sum_{0 \leq \abs{\alpha} \leq k } \varrho_\ph(\partial_\alpha u)
	\end{align*}
	and the associated Luxemburg quasi-norm
	\begin{align*}
		\norm{u}_{k,\ph} = \inf \left\lbrace \lambda > 0 : \varrho_{k,\ph} \left( \frac{u}{\lambda} \right)  \leq 1 \right\rbrace
	\end{align*}
	is a quasi Banach space. Analogously, the set
	\begin{align*}
		W^{k,\ph}_0 (\Omega) = \overline{C_0^\infty (\Omega)}^{\norm{\cdot}_{k,\ph}},
	\end{align*}
	where $C_0^\infty (\Omega)$ are the functions in $C^\infty (\Omega)$ with compact support, equipped with the same modular and norm is also a quasi Banach space.

	Furthermore, if $\ph$ is a generalized convex $\Phi$-function, both spaces are Banach spaces; if $\ph$ satisfies \textnormal{(aDec)}, then they are separable; and if $\ph$ satisfies \textnormal{(aInc)} and \textnormal{(aDec)} they possess an equivalent, uniformly convex norm, hence they are reflexive.
\end{proposition}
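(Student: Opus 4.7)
The plan is to realize $W^{k,\ph}(\Omega)$ as an isometric copy of a closed subspace of the finite product $(L^\ph(\Omega))^M$, where $M = \#\{\alpha : |\alpha| \leq k\}$, and then transfer every claimed property from $L^\ph(\Omega)$ (via Proposition \ref{Prop:AbstractBanach}) to $W^{k,\ph}(\Omega)$. Concretely, I would consider the linear map $T \colon W^{k,\ph}(\Omega) \to (L^\ph(\Omega))^M$ defined by $T(u) = (\partial_\alpha u)_{|\alpha| \leq k}$, and equip the codomain with the Luxemburg quasi-norm associated to the modular $(v_\alpha)_\alpha \mapsto \sum_\alpha \varrho_\ph(v_\alpha)$, so that $T$ becomes a linear isometry onto its image.

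The first (routine) step is to verify that $\norm{\cdot}_{k,\ph}$ is a quasi-norm on $W^{k,\ph}(\Omega)$: homogeneity follows since $\varrho_{k,\ph}(\lambda u) = \sum_\alpha \varrho_\ph(\lambda \partial_\alpha u)$, positive definiteness because $\norm{u}_{k,\ph} = 0$ forces $\norm{u}_\ph = 0$ hence $u = 0$, and the quasi-triangle inequality is inherited componentwise from that of $\norm{\cdot}_\ph$. It is then immediate from the definitions that $T$ is an isometry onto its image.

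The only delicate step, which I expect to be the main obstacle, is to show that $T(W^{k,\ph}(\Omega))$ is closed in $(L^\ph(\Omega))^M$. Suppose $T(u_n) \to (v_\alpha)_{|\alpha|\leq k}$ in that product; in particular $u_n \to v_0 := v_{(0,\ldots,0)}$ and $\partial_\alpha u_n \to v_\alpha$ in $L^\ph(\Omega)$ for each $\alpha$. The hypothesis $L^\ph(\Omega) \subseteq L^1_\loc(\Omega)$ implies that these convergences also hold in $L^1_\loc(\Omega)$, so integration by parts against any $\psi \in C_0^\infty(\Omega)$ yields
\[
\into v_\alpha \psi \dx
= \lim_{n \to \infty} \into (\partial_\alpha u_n)\psi \dx
= (-1)^{|\alpha|} \lim_{n \to \infty} \into u_n \partial_\alpha \psi \dx
= (-1)^{|\alpha|} \into v_0 \partial_\alpha \psi \dx,
\]
so $\partial_\alpha v_0 = v_\alpha$ distributionally. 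Hence $v_0 \in W^{k,\ph}(\Omega)$ and $T(v_0) = (v_\alpha)_\alpha$, which proves closedness. This is precisely the point where the assumption $L^\ph(\Omega) \subseteq L^1_\loc(\Omega)$ is used.

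It then remains to transfer properties from $L^\ph(\Omega)$ using Proposition \ref{Prop:AbstractBanach}. Completeness as a quasi Banach space is preserved under finite products and passes to closed subspaces, so by the isometry $W^{k,\ph}(\Omega)$ is a quasi Banach space; the same argument, with Banach replacing quasi Banach, handles the case when $\ph$ is a generalized convex $\Phi$-function. Separability under (aDec) transfers to finite products and then to subsets of metric spaces, so $W^{k,\ph}(\Omega)$ is separable. Under (aInc) and (aDec), $L^\ph(\Omega)$ admits an equivalent uniformly convex norm; an $\ell^2$-type combination of these equivalent norms across the $M$ components is again uniformly convex on $(L^\ph(\Omega))^M$, and its restriction to the closed subspace $T(W^{k,\ph}(\Omega))$ furnishes the desired equivalent uniformly convex norm on $W^{k,\ph}(\Omega)$, whence reflexivity by Milman--Pettis. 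Finally, since $W^{k,\ph}_0(\Omega)$ is by definition the closure of $C_0^\infty(\Omega)$ in $W^{k,\ph}(\Omega)$, it is a closed subspace, and each of the above properties (quasi Banach or Banach structure, separability, and reflexivity together with the equivalent uniformly convex norm) is inherited by closed subspaces.
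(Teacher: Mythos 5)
The paper does not give its own proof of this proposition: it simply cites Theorems 6.1.4 and 6.1.9 of Harjulehto--H\"ast\"o \cite{Harjulehto-Hasto-2019}. Your argument---embedding $W^{k,\ph}(\Omega)$ isometrically as a closed subspace of $(L^\ph(\Omega))^M$ via $u\mapsto(\partial_\alpha u)_{|\alpha|\le k}$ and transferring completeness, separability, and uniform convexity from Proposition \ref{Prop:AbstractBanach}---is the standard proof of these statements and coincides with the route taken in the cited reference; it is correct. Two steps you use tacitly deserve a word: (a) passing from convergence in $L^\ph$ to convergence in $L^1_{\loc}$ requires the \emph{continuity} of the inclusion $L^\ph(\Omega)\hookrightarrow L^1_{\loc}(\Omega)$, not merely the set inclusion; this follows from the closed graph theorem for F-spaces (both convergences force convergence in measure on sets of finite measure, so the graph is closed), or directly from the H\"older inequality against $\ph^*$; (b) for the uniformly convex norm you implicitly use that the Luxemburg quasi-norm built from the modular $\sum_\alpha\varrho_\ph$ on $(L^\ph)^M$ is equivalent to the $\ell^2$-combination of the component norms $\norm{\cdot}_\ph$, which holds by (aInc)$_1$ (part of being a weak $\Phi$-function). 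Neither is a genuine gap; the proposal is sound.
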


The following statement cannot be found explicitly written in the book by Harjulehto--H\"ast\"o \cite{Harjulehto-Hasto-2019}. However, due to the form of $\varrho_{k,\ph} (\cdot)$ and $\|\cdot\|_{k,\ph}$, one can repeat step by step the arguments of the proof of Lemma 3.2.9 of this same book to obtain it.

\begin{proposition}
	\label{Prop:AbstractoneNormModular}
	Let $\ph \colon \Omega \times [0,+\infty) \to [0,+\infty]$ be a generalized weak $\Phi$-function that satisfies \textnormal{(aInc)}$_p$ and \textnormal{(aDec)}$_q$, with $1 \leq p \leq q < \infty$. Then
	\begin{align*}
		\frac{1}{a} \min \left\lbrace \norm{u}_{k,\ph}^p ,  \norm{u}_{k,\ph}^q \right\rbrace
		\leq \varrho_{k,\ph} (u)
		\leq a \max \left\lbrace \norm{u}_{k,\ph}^p ,  \norm{u}_{k,\ph}^q \right\rbrace
	\end{align*}
	for all $u \in W^{k,\ph} (\Omega)$, where $a$ is the maximum of the constants of \textnormal{(aInc)}$_p$ and \textnormal{(aDec)}$_q$.
\end{proposition}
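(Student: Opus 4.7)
The plan is to imitate verbatim the proof of Lemma 3.2.9 in Harjulehto--H\"ast\"o (which appears as Proposition \ref{Prop:AbstractNormModular} above), with the modular $\varrho_\ph$ replaced by the Sobolev modular $\varrho_{k,\ph}$. The whole argument is driven by a single scaling inequality for $\ph$, and since $\varrho_{k,\ph}$ is a \emph{finite} sum of modulars of the shape $\varrho_\ph(\partial_\alpha u)$, that scaling inequality passes without modification to $\varrho_{k,\ph}$.

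First, I would extract the pointwise scaling inequality from the assumptions. Unpacking the definitions, \textnormal{(aInc)}$_p$ with constant $a_1$ gives $\ph(x, \mu t) \leq a_1 \mu^p \ph(x,t)$ for $\mu \in (0,1]$, while \textnormal{(aDec)}$_q$ with constant $a_2$ gives $\ph(x,\mu t) \leq a_2 \mu^q \ph(x,t)$ for $\mu \geq 1$. Setting $a = \max\{a_1, a_2\}$, we obtain for all $x \in \Omega$, $t\geq 0$ and $\mu > 0$ the bound
\begin{align*}
	\ph(x, \mu t) \leq a \max\{\mu^p, \mu^q\} \ph(x,t).
\end{align*}
Integrating in $x$ with $t = \abs{\partial_\alpha u(x)}$ and summing over multi-indices $\abs{\alpha} \leq k$ yields
\begin{align*}
	\varrho_{k,\ph}(\mu u) \leq a \max\{\mu^p, \mu^q\} \, \varrho_{k,\ph}(u) \quad \text{for every } \mu > 0.
\end{align*}

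Next I would use the definition of the Luxemburg norm in two steps. For the upper bound, fix $\mu > \norm{u}_{k,\ph}$, so that $\varrho_{k,\ph}(u/\mu) \leq 1$, and apply the scaling inequality to $u = \mu \cdot (u/\mu)$ to obtain $\varrho_{k,\ph}(u) \leq a\max\{\mu^p, \mu^q\}$; sending $\mu \to \norm{u}_{k,\ph}^+$ gives the right-hand side of the claim. For the lower bound, take $\mu \in (0, \norm{u}_{k,\ph})$, so by definition of the infimum $\varrho_{k,\ph}(u/\mu) > 1$, and apply the scaling inequality with factor $1/\mu$ (using $\max\{\mu^{-p}, \mu^{-q}\} = 1/\min\{\mu^p,\mu^q\}$) to conclude
\begin{align*}
	1 < \varrho_{k,\ph}(u/\mu) \leq \frac{a}{\min\{\mu^p, \mu^q\}} \, \varrho_{k,\ph}(u),
\end{align*}
i.e., $\varrho_{k,\ph}(u) \geq (1/a) \min\{\mu^p,\mu^q\}$; letting $\mu \to \norm{u}_{k,\ph}^-$ gives the left-hand side. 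The case $u = 0$ is trivial.

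I do not expect any serious obstacle: the only point where one must pay attention is to extract the correct direction of the scaling inequality for $\mu \in (0,1]$ from \textnormal{(aInc)}$_p$ and for $\mu \geq 1$ from \textnormal{(aDec)}$_q$, since the definitions of almost (in)creasing are asymmetric and a sign error would give the wrong exponent. Once that bookkeeping is done, every other step is the standard Luxemburg norm--modular duality, and the sum over multi-indices enters only as trivial linearity and monotonicity.
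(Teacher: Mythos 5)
Your proposal is correct and matches the paper's intent exactly: the paper simply states that, because $\varrho_{k,\ph}$ is a finite sum of modulars of the form $\varrho_\ph(\partial_\alpha u)$, the argument of Lemma 3.2.9 in Harjulehto--H\"ast\"o carries over verbatim, which is precisely the reduction you perform. Your extraction of the scaling inequality $\ph(x,\mu t)\leq a\max\{\mu^p,\mu^q\}\ph(x,t)$ from \textnormal{(aInc)}$_p$/\textnormal{(aDec)}$_q$ and the two-sided Luxemburg-norm argument are the right details.
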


One might also wonder when smooth functions are dense in a Sobolev Musielak-Orlicz space. Sufficient conditions are given in the next result taken from Theorem 6.4.7 of the book by Harjulehto--H\"ast\"o \cite{Harjulehto-Hasto-2019}.

\begin{theorem}
	\label{Th:AbstractDensity}
	Let $\ph \colon \Omega \times [0,+\infty) \to [0,+\infty]$ be a generalized weak $\Phi$-function that satisfies \textnormal{(A0)}, \textnormal{(A1)}, \textnormal{(A2)} and \textnormal{(aDec)}. Then $C^\infty (\Omega) \cap W^{k,\ph} (\Omega)$ is dense in $W^{k,\ph} (\Omega)$.
\end{theorem}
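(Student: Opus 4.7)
The plan is to follow a classical mollification strategy adapted to the Musielak-Orlicz setting, using a partition of unity in $\Omega$. The core difficulty compared with ordinary Sobolev spaces is that the modular $\varrho_\ph(u) = \int_A \ph(x,|u(x)|) \diff \mu(x)$ depends pointwise on $x$, so convolution $\rho_\eps \ast u$ with a standard mollifier does not immediately converge to $u$ in $L^\ph$-norm as it does in $L^p$. I would first reduce, by cut-offs (which preserve $W^{k,\ph}(\Omega)$ by \textnormal{(aDec)} and the trivial bound $\ph(x,Ct)\le C'\ph(x,t)$), to approximating a function of compact support in $\Omega$ extended by zero to $\R^N$.

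The crucial analytic step is to establish boundedness of the Hardy--Littlewood maximal operator $M$ on $L^\ph(\R^N)$. The key is a pointwise estimate of the form
\begin{align*}
\ph\left(x, \frac{Mf(x)}{C}\right) \leq M\bigl(\ph(\cdot,f(\cdot))\bigr)(x) + h(x)
\end{align*}
for some constant $C>0$ and some $h \in \Lp{1}\cap\Linf$, valid for all $f$ with $\|f\|_\ph \le 1$. To obtain it one uses \textnormal{(A0)} to normalize the values of $\ph$, then \textnormal{(A1)'} (available by Lemma \ref{Le:equivalences}) to transfer modular bounds from points $y$ in the averaging ball to the base point $x$ in the small-value regime, and \textnormal{(A2)'} to control the large-value regime by a constant-coefficient weak $\Phi$-function $\ph_\infty$ plus an $\Lp{1}\cap\Linf$ error. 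Invoking \textnormal{(aDec)} and Proposition \ref{Prop:AbstractNormModular}, one converts the modular estimate into the norm bound $\|Mf\|_\ph \leq C\|f\|_\ph$.

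With the maximal function bounded, the standard pointwise inequality $|\rho_\eps \ast f|\le CMf$ and the Lebesgue dominated convergence theorem applied inside the modular (using \textnormal{(aDec)} to dominate $\ph(x,|\rho_\eps\ast f|)$ by $\ph(x,CMf)\in \Lp{1}$) yield modular convergence $\varrho_\ph(\rho_\eps \ast f - f)\to 0$, hence norm convergence by Proposition \ref{Prop:AbstractNormModular}. Since $\partial_\alpha(\rho_\eps \ast u) = \rho_\eps \ast \partial_\alpha u$ for $u$ of compact support, the same reasoning gives convergence of all derivatives up to order $k$, and therefore $\rho_\eps \ast u \to u$ in $\|\cdot\|_{k,\ph}$. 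A standard Meyers--Serrin style partition-of-unity argument glues the local smooth approximants into a global element of $C^\infty(\Omega)\cap W^{k,\ph}(\Omega)$.

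The hardest step is the boundedness of the maximal operator on $L^\ph$. Its subtlety is precisely that the conditions \textnormal{(A1)} and \textnormal{(A2)} must be used in a careful interplay: \textnormal{(A1)} controls the oscillation of $\ph(x,\cdot)$ on balls of radius at most $1$ and is needed to compare averages of $f$ over small balls with the Luxemburg norm at the base point, whereas \textnormal{(A2)} controls $\ph$ globally by a model $\ph_\infty$ modulo an integrable correction and thus handles large scales. Neither condition suffices alone, and without \textnormal{(aDec)} one cannot pass from modular to norm convergence at the end. All four hypotheses \textnormal{(A0)}, \textnormal{(A1)}, \textnormal{(A2)}, \textnormal{(aDec)} therefore enter essentially at distinct points of the argument, and verifying the maximal inequality is where the bulk of the technical work lies.
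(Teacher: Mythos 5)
The paper gives no proof of this result: it is quoted verbatim from Theorem~6.4.7 of Harjulehto--H\"ast\"o, so there is no internal argument to compare against. Your overall scheme --- cut-off, mollification, Meyers--Serrin-type partition of unity --- is the standard route and matches the spirit of the cited proof.

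There is, however, a genuine gap at your central analytic step. You reduce everything to boundedness of the Hardy--Littlewood maximal operator $M$ on $L^\ph$ and then dominate $|\rho_\eps \ast f|$ by $C\,Mf$. But boundedness of $M$ on $L^\ph$ requires \textnormal{(aInc)}$_p$ for some $p>1$ in addition to \textnormal{(A0)}, \textnormal{(A1)}, \textnormal{(A2)} (this is the content of the maximal-operator theorem in the same book). A generalized weak $\Phi$-function is only guaranteed to satisfy \textnormal{(aInc)}$_1$, and the present theorem does not add \textnormal{(aInc)} with any $p>1$; indeed one expects the density result to hold for examples as degenerate as $t^{p(x)}$ with $p_-=1$, where $M$ is unbounded. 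None of \textnormal{(A0)}, \textnormal{(A1)}, \textnormal{(A2)}, \textnormal{(aDec)} supplies the strict gain $p>1$, so the maximal-function lemma you invoke is simply not available under the stated hypotheses, and the dominated-convergence step that relies on $\ph(\cdot, CMf) \in L^1$ collapses with it.

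The repair is to replace the maximal operator by the uniformly bounded family of averaging operators $T_\eps f = \rho_\eps \ast f$. The estimate $\|T_\eps f\|_\ph \le C\|f\|_\ph$ uniformly in $\eps$ is strictly weaker than maximal boundedness and is exactly what \textnormal{(A0)}, \textnormal{(A1)}, \textnormal{(A2)} yield without any \textnormal{(aInc)}$_p$, $p>1$. Combined with \textnormal{(aDec)}, which guarantees that bounded compactly supported functions are dense in $L^\ph$ --- this is the essential, non-cosmetic role of \textnormal{(aDec)}; your sketch treats it as bookkeeping and never explains where a dense class of nice functions comes from --- one obtains $\|T_\eps f - f\|_\ph \to 0$ by the usual three-term triangle-inequality argument (uniform boundedness of $T_\eps$ applied to $f-g$, plus uniform convergence of $\rho_\eps\ast g \to g$ for nice $g$). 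With this substitution the convergence of all derivatives up to order $k$ and the partition-of-unity gluing go through as you describe.
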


As a penultimate, let us recall a few properties of the $\log$ function that are useful when dealing with logarithmic growth next to power-law growth. For $s,t \geq 0$ and $C \geq 1$,
\begin{align}
	\label{Eq:LogGrowthProduct}
	\log (e + st) \leq \log (e + s ) + \log (e + t), \qquad \log (e + C s) \leq C \log (e + s)
\end{align}
and for $s,t \geq 0$ and $q \geq 1$,
\begin{equation}
	\label{Eq:LogGrowthSum}
	\begin{aligned}
		(s + t)^q \log (e + s + t)
		 & \leq (2s)^q \log (e + 2s) + (2t)^q \log (e + 2t)        \\
		 & \leq 2^{q+1} s^q \log (e + s) + 2^{q+1} t^q \log(e + t)
	\end{aligned}
\end{equation}

Lastly, we give here the statement of a version of the mountain pass theorem, the quantitative deformation lemma and the Poincar\'{e}-Miranda existence theorem, all of which will be used later. Let $X$ be a Banach space, we say that a functional $\ph \colon X \to \R$ satisfies the Cerami condition or \textnormal{C}-condition if for every sequence $\{u_n\}_{n \in \N} \subseteq X$ such that $\{ \ph(u_n) \}_{n \in \N} \subseteq \R$ is bounded and it also satisfies
\begin{align*}
	( 1 + \| u_n \| )\ph'(u_n) \to 0 \quad\text{as } n \to \infty,
\end{align*}
then it contains a strongly convergent subsequence. Furthermore, we say that it satisfies the Cerami condition at the level $c \in \R$ or the \textnormal{C$_c$}-condition if it holds for all the sequences such that $\ph (u_n) \to c$ as $n \to \infty$ instead of for all the bounded sequences. The proof of the following mountain pass theorem can be found in the book by Papageorgiou--R\u{a}dulescu--Repov\v{s} \cite[Theorem 5.4.6]{Papageorgiou-Radulescu-Repovs-2019a}. For the quantitative deformation lemma after it we refer to the book by Willem \cite[Lemma 2.3]{Willem-1996} and regarding the Poincar\'e-Miranda existence theorem, the proof can be found in the book by Dinca--Mawhin \cite[Corollary 2.2.15]{Dinca-Mawhin-2021}.
\begin{theorem}[Mountain pass theorem] \label{Th:MPT}
	Let X be a Banach space and suppose $\ph \in C^1(X)$, $u_0, u_1 \in X$ with $\| u_1 - u_0 \| > \delta > 0$,
	\begin{align*}
		\max\{\ph(u_0), \ph(u_1)\} & \leq \inf\{\ph(u) \colon \| u - u_0 \| = \delta \} = m_\delta, \\
		c = \inf_{ \gamma \in \Gamma} \max_{ 0 \leq t \leq 1}
		\ph(\gamma (t)) \text{ with } \Gamma & = \{\gamma \in C([0, 1], X) \colon \gamma(0) = u_0, \gamma(1) = u_1\}
	\end{align*}
	and $\ph$ satisfies the \textnormal{C$_c$}-condition. Then $c \geq m_\delta$ and $c$ is a critical value of $\ph$. Moreover, if $c = m_\delta$, then there exists $u \in \partial B_\delta (u_0)$ such that $\ph'(u) = 0$.
\end{theorem}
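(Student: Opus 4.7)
The plan is to prove the theorem by contradiction, using the quantitative deformation lemma cited just before the statement as the main engine, and splitting into the generic case $c > m_\delta$ and the borderline case $c = m_\delta$.

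First I would establish the easy inequality $c \geq m_\delta$. For any $\gamma \in \Gamma$, the real-valued function $t \mapsto \| \gamma(t) - u_0 \|$ is continuous with value $0$ at $t=0$ and value $\| u_1 - u_0 \| > \delta$ at $t=1$, so by the intermediate value theorem there is some $t^\ast \in (0,1)$ with $\| \gamma(t^\ast) - u_0 \| = \delta$, hence $\max_{t} \varphi(\gamma(t)) \geq \varphi(\gamma(t^\ast)) \geq m_\delta$. Taking the infimum over $\Gamma$ yields $c \geq m_\delta$.

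Next, I would assume for contradiction that $c > m_\delta$ but $c$ is not a critical value. Since $\varphi$ satisfies the $\textnormal{C}_c$-condition and the critical set at level $c$ is empty, a standard argument shows that there exist $\varepsilon_0, \sigma > 0$ so that $(1 + \| u \|) \| \varphi'(u) \|_\ast \geq \sigma$ whenever $|\varphi(u) - c| \leq \varepsilon_0$. Shrinking $\varepsilon_0$ so that $c - 2 \varepsilon_0 > m_\delta \geq \max \{ \varphi(u_0), \varphi(u_1) \}$, I apply the quantitative deformation lemma (with the empty set $S = \emptyset$ or in its global form) to obtain, for some $0 < \varepsilon < \varepsilon_0$, a continuous $\eta \colon [0,1] \times X \to X$ such that $\eta(1, \cdot)$ leaves $\varphi^{c - 2 \varepsilon_0}$ invariant and pushes $\varphi^{c + \varepsilon}$ down to $\varphi^{c - \varepsilon}$. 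Choosing $\gamma \in \Gamma$ with $\max_t \varphi(\gamma(t)) < c + \varepsilon$, the new path $\tilde \gamma(t) := \eta(1, \gamma(t))$ still connects $u_0$ to $u_1$ (because both endpoints lie strictly below $c - 2\varepsilon_0$ and are thus fixed by $\eta$) and satisfies $\max_t \varphi(\tilde\gamma(t)) \leq c - \varepsilon$, contradicting the definition of $c$.

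The main obstacle is the borderline case $c = m_\delta$, since the simple argument above requires $\varphi(u_0), \varphi(u_1)$ to lie strictly below $c$ for the deformation to fix the endpoints. For this case I would assume that there is no critical point on $\partial B_\delta(u_0)$ at level $c$, i.e.\ the compact (by $\textnormal{C}_c$) set $K_c \cap \partial B_\delta(u_0)$ is empty. Using the $\textnormal{C}_c$-condition plus a localization of $\| \varphi' \|_\ast$ to a small tubular neighborhood of $\partial B_\delta(u_0) \cap \varphi^{-1}(c)$, I apply the quantitative deformation lemma localized to that neighborhood, producing an $\eta$ that moves every point by less than, say, $\delta / 2$ and sends $\varphi^{c + \varepsilon} \cap \partial B_\delta(u_0)$ into $\varphi^{c - \varepsilon}$. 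Picking a nearly optimal path $\gamma$, the deformed path $\tilde \gamma$ still joins $u_0$ to $u_1$ and still has to cross the sphere $\partial B_\delta(u_0)$ by the intermediate-value argument of the first step; at the crossing point its value is at most $c - \varepsilon < m_\delta$, contradicting the definition of $m_\delta$. The conclusion is therefore that some critical point must lie on $\partial B_\delta(u_0)$ at level $c$, completing the proof.
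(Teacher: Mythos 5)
The paper does not prove Theorem \ref{Th:MPT}; it is cited directly from Papageorgiou--R\u{a}dulescu--Repov\v{s} \cite[Theorem 5.4.6]{Papageorgiou-Radulescu-Repovs-2019a}, so there is no in-paper argument to compare against. Your outline follows the standard deformation-based route, and the structure (the intermediate value theorem step, the generic case, and the borderline case $c = m_\delta$) is the right shape. However, there is one genuine gap and a couple of smaller ones.

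The genuine gap is the mismatch between the \textnormal{C$_c$}-condition and the hypothesis of the quantitative deformation lemma as stated in the paper (Lemma \ref{Le:DeformationLemma}, taken from Willem). You correctly observe that the \textnormal{C$_c$}-condition, together with the assumption that $K_c = \emptyset$, gives $\varepsilon_0, \sigma > 0$ with $(1 + \|u\|)\,\|\ph'(u)\|_* \geq \sigma$ for all $u$ with $|\ph(u) - c| \leq \varepsilon_0$. But the deformation lemma requires a uniform bound $\|\ph'(u)\|_* \geq 8\varepsilon/\delta$ on the relevant set, and $\|\ph'(u)\|_* \geq \sigma/(1 + \|u\|)$ degenerates as $\|u\| \to \infty$. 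Since the sublevel strip $\ph^{-1}([c - 2\varepsilon, c + 2\varepsilon])$ need not be bounded, you cannot directly invoke Willem's lemma. The standard fix is a Cerami-compatible deformation (constructed by integrating a pseudo-gradient rescaled by $(1 + \|u\|)^{-1}$, or equivalently by working with the weighted metric $\diff(u,v) = \int \frac{\diff s}{1 + \|\gamma(s)\|}$); this is exactly what is already built into the cited reference. As written, your generic case only establishes the conclusion under the Palais--Smale condition, not under \textnormal{C$_c$}.

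Two smaller points. First, the paper's deformation lemma requires $\emptyset \neq S$, so "$S = \emptyset$ or in its global form" should simply be $S = X$. Second, in the borderline case $c = m_\delta$, the tubular neighborhood of $\partial B_\delta(u_0)$ must be chosen small enough to exclude both endpoints from $S_{2\delta'}$; your choice of radius $\delta/2$ excludes $u_0$, but may fail to exclude $u_1$ if $\|u_1 - u_0\|$ is only slightly larger than $\delta$. One should take a radius smaller than $\min\{\delta, \|u_1 - u_0\| - \delta\}$. With these corrections, your argument for $c = m_\delta$ is sound: the deformed path still connects $u_0$ to $u_1$, must meet $\partial B_\delta(u_0)$, and at that crossing point $\ph < m_\delta$, contradicting the definition of $m_\delta$.
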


\begin{lemma}[Quantitative deformation lemma] \label{Le:DeformationLemma}
	Let $X$ be a Banach space, $\ph \in C^1(X;\R)$, $\emptyset \neq S \subseteq X$, $c \in \R$, $\eps,\delta > 0$ such that for all $u \in \ph^{-1}([c - 2\eps, c + 2\eps]) \cap S_{2 \delta}$ there holds $\| \ph'(u) \|_* \geq 8\eps / \delta$, where $S_{r} = \{ u \in X \colon d(u,S) = \inf_{u_0 \in S} \| u - u_0 \| < r \}$ for any $r > 0$.
	Then there exists $\eta \in C([0, 1] \times X; X)$ such that
	\begin{enumerate} [label=(\roman*),font=\normalfont]
		\item
			$\eta (t, u) = u$, if $t = 0$ or if $u \notin \ph^{-1}([c - 2\eps, c + 2\eps]) \cap S_{2 \delta}$;
		\item
			$\ph( \eta( 1, u ) ) \leq c - \eps$ for all $u \in \ph^{-1} ( ( - \infty, c + \eps] ) \cap S $;
		\item
			$\eta(t, \cdot )$ is an homeomorphism of $X$ for all $t \in [0,1]$;
		\item
			$\| \eta(t, u) - u \| \leq \delta$ for all $u \in X$ and $t \in [0,1]$;
		\item
			$\ph( \eta( \cdot , u))$ is decreasing for all $u \in X$;
		\item
			$\ph(\eta(t, u)) < c$ for all $u \in \ph^{-1} ( ( - \infty, c] ) \cap S_\delta$ and $t \in (0, 1]$.
	\end{enumerate}
\end{lemma}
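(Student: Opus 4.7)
The plan is to follow the classical pseudo-gradient flow construction. Since $\ph \in C^1(X;\R)$, on the open set $\tilde X := \{u \in X \,:\, \ph'(u) \neq 0\}$ there exists, by a partition-of-unity argument, a locally Lipschitz pseudo-gradient field $v \colon \tilde X \to X$ satisfying
\begin{align*}
\|v(u)\| \leq 2\|\ph'(u)\|_*, \qquad \langle \ph'(u), v(u)\rangle \geq \|\ph'(u)\|_*^2 .
\end{align*}
Introduce the nested closed sets
\begin{align*}
A &:= \ph^{-1}([c-2\eps, c+2\eps]) \cap \{u : d(u,S) \leq 3\delta/2\}, \\
B &:= \ph^{-1}([c-\eps, c+\eps]) \cap \{u : d(u,S) \leq \delta\} ,
\end{align*}
so that $B \subseteq A \subseteq S_{2\delta}$. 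The hypothesis then gives $\|\ph'\|_* \geq 8\eps/\delta$ on $A$ (in particular $A \subseteq \tilde X$), and the Urysohn function $\psi(u) := d(u, X \setminus A)/\bigl[d(u, X \setminus A) + d(u, B)\bigr]$ is a well-defined locally Lipschitz cut-off $X \to [0,1]$ with $\psi = 1$ on $B$ and $\psi = 0$ outside $A$.

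Next I would define the auxiliary vector field
\begin{align*}
f(u) := \begin{cases} -\psi(u)\, v(u) / \|v(u)\|^2 & \text{if } u \in \tilde X,\\ 0 & \text{otherwise.}\end{cases}
\end{align*}
This is locally Lipschitz on $X$ (the lower bound on $\|\ph'\|_*$ on $\operatorname{supp}\psi \subseteq A$ keeps the denominator away from $0$) and it is bounded by $\|f(u)\| \leq 1/\|\ph'(u)\|_* \leq \delta/(8\eps)$. By the Picard--Lindel\"of theorem, the Cauchy problem $\partial_t \sigma(t,u) = f(\sigma(t,u))$, $\sigma(0,u) = u$, has a unique global flow $\sigma \in C([0,\infty) \times X; X)$ and each $\sigma(t,\cdot)$ is a homeomorphism of $X$ (with inverse obtained by running time backwards). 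Setting $\eta(t,u) := \sigma(8\eps t, u)$, properties (i), (iii), (iv), (v) are almost immediate: (i) because $f \equiv 0$ off $A$; (iii) from invertibility of the flow; (iv) from $\|f\|_\infty \leq \delta/(8\eps)$ integrated over $[0, 8\eps]$; and (v) from the computation
\begin{align*}
\tfrac{d}{dt}\ph(\sigma(t,u)) = - \psi(\sigma(t,u)) \tfrac{\langle \ph'(\sigma), v(\sigma)\rangle}{\|v(\sigma)\|^2} \leq 0 ,
\end{align*}
which is strict whenever $\psi(\sigma) > 0$. This in turn gives (vi), since $\ph^{-1}((-\infty, c]) \cap S_\delta \subseteq B$ so $\psi = 1$ on the orbit at $t = 0$.

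The main quantitative step is property (ii). Fix $u \in S$ with $\ph(u) \leq c + \eps$. If $\ph(\sigma(t^*, u)) \leq c - \eps$ for some $t^* \in [0, 8\eps]$, monotonicity from (v) closes the argument. Otherwise, along $[0, 8\eps]$ the orbit stays in $\ph^{-1}([c-\eps, c+\eps])$, and since $\|f\|_\infty \leq \delta/(8\eps)$ with $u \in S$, it also stays in $\{d(\cdot, S) \leq \delta\}$; hence it lies in $B$ and $\psi \equiv 1$ on it. The two pseudo-gradient bounds combine to $\tfrac{d}{dt}\ph(\sigma(t,u)) \leq -\|\ph'(\sigma)\|_*^2 / (2\|\ph'(\sigma)\|_*)^2 = -\tfrac14$, so
\begin{align*}
\ph(\eta(1,u)) - \ph(u) \leq -\tfrac14 \cdot 8\eps = -2\eps,
\end{align*}
which gives $\ph(\eta(1,u)) \leq c - \eps$. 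The delicate points are the construction of the pseudo-gradient field $v$ and the verification that the orbit cannot leave the tube $\{d(\cdot, S) \leq \delta\}$ before the required decay accumulates; this is what pins the numerical constants $8$ and $2$ appearing in the statement. Once these are in place, the rest is ODE bookkeeping.
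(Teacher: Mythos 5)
The paper does not prove this lemma; it is quoted verbatim from Willem's monograph (Minimax Theorems, Lemma~2.3), so there is no ``paper's own proof'' to compare against. Your argument is precisely the standard pseudo-gradient flow construction that Willem uses: locally Lipschitz pseudo-gradient on $\tilde{X}$, nested closed sets $B\subseteq A$ with a Lipschitz Urysohn cut-off, truncated and renormalized vector field $f=-\psi\,v/\|v\|^2$, global flow, and reparametrization by $t\mapsto 8\eps t$. The numerics are right: the second pseudo-gradient inequality forces $\|v\|\geq\|\ph'\|_*$, giving $\|f\|\leq\delta/(8\eps)$ on $\operatorname{supp}\psi\subseteq A$, and on $B$ one has $\frac{d}{dt}\ph\circ\sigma\leq -1/4$, whence the gain of $2\eps$ over time $8\eps$.

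There is, however, a genuine (if easily reparable) error in your derivation of property (vi). You claim $\ph^{-1}((-\infty,c])\cap S_\delta\subseteq B$, which is false: a point $u$ with $\ph(u)<c-\eps$ and $d(u,S)<\delta$ lies in $\ph^{-1}((-\infty,c])\cap S_\delta$ but not in $B=\ph^{-1}([c-\eps,c+\eps])\cap\{d(\cdot,S)\leq\delta\}$, and there $\psi$ need not be $1$ (indeed such a $u$ may not even lie in $A$). The conclusion (vi) is nonetheless correct, but it requires a case split. If $\ph(u)<c$, property (v) alone gives $\ph(\eta(t,u))\leq\ph(u)<c$ for all $t$, with no use of $\psi$. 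If $\ph(u)=c$, then indeed $u\in B$ (so $\psi(u)=1$) and $u\in A\subseteq\tilde{X}$ (so $\ph'(u)\neq0$), hence $\frac{d}{dt}\ph(\sigma(t,u))\big|_{t=0}\leq -1/4<0$; combined with the monotonicity of (v) this gives $\ph(\eta(t,u))<c$ for all $t>0$. As written, your argument only covers the boundary case and silently omits the bulk case $\ph(u)<c$, while also asserting a false set inclusion. Everything else in the proposal is sound.
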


\begin{theorem}[Poincar\'e-Miranda existence theorem] \label{Th:PoincareMiranda}
	Let $P = [-t_1, t_1] \times \cdots \times [-t_N, t_N]$ with $t_i > 0$ for $i \in {1, \ldots, N}$ and $d \colon P \to \R^N$ be continuous. If for each $i \in \{1, \ldots, N\}$ one has
	\begin{align*}
		\begin{aligned}
			d_i (a) &\leq 0 \quad\text{when } a \in P \text{ and } a_i = -t_i,\\
			d_i (a) &\geq 0 \quad\text{when } a \in P \text{ and } a_i = t_i,
		\end{aligned}
	\end{align*}
	then $d$ has at least one zero in $P$.
\end{theorem}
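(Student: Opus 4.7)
The plan is to reduce the statement to the homotopy invariance of the Brouwer degree, first handling a \emph{strict} version and then recovering the general case by a small perturbation. Working with the strict case initially is essential because the homotopy connecting the identity and $d$ must be shown not to vanish on $\partial P$, and this is transparent only when the boundary inequalities are strict.

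First, I would assume temporarily that $d_i(a) > 0$ when $a_i = t_i$ and $d_i(a) < 0$ when $a_i = -t_i$, and consider the affine homotopy $H \colon P \times [0,1] \to \R^N$ given by
\begin{align*}
H(a,s) = (1-s) a + s\, d(a).
\end{align*}
For $a \in \partial P$ one has $a_i = \pm t_i$ for some index $i$. In the case $a_i = t_i$ the $i$-th component of $H$ is $(1-s) t_i + s\, d_i(a)$, whose first summand is strictly positive for $s < 1$ (since $t_i > 0$) and whose second summand is strictly positive for $s > 0$ (since $d_i(a) > 0$), so the sum is strictly positive for every $s \in [0,1]$. The case $a_i = -t_i$ is symmetric with reversed signs. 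Thus $H(\cdot,s)$ does not vanish on $\partial P$ for any $s \in [0,1]$, and homotopy invariance of the Brouwer degree gives $\deg(d, \operatorname{int} P, 0) = \deg(\operatorname{id}, \operatorname{int} P, 0) = 1 \neq 0$. The existence property of the degree then yields a zero of $d$ in $P$.

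To remove the strictness assumption, for $\eps > 0$ I would consider the perturbation $d_\eps(a) := d(a) + \eps a$. At $a_i = t_i$ its $i$-th component equals $d_i(a) + \eps t_i > 0$, and at $a_i = -t_i$ it equals $d_i(a) - \eps t_i < 0$, so the strict version just established produces a zero $a_\eps \in P$ of $d_\eps$. By compactness of $P$, extract a convergent subsequence $a_{\eps_k} \to a_0 \in P$. Then $d(a_{\eps_k}) = -\eps_k a_{\eps_k} \to 0$, and continuity of $d$ forces $d(a_0) = 0$.

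The main obstacle is the verification that $H$ stays nonzero on $\partial P$ throughout the homotopy in the first step; this is precisely where the product (cuboid) structure of $P$ matters, because the sign information is prescribed face by face and lines up with the one-dimensional slicing of the boundary. A conceivable alternative would assume that $d$ has no zero in $P$ and use this hypothesis to construct a retraction of $P$ onto $\partial P$, contradicting the no-retraction theorem; but such an approach still relies on Brouwer's theorem and is not shorter than the degree-theoretic route sketched above.
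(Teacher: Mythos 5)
Your proof is correct. The paper itself supplies no proof of this theorem; it is stated as a preliminary with a citation to Dinca--Mawhin \cite[Corollary 2.2.15]{Dinca-Mawhin-2021}, where the argument is carried out by Brouwer-degree methods of precisely the kind you use. Your two-step scheme (affine homotopy $H(a,s)=(1-s)a+s\,d(a)$ under strict boundary inequalities, then perturbation $d_\eps(a)=d(a)+\eps a$ plus compactness of $P$ to remove the strictness) is the standard degree-theoretic proof, and all of its steps check out: on a face $a_i=t_i$ both terms of $H_i(a,s)$ are nonnegative with at least one strictly positive at every $s\in[0,1]$, so $H$ is admissible on $\partial P$; $0\in\operatorname{int}P$ gives $\deg(\operatorname{id},\operatorname{int}P,0)=1$; and $d(a_{\eps_k})=-\eps_k a_{\eps_k}\to 0$ forces the limit point to be a zero of $d$.
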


\section{Logarithmic function space}\label{functional-space}

In light of the previous section, we now choose an appropriate $\Phi$-function that allows us to study our problem \eqref{Eq:Problem}. Let $\hlog \colon \close \times [0,\infty) \to [0,\infty)$ be given by
\begin{align*}
	\hlog (x,t) = t^{p(x)} + \mu(x) t^{q(x)} \log (e + t),
\end{align*}
where we assume the following conditions:
\begin{enumerate}[label=\textnormal{(H$_0$)},ref=\textnormal{H$_0$}]
	\item\label{Assump:Space}
	$\Omega \subseteq \R^N$, with $N \geq 2$, is a bounded domain with Lipschitz boundary $\partial \Omega$, $p,q \in C_+ (\close)$ with $p(x) \leq q(x)$ for all $x \in \close$ and $0\leq\mu(\cdot) \in \Lp{1}$.
\end{enumerate}
We can see that this $\Phi$-function satisfies some important properties from the previous section.

\begin{lemma}
	\label{Le:fepsilon}
	The function $f_\eps \colon [0,+\infty) \to  [0,+\infty)$ given by
	\begin{align*}
		f_\eps(t) = \frac{ t^\eps } { \log (e + t) }
	\end{align*}
	is increasing for $\eps \geq \kappa$ and almost increasing for $0 < \eps < \kappa$ with constant $a_\eps$, where $\kappa = e/(e + t_0)$, with $t_0$ being the only positive solution of $t_0 = e \log(e + t_0)$.
\end{lemma}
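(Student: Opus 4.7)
The plan is to differentiate $f_\eps$ and reduce the monotonicity question to controlling a single auxiliary function. A direct computation gives
\[
f_\eps'(t) = \frac{t^{\eps-1}}{[\log(e+t)]^2}\left[\eps\log(e+t) - \frac{t}{e+t}\right] = \frac{t^{\eps-1}}{\log(e+t)}\bigl(\eps - h(t)\bigr),
\]
where $h(t) := t/[(e+t)\log(e+t)]$. Since the prefactor $t^{\eps-1}/\log(e+t)$ is positive on $(0,\infty)$, the sign of $f_\eps'(t)$ equals the sign of $\eps - h(t)$, and the whole problem reduces to understanding the maximum and the shape of $h$.

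Next I would analyze $h$. Note that $h(0^+) = 0$ and $h(t) \to 0$ as $t \to \infty$, so any maximum is attained at an interior point. A short calculation yields
\[
h'(t) = \frac{e\log(e+t) - t}{[(e+t)\log(e+t)]^2},
\]
so the sign of $h'(t)$ coincides with the sign of $\psi(t) := e\log(e+t) - t$. Since $\psi(0) = e > 0$, $\psi(t) \to -\infty$ and $\psi'(t) = -t/(e+t) < 0$ for every $t > 0$, the equation $\psi(t) = 0$ has a unique positive solution $t_0$, which is therefore the unique critical point of $h$ and a global maximum. Substituting the defining identity $t_0 = e\log(e+t_0)$ into $h(t_0)$ gives $h(t_0) = e/(e+t_0) = \kappa$, matching the constant in the statement, and $h$ is strictly increasing on $(0, t_0]$ and strictly decreasing on $[t_0, \infty)$.

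The case $\eps \geq \kappa$ is then immediate: $\eps \geq h(t)$ for every $t > 0$, hence $f_\eps'(t) \geq 0$, so $f_\eps$ is increasing on $(0,\infty)$. For $0 < \eps < \kappa$, the level set $\{h = \eps\}$ consists of exactly two points $t_1(\eps) < t_0 < t_2(\eps)$; between them $h > \eps$ and outside of them $h < \eps$. Consequently $f_\eps$ is increasing on $(0, t_1]$, decreasing on $[t_1, t_2]$ and increasing on $[t_2, \infty)$, so its only local extrema are the maximum at $t_1$ and the minimum at $t_2$. The natural choice is $a_\eps := f_\eps(t_1)/f_\eps(t_2) \geq 1$, and $f_\eps(s) \leq a_\eps f_\eps(t)$ for arbitrary $0 < s < t$ follows by a short case distinction according to which of the three monotonicity intervals contain $s$ and $t$; the only binding case is when $s$ lies to the left of $t_2$ and $t$ lies in $[t_1, t_2]$, where one uses $f_\eps(s) \leq f_\eps(t_1)$ together with $f_\eps(t) \geq f_\eps(t_2)$. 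I expect the only mildly nontrivial step to be the clean identification $h(t_0) = \kappa$ via the implicit relation defining $t_0$; everything else is elementary one-variable calculus plus a finite case analysis.
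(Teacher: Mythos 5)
Your proof is correct and follows essentially the same route as the paper: differentiate $f_\eps$, reduce the sign of $f_\eps'$ to comparing $\eps$ with the auxiliary function $t/[(e+t)\log(e+t)]$, locate its unique maximum $\kappa$ at $t_0$ via the monotone function $e\log(e+t)-t$, and in the subcritical case take $a_\eps = f_\eps(t_1)/f_\eps(t_2)$. The only difference is that you spell out the final case analysis in slightly more detail than the paper does.
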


\begin{proof}
	It is immediate to check that $f_\eps ' (t) > 0$ ($= 0, < 0$) if and only if
	\begin{align*}
		\eps (e + t) \log (e + t) - t > 0 \quad ( = 0, < 0).
	\end{align*}
	Hence we are interested in when the function
	\begin{align*}
		g (t) = \frac { t }{  (e + t) \log (e + t) }
	\end{align*}
	satisfies $g (t) < \eps$ ($= \eps, > \eps$). Arguing similarly, $g ' (t) > 0$ ($= 0, < 0$) if and only if
	\begin{align*}
		e \log (e+t) - t > 0 \quad ( = 0, < 0).
	\end{align*}
	Therefore we now look at the function
	\begin{align*}
		h(t) = e \log (e+t) - t,
	\end{align*}
	which is strictly decreasing, strictly positive at $0$ and $-\infty$ at $+\infty$. Thus $g$ achieves its global maximum at $t_0$ defined by $h(t_0) = 0$ and it holds that $g(t_0) = \kappa$. This proves the case $\eps \geq \kappa$.

	If $0 < \eps < \kappa$, then there exist $t_{1,\eps} < t_0 < t_{2, \eps}$ such that $t_{1,\eps}$ is the unique local maximum of $f_\eps$ and $t_{2,\eps}$ is the unique local minimum of $f_\eps$ apart from $0$. Since $f_\eps$ is increasing in $(0,t_{1,\eps})$ and in $(t_{2,\eps},\infty)$, $f_\eps$ is almost increasing if and only if there exists $a_\eps > 0$ such that $f_\eps (t_{1,\eps}) / f_\eps (t_{2,\eps}) \leq a_\eps$, which is trivially true.

	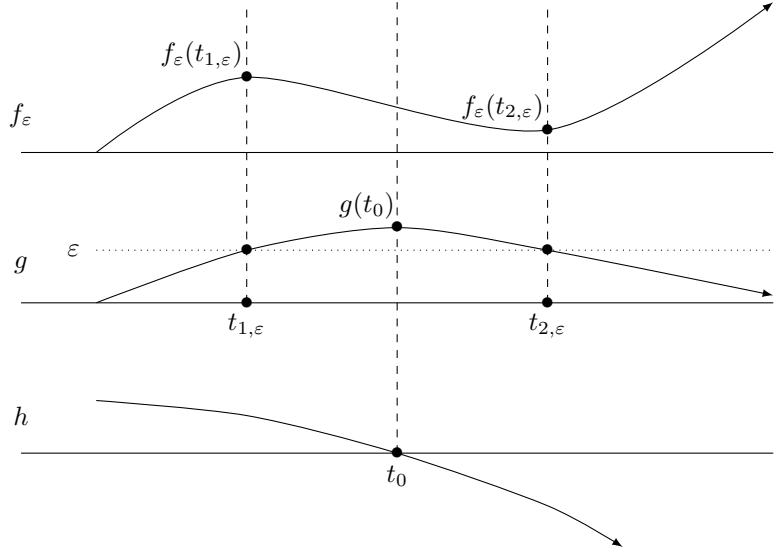
\begin{figure}
		\begin{tikzpicture}
			\draw[-] (-1,0) -- (9,0);
			\draw (-1,0.5) node {$f_\eps$};
			\draw[-latex] plot [smooth] coordinates {(0,0) (2,1) (6,0.3) (9,2)};
			\draw (2,1) node {\textbullet};
			\draw (1.4,1.3) node {$f_\eps(t_{1,\eps})$};
			\draw (6,0.3) node {\textbullet};
			\draw (5.4,0.6) node {$f_\eps(t_{2,\eps})$};
			\draw[-] (-1,-2) -- (9,-2);
			\draw (-1,-1.5) node {$g$};
			\draw[-latex] plot [smooth] coordinates {(0,-2) (2,-1.3) (4,-1) (6,-1.3) (9,-1.9)};
			\draw[dotted] (0,-1.3) -- (9,-1.3);
			\draw (-0.3,-1.3) node {$\eps$};
			\draw (4,-1) node {\textbullet};
			\draw (3.6,-0.7) node {$g(t_0)$};
			\draw (2,-1.3) node {\textbullet};
			\draw (2,-2) node {\textbullet};
			\draw[dashed] (2,-2) -- (2,2);
			\draw (2,-2.3) node {$t_{1,\eps}$};
			\draw (6,-1.3) node {\textbullet};
			\draw (6,-2) node {\textbullet};
			\draw[dashed] (6,-2) -- (6,2);
			\draw (6,-2.3) node {$t_{2,\eps}$};
			\draw[-] (-1,-4) -- (9,-4);
			\draw (-1,-3.5) node {$h$};
			\draw[-latex] plot [smooth] coordinates {(0,-3.3) (2,-3.5) (4,-4) (6,-4.7) (7,-5.25)};
			\draw (4,-4) node {\textbullet};
			\draw (4,-4.3) node {$t_0$};
			\draw[dashed] (4,-4) -- (4,2);
		\end{tikzpicture}
		\caption{Functions in the proof of Lemma \ref{Le:fepsilon}.}
	\end{figure}
\end{proof}

\begin{remark}
	Note that with the choice of $a_\eps$ in the proof of the previous result we cannot ensure that there exists a constant uniform in $\eps$. On the other hand, note also that $t_0 \simeq 5.8340$ and $\kappa \simeq 0.31784$.
\end{remark}

\begin{lemma}
	\label{Le:PropertiesHlog}
	Let \eqref{Assump:Space} be satisfied, then $\hlog$ is a generalized strong $\Phi$-function and it fulfills
	\begin{enumerate}
		\item[\textnormal{(i)}]
			\textnormal{(Inc)}$_{p_-}$;
		\item[\textnormal{(ii)}]
			\textnormal{(Dec)}$_{q_+ + \kappa}$;
		\item[\textnormal{(iii)}]
			\textnormal{(aDec)}$_{q_+ + \eps}$ for $0 < \eps < \kappa$ and with constant $a_\eps$,
	\end{enumerate}
	where $\kappa$ and $a_\eps$ are the same as in Lemma \ref{Le:fepsilon}.
\end{lemma}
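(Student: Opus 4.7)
The plan is to verify each of the four claims separately by direct computation, leaning on Lemma \ref{Le:fepsilon} for the parts involving the logarithmic factor.

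First, to see that $\hlog$ is a generalized strong $\Phi$-function, I would check the definition item by item. Measurability in $x$ follows from the measurability of $p,q,\mu$ together with the joint continuity of the algebraic operations, and one directly sees $\hlog(x,0)=0$, $\lim_{t\to0^+}\hlog(x,t)=0$, $\lim_{t\to\infty}\hlog(x,t)=+\infty$, and continuity of $\hlog(x,\cdot)$ for a.a.\ $x \in \close$. For convexity I would simply compute the derivative
\begin{align*}
\partial_t \hlog(x,t) = p(x)\, t^{p(x)-1} + \mu(x)\, q(x)\, t^{q(x)-1} \log(e+t) + \frac{\mu(x)\, t^{q(x)}}{e+t},
\end{align*}
and argue that each of the three summands is non-decreasing in $t$ for $t\geq 0$: the first by $p(x)\geq 1$, the second as a product of two non-negative non-decreasing factors (using $q(x)\geq 1$), and the third because its derivative equals $t^{q(x)-1}(e+t)^{-2}[q(x)e+(q(x)-1)t]\geq 0$. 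Thus $\partial_t \hlog(x,\cdot)$ is non-decreasing, hence $\hlog(x,\cdot)$ is convex, giving the strong $\Phi$-function property.

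For \textnormal{(Inc)}$_{p_-}$, I would write
\begin{align*}
t^{-p_-}\hlog(x,t) = t^{\,p(x)-p_-} + \mu(x)\, t^{\,q(x)-p_-}\log(e+t),
\end{align*}
and observe that, since $p_-\leq p(x)\leq q(x)$, both exponents are non-negative and $\log(e+t)$ is increasing, so each summand is non-decreasing in $t$.

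For \textnormal{(Dec)}$_{q_++\kappa}$ I would do the same splitting:
\begin{align*}
t^{-(q_++\kappa)}\hlog(x,t) = t^{\,p(x)-q_+-\kappa} + \frac{\mu(x)\log(e+t)}{t^{\,q_++\kappa-q(x)}}.
\end{align*}
The first term is decreasing because $p(x)-q_+-\kappa\leq -\kappa<0$. For the second, setting $\eps'(x):=q_++\kappa-q(x)\geq \kappa$, the quotient equals $\mu(x)/f_{\eps'(x)}(t)$, and Lemma \ref{Le:fepsilon} says $f_{\eps'(x)}$ is increasing on $[0,\infty)$ for $\eps'(x)\geq \kappa$, so the quotient is decreasing, which gives (Dec)$_{q_++\kappa}$. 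The case \textnormal{(aDec)}$_{q_++\eps}$ with $0<\eps<\kappa$ is proved by the same splitting: the first term is still strictly decreasing, while for the second, now $\eps'(x)=q_++\eps-q(x)\in[\eps,\,q_+-q_-+\eps]$, so Lemma \ref{Le:fepsilon} yields only the almost-increasing property of $f_{\eps'(x)}$, hence almost-decreasing for the quotient. The sum of a decreasing function and an almost-decreasing function is almost-decreasing with the same constant, so the claim follows once one observes that the constants $a_{\eps'(x)}$ stay bounded as $\eps'(x)$ varies over the compact range $[\eps,q_+-q_-+\eps]$; the constant $a_\eps$ in the statement can then be taken as this uniform bound.

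The only slightly delicate point I anticipate is the last sentence: controlling the constant in \textnormal{(aDec)} uniformly in $x\in\close$. This is the step where the specific form of $a_\eps$ from the proof of Lemma \ref{Le:fepsilon} (given by the ratio $f_{\eps'}(t_{1,\eps'})/f_{\eps'}(t_{2,\eps'})$ of a local max to a local min) matters, and one has to note the continuity of these quantities in $\eps'$ to obtain a uniform bound on the range of $\eps'(x)$. Everything else is routine manipulation of elementary inequalities.
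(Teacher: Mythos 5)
Your proposal follows the same overall strategy as the paper (direct computation, with the logarithmic part controlled by Lemma \ref{Le:fepsilon}), and the convexity and \textnormal{(Inc)}$_{p_-}$ parts are fine (you show the first derivative is non-decreasing; the paper shows the second derivative is positive — equivalent). The genuine difference is in how the \textnormal{(Dec)}/\textnormal{(aDec)} decomposition is carried out, and it matters.

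The paper writes
\begin{align*}
\frac{\hlog(x,t)}{t^{q_++\eps}}
= t^{\,p(x)-q_+-\eps}
+ \mu(x)\, t^{\,q(x)-q_+}\cdot\frac{\log(e+t)}{t^{\eps}},
\end{align*}
keeping the logarithmic factor as the single $x$-independent function $1/f_\eps(t)$, multiplied by the separate factor $\mu(x)\,t^{q(x)-q_+}$, which is non-negative and decreasing in $t$ because $q(x)-q_+\leq 0$. The product of a non-negative decreasing function and a non-negative almost-decreasing function with constant $a_\eps$ is again almost decreasing with the same constant $a_\eps$, and adding the decreasing term $t^{p(x)-q_+-\eps}$ preserves this. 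So the paper gets exactly the constant $a_\eps$ from Lemma \ref{Le:fepsilon}, independent of $x$, with no extra work. You instead fold everything into $\log(e+t)/t^{q_++\eps-q(x)}=1/f_{\eps'(x)}(t)$ with the $x$-dependent exponent $\eps'(x)=q_++\eps-q(x)$, which obliges you to argue that the constants $a_{\eps'(x)}$ are bounded uniformly in $x$. Your continuity argument over the compact range $[\eps,\kappa]$ does produce some finite uniform constant, but not obviously the stated $a_\eps$: you would also need to know that $\eps'\mapsto a_{\eps'}$ is non-increasing on $(0,\kappa)$ so that the supremum is attained at the left endpoint. That monotonicity is plausible but is neither stated nor proved, and it is exactly the complication the paper's decomposition avoids. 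If you keep your decomposition, either supply that monotonicity or relax the conclusion to ``with some finite constant depending on $\eps$''; otherwise, factoring out $t^{q(x)-q_+}$ as the paper does is the cleaner route.
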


\begin{proof}
	First we see that it is a generalized strong $\Phi$-function. Since the other conditions are straightforward, we only need to check the convexity in the second variable, which follows from
	\begin{align*}
		 & {\partial_t}^2\hlog (x,t)\\
		 & = p(x) ( p(x) - 1) t^{p(x) - 2} \\
		 & \quad + \mu(x)
		t^{q(x) -2} \left[  q(x)(q(x) - 1)\log (e + t) + (2q(x) - 1)\frac{t}{e+t} + \frac{et}{(e+t)^2} \right] > 0
	\end{align*}
	for all $t>0$ and for a.a.\,$x \in \Omega$.

	Now we prove \textnormal{(i)}, \textnormal{(ii)} and \textnormal{(iii)}. Notice that
	\begin{align*}
		\frac{\hlog (x,t)}{t^{p_-}} =
		t^{p(x) - p_-} + \mu(x) t^{q(x) - p_-} \log (e + t)
	\end{align*}
	is an increasing function because all the exponents are positive. Similarly, by Lemma \ref{Le:fepsilon}
	\begin{align*}
		\frac{\hlog (x,t)}{t^{q_+ + \eps}} =
		t^{p(x) - q_+ -\eps} + \mu(x) t^{q(x) - q_+} \frac{\log (e + t)}{t^{\eps}}
	\end{align*}
	is a decreasing function when $\eps \geq \kappa$ and almost decreasing when $0 < \eps < \kappa$, with the constant $a_\eps$ from Lemma \ref{Le:fepsilon}.
\end{proof}

As a consequence of the previous result, we obtain  the following.

\begin{proposition}
	\label{Prop:HlogModularNorm}
	Let \eqref{Assump:Space} be satisfied, then $\LHlog$ is a separable, reflexive Banach space and the following hold:
	\begin{enumerate}
		\item[\textnormal{(i)}]
			$\normHlog{u} = \lambda$ if and only if $\modHlog{ \frac{u}{\lambda} } = 1 $ for $u \neq 0$ and $\lambda>0$;
		\item[\textnormal{(ii)}]
			$\normHlog{u} < 1$ (resp. $=1$, $>1$) if and only if $\modHlog{u} < 1$ (resp. $=1$, $>1$);
		\item[\textnormal{(iii)}]
			$\min\left\lbrace \normHlog{u}^{p_-} , \normHlog{u}^{q_+ + \kappa} \right\rbrace
				\leq \modHlog{u} \leq
				\max\left\lbrace \normHlog{u}^{p_-} , \normHlog{u}^{q_+ + \kappa} \right\rbrace $ for $\kappa>0$ as in Lemma \ref{Le:fepsilon};
		\item[\textnormal{(iv)}]
			$\frac{1}{a_\eps}\min\left\lbrace \normHlog{u}^{p_-} , \normHlog{u}^{q_+ + \eps} \right\rbrace
				\leq \modHlog{u} \leq
				a_\eps \max\left\lbrace \normHlog{u}^{p_-} , \normHlog{u}^{q_+ + \eps} \right\rbrace $ for $0 < \eps < \kappa$, where $\kappa$ and $a_\eps$ are the same as in Lemma \ref{Le:fepsilon};
		\item[\textnormal{(v)}]
			$\normHlog{u} \to 0$ if and only if $\modHlog{u} \to 0$;
		\item[\textnormal{(vi)}]
			$\normHlog{u} \to \infty$ if and only if $\modHlog{u} \to \infty$.
	\end{enumerate}
\end{proposition}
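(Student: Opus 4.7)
The plan is to obtain this proposition as a direct consequence of the abstract Musielak--Orlicz framework recalled in Section \ref{musielak-orlicz-preliminaries} once we feed it the structural information about $\hlog$ already established in Lemma \ref{Le:PropertiesHlog}. From that lemma we know $\hlog$ is a generalized strong $\Phi$-function (in particular a generalized weak $\Phi$-function) satisfying \textnormal{(Inc)}$_{p_-}$ and \textnormal{(Dec)}$_{q_++\kappa}$; via Lemma \ref{Le:equivalences}(i)--(iii) these imply the $\Delta_2$ and $\nabla_2$ conditions, equivalently \textnormal{(aInc)} and \textnormal{(aDec)}. Since the underlying measure space $(\Omega,\mathcal{L},\mathrm{d}x)$ is $\sigma$-finite, complete and separable, Proposition \ref{Prop:AbstractBanach} then yields in one stroke that $\LHlog$ is a separable, reflexive Banach space.

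For (iii) and (iv) I would invoke Proposition \ref{Prop:AbstractNormModular} twice with the two valid choices of exponents supplied by Lemma \ref{Le:PropertiesHlog}. First, with $p=p_-$ and $q=q_++\kappa$, both conditions \textnormal{(Inc)} and \textnormal{(Dec)} hold in their strict (non-almost) form, so the constant $a$ in Proposition \ref{Prop:AbstractNormModular} can be taken equal to $1$, giving (iii) without any multiplicative factor. Second, with $p=p_-$ and $q=q_++\eps$ for $0<\eps<\kappa$, \textnormal{(aDec)}$_{q_++\eps}$ holds with constant $a_\eps$ as in Lemma \ref{Le:fepsilon}, and Proposition \ref{Prop:AbstractNormModular} then delivers exactly (iv).

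Statements (i), (ii), (v) and (vi) are standard consequences of the Luxemburg-norm construction once (iii) is available. For (i), since $\hlog(x,\cdot)$ is continuous and the modular is finite on $\LHlog$ (thanks to \textnormal{(aDec)} and Proposition \ref{Prop:AbstractBanach}), the infimum defining the Luxemburg norm is attained with equality, so $\normHlog{u}=\lambda$ is equivalent to $\modHlog{u/\lambda}=1$ for $u\neq 0$. For (ii), if $\normHlog{u}<1$ then (iii) forces $\modHlog{u}\leq\normHlog{u}^{p_-}<1$, while $\normHlog{u}>1$ gives $\modHlog{u}\geq\normHlog{u}^{p_-}>1$; the equality case follows from (i) applied with $\lambda=1$. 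For (v) and (vi), the inequality in (iii) directly makes $\normHlog{u_n}\to 0$ equivalent to $\modHlog{u_n}\to 0$ and $\normHlog{u_n}\to\infty$ equivalent to $\modHlog{u_n}\to\infty$.

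There is no real obstacle here: the entire argument is bookkeeping, and the only nontrivial ingredient is the sharp \textnormal{(Dec)}-type exponent $q_++\kappa$ produced by Lemma \ref{Le:fepsilon}, which is precisely what makes (iii) cleaner than the almost-version in (iv). The point to double-check is that the abstract Proposition \ref{Prop:AbstractNormModular} applies with $a=1$ when the Inc/Dec hold without the ``almost'' qualifier, which is immediate from its proof in \cite{Harjulehto-Hasto-2019}; this is the single technical verification I would make explicit before citing the abstract results.
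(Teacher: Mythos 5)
Your proof is correct and follows essentially the same route as the paper: separability/reflexivity from Proposition~\ref{Prop:AbstractBanach} plus Lemma~\ref{Le:PropertiesHlog}; parts (iii)--(iv) from Proposition~\ref{Prop:AbstractNormModular} with the two exponent pairs $(p_-,q_++\kappa)$ and $(p_-,q_++\eps)$; and (v)--(vi) from (iii). The only (minor) divergence is that the paper obtains (i) and (ii) directly from the fact that $\lambda\mapsto\modHlog{u/\lambda}$ is continuous, convex and strictly monotone, whereas you obtain (ii) by combining (iii) with the equality case from (i); both are sound, and your explicit remark that Proposition~\ref{Prop:AbstractNormModular} applies with constant $a=1$ when \textnormal{(Inc)}/\textnormal{(Dec)} hold in the non-almost form is a worthwhile clarification that the paper leaves implicit.
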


\begin{proof}
	First, by Proposition \ref{Prop:AbstractBanach} and Lemma \ref{Le:PropertiesHlog}, we know that $\Lp{\hlog}$ is a separable, reflexive Banach space.

	For \textnormal{(i)} and \textnormal{(ii)}, note that the function $\lambda \mapsto \modHlog{u/\lambda}$ with $\lambda \geq 0$ is continuous, convex and strictly increasing. This directly implies \textnormal{(i)}, and \textnormal{(i)} with the strict increasing property yields \textnormal{(ii)}.

	Finally, \textnormal{(iii)} and \textnormal{(iv)} follow from Proposition \ref{Prop:AbstractNormModular} and Lemma \ref{Le:PropertiesHlog}; and \textnormal{(v)} and \textnormal{(vi)} follow from \textnormal{(iii)}.
\end{proof}

This space satisfies the following embeddings. As in the usual notation, given $r \in C_+(\close)$, let $L^{r(x)} \log L (\Omega) = \Lp{\zeta}$, where $\zeta(x,t) = t^{r(x)}\log (e + t)$.
\begin{proposition}
	\label{Prop:EmbeddingHlog}
	Let \eqref{Assump:Space} be satisfied and let $\eps > 0$, then it holds
	\begin{align*}
		\Lp{q(\cdot) + \eps} \hookrightarrow L^{q(\cdot)} \log L (\Omega)
		\quad\text{and}\quad
		\LHlog \hookrightarrow \Lp{p(\cdot)}.
	\end{align*}
	If we further assume $0\leq\mu(\cdot) \in \Lp{\infty}$, then it also holds $L^{q(\cdot)} \log L (\Omega) \hookrightarrow \LHlog$.
\end{proposition}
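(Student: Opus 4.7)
The plan is to verify each of the three embeddings by applying Proposition \ref{Prop:AbstractEmbedding}, which reduces the task to producing, for each pair of $\Phi$-functions $\psi, \varphi$, a constant $K > 0$ and a function $h \in L^1(\Omega)$ with $\|h\|_1 \leq 1$ such that $\psi(x, t/K) \leq \varphi(x,t) + h(x)$ for all $t \geq 0$ and a.a.\ $x \in \Omega$. The workhorse tool is the elementary estimate $\log(e+t) \leq C_\eps (1 + t^\eps)$, valid for every $\eps > 0$ and all $t \geq 0$, together with the obvious inequality $t^{r(x)} \leq 1 + t^{r(x)+\eps}$ (handled separately on $\{t \leq 1\}$ and $\{t > 1\}$).

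For the first embedding $\Lp{q(\cdot)+\eps} \hookrightarrow L^{q(\cdot)} \log L(\Omega)$, take any $K \geq 1$ and estimate
\begin{align*}
    (t/K)^{q(x)} \log(e + t/K) &\leq K^{-q_-} t^{q(x)} \log(e + t) \\
    &\leq C_\eps K^{-q_-} \left( t^{q(x)} + t^{q(x)+\eps} \right) \\
    &\leq C_\eps K^{-q_-} + 2 C_\eps K^{-q_-} t^{q(x)+\eps}.
\end{align*}
I would then choose $K$ large enough that simultaneously $2 C_\eps K^{-q_-} \leq 1$ and $C_\eps K^{-q_-} |\Omega| \leq 1$, and set $h(x) \equiv C_\eps K^{-q_-}$; this yields the needed inequality with $\|h\|_1 \leq 1$.

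For the second embedding $\LHlog \hookrightarrow \Lp{p(\cdot)}$, the pointwise bound $t^{p(x)} \leq \hlog(x,t)$ is immediate from the definition of $\hlog$, so $K = 1$ and $h \equiv 0$ work trivially. For the third embedding, assuming $\mu \in \Linf$ with $M := \|\mu\|_\infty$, the strategy is to bound each summand of $\hlog(x, t/K)$ by a small multiple of $t^{q(x)} \log(e+t)$ plus a $K$-dependent constant. On $\{t \leq 1\}$ both summands are bounded by constants depending on $K$; on $\{t > 1\}$ one uses $p(x) \leq q(x)$ together with $\log(e+t) \geq 1$ to get $(t/K)^{p(x)} \leq K^{-p_-} t^{q(x)} \log(e+t)$, while the logarithmic term is dominated by $M K^{-q_-} t^{q(x)} \log(e+t)$. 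Combining, $\hlog(x, t/K) \leq (K^{-p_-} + M K^{-q_-}) t^{q(x)} \log(e+t) + C_K$ for some constant $C_K$, and choosing $K$ large so that the leading coefficient is $\leq 1$ and $C_K |\Omega| \leq 1$ finishes the job with $h \equiv C_K$.

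The only subtlety worth flagging is the normalization constraint $\|h\|_1 \leq 1$ in Proposition \ref{Prop:AbstractEmbedding}, since the natural pointwise bounds produce arbitrary constants on the right-hand side. This is dealt with uniformly by taking $K$ large: in the logarithmic embeddings the scaling $t \mapsto t/K$ introduces factors of $K^{-q_-}$ or $K^{-p_-}$ in front of \emph{both} the dominant term and the residual constant, so a single choice of large $K$ can make both small enough at once. No genuinely hard step arises; the main care lies in handling small and large $t$ regimes separately so that the constant residue ends up being absorbed into the $L^1$ bound on $h$.
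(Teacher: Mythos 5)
Your proposal is correct and follows essentially the same route as the paper: apply Proposition \ref{Prop:AbstractEmbedding} by producing the pointwise inequality $\psi(x,t/K)\leq\ph(x,t)+h(x)$, using $\log(e+t)\lesssim 1+t^{\eps}$ and the split $t^{q(x)}\leq 1+t^{q(x)+\eps}$ for the first embedding, the trivial bound $t^{p(x)}\leq\hlog(x,t)$ for the second, and $p(x)\leq q(x)$ together with $\log(e+t)\geq1$ and $\mu\in L^\infty(\Omega)$ for the third, in each case absorbing the residual constant into $h$ by taking $K$ large. The only cosmetic difference is that the paper writes the same estimates without an explicit case split on $t\lessgtr1$, but the computation is identical in substance.
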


\begin{proof}
	We will prove all the embeddings by applying Proposition \ref{Prop:AbstractEmbedding} to the corresponding $\Phi$-functions.

	First, for any $K>0$, by the inequality $\log (e + t) \leq C_\eps + t^\eps$ it holds that
	\begin{align*}
		\left( \frac{t}{K} \right) ^{q(x)} \log \left( e + \frac{t}{K} \right)
		\leq \frac{ t^{q(x) + \eps} + 1 }{ K^{q(x)} } C_\eps + \frac{ t^{q(x) + \eps} }{ K^{q(x) + \eps} }
	\end{align*}
	and if we choose $K \geq \max \{ 2^{1/(q_- + \eps)} , (2 C_\eps)^{1/q_-} , ( C_\eps |\Omega| )^{1/q_-} \}$, it follows
	\begin{align*}
		\left( \frac{t}{K} \right) ^{q(x)} \log \left( e + \frac{t}{K} \right)
		\leq \frac{ C_\eps }{ K^{q(x)} }  + t^{ q(x) + \eps }, \quad \text{ with } \into \frac{ C_\eps }{ K^{q(x)} } \dx \leq 1.
	\end{align*}
	This concludes the proof of the first embedding. The condition for the second embedding is straightforward to verify. Finally, for any $K>0$,
	\begin{align*}
		\hlog \left( x, \frac{t}{K} \right)
		\leq \frac{1}{K^{p(x)}} + \left( \frac{1}{K^{p(x)}} + \frac{ \norm{\mu}_{\infty} }{K^{q(x)}} \right)
		t^{q(x)} \log \left( e + \frac{t}{K} \right)
	\end{align*}
	and if we choose $K \geq \max \{ 2^{1/p_-} , ( 2 \norm{\mu}_{\infty} )^{1/q_-} , |\Omega| ^{1/p_-} \}$, it follows
	\begin{align*}
		\hlog \left( x, \frac{t}{K} \right)
		\leq \frac{1}{K^{p(x)}} + t^{q(x)} \log \left( e + t \right) \quad \text{with } \into \frac{1}{K^{p(x)}} \dx \leq 1.
	\end{align*}
	This shows the proof of the third embedding.
\end{proof}

For our purposes we further need to work on the associated Sobolev space, whose properties are summarized in the following statement. Its proof is completely analogous to the proof of Proposition \ref{Prop:HlogModularNorm} except that now we use Proposition \ref{Prop:AbstractSobolev} (instead of Proposition \ref{Prop:AbstractBanach}) and Proposition \ref{Prop:AbstractoneNormModular} (instead of Proposition \ref{Prop:AbstractNormModular}).

\begin{proposition}
	\label{Prop:oneHlogModularNorm}
	Let \eqref{Assump:Space} be satisfied, then $\WHlog$ and $\WHlogzero$ are separable, reflexive Banach spaces and the following hold:
	\begin{enumerate}
		\item[\textnormal{(i)}]
			$\normoneHlog{u} = \lambda$ if and only if $\modoneHlog{ \frac{u}{\lambda} } = 1 $ for $u \neq 0$ and $\lambda>0$;
		\item[\textnormal{(ii)}]
			$\normoneHlog{u} < 1$ (resp.\,$=1$, $>1$) if and only if $\modoneHlog{u} < 1$ (resp.\,$=1$, $>1$);
		\item[\textnormal{(iii)}]
			$\min\left\lbrace \normoneHlog{u}^{p_-} , \normoneHlog{u}^{q_+ + \kappa} \right\rbrace
				\leq \modoneHlog{u} \leq
				\max\left\lbrace \normoneHlog{u}^{p_-} , \normoneHlog{u}^{q_+ + \kappa} \right\rbrace $;
		\item[\textnormal{(iv)}]
			\begin{align*}
				&\frac{1}{a_\eps}\min\left\lbrace \normoneHlog{u}^{p_-} , \normoneHlog{u}^{q_+ + \eps} \right\rbrace\\
				&\leq \modoneHlog{u} \leq
				a_\eps \max\left\lbrace \normoneHlog{u}^{p_-} , \normoneHlog{u}^{q_+ + \eps} \right\rbrace
			\end{align*}
			for $0 < \eps < \kappa$, where $\kappa$ and $a_\eps$ are the same as in Lemma \ref{Le:fepsilon};
		\item[\textnormal{(v)}]
			$\normoneHlog{u} \to 0$ if and only if $\modoneHlog{u} \to 0$;
		\item[\textnormal{(vi)}]
			$\normoneHlog{u} \to \infty$ if and only if $\modoneHlog{u} \to \infty$.
	\end{enumerate}
\end{proposition}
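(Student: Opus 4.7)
The plan is to mirror, step by step, the proof of Proposition \ref{Prop:HlogModularNorm}, replacing the ``Lebesgue-type'' abstract ingredients by the ``Sobolev-type'' ones that have already been recorded in Section \ref{musielak-orlicz-preliminaries}. Concretely, I would use Proposition \ref{Prop:AbstractSobolev} in place of Proposition \ref{Prop:AbstractBanach} to obtain the Banach space structure, and Proposition \ref{Prop:AbstractoneNormModular} in place of Proposition \ref{Prop:AbstractNormModular} for the quantitative modular--norm estimates; the structural input about $\hlog$ itself comes, as before, from Lemma \ref{Le:PropertiesHlog}.

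First I would verify the hypotheses needed to invoke Proposition \ref{Prop:AbstractSobolev} with $\ph = \hlog$ and $k = 1$. By Lemma \ref{Le:PropertiesHlog}, $\hlog$ is a generalized strong $\Phi$-function (hence in particular a generalized convex, and therefore generalized weak, $\Phi$-function), it satisfies \textnormal{(Inc)}$_{p_-}$ (which implies \textnormal{(aInc)}) and \textnormal{(aDec)}$_{q_+ + \eps}$ for any $0 < \eps < \kappa$. The embedding condition $\LHlog \subseteq L^1_{\loc}(\Omega)$ required to define the weak derivatives follows from the continuous embedding $\LHlog \hookrightarrow \Lp{p(\cdot)}$ of Proposition \ref{Prop:EmbeddingHlog} together with $\Lp{p(\cdot)} \hookrightarrow \Lp{1} \subseteq L^1_{\loc}(\Omega)$ on the bounded domain $\Omega$. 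Proposition \ref{Prop:AbstractSobolev} then yields that $\WHlog$ and $\WHlogzero$ are Banach spaces, and separable and reflexive by virtue of the \textnormal{(aInc)}/\textnormal{(aDec)} properties of $\hlog$.

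For items \textnormal{(i)} and \textnormal{(ii)} I would argue as in Proposition \ref{Prop:HlogModularNorm}: for fixed $u \neq 0$, the function
\begin{align*}
	\lambda \longmapsto \modoneHlog{u/\lambda} = \into \hlog\!\left(x, \abs{u(x)}/\lambda\right)\dx + \into \hlog\!\left(x, \abs{\nabla u(x)}/\lambda\right)\dx,\quad \lambda>0,
\end{align*}
is continuous (by dominated convergence, using that $\hlog(x,\cdot)$ is continuous and monotone and $u \in \WHlog$), convex (as an integral of convex functions in $1/\lambda$) and strictly decreasing in $\lambda$ (from the strict monotonicity of $\hlog(x,\cdot)$). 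Combined with the definition of the Luxemburg norm $\normoneHlog{\cdot}$ as an infimum, this gives \textnormal{(i)}, and then \textnormal{(i)} together with strict monotonicity yields \textnormal{(ii)}.

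For items \textnormal{(iii)} and \textnormal{(iv)} I would plug the exponents provided by Lemma \ref{Le:PropertiesHlog} into Proposition \ref{Prop:AbstractoneNormModular}: \textnormal{(Inc)}$_{p_-}$ implies \textnormal{(aInc)}$_{p_-}$ with constant $1$, and \textnormal{(Dec)}$_{q_+ + \kappa}$ (resp.\ \textnormal{(aDec)}$_{q_+ + \eps}$ with constant $a_\eps$) gives the upper exponent, so the abstract two-sided estimate with $a = 1$ (resp.\ $a = a_\eps$) reads exactly as stated. Finally, \textnormal{(v)} and \textnormal{(vi)} are an immediate consequence of \textnormal{(iii)} by letting $\normoneHlog{u} \to 0$ or $\to \infty$.

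I do not anticipate any genuine obstacle: every ingredient is by now in place, and the only point requiring a small independent check is the embedding $\LHlog \subseteq L^1_{\loc}(\Omega)$ needed for Proposition \ref{Prop:AbstractSobolev}; everything else is a literal transcription of the proof of Proposition \ref{Prop:HlogModularNorm} with the Sobolev modular and norm in place of the Lebesgue ones.
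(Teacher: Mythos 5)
Your proposal is correct and takes exactly the route the paper indicates: the paper simply says the proof is "completely analogous to the proof of Proposition \ref{Prop:HlogModularNorm}" with Proposition \ref{Prop:AbstractSobolev} replacing Proposition \ref{Prop:AbstractBanach} and Proposition \ref{Prop:AbstractoneNormModular} replacing Proposition \ref{Prop:AbstractNormModular}, which is precisely your plan. Your extra check of the hypothesis $\LHlog \subseteq L^1_{\loc}(\Omega)$ via Proposition \ref{Prop:EmbeddingHlog} is a useful small addition the paper leaves implicit, and you also correctly observe that $\lambda \mapsto \modoneHlog{u/\lambda}$ is strictly \emph{decreasing} in $\lambda$ (the paper's proof of Proposition \ref{Prop:HlogModularNorm} has a harmless typo saying "increasing").
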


These Sobolev spaces satisfy the following embeddings.

\begin{proposition}
	\label{Prop:EmbeddingHlogSobolev}
	Let \eqref{Assump:Space} be satisfied, then the following hold:
	\begin{enumerate}
		\item[\textnormal{(i)}]
			$\WHlog \hookrightarrow \Wp{p(\cdot)}$ and $\WHlogzero \hookrightarrow \Wpzero{p(\cdot)}$ are continuous;
		\item[\textnormal{(ii)}]
			if $p \in C_+(\close) \cap C^{0, \frac{1}{|\log t|}}(\close)$, then $\WHlog \hookrightarrow \Lp{p^*(\cdot)}$ and $\WHlogzero$ $\hookrightarrow \Lp{p^*(\cdot)}$ are continuous;
		\item[\textnormal{(iii)}]
			$\WHlog \hookrightarrow \Lp{r(\cdot)}$ and $\WHlogzero \hookrightarrow \Lp{r(\cdot)}$ are compact for $r \in C(\close) $ with $ 1 \leq r(x) < p^*(x)$ for all $x\in \close$;
		\item[\textnormal{(iv)}]
			if $p \in C_+(\close) \cap W^{1,\gamma}(\Omega)$ for some $\gamma>N$, then $\WHlog \hookrightarrow \Lprand{p_*(\cdot)}$ and $\WHlogzero \hookrightarrow \Lprand{p_*(\cdot)}$ are continuous;
		\item[\textnormal{(v)}]
			$\WHlog \hookrightarrow \Lprand{r(\cdot)}$ and $\WHlogzero \hookrightarrow \Lprand{r(\cdot)}$ are compact for $r \in C(\close) $ with $ 1 \leq r(x) < p_*(x)$ for all $x\in \close$.
	\end{enumerate}
\end{proposition}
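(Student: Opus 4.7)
The plan is to reduce every assertion to the already established continuous embedding $\LHlog \hookrightarrow \Lp{p(\cdot)}$ from Proposition \ref{Prop:EmbeddingHlog}, and then compose with the classical variable exponent Sobolev embeddings (Propositions \ref{Prop:classicalembedd} and \ref{Prop:classicalembedd:boundary}). I expect all five parts to follow from this two-step pattern, so the proof is essentially a chain of embeddings.

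For part \textnormal{(i)}, I would argue as follows. Given $u \in \WHlog$, both $u$ and $|\nabla u|$ lie in $\LHlog$, hence by Proposition \ref{Prop:EmbeddingHlog} they also lie in $\Lp{p(\cdot)}$ with
\begin{align*}
	\|u\|_{p(\cdot)} + \|\nabla u\|_{p(\cdot)} \leq C \left( \normHlog{u} + \normHlog{\nabla u} \right) \leq C' \normoneHlog{u},
\end{align*}
where the last step uses the equivalence between sum-of-modular-norms and the Luxemburg norm induced by $\varrho_{1,\hlog}$ (which follows from Proposition \ref{Prop:oneHlogModularNorm} combined with Proposition \ref{Prop:HlogModularNorm}, since both behave as powers of each other for small/large values). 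This yields the continuous embedding $\WHlog \hookrightarrow \Wp{p(\cdot)}$. For the zero-trace version, note that $C_0^\infty(\Omega) \subseteq \WHlog$ and the above inequality shows that the inclusion $C_0^\infty(\Omega) \hookrightarrow \Wpzero{p(\cdot)}$ is $\normoneHlog{\cdot}$-continuous; passing to the closure gives $\WHlogzero \hookrightarrow \Wpzero{p(\cdot)}$.

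Parts \textnormal{(ii)}--\textnormal{(v)} then follow by composition. For \textnormal{(ii)}, under the log-Hölder assumption on $p$, Proposition \ref{Prop:classicalembedd} applied with $s = p^*$ gives $\Wp{p(\cdot)} \hookrightarrow \Lp{p^*(\cdot)}$; chaining with \textnormal{(i)} yields $\WHlog \hookrightarrow \Lp{p^*(\cdot)}$ and similarly for $\WHlogzero$. Part \textnormal{(iii)} is analogous, using the compact part of Proposition \ref{Prop:classicalembedd}: since composition of a continuous and a compact linear operator is compact, we obtain compact embeddings into $\Lp{r(\cdot)}$ for every $r\in C(\close)$ with $1\leq r(x) < p^*(x)$. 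Parts \textnormal{(iv)} and \textnormal{(v)} proceed identically but invoking Proposition \ref{Prop:classicalembedd:boundary} for the trace embeddings into $\Lprand{p_*(\cdot)}$ and $\Lprand{r(\cdot)}$, respectively, using the stronger hypothesis $p \in W^{1,\gamma}(\Omega)$ for some $\gamma > N$.

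The only mildly delicate point is the bookkeeping in the first paragraph: one has to verify that controlling the two separate $\normHlog{\cdot}$-norms of $u$ and $\nabla u$ is equivalent to controlling $\normoneHlog{u}$. This is not a deep issue but requires care, because the norm on $\WHlog$ is the Luxemburg norm associated with the \emph{sum} modular $\varrho_{1,\hlog} = \varrho_{\hlog}(u) + \varrho_{\hlog}(\nabla u)$, not the sum of the two Luxemburg norms. The equivalence follows from the standard argument comparing both quantities via Propositions \ref{Prop:HlogModularNorm}\textnormal{(iii)} and \ref{Prop:oneHlogModularNorm}\textnormal{(iii)}, which on either side of $1$ give two-sided power bounds in $p_-$ and $q_+ + \kappa$. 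Apart from this, nothing else should cause trouble, since all the heavy lifting has been packaged into Proposition \ref{Prop:EmbeddingHlog} and the classical variable-exponent embedding results.
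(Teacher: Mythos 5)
Your proposal is correct and follows the same two-step strategy the paper uses: reduce part (i) to the continuous embedding $\LHlog \hookrightarrow \Lp{p(\cdot)}$ of Proposition \ref{Prop:EmbeddingHlog} (the paper literally says (i) follows directly from it), and obtain (ii)--(v) by composing (i) with the classical variable exponent Sobolev embeddings of Propositions \ref{Prop:classicalembedd} and \ref{Prop:classicalembedd:boundary}. The only remark worth making is that the step you flag as delicate is in fact the easy direction (clearly $\normHlog{u}, \normHlog{\nabla u} \leq \normoneHlog{u}$, since $\varrho_{1,\hlog}(u/\lambda) \leq 1$ forces both individual modulars $\leq 1$), and since $\hlog(x,t) \geq t^{p(x)}$ one even gets $\norm{u}_{1,p(\cdot)} \leq \normoneHlog{u}$ directly with constant $1$ by comparing modulars, so no power-bound comparison is needed.
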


\begin{proof}
	The proof of \textnormal{(i)} follows directly from Proposition \ref{Prop:EmbeddingHlog}. The proofs of \textnormal{(ii)} - \textnormal{(v)} follow from \textnormal{(i)} and the usual Sobolev embeddings of $\Wp{p(\cdot)}$ and $\Wpzero{p(\cdot)}$ in Propositions \ref{Prop:classicalembedd} and \ref{Prop:classicalembedd:boundary}.
\end{proof}

We also have the property that the truncation of functions on $\WHlog$ and $\WHlogzero$ stays within the space, as it is proven in the next result.

\begin{proposition}
	\label{Prop:Truncations}
	Let \eqref{Assump:Space} be satisfied, then
	\begin{enumerate}
		\item[\textnormal{(i)}]
			If $u \in \WHlog$, then $u^\pm \in \WHlog$ with $\nabla (\pm u) = \nabla u 1_{ \{ \pm u > 0 \} }$;
		\item[\textnormal{(ii)}]
			if $u_n \to u$ in $\WHlog$, then $u_n^\pm \to u^\pm$ in $\WHlog$;
		\item[\textnormal{(iii)}]
			if we further assume $0\leq\mu(\cdot) \in \Lp{\infty}$, then $u \in \WHlogzero$ implies $u^\pm \in \WHlogzero$.
	\end{enumerate}
\end{proposition}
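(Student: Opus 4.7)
For part (i), I plan to exploit the continuous embedding $\WHlog \hookrightarrow \Wp{p(\cdot)} \hookrightarrow \Wp{1}$ from Proposition \ref{Prop:EmbeddingHlogSobolev}(i), together with $L^{p(\cdot)}(\Omega)\hookrightarrow L^1(\Omega)$ on the bounded domain $\Omega$. The classical truncation theorem in $\Wp{1}$ then provides $u^\pm \in \Wp{1}$ together with the pointwise gradient formula. Since $\hlog(x,\cdot)$ is increasing and the bounds $|u^\pm| \leq |u|$ and $|\nabla u^\pm| \leq |\nabla u|$ hold pointwise, one has $\modoneHlog{u^\pm} \leq \modoneHlog{u} < \infty$, hence $u^\pm \in \WHlog$.

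The essential work lies in part (ii). After extracting a subsequence along which $u_n \to u$ and $\nabla u_n \to \nabla u$ almost everywhere, I would base the argument on the decomposition
\begin{equation*}
\nabla u_n^+ - \nabla u^+ = (\nabla u_n - \nabla u)\, 1_{\{u_n > 0\}} + \nabla u \bigl(1_{\{u_n > 0\}} - 1_{\{u > 0\}}\bigr),
\end{equation*}
and its negative-part analogue. Lemma \ref{Le:PropertiesHlog}(iii) provides \textnormal{(aDec)}, hence the doubling estimate $\hlog(x,a+b) \leq C(\hlog(x,a) + \hlog(x,b))$, so the first summand contributes at most $C\modHlog{\nabla u_n - \nabla u} \to 0$. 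For the second, since $|1_{\{u_n > 0\}} - 1_{\{u > 0\}}|$ is $\{0,1\}$-valued one obtains $\hlog(x, |\nabla u|\cdot|1_{\{u_n > 0\}} - 1_{\{u > 0\}}|) = |1_{\{u_n > 0\}} - 1_{\{u > 0\}}|\, \hlog(x, |\nabla u|)$, and dominated convergence with dominant $\hlog(x,|\nabla u|) \in \Lp{1}$ kills this term: on $\{u \neq 0\}$ the indicators eventually coincide, and on $\{u = 0\}$ the classical Stampacchia identity $\nabla u = 0$ a.e.\ annihilates the factor. The function part is controlled by the trivial bound $|u_n^\pm - u^\pm| \leq |u_n - u|$, and Proposition \ref{Prop:oneHlogModularNorm}(v) converts modular convergence into norm convergence; the subsequence principle then lifts the conclusion from the extracted subsequence to the full sequence.

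For part (iii), I select $u_n \in C_0^\infty(\Omega)$ with $u_n \to u$ in $\WHlog$ and invoke (ii) to get $u_n^\pm \to u^\pm$ in $\WHlog$; closedness of $\WHlogzero$ then reduces matters to verifying $u_n^\pm \in \WHlogzero$ for each fixed $n$. Each $u_n^\pm$ is Lipschitz with compact support $K_n \Subset \Omega$, so the standard mollification $\eta_\eps * u_n^\pm$ belongs to $C_0^\infty(\Omega)$ for all sufficiently small $\eps$. The extra hypothesis $\mu \in \Linf$, combined with the uniform bound $|\nabla u_n^\pm| \leq L$, yields the pointwise majorisation $\hlog(x,t) \leq C(L)(t^{p(x)} + t^{q(x)})$ for $t \leq 2L$, which reduces $\LHlog$-convergence of the gradients to simultaneous convergence in $\Lp{p(\cdot)}$ and $\Lp{q(\cdot)}$. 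The latter follows from a.e.\ Lebesgue-point convergence of the mollification together with ordinary dominated convergence in variable-exponent Lebesgue spaces (the uniform $L^\infty$ bound sidesteps any need for log-H\"older continuity of $p,q$). The function-part convergence is even uniform, since $u_n^\pm$ is continuous with compact support.

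The principal obstacle I foresee is the cancellation step in (ii): the term $\nabla u\,(1_{\{u_n > 0\}} - 1_{\{u > 0\}})$ is not dominated pointwise by $|\nabla u_n - \nabla u|$, and its disappearance rests on the Stampacchia vanishing of $\nabla u$ on $\{u = 0\}$ coupled with a careful dominated-convergence argument performed inside the modular, which in turn hinges on the doubling (\textnormal{(aDec)}) property of $\hlog$ furnished by Lemma \ref{Le:PropertiesHlog}.
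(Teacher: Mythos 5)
Your proposal is correct and follows essentially the same route as the paper: part (i) via the embedding into a classical Sobolev space and the monotonicity of $\hlog(x,\cdot)$; part (ii) via the decomposition $\nabla u_n^\pm - \nabla u^\pm = (\nabla u_n - \nabla u)\,1_{\{\pm u_n>0\}} + \nabla u\,(1_{\{\pm u_n>0\}}-1_{\{\pm u>0\}})$, the doubling estimate \eqref{Eq:LogGrowthSum}/(\textnormal{aDec}), the vanishing $\nabla u = 0$ a.e.\ on $\{u=0\}$, dominated convergence along an a.e.\ subsequence, and the subsequence principle; part (iii) via mollifying the Lipschitz, compactly supported truncations. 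Your observation that $\hlog(x,ct)=c\,\hlog(x,t)$ for $c\in\{0,1\}$ is a slightly cleaner bookkeeping of the second summand than the paper's, but the substance is identical.
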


\begin{proof}
	Part \textnormal{(i)} follows from the classical case. Indeed, let $r \in \R$ with $1 \leq r \leq \infty$, then for any $u \in \Wp{r}$ we know that $u^\pm \in \Wp{r}$ and $\nabla (\pm u) = \nabla u 1_{ \{ \pm u > 0 \} }$; see, for example, the book by Heinonen--Kilpel\"{a}inen--Martio \cite[Lemma 1.19]{Heinonen-Kilpelainen-Martio-2006}. The reason is that by Proposition \ref{Prop:EmbeddingHlogSobolev} \textnormal{(i)} $u \in \Wp{p_-}$ and $\abs{\pm u ^\pm} \leq \abs{u}$.

	For part \textnormal{(ii)}, note again that $\abs{\pm u_n ^\pm \mp u^\pm} \leq \abs{u_n - u}$. Then by Proposition \ref{Prop:oneHlogModularNorm} \textnormal{(v)}, we have that $\modHlog{u_n^\pm - u^\pm} \to 0$. It is only left to see the convergence of the terms with the gradients. We do only the logarithmic term, the other one can be done similarly. Note that $\abs{\pm \nabla u_n^\pm \mp \nabla u^\pm} = \abs{1_{\{ \pm u_n > 0 \}} \nabla u_n - 1_{\{ \pm u > 0 \}} \nabla u}$, then by \eqref{Eq:LogGrowthSum} we have
	\begin{align*}
		 & \into \mu(x) \abs{\pm \nabla u_n^\pm \mp \nabla u^\pm}^{q(x)} \log (e + \abs{\pm \nabla u_n^\pm \mp \nabla u^\pm} ) \dx \\
		 & \leq 2^{q_+ + 1} \into \mu(x) \abs{ \nabla u_n - \nabla u } ^{q(x)} \log(e + \abs{ \nabla u_n - \nabla u } ) \dx        \\
		 & \quad + 2^{q_+ + 1} \into \mu(x)\abs{ \nabla u}^{q(x)} \abs{  1_{\{ \pm u_n > 0 \}} - 1_{\{ \pm u > 0 \}} } ^{q(x)}     \\
		 & \qquad\qquad\qquad \times \log (e + \abs{\nabla u} \abs{  1_{\{ \pm u_n > 0 \}} - 1_{\{ \pm u > 0 \}} } ) \dx.
	\end{align*}
	The first term on the right-hand side converges to zero by Proposition \ref{Prop:HlogModularNorm} \textnormal{(v)}. The second one also converges to zero by taking an a.e.\,convergent subsequence, using the dominated convergence theorem and then using the subsequence principle. For the application of the dominated convergence theorem, take into account that $\nabla u = 0$ on the set $\{ u = 0 \}$ by \textnormal{(i)}. All in all, this yields $\modoneHlog{u_n^\pm - u^\pm} \to 0$ and the final result by Proposition \ref{Prop:oneHlogModularNorm} \textnormal{(v)}.

	Part \textnormal{(iii)} is more technical. For $u \in \WHlogzero$ we know that there exists a sequence $\{u_n\}_{n \in \N} \subseteq C^\infty _0 (\Omega)$ such that $u_n \to u$ in $\WHlog$, which by part \textnormal{(ii)} implies $u_n^\pm \to u^\pm$ in $\WHlog$. In particular we have that $u_n^\pm \in C_0 = \{ v \in C(\Omega) : \operatorname{supp} v \text{ is compact}\}$ and $\partial_{x_i} u_n^\pm \in \Lp{\infty}$ for all $n \in \N$ and all $1 \leq i \leq N$. Consider the standard mollifier $\eta_\eps$. For each $n \in \N$ there is a small $\eps_n > 0$ such that $\eta _ \eps \ast u_n^\pm \in C^\infty_0 (\Omega)$ for $0 < \eps < \eps_n$. Moreover, for any $\delta > 0$
	\begin{align*}
		 & \eta_\eps \ast u_n^\pm \to u_n^\pm \quad \text{uniformly in $\Omega$ as } \eps \to 0,   \\
		 & \partial_{x_i} (\eta_\eps \ast u_n^\pm) = \eta_\eps \ast \partial_{x_i} u_n^\pm \to \partial_{x_i} u_n^\pm \quad \text{in } \Lp{q_+ + \delta} \text{ as } \eps \to 0.
	\end{align*}
	By Propositions \ref{Prop:EmbeddingHlog} and \ref{Prop:HlogModularNorm} \textnormal{(v)} this means that $\modoneHlog{\eta _ {\eps } \ast u_n^\pm - u_n^\pm} \to 0$ as $\eps \to 0$, or also in the norm. Altogether, for each $u_n^+, u_n^- $ we can find some $v_n, \widetilde{v}_n \in C_0 ^\infty (\Omega)$ as close as we want to them in the norm of $\WHlog$; and these new sequences satisfy $v_n \to u^+$ and $\widetilde{v}_n \to u^-$ in $\WHlog$.
\end{proof}

As a consequence of the embeddings above we can prove that a Poincar\'{e} inequality is satisfied in $\WHlogzero$ if we further assume  in \eqref{Assump:Space} that $0\leq\mu(\cdot) \in \Lp{\infty}$ and $q(x) < p^*(x)$ for all $x \in \close$. So we suppose the following:
\begin{enumerate}[label=\textnormal{(H)},ref=\textnormal{H}]
	\item\label{Assump:Poincare}
	$\Omega \subseteq \R^N$, with $N \geq 2$, is a bounded domain with Lipschitz boundary $\partial \Omega$, $p,q \in C_+ (\close)$ with $p(x) \leq q(x) < p^*(x)$ for all $x \in \close$ and $0\leq\mu(\cdot) \in \Lp{\infty}$.
\end{enumerate}
As it is usual in the literature, we denote $\normHlog{ \nabla u} = \normHlog{\,\abs{\nabla u}\,} $ and $\modHlog{ \nabla u} = \modHlog{ \abs{\nabla u} } $.

\begin{proposition}
	\label{Prop:Poincare}
	Let \eqref{Assump:Poincare} be satisfied. Then $\WHlog \hookrightarrow \LHlog$ is a compact embedding and there exists a constant $C>0$ such that
	\begin{align*}
		\normHlog{u} \leq C \normHlog{ \nabla u } \quad \text{for all } u \in \WHlogzero.
	\end{align*}
\end{proposition}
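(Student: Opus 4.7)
The plan is to handle the two assertions in order, since the compact embedding is precisely what enables a standard contradiction argument for the Poincaré inequality.

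For the compact embedding $\WHlog \hookrightarrow \LHlog$, I would chain three embeddings already established in the paper. Proposition \ref{Prop:EmbeddingHlogSobolev}(i) yields $\WHlog \hookrightarrow \Wp{p(\cdot)}$ continuously. Since $q,p^* \in C(\close)$ with $q(x) < p^*(x)$ on the compact set $\close$, there exists $\eps > 0$ so small that $q(x) + \eps < p^*(x)$ for every $x \in \close$; the compactness part of Proposition \ref{Prop:classicalembedd} then gives the compact embedding $\Wp{p(\cdot)} \hookrightarrow \Lp{q(\cdot) + \eps}$. Finally, invoking $\mu \in \Linf$ from \eqref{Assump:Poincare}, Proposition \ref{Prop:EmbeddingHlog} yields the continuous chain $\Lp{q(\cdot) + \eps} \hookrightarrow L^{q(\cdot)} \log L(\Omega) \hookrightarrow \LHlog$. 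Composing these three embeddings (continuous, compact, continuous) gives compactness of $\WHlog \hookrightarrow \LHlog$.

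For the Poincaré inequality I would argue by contradiction. Assume no such $C$ exists; then there is a sequence $\{u_n\}_{n \in \N} \subseteq \WHlogzero$ with $u_n \neq 0$ and $\normHlog{u_n} > n \normHlog{\nabla u_n}$, and setting $v_n = u_n / \normHlog{u_n}$ produces $\normHlog{v_n} = 1$ together with $\normHlog{\nabla v_n} \to 0$. These bounds pass through the modular-norm comparisons in Propositions \ref{Prop:HlogModularNorm} and \ref{Prop:oneHlogModularNorm} to show that $\{v_n\}$ is bounded in the reflexive space $\WHlogzero$. Passing to a subsequence, $v_n \weak v$ in $\WHlogzero$, and by the compact embedding just proved, $v_n \to v$ strongly in $\LHlog$.

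Now $\normHlog{\nabla v_n} \to 0$ directly means $\nabla v_n \to 0$ strongly in $\LHlog$, while $v_n \weak v$ in $\WHlogzero$ implies $\nabla v_n \weak \nabla v$; uniqueness of weak limits therefore forces $\nabla v = 0$ a.e.\ in $\Omega$. Since $\WHlogzero \hookrightarrow \Wpzero{p(\cdot)}$ by Proposition \ref{Prop:EmbeddingHlogSobolev}(i), the variable exponent Poincaré inequality from Section \ref{musielak-orlicz-preliminaries} yields $\norm{v}_{p(\cdot)} \leq c_0 \norm{\nabla v}_{p(\cdot)} = 0$, so $v = 0$. But then $1 = \normHlog{v_n} \to \normHlog{0} = 0$, the desired contradiction. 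The main delicate point in the whole argument is the first step: one needs room for an intermediate exponent $q(\cdot) + \eps < p^*(\cdot)$, which is exactly what the strict inequality $q(x) < p^*(x)$ in \eqref{Assump:Poincare} was introduced to guarantee.
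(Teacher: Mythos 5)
Your proof is correct and follows essentially the same route as the paper: you obtain the compact embedding by finding $\eps > 0$ with $q(\cdot) + \eps < p^*(\cdot)$ uniformly and then chaining Propositions \ref{Prop:EmbeddingHlogSobolev} and \ref{Prop:EmbeddingHlog} (the paper cites \ref{Prop:EmbeddingHlogSobolev}(iii) directly, which is the same content you unroll via (i) and Proposition \ref{Prop:classicalembedd}), and you then derive the Poincar\'e inequality by the standard normalization-and-contradiction argument that the paper simply references to \cite{Crespo-Blanco-Gasinski-Harjulehto-Winkert-2022}.
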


\begin{proof}
	From \eqref{Assump:Poincare} we can deduce that there exists $\eps > 0$ such that $q(x) + \eps < p^*(x)$ for all $x \in \close$. Then the compact embedding follows from Propositions \ref{Prop:EmbeddingHlogSobolev} \textnormal{(iii)} and \ref{Prop:EmbeddingHlog}.

	The inequality follows from the compact embedding in the standard way (see, for example, Proposition 2.18 in the paper by Crespo-Blanco--Gasi\'{n}ski--Harjulehto--Winkert \cite{Crespo-Blanco-Gasinski-Harjulehto-Winkert-2022})
\end{proof}

Due to the previous inequality we can take in the space $\WHlogzero$ the equivalent norm $\normoneHlogzero{u} = \normHlog{ \nabla u }$.

In the last part of this section, we investigate the density of smooth functions in the space $\WHlog$. For this purpose, we check which of the \textnormal{(A0)}, \textnormal{(A1)} and \textnormal{(A2)} assumptions are satisfied by $\hlog$.

\begin{lemma}
	\label{Le:A0A2}
	Let \eqref{Assump:Space} be satisfied, then $\hlog$ satisfies \textnormal{(A0)} and \textnormal{(A2)}.
\end{lemma}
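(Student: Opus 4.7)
My plan is to verify the equivalent primed formulations (A0)' and (A2)' from Lemma~\ref{Le:equivalences}\,(iv) and (vi), since both conditions become more tractable when phrased in terms of $\hlog$ directly: the generalized inverse $\hlog^{-1}(x,\cdot)$ is awkward to compute, whereas $\hlog(x,\cdot)$ is continuous and strictly increasing, so the primed versions give clean pointwise inequalities.

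For (A0)', I would seek a single $\beta\in(0,1]$ with $\hlog(x,\beta)\leq 1\leq \hlog(x,\beta^{-1})$ for a.e.\,$x\in\Omega$. The upper bound comes for free: from $\beta\leq 1$ we get $\beta^{-1}\geq 1$, and hence
\[
\hlog(x,\beta^{-1}) \;\geq\; (\beta^{-1})^{p(x)} \;\geq\; 1.
\]
For the lower bound, using $\beta\leq 1$, $p(x)\geq p_-$ and $q(x)\geq q_-$, I would estimate
\[
\hlog(x,\beta) \;\leq\; \beta^{p_-} + \mu(x)\,\beta^{q_-}\log(e+1),
\]
and then pick $\beta$ small enough, in terms of an essential bound of $\mu$ and of $p_-,q_-$, so that the right-hand side stays $\leq 1$ a.e.\,in $\Omega$.

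For (A2)', I would try the $x$-independent weak $\Phi$-function $\hlog_\infty(t) = t^{p_-}$, which captures the dominant behaviour of $\hlog(x,\cdot)$ near $t=0$. The two inequalities $\hlog(x,\beta t)\leq \hlog_\infty(t)+h(x)$ and $\hlog_\infty(\beta t)\leq \hlog(x,t)+h(x)$ only need to hold on the set $\{\hlog_\infty(t)\leq s\}\cap\{\hlog(x,t)\leq s\}$, which confines $t$ to a bounded interval $[0,T_s]$. On such an interval Lemma~\ref{Le:PropertiesHlog} lets me swap the variable exponents $p(x),q(x)$ for the constants $p_-,q_++\kappa$, and the logarithmic factor is absorbed into a power via $\log(e+t)\leq C_\eps(1+t^\eps)$. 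Since $\Omega$ is bounded, $h$ may be taken to be a sufficiently large constant, which automatically lies in $L^1(\Omega)\cap L^\infty(\Omega)$; this is the standard trick that makes (A2) essentially free on domains of finite measure.

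The part I expect to be delicate is the lower bound in (A0): forcing $\mu(x)\beta^{q_-}\log(e+\beta)\leq 1-\beta^{p_-}$ uniformly a.e.\,appears to need an essential bound on $\mu$, which is not directly provided by (H$_0$) alone. I would therefore look either for an implicit use of the essential boundedness of $\mu$ (which is in force in the later parts of the paper), or for a reformulation that exploits $\mu\in\Lp{1}$ through a null-set modification. Once (A0) is secured, (A2) should go through along the outline above without genuine obstruction.
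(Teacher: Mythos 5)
Your strategy---verifying (A0)' and (A2)' via Lemma~\ref{Le:equivalences}~(iv), (vi)---is precisely what the paper does, and your remark that a constant $h$ suffices because $\Omega$ is bounded matches the paper's own Remark following the lemma. Your worry about the lower bound in (A0)' is also well-founded: the paper's proof takes $\beta = \left[2(\norm{\mu}_\infty + 1)\log(e+1/2)\right]^{-1}$ and invokes $\norm{\mu}_\infty$ again in the (A2)' estimate, so the lemma tacitly relies on $\mu\in\Lp{\infty}$ rather than merely $\mu\in\Lp{1}$ as \eqref{Assump:Space} states; you are right that this cannot be avoided, since for unbounded $\mu$ the inequality $\hlog(x,\beta)\leq 1$ fails on a set of positive measure for every fixed $\beta>0$. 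The one divergence from the paper is the choice of $\ph_\infty$ in (A2)': the paper uses $\ph_\infty(t)=t^{p_++1}$ with $s=1$, $\beta=1$, and passes through a Young's inequality $\left[1+\norm{\mu}_\infty\log(e+1)\right]t^{p(x)}\leq t^{p_++1}+\left[1+\norm{\mu}_\infty\log(e+1)\right]^{p_++1}$ to absorb the coefficient into the constant $h$. Your $\ph_\infty(t)=t^{p_-}$ works equally well and is marginally shorter: the constraint $\ph_\infty(t)\leq 1$ still forces $t\leq 1$, so with $\beta=1$ one has $\hlog(x,t)\leq t^{p_-}\left[1+\norm{\mu}_\infty\log(e+1)\right]\leq t^{p_-}+\norm{\mu}_\infty\log(e+1)$ and trivially $t^{p_-}\leq 1\leq\hlog(x,t)+1$, so the constant $h\equiv 1+\norm{\mu}_\infty\log(e+1)$ serves both inequalities without any interpolation.
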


\begin{proof}
	By Lemma \ref{Le:equivalences} \textnormal{(iv)} and \textnormal{(vi)}, it is equivalent to check the conditions \textnormal{(A0)'} and \textnormal{(A2)'}. For \textnormal{(A0)'} one can take $\beta = \left[ 2 ( \norm{\mu}_\infty + 1) \log(e + 1/2) \right] ^{-1}$ since
	\begin{align*}
		\hlog (x,\beta) & \leq \frac{1}{2} + \frac{1}{2} \frac{ \norm{\mu}_\infty }{ \norm{\mu}_\infty + 1 } \frac{ \log \left( e + \frac{1}{2} \right) }{ \log \left( e + \frac{1}{2} \right) } \leq 1, \\
		\hlog (x,\beta^{-1}) & \geq 2 + 0 \geq 1.
	\end{align*}
	For \textnormal{(A2)'}, take $s=1$, $\ph_\infty(t) = t^{p_+ + 1}$ and $\beta = 1$. First note that $\ph_\infty(t) \leq 1$ implies $t \leq 1$. Thus, by Young's inequality
	\begin{align*}
		\hlog (x, \beta t)
		 & \leq [ 1 + \norm{\mu}_\infty \log (e + 1) ] t^{p(x)}                                                                                                             \\
		 & \leq \frac{ p(x) }{ p_+ + 1 } t^{ p_+ + 1 } + \frac{p_+ - p (x) + 1 }{ p_+ + 1 } [ 1 + \norm{\mu}_\infty \log( e + 1 ) ] ^{ \frac{ p_+ + 1 }{p_+ - p (x) + 1 } } \\
		 & \leq \ph_\infty (t) + [ 1 + \norm{\mu}_\infty \log( e + 1 ) ] ^{ p_+ + 1  }.
	\end{align*}
	Take $h$ as the additive constant in the previous line, then we also have
	\begin{align*}
		\ph_\infty ( \beta t )
		\leq t^{p(x)} \leq \hlog (x,t) + h(x).
	\end{align*}
\end{proof}

\begin{remark}
	Note that in the proof of \textnormal{(A2)'} the function $h$ would not be in $\Lp{1}$ if $|\Omega| = \infty$, so this argument does not generalize for unbounded domains. For that purpose, one needs an extra assumption, see Theorem \ref{Th:DensityUnbounded}.
\end{remark}

For the remaining assumption \textnormal{(A1)} we use much stricter assumptions. Indeed, instead of supposing just continuity on $p$ and $q$ as well as $L^1$-integrability of $\mu$, we require now, among others, that all three exponents are H\"{o}lder continuous on $\overline{\Omega}$.

\begin{theorem}
	Let $\Omega \subseteq \R^N$, with $N \geq 2$, be a bounded domain and the functions $p,q \colon \close \to [1,\infty)$ and $\mu \colon \close \to [0,\infty)$ be H\"older continuous functions such that $1 < p(x) \leq q(x)$ for all $x \in \close$ and
	\begin{align*}
		\left( \frac{q}{p} \right)_+ < 1 + \frac{\gamma}{N},
	\end{align*}
	where $\gamma$ is the H\"older exponent of $\mu$. Then $\hlog$ satisfies \textnormal{(A1)} and $C^\infty (\Omega) \cap \WHlog$ is dense in $\WHlog$.
\end{theorem}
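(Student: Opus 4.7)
The plan is to verify the remaining condition \textnormal{(A1)} for $\hlog$ and then invoke Theorem \ref{Th:AbstractDensity} for the density statement, since \textnormal{(A0)} and \textnormal{(A2)} are already contained in Lemma \ref{Le:A0A2} and \textnormal{(aDec)} in Lemma \ref{Le:PropertiesHlog}. By Lemma \ref{Le:equivalences}\textnormal{(v)} it suffices to prove the equivalent formulation \textnormal{(A1)'}: find $\beta \in (0,1)$ such that $\hlog(x,\beta t) \leq \hlog(y,t)$ whenever $x,y \in B \cap \Omega$, $|B| \leq 1$, and $\hlog(y,t) \in [1,1/|B|]$.

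I would split the argument into two regimes. For $|B|$ bounded below by a fixed small $r_0>0$, the constraint $\hlog(y,t) \leq 1/r_0$ confines $t$ to a compact interval and, since $\mu$ is bounded on $\close$ by H\"older continuity, a small enough $\beta$ makes $\hlog(x,\beta t)$ uniformly smaller than $1 \leq \hlog(y,t)$. The substantive case is $|B|<r_0$, where I would establish an auxiliary estimate of the form
\begin{align*}
    |B|^{\gamma/N} t^{q(x)+\theta} \leq \hlog(y,t)
\end{align*}
for admissible exponents $\theta \geq 0$ with $(q(x)+\theta)/p(y) \leq 1 + \gamma/N$. This is obtained by raising the trivial bound $t^{p(y)} \leq \hlog(y,t)$ to the power $(q(x)+\theta)/p(y)$ and then splitting into the two subcases $(q(x)+\theta)/p(y) \gtrless 1$, absorbing the surplus through $\hlog(y,t) \in [1, 1/|B|]$. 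The hypothesis $(q/p)_+ < 1 + \gamma/N$ together with the continuity of $p,q$ ensures that both $\theta=0$ and $\theta=\delta$ are admissible for some sufficiently small $\delta>0$ on small enough balls.

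With the auxiliary estimate in hand, I would decompose $\hlog(x,\beta t) = (\beta t)^{p(x)} + \mu(x) (\beta t)^{q(x)} \log(e+\beta t)$ and bound each piece by a constant multiple of $\beta^{p_-} \hlog(y,t)$. The term $(\beta t)^{p(x)}$ is treated by the classical variable-exponent argument using the H\"older continuity of $p$ together with $t \leq |B|^{-1/p_-}$. For the logarithmic term, I would split $\mu(x) = \mu(y) + (\mu(x)-\mu(y))$: the $\mu(y)$-piece is handled by absorbing $t^{q(x)-q(y)}$ into a uniform constant via H\"older continuity of $q$; the $(\mu(x)-\mu(y))$-piece, which is the most delicate part, uses $|\mu(x)-\mu(y)| \leq L|B|^{\gamma/N}$ and the growth estimate $\log(e+\beta t) \leq C_\delta(1+t^\delta)$, and is then controlled via the auxiliary estimate applied with $\theta \in \{0,\delta\}$. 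Choosing $\beta$ small enough to absorb all accumulated constants completes \textnormal{(A1)'}, and the density assertion follows directly from Theorem \ref{Th:AbstractDensity}.

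The main obstacle I foresee is exactly the $(\mu(x)-\mu(y))$-piece: because $t^{q(x)}\log(e+t)$ behaves like $t^{q(x)+\delta}$ for large $t$, it consumes a small extra margin $\delta$ in the effective growth exponent, and this is precisely why the \emph{strict} inequality in $(q/p)_+ < 1 + \gamma/N$ is essential, as it is what leaves room to choose an admissible $\delta>0$ for which the auxiliary estimate still applies.
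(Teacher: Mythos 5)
Your argument is correct and invokes the same surrounding machinery (Lemma \ref{Le:equivalences}(v) to pass to \textnormal{(A1)'}, then Lemmas \ref{Le:PropertiesHlog} and \ref{Le:A0A2} together with Theorem \ref{Th:AbstractDensity}), but the key estimate is obtained by a genuinely different device than the one in the paper. The paper first rewrites $\hlog(y,t)\in[1,1/|B|]$ as an explicit interval constraint on $t$, uses a Claim that $t^{p(x)}\leq M t^{p(y)}$ and $t^{q(x)}\leq M t^{q(y)}$ (so all exponents live at $y$ afterwards), and then treats the excess piece coming from $|\mu(x)-\mu(y)|\leq c_\mu 2^\gamma R^\gamma$ by plugging in $t\leq(\omega(N)R^N)^{-1/p(y)}$ and showing directly that
\begin{align*}
h(R)=R^{\gamma+N-N(q/p)_+}\log\bigl(e+(\omega(N)R^N)^{-1/p_-}\bigr)
\end{align*}
is bounded on $[0,\omega(N)^{-1/N}]$ --- which is exactly where the strict inequality $(q/p)_+<1+\gamma/N$ enters, since $\gamma+N-N(q/p)_+>0$ makes $h(R)\to 0$ as $R\to 0^+$. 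You instead build the auxiliary estimate $|B|^{\gamma/N}t^{q(x)+\theta}\leq\hlog(y,t)$ by raising the trivial bound $t^{p(y)}\leq\hlog(y,t)$ to the power $(q(x)+\theta)/p(y)\leq 1+\gamma/N$ and absorbing the overshoot via $\hlog(y,t)\leq 1/|B|$, then peel off $\log(e+\beta t)\leq C_\delta(1+t^\delta)$ and invoke the auxiliary estimate with $\theta\in\{0,\delta\}$; this is a cleaner algebraic route that sidesteps the explicit analysis of $h(R)$ near $R=0$. The price is that you keep the exponent $q(x)$ while the denominator is $p(y)$, so your admissibility $(q(x)+\delta)/p(y)\leq 1+\gamma/N$ is a cross-point condition that genuinely needs the ball to be small; your two-regime split ($|B|\geq r_0$ handled by compactness of the admissible $t$-range, $|B|<r_0$ handled by continuity of $p,q$ giving slack for $\delta$) correctly addresses this, whereas the paper avoids the split altogether by passing to exponents at $y$ via the Claim before estimating the $\mu$-remainder. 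Both arguments use the strictness of the hypothesis in exactly the same spirit, as you correctly point out.
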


\begin{proof}
	It suffices to show that $\hlog$ satisfies \textnormal{(A1)} because the density follows from Theorem \ref{Th:AbstractDensity}, Lemma \ref{Le:PropertiesHlog} and Lemma \ref{Le:A0A2}.

	By Lemma \ref{Le:equivalences} \textnormal{(v)}, it is equivalent to check \textnormal{(A1)'}. Let $B \subseteq \R^N$ be a ball such that $|B| \leq 1$. We start by rewriting the condition $\hlog(y,t) \in [ 1 , 1/|B|]$ into a simpler statement. From this condition we can derive that
	\begin{align*}
		\begin{rcases}
			\text{if } t \leq 1, & 1 \leq \hlog(y,t)  \\
			\text{if } t \geq 1, & \phantom{\leq \hlog(y,t)} 1
		\end{rcases}
		\leq [ 1 + \norm{\mu}_\infty \log(e + 1)] t^{p (y)},
	\end{align*}
	and
	\begin{align*}
		\begin{rcases}
			\text{if } t \leq 1, & \phantom{\hlog(y)} \frac{1}{|B|} \geq 1 \\
			\text{if } t \geq 1, & \frac{1}{|B|} \geq \hlog(y,t)
		\end{rcases}
		\geq t^{p (y)},
	\end{align*}
	which altogether means
	\begin{align}
		\label{Eq:A1Interval}
		\frac{1}{[ 1 + \norm{\mu}_\infty \log(e + 1)]^{\frac{1}{p (y)} }} \leq t \leq  \frac{1}{|B|^{\frac{1}{p (y)} }}.
	\end{align}

	{\bf Claim:} There exists a constant $M > 0$ depending only on $N, p, q, \mu$ such that
	\begin{align*}
		t^{p(x)} \leq M t^{p(y)}
		\quad \text{and} \quad
		t^{q(x)} \leq M t^{q(y)}
	\end{align*}
	for all $x,y \in B \cap \Omega$ and $t \geq 0$ such that $\hlog(y,t) \in [ 1 , 1/|B|]$, and any ball $B \subseteq \R^N$ such that $|B| \leq 1$.

	We only do the $p$ case, the $q$ case is identical. If either $t \leq 1$ and $p(x) \geq p(y)$, or $t \geq 1$ and $p(x) \leq p(y)$, then one can simply take $M=1$. If $t \leq 1$ and $p(x) \leq p(y)$, from \eqref{Eq:A1Interval} we obtain
	\begin{align*}
		t^{p(x)} = t^{p(x) - p(y)} t^{p(y)}
		 & \leq \left( [ 1 + \norm{\mu}_\infty \log(e + 1)]^{\frac{1}{p (y)} } \right) ^{p(y) - p(x)} t^{p(y)} \\
		 & \leq [ 1 + \norm{\mu}_\infty \log(e + 1)]^{\frac{p_+}{p_-}}  t^{p(y)},
	\end{align*}
	so we can take $M = [ 1 + \norm{\mu}_\infty \log(e + 1)]^{ p_+ / p_- }$. Consider the remaining case $t \geq 1$ and $p(x) \geq p(y)$. Note that for any ball $B \subseteq \R^N$ of radius $R$ we have $|B| = \omega(N) R^N$, where $\omega(N) > 0$ is the constant associated to $\R^N$, and also that $x,y \in |B|$ implies $\abs{x-y} \leq 2R$. As $p$ is H\"older continuous with exponent $0 < \alpha \leq 1$ and constant $c_p > 0$, $x,y \in |B|$ implies $\abs{p(x) - p(y)} \leq c_p 2^\alpha R^\alpha$. Remember that $|B| \leq 1$, so $|B|^{-1/p(y)} \leq |B|^{-1/p_-}$. From this and \eqref{Eq:A1Interval} we get
	\begin{align*}
		t^{p(x)} = t^{p(x) - p(y)} t^{p(y)}
		 & \leq \left( \omega(N) R^N \right) ^{ \frac{-1}{p_-} c_p 2^\alpha R^\alpha } t^{p(y)} \\
		 & = \left( \omega(N)^{\frac{ - c_p 2^\alpha }{ p_- }} \right)^{R^\alpha} \left(  R^{R^\alpha} \right)^{\frac{- c_p 2^\alpha N}{p_-}} t^{p(y)}.
	\end{align*}
	Since $|B| \leq 1$, we know that $R \leq \omega(N)^{-1/N}$. On the other hand, the function $h(R) = \left( \omega(N)^{ - c_p 2^\alpha / p_- } \right)^{R^\alpha} \left(  R^{R^\alpha} \right)^{ - c_p 2^\alpha N / p_- }$ is strictly positive and continuous in the interval $[0 , \omega(N)^{-1/N}]$ (note $h(0) = 1)$. Thus it attains its maximum at some $R_0$ in that interval and we can take $M = h(R_0)$. This ends the proof of the Claim.

	Let us now prove the inequality of \textnormal{(A1)'} using the previous information. To this end, let $ 0 < \beta < M^{-1/p_-} < 1$, $0 < \gamma \leq 1$ be the H\"{o}lder exponent of $\mu$ and $c_\mu > 0$ be the corresponding constant, so as in the proof of the Claim  $x,y \in |B|$ implies $\abs{\mu(x) - \mu(y)} \leq c_\mu 2^\gamma R^\gamma$. By all of this and the Claim above we have
	\begin{align*}
		\hlog(\beta x, t) & \leq \beta^{p_-} \left( t^{p(x)} + \mu(x) t^{q(x)} \log(e + t) \right)  \\
		& \leq \beta^{p_-} M \left( t^{p(y)} + \mu(x) t^{q(y)} \log(e + t) \right)  \\
		& \leq \beta^{p_-} M \left( t^{p(y)} + \mu(y) t^{q(y)} \log(e + t) + c_\mu 2^\gamma R^\gamma t^{q(y)} \log(e + t) \right)  \\
		& \leq \beta^{p_-} M \left( t^{p(y)} + c_\mu 2^\gamma R^\gamma t^{q(y)} \log(e + t) \right) + \mu(y) t^{q(y)} \log(e + t).
	\end{align*}
	We continue the inequality using \eqref{Eq:A1Interval}, where we take again into account that $|B| = \omega(N) R^N$, with result
	\begin{align*}
		 & \hlog(\beta x, t) \\
		 & \leq \beta^{p_-} M t^{p(y)} \left( 1 + c_\mu 2^\gamma R^\gamma t^{q(y) - p(y)} \log(e + t) \right) + \mu(y) t^{q(y)} \log(e + t)  \\
		 & \leq \beta^{p_-} M t^{p(y)} \left[ 1 + c_\mu 2^\gamma R^\gamma \left( \omega(N) R^N \right) ^{ \frac{-1}{p(y)} (q(y) - p(y)) } \log \left( e + \left( \omega(N) R^N \right) ^{ \frac{-1}{p(y)} } \right) \right] \\
		 & \quad + \mu(y) t^{q(y)} \log(e + t).
	\end{align*}
	Now we need to estimate the part in square brackets independently of $y$ and $R$. Let $\tau_{p,q,N} = q_-/p_+$ if $\omega(N) > 1$ and $\tau_{p,q,N} = q_+/p-$ if $\omega(N) \leq 1$. Once again, as $\omega(N) R^N \leq 1$
	\begin{align*}
		 & R^\gamma \left( \omega(N) R^N \right) ^{ \frac{-1}{p(y)} (q(y) - p(y)) } \log \left( e + \left( \omega(N) R^N \right) ^{ \frac{-1}{p(y)} } \right) \\
		 & \leq \omega(N)^{1 - \tau_{p,q,N}} R^{ \gamma + N - N\frac{q(y)}{p(y)}} \log \left( e + \left( \omega(N) R^N \right) ^{ \frac{-1}{p_-} } \right).
	\end{align*}
	Let us distinguish two cases. If $R \leq 1$,
	\begin{align*}
		 & R^{ \gamma + N - N\frac{q(y)}{p(y)}} \log \left( e + \left( \omega(N) R^N \right) ^{ \frac{-1}{p_-} } \right)                           \\
		 & \leq R^{ \gamma + N - N \left( \frac{q}{p} \right)_+ } \log \left( e + \left( \omega(N) R^N \right) ^{ \frac{-1}{p_-} } \right) = h(R).
	\end{align*}
	This function $h$ is positive and continuous in the interval $[0 , \omega(N)^{-1/N}]$ because
	\begin{align*}
		\lim_{R \to 0^+} h(R) = 0,
	\end{align*}
	where this limit follows from $\gamma + N - N (q/ p)_+ > 0$ and by L'Hospital's rule. Hence $h$ attains its maximum at some $R_0$ in that interval and we can use $h(R_0)$ as upper estimate. In the other case, if $R \geq 1$
	\begin{align*}
		 & R^{ \gamma + N - N\frac{q(y)}{p(y)}} \log \left( e + \left( \omega(N) R^N \right) ^{ \frac{-1}{p_-} } \right) \\
		 & \leq R^{ \gamma + N - N \left( \frac{q}{p} \right)_- } \log \left( e + \left( \omega(N) \right) ^{ \frac{-1}{p_-} } \right)\\
		 & \leq \omega(N)^{ \frac{- \gamma}{N} - 1 + \left( \frac{q}{p} \right)_- } \log \left( e + \left( \omega(N) \right) ^{ \frac{-1}{p_-} } \right) = \widetilde{\Lambda}_{p,q,N},
	\end{align*}
	which follows from $\gamma + N - N ( \frac{q}{p} )_- > 0$ and $R \leq \omega(N)^{-1/N}$ (or equivalently $|B| \leq 1$). Let $\Lambda_{p,q,N}$ be the maximum of $\omega(N)^{1 - \tau_{p,q,N}} h(R_0)$ and $\omega(N)^{1 - \tau_{p,q,N}} \widetilde{\Lambda}_{p,q,N}$. Altogether, we have proved that
	\begin{align*}
		\hlog(\beta x, t) \leq \beta^{p_-} M t^{p(y)} \left[ 1 + c_\mu 2^\gamma \Lambda_{p,q,N} \right] + \mu(y) t^{q(y)} \log(e + t).
	\end{align*}
	If we take
	\begin{align*}
		\beta < M^{\frac{-1}{p_-}} \left[ 1 + c_\mu 2^\gamma \Lambda_{p,q,N} \right] ^{\frac{-1}{p_-}},
	\end{align*}
	we obtain $\hlog(\beta x, t) \leq \hlog(y, t)$ and the proof is complete.
\end{proof}

In this work and in the previous result we deal with bounded domains. However, one can also obtain the density for unbounded domains by adding one more assumption at infinity, hence we include it here for the shake of completion. Let $\Omega \subseteq \R^N$ be an open subset, we say that a measurable function $r \colon \Omega \to [1,\infty]$ satisfies Nekvinda's decay condition if there exists $r_\infty \in [1 , \infty]$ and $c \in (0,1)$ such that
\begin{align*}
	\into c^{\frac{1}{ \abs{ \frac{1}{r(x)} - \frac{1}{r_\infty} } }} \dx < \infty,
\end{align*}
or, equivalently, $1 \in \Lp{s(\cdot)}$, where $s^{-1} (x) = \abs{ p^{-1} (x) - p^{-1}_\infty}$. This condition was first introduced in the paper by Nekvinda \cite{Nekvinda-2004}.

\begin{theorem}
	\label{Th:DensityUnbounded}
	Let $\Omega \subseteq \R^N$, with $N \geq 2$, be an unbounded domain and the functions $p,q \colon \close \to [1,\infty)$ and $\mu \colon \close \to [0,\infty)$ be bounded, H\"older continuous functions such that $p$ satisfies Nekvinda's decay condition, $1 < p(x) \leq q(x)$ for all $x \in \close$ and
	\begin{align*}
		\left( \frac{q}{p} \right)_+ < 1 + \frac{\gamma}{N},
	\end{align*}
	where $\gamma$ is the H\"older exponent of $\mu$. Then $\hlog$ satisfies \textnormal{(A0)}, \textnormal{(A1)}, \textnormal{(A2)}, \textnormal{(aDec)} and $C^\infty (\Omega) \cap \WHlog$ is dense in $\WHlog$.
\end{theorem}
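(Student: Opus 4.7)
The strategy is to check that $\hlog$ satisfies the conditions \textnormal{(A0)}, \textnormal{(A1)}, \textnormal{(A2)} and \textnormal{(aDec)}, whence Theorem \ref{Th:AbstractDensity} yields the density of $C^\infty(\Omega) \cap \WHlog$ in $\WHlog$. The conditions \textnormal{(A0)} and \textnormal{(aDec)} are pointwise and rely only on the bounds $1 < p_- \leq p_+ < \infty$, $q_+ < \infty$ and $\mu \in \Lp{\infty}$, all of which follow from the assumptions, so that the arguments in Lemma \ref{Le:A0A2} and Lemma \ref{Le:PropertiesHlog} carry over verbatim. Condition \textnormal{(A1)} is local in nature: the preceding bounded-domain theorem only uses the H\"{o}lder continuity of $p$, $q$, $\mu$ on balls $B \subseteq \R^N$ of measure at most one, together with the hypothesis $(q/p)_+ < 1 + \gamma/N$. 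Since these data are unchanged for unbounded $\Omega$, the same estimates apply without modification.

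The decisive step is \textnormal{(A2)}, since the bounded-case argument produced an additive constant $h$ which fails to lie in $\Lp{1}$ when $|\Omega| = \infty$. Here one exploits Nekvinda's decay condition on $p$ to replace that constant by a function decaying at infinity. Explicitly, with $p_\infty$ and $c \in (0,1)$ as in the hypothesis, the function
\begin{align*}
	h_0(x) = c^{\frac{1}{\abs{\frac{1}{p(x)} - \frac{1}{p_\infty}}}}
\end{align*}
lies in $\Lp{1} \cap \Lp{\infty}$ (the second inclusion from $c \leq 1$). The plan is to take $\hlog_\infty(t) = t^{p_\infty}$, which is a weak $\Phi$-function, and choose $s > 0$ small enough that the conditions $\hlog_\infty(t) \leq s$ and $\hlog(x,t) \leq s$ both force $t$ to lie in a small interval $[0,t_0]$ with $t_0 \leq 1$ depending only on $p_-$, $q_+$, $\|\mu\|_\infty$. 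In this bounded-$t$ regime $\log(e+t)$ is bounded, and the inequality $q(x) \geq p(x)$ together with $t \leq 1$ yields $\mu(x) t^{q(x)} \log(e+t) \leq C t^{p(x)}$ for a universal constant $C$. Consequently, after possibly shrinking $\beta$, both of the inequalities required by \textnormal{(A2)'} reduce to the pure-power Nekvinda estimates $(\beta t)^{p(x)} \leq t^{p_\infty} + C' h_0(x)$ and $(\beta t)^{p_\infty} \leq t^{p(x)} + C' h_0(x)$ for $t \in [0, t_0]$, which are standard: in the region $p(x) \geq p_\infty$ one uses $t^{p(x)} \leq t^{p_\infty}$ directly, while in the region $p(x) < p_\infty$ a Young inequality with exponents tailored to $\abs{1/p(x) - 1/p_\infty}$ produces precisely the Nekvinda weight $h_0$ as the error term.

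The main obstacle is checking that the logarithmic factor does not disturb the delicate Young/Nekvinda balancing used for the pure power $t^{p(x)}$ versus $t^{p_\infty}$. This is circumvented by the observation above that in the regime $\hlog_\infty(t), \hlog(x,t) \leq s$ the logarithm is harmless, so the full estimate collapses to the variable-exponent Lebesgue case under Nekvinda's condition, which is classical. Once \textnormal{(A0)}, \textnormal{(A1)}, \textnormal{(A2)} and \textnormal{(aDec)} are established, Theorem \ref{Th:AbstractDensity} concludes the proof.
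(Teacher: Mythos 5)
Your proposal is correct and takes essentially the same route as the paper: both reduce \textnormal{(A0)}, \textnormal{(A1)}, \textnormal{(aDec)} to the earlier lemmas and then verify \textnormal{(A2)'} with $\ph_\infty(t)=t^{p_\infty}$, restricting to $t\le 1$ so the logarithm is uniformly bounded and the estimate collapses to the pure power-law Nekvinda argument via Young's inequality. The paper simply takes $s=1$ (which already forces $t\le 1$) and makes the choice of $\beta$ and the $L^1$ majorant $h$ explicit, whereas you leave those constants implicit; there is no substantive gap.
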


\begin{proof}
	The proof of \textnormal{(A0)}, \textnormal{(A1)} and \textnormal{(aDec)} is exactly like in Lemma \ref{Le:PropertiesHlog} and Lemma \ref{Le:A0A2}, they are not affected if $\Omega$ is unbounded. Therefore, it suffices to show that $\hlog$ satisfies \textnormal{(A2)} because the density follows from Theorem \ref{Th:AbstractDensity}.

	By Lemma \ref{Le:equivalences} \textnormal{(v)}, it is equivalent to check \textnormal{(A2)'}. For this purpose, take $s=1$, $\ph_\infty(t) = t^{p_\infty}$ and $\beta \leq 1$. First note that $\ph_\infty(t) \leq 1$ implies $t \leq 1$. Let us distinguish two cases. In the points where $p(x) < p_\infty$, by Young's inequality
	\begin{align*}
		\hlog(x,\beta t) & \leq [ 1 + \norm{\mu}_\infty \log (e + 1) ] \beta^{p(x)} t^{p(x)}  \\
		& \leq \frac{ p(x) }{ p_\infty } t^{ p_\infty } + \frac{p_\infty - p (x) }{ p_\infty } [ 1 + \norm{\mu}_\infty \log( e + 1 ) ] ^{ \frac{ p_\infty }{p_\infty - p (x) } } \beta ^{\frac{1}{ \abs{ \frac{1}{p(x)} - \frac{1}{p_\infty} } }} \\
		& \leq \ph_\infty (t) + \left( \beta [ 1 + \norm{\mu}_\infty \log( e + 1 ) ] \right)  ^{\frac{1}{ \abs{ \frac{1}{p(x)} - \frac{1}{p_\infty} } }}.
	\end{align*}
	Let us take
	\begin{align*}
		\beta & < c [ 1 + \norm{\mu}_\infty \log( e + 1 ) ]^{-1},\\
		h(x)  & = \left( \beta [ 1 + \norm{\mu}_\infty \log( e + 1 ) ] \right)  ^{\frac{1}{ \abs{ \frac{1}{p(x)} - \frac{1}{p_\infty} } }},
	\end{align*}
	where $c \in (0,1)$ is the constant of Nekvinda's decay condition of $p$. Then we know that $h \in \Lp{1} \cap \Lp{\infty}$. In the points where $p(x) \geq p_\infty$, with the same choice of $\beta$ we have
	\begin{align*}
		\hlog(x,\beta t) \leq [ 1 + \norm{\mu}_\infty \log (e + 1) ] \beta t^{p_\infty} \leq \ph_\infty (t).
	\end{align*}
	We do the other inequality in a similar way. In the points where $p(x) \leq p_\infty$, as $\beta \leq 1$
	\begin{align*}
		\ph_\infty ( \beta t ) \leq t^{p(x)} \leq \hlog(x,t) ,
	\end{align*}
	and in the points where $p(x) > p_\infty$, using again Young's inequality
	\begin{align*}
		\ph_\infty ( \beta t )
		 & \leq \frac{ p_\infty }{ p(x) } t^{ p(x) } + \frac{p(x) - p _\infty }{ p(x) } \beta ^{\frac{1}{ \abs{ \frac{1}{p(x)} - \frac{1}{p_\infty} } }} \\
		 & \leq \hlog(x,t) + h(x).
	\end{align*}
\end{proof}

\begin{remark}
	Note that in the previous result we only imposed Nekvinda's decay condition on $p$, there is no condition at infinity imposed on $q$.
\end{remark}

\section{Energy functional and logarithmic  operator}\label{energy-functional-operator}

In this section we investigate the properties of the associated energy functional and the logarithmic operator given in \eqref{log-operator}. We denote by $\langle\cdot\,,\cdot\rangle$ the duality pairing between $\WHlogzero$ and its dual space $[\WHlogzero]^*$. Let $A \colon \WHlogzero \to [\WHlogzero]^*$ be our operator of interest, which for each $u,v \in \WHlogzero$ is given by the expression
\begin{align*}
	\left\langle A(u),v \right\rangle
	 & = \into \abs{\nabla u}^{p(x)-2}\nabla
	u \cdot \nabla v\dx \\
	 & \quad +\into \mu(x) \left[ \log (e + \abs{\nabla u} ) + \frac{\abs{\nabla u}}{q(x) (e + \abs{\nabla u})} \right]  \abs{\nabla u}^{q(x)-2}\nabla u \cdot \nabla v \dx.
\end{align*}
Furthermore, let $I \colon \WHlogzero \to \R$ be its associated energy functional, which for each $u \in \WHlogzero$ is given by
\begin{align*}
	I(u)=\into \left( \frac{\abs{\nabla u}^{p(x)}}{p(x)}
	+ \mu(x) \frac{\abs{\nabla u}^{q(x)}}{q(x)} \log (e + \abs{\nabla u}) \right) \dx.
\end{align*}

We first deal with the differentiability of the energy functional.

\begin{theorem}
	\label{Th:C1functional}
	Let \eqref{Assump:Space} be satisfied, then the functional $I$ is $C^1$ with $I'(u) = A(u)$.
\end{theorem}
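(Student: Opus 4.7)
The plan is to follow the classical two-step route: first show that $I$ is Gateaux differentiable at every $u\in\WHlogzero$ with Gateaux derivative equal to $A(u)$, then show that $A\colon\WHlogzero\to[\WHlogzero]^*$ is continuous; these two facts together imply that $I$ is Fréchet differentiable and of class $C^1$ with $I'=A$. Write $I(u)=\into \Psi(x,\nabla u)\dx$ where
\begin{align*}
\Psi(x,\xi)=\frac{|\xi|^{p(x)}}{p(x)}+\mu(x)\frac{|\xi|^{q(x)}}{q(x)}\log(e+|\xi|),
\end{align*}
so that a direct computation gives
\begin{align*}
\nabla_\xi\Psi(x,\xi)=|\xi|^{p(x)-2}\xi+\mu(x)\left[\log(e+|\xi|)+\frac{|\xi|}{q(x)(e+|\xi|)}\right]|\xi|^{q(x)-2}\xi,
\end{align*}
which is exactly the integrand of $\langle A(u),v\rangle$ when dotted with $\nabla v$.

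For the Gateaux derivative, fix $u,v\in\WHlogzero$ and $t\in(-1,1)\setminus\{0\}$. The difference quotient $t^{-1}[\Psi(x,\nabla u+t\nabla v)-\Psi(x,\nabla u)]$ converges a.e.\ to $\nabla_\xi\Psi(x,\nabla u)\cdot\nabla v$, and by the mean value theorem it is pointwise dominated by a constant multiple of
\begin{align*}
(|\nabla u|+|\nabla v|)^{p(x)-1}|\nabla v|+\mu(x)(|\nabla u|+|\nabla v|)^{q(x)-1}\log(e+|\nabla u|+|\nabla v|)|\nabla v|.
\end{align*}
By Young's inequality the first summand is in $\Lp{1}$, and combining \eqref{Eq:LogGrowthProduct}--\eqref{Eq:LogGrowthSum} with Proposition \ref{Prop:oneHlogModularNorm} the second summand is also in $\Lp{1}$ whenever $u,v\in\WHlogzero$. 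The dominated convergence theorem then yields $\langle I'(u),v\rangle=\langle A(u),v\rangle$ and the linearity of $v\mapsto\langle A(u),v\rangle$ is clear; its continuity follows by the same bounds, using the Young-type inequality stated in the Introduction with $r=q(x)$ to absorb the mixed term $\mu|\nabla u|^{q(x)-1}|\nabla v|[\log+\frac{\cdot}{q(\cdot)(e+\cdot)}]$ by a sum of quantities controlled by $\modoneHlog{u}$ and $\modoneHlog{v}$.

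For the continuity of $A$, let $u_n\to u$ in $\WHlogzero$. It suffices to show that, up to subsequences, $\sup_{\|v\|\le 1}|\langle A(u_n)-A(u),v\rangle|\to 0$. Extract an a.e.-convergent subsequence of $\nabla u_n$ by Proposition \ref{Prop:oneHlogModularNorm}(v). Splitting $\langle A(u_n)-A(u),v\rangle$ into the $p(\cdot)$-power part and the $q(\cdot)$-log part, the first is handled by the variable-exponent Hölder inequality together with the standard continuity of the Nemytskii operator $\xi\mapsto|\xi|^{p(x)-2}\xi$ from $\Lp{p(\cdot)}$ into $\Lp{p'(\cdot)}$, applied via the embedding $\WHlogzero\hookrightarrow\Wpzero{p(\cdot)}$ from Proposition \ref{Prop:EmbeddingHlogSobolev}(i). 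For the logarithmic part one uses the Musielak-Orlicz Hölder inequality (Proposition 2.11) with the conjugate of an appropriate generalized $\Phi$-function controlling the log-term and then, via the Young-type inequality and \eqref{Eq:LogGrowthSum}, produces an $\Lp{1}$ dominant for the modular of
\begin{align*}
\mu(x)\left[\log(e+|\nabla u_n|)+\frac{|\nabla u_n|}{q(x)(e+|\nabla u_n|)}\right]|\nabla u_n|^{q(x)-2}\nabla u_n
\end{align*}
minus its limit. Vitali's or the dominated convergence theorem then closes the argument, and the subsequence principle upgrades the subsequential convergence to full convergence. The main technical obstacle is precisely the construction of this $\Lp{1}$ dominant for the logarithmic Nemytskii difference: naive estimates spread the log across factors in incompatible ways, and one needs the sharp sum/product log bounds \eqref{Eq:LogGrowthProduct}--\eqref{Eq:LogGrowthSum} together with the modular-norm equivalence in Proposition \ref{Prop:HlogModularNorm}(iii)--(iv) to transfer norm convergence of $u_n$ to modular convergence of the nonlinear integrand, after which continuous Gateaux differentiability gives $I\in C^1$ with $I'=A$.
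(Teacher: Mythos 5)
Your proof has the right skeleton — establish the Gateaux derivative, then prove that $A$ is continuous, which together give $C^1$ — and the Gateaux-differentiability half matches the paper almost exactly (mean value theorem for the difference quotient, the $\log$-growth bound \eqref{Eq:LogGrowthSum} to produce an $L^1$ dominant for $|t|\le 1$, then dominated convergence). The problem is in the continuity half, which is where the real content of the theorem lives, and your proposal gestures at it without resolving it. You propose to apply the Musielak--Orlicz H\"older inequality ``with the conjugate of an appropriate generalized $\Phi$-function''; but you never specify that function, and there is no obvious choice: the conjugate $\hlog^*$ has no usable closed form, and it is not clear that the logarithmic Nemytskii image $\mu[\log(e+|\nabla u_n|)+\tfrac{|\nabla u_n|}{q(e+|\nabla u_n|)}]|\nabla u_n|^{q(x)-2}\nabla u_n$ converges in the corresponding conjugate space. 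You then claim to ``produce an $L^1$ dominant for the modular of'' the Nemytskii difference, but you never exhibit one — and in fact, as you yourself concede, this is exactly the ``main technical obstacle.'' A single fixed $L^1$ dominant is not available here; the paper's argument does not use one.

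What the paper actually does is stay inside the variable-exponent Lebesgue framework rather than the abstract Musielak--Orlicz one: it applies H\"older's inequality in $\Lp{q(\cdot)}$ after inserting the log-power weight $g_n(x)=\log^{1/q(x)}(e+\max\{|\nabla u|,|\nabla u_n|\})$ and the weight $\mu^{(q(x)-1)/q(x)}$, splitting
\begin{align*}
\left|\into \mu(x)\, w_n\cdot\nabla v\dx\right|
\le 2\left\|\mu^{\frac{q-1}{q}}\tfrac{|w_n|}{g_n}\right\|_{q'(\cdot)}\left\|\mu^{1/q}|\nabla v|\, g_n\right\|_{q(\cdot)}.
\end{align*}
The second factor is bounded uniformly in $n$ and $v$ by partitioning $\Omega$ according to which of $|\nabla u|,|\nabla u_n|,|\nabla v|$ is largest and comparing with $\modHlog{\nabla u}$, $\modHlog{\nabla v}$, $\modHlog{\nabla u_n}$. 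The first factor is shown to go to zero in modular by Vitali's theorem: $\nabla u_n\to\nabla u$ in measure gives pointwise convergence of the integrand, and uniform integrability follows because the integrand of the modular is dominated by a constant multiple of $\mu[\log(e+|\nabla u_n|)+1]|\nabla u_n|^{q}+\mu[\log(e+|\nabla u|)+1]|\nabla u|^{q}$, which is uniformly integrable once one uses the converse of Vitali applied to $\modHlog{\nabla u_n}\to\modHlog{\nabla u}$. The key device is the weight $g_n$: it decouples the logarithm from the power so that neither H\"older factor blows up; without it the naive $\Lp{q(\cdot)}$--$\Lp{q'(\cdot)}$ split places the full logarithm on the conjugate side and the estimate collapses. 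This specific weighting and partition is exactly what your proposal leaves out, and it is the part of the proof that cannot be reconstructed from the general tools you cite.
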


\begin{proof}
	By the additivity of the Fr\'echet derivative, we only do the argument for the logarithmic term, the other one can be done analogously (and can also be found in previous literature, for example, Proposition 3.1 in the paper by Crespo-Blanco--Gasi\'{n}ski--Harjulehto--Winkert \cite{Crespo-Blanco-Gasinski-Harjulehto-Winkert-2022}). The proof is divided in two steps: first we prove the Gateaux differentiability and later its continuous dependence.

	For the Gateaux differentiability, let $u,v \in \WHlogzero$ and $t \in \R$. In the points where $\nabla u \neq 0$, by the mean value theorem there exists $\theta_{x,t} \in (0,1)$ such that
	\begin{align*}
		 & \beta (x,t)  \\
		 & = \frac{\mu(x)}{t q(x)} \left( \abs{\nabla u + t \nabla v}^{q(x)} \log (e + \abs{\nabla u + t \nabla v}) - \abs{\nabla u}^{q(x)} \log (e + \abs{\nabla u}) \right) \\
		 & = \mu(x) \left( \log (e + \abs{\nabla u + t \theta_{x,t} \nabla v}) \abs{\nabla u + t \theta_{x,t} \nabla v}^{q(x) - 2} (\nabla u + t \theta_{x,t} \nabla v) \cdot \nabla v \vphantom{\frac{(\nabla u + t \theta_{x,t} \nabla v) \cdot \nabla v}{ \abs{\nabla u + t \theta_{x,t} \nabla v} }} \right. \\
		 & \quad \left. + \abs{\nabla u + t \theta_{x,t} \nabla v}^{q(x)} \frac{1}{q(x)(e + \abs{\nabla u + t \theta_{x,t} \nabla v})} \frac{(\nabla u + t \theta_{x,t} \nabla v) \cdot \nabla v}{ \abs{\nabla u + t \theta_{x,t} \nabla v} } \right) \\
		 & \xrightarrow{t \to 0} \mu(x) \left( \log (e + \abs{\nabla u }) \abs{\nabla u }^{q(x) - 2} \nabla u  \cdot \nabla v + \abs{\nabla u }^{q(x)} \frac{1}{q(x)(e + \abs{\nabla u })} \frac{\nabla u  \cdot \nabla v}{ \abs{\nabla u } } \right)
	\end{align*}
	and in the points where $\nabla u = 0$ we can derive the same limit directly. So this convergence is true a.e.\,in $\Omega$. On the other hand, using \eqref{Eq:LogGrowthSum}, for $t \leq 1$
	\begin{align*}
		\abs{\beta(x,t)}
		& \leq \mu(x) \left[ \left( \abs{ \nabla u } +  \abs{ \nabla v } \right)^{q(x) - 1} \abs{ \nabla v} \left( \log( e + \abs{ \nabla u } + \abs{ \nabla v }) + 1 \right) \right] \\
		& \leq 2^{q^+ +1}\mu(x) \left[|\nabla u|^{q(x)} \log(e+ |\nabla u|) +  |\nabla v|^{q(x)} \log(e+ |\nabla v|) + 1 \right],
	\end{align*}
	which is an $L^1(\Omega)$-function. Hence, by the dominated convergence theorem, we proved that the Gateaux derivative exists and coincides with $A$.

	For the $C^1$-property, let $u_n \to u$ in $\WHlogzero$ and $v \in \WHlogzero$ with $\normoneHlog{v} \leq 1$. For the following computations, we define
	\begin{align*}
		w_n(x) & = \left[ \log (e + \abs{\nabla u_n}) + \frac{\abs{\nabla u_n}}{q(x) (e + \abs{\nabla u_n})}\right] \abs{\nabla u_n}^{q(x) -2 }\nabla u_n  \\
		& \quad - \left[ \log (e + \abs{\nabla u}) + \frac{\abs{\nabla u}}{q(x) (e + \abs{\nabla u})}\right] \abs{\nabla u}^{q(x) -2 }\nabla u, \\
		g_n(x)& = \log^{1/q(x)} ( e + \max\{ \abs{\nabla u}, \abs{\nabla u_n} \}),\\
		\Omega_u & = \{ x \in \Omega \colon  \max\{ \abs{\nabla v}, \abs{\nabla u}, \abs{\nabla u_n} \} = \abs{\nabla u}\}, \\
		\Omega_{u_n} & = \{ x \in \Omega \colon  \abs{\nabla u} < \max\{ \abs{\nabla v}, \abs{\nabla u}, \abs{\nabla u_n} \} = \abs{\nabla u_n}\},  \\
		\Omega_v & = \{ x \in \Omega \colon  \abs{\nabla u}, \abs{\nabla u_n} < \max\{ \abs{\nabla v}, \abs{\nabla u}, \abs{\nabla u_n} \} = \abs{\nabla v}\}.
	\end{align*}
	By H\"older's inequality in $\Lp{q(\cdot)}$, we have
	\begin{align*}
		\quad \left|  \into \mu(x) w_n  \cdot \nabla v \dx \right|
		\leq 2 \norm{(\mu (x))^{\frac{q(x) - 1}{q(x)}} \frac{\abs{w_n}}{g_n}}_{\frac{q(\cdot)}{q(\cdot) - 1}}
		\norm{(\mu (x))^{1/q(x)} \abs{\nabla v} g_n }_{q(\cdot)}.
	\end{align*}
	The second factor is uniformly bounded in $n$ and $v$ by Proposition \ref{Prop:ModularNormVarExp} \textnormal{(vi)} and
	\begin{align*}
		\varrho_{ q(\cdot)} \left( (\mu (x))^{1/q(x)} \abs{\nabla v} g_n \right)
		 & \leq  \int_{\Omega_u} \mu(x) \abs{ \nabla v }^{q(x)} \log( e + \abs{\nabla u} ) \dx                                     \\
		 & \quad + \int_{\Omega_{u_n}} \mu(x) \abs{ \nabla v }^{q(x)} \log( e + \abs{\nabla u_n} ) \dx                             \\
		 & \quad +  \int_{\Omega_v} \mu(x) \abs{ \nabla v }^{q(x)} \log( e + \abs{\nabla v} ) \dx                                  \\
		 & \leq \modHlog{ \nabla u } + \underbrace{\modHlog{ \nabla v }}_{\leq 1} + \underbrace{\modHlog{ \nabla u_n } }_{\leq M},
	\end{align*}
	where the last two estimates follow from Proposition \ref{Prop:HlogModularNorm} \textnormal{(ii)} and \textnormal{(vi)} and $u_n \to u$ in $\WHlogzero$. Therefore, we only need to prove that the first factor converges to zero. By Proposition \ref{Prop:ModularNormVarExp} \textnormal{(v)}, it is enough to see that this happens in the modular of $\Lp{\frac{q(\cdot)}{q(\cdot) - 1}}$, that is
	\begin{align*}
		\varrho_{ \frac{q(\cdot)}{q(\cdot) - 1} } \left( (\mu (x))^{\frac{q(x) - 1}{q(x)}} \frac{\abs{w_n}}{g_n} \right)
		= \into \mu(x) \left(  \frac{\abs{w_n } }{g_n} \right) ^{\frac{q(x)}{q(x) - 1}}  \dx \; \xrightarrow{n \to \infty} 0.
	\end{align*}
	We prove this convergence by using Vitali's theorem. By Proposition \ref{Prop:EmbeddingHlogSobolev} \textnormal{(i)}, $\nabla u_n \to \nabla u$ in measure, and using the property that convergence in measure is preserved by composition with continuous functions, we obtain the convergence in measure to zero of the integrand. For the uniform integrability, note that
	\begin{align*}
		 & \mu(x) \left(  \frac{\abs{w_n } }{g_n} \right) ^{\frac{q(x)}{q(x) - 1}} \\
		 & \leq \mu(x) \left( \left[ \log^{1 - \frac{1}{q(x)}} (e + \abs{\nabla u_n}) + 1 \right] \abs{\nabla u_n}^{q(x) - 1}\right. \\
		 & \left.\qquad\qquad  + \left[ \log^{1 - \frac{1}{q(x)}} (e + \abs{\nabla u}) + 1 \right] \abs{\nabla u}^{q(x) - 1}  \right) ^{\frac{q(x)}{q(x) - 1}} \\
		 & \leq C \mu(x) \left( \left[ \log (e + \abs{\nabla u_n}) + 1 \right] \abs{\nabla u_n}^{q(x)}
		+ \left[ \log (e + \abs{\nabla u}) + 1 \right] \abs{\nabla u}^{q(x)}  \right).
	\end{align*}
	As $\nabla u_n \to \nabla u$ in measure and $\modHlog{ \nabla u_n } \to \modHlog{ \nabla u }$, we know that\\$\abs{\nabla u_n}^{q(x)} \log (e + \abs{ \nabla u_n })$ is uniformly integrable, hence we also know that our sequence is uniformly integrable and this finishes the proof.
\end{proof}

Next we are concerned with the properties of the operator $A$. For this purpose, we need the following two lemmas. The first one is concerned with the monotonicity of terms that are not power laws, but still something similar.

\begin{lemma}
	\label{Le:MonotoneInequality}
	Let $h \colon [0,\infty) \to [0,\infty)$ be a increasing function and $r > 1$. Then, for any $\xi, \eta \in \R^N$
	\begin{align*}
		\left( h( \abs{\xi} ) \abs{\xi}^{r-2} \xi - h( \abs{\eta} ) \abs{\eta}^{r-2} \eta \right)
		\cdot \left( \xi - \eta \right) \geq
		C_r \abs{ \xi - \eta }^r h(m)
	\end{align*}
	if $r \geq 2$, and
	\begin{align*}
		\left( \abs{\xi} + \abs{\eta} \right)^{2 - r} \left( h( \abs{\xi} ) \abs{\xi}^{r-2} \xi - h( \abs{\eta} ) \abs{\eta}^{r-2} \eta \right)
		\cdot \left( \xi - \eta \right)  \geq
		C_r \abs{\xi - \eta}^2 h(m)
	\end{align*}
	if $1 < r < 2$, where $m = \min \{ \abs{\xi}, \abs{\eta} \}$ and
	\begin{align*}
		C_r =
		\begin{cases}
			\min \{ 2^{2-r}, 2^{-1} \} & \text{if } r \geq 2,  \\
			r-1                        & \text{if } 1 < r < 2.
		\end{cases}
	\end{align*}
\end{lemma}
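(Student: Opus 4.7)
The idea is to reduce the inequality to the classical monotonicity bounds for the pure $r$-Laplacian vector field $\xi \mapsto \abs{\xi}^{r-2}\xi$ by peeling off the $h$-factor in a way that exploits its monotonicity. If $\xi = \eta$ both sides are zero, so I may assume $\xi \neq \eta$; and by the symmetry of the two sides under swapping $\xi$ and $\eta$ I may assume $\abs{\eta} \leq \abs{\xi}$, so that $m = \abs{\eta}$ and $h(m) = h(\abs{\eta})$.

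The first key step is the algebraic decomposition
\begin{align*}
	h(\abs{\xi})\abs{\xi}^{r-2}\xi - h(\abs{\eta})\abs{\eta}^{r-2}\eta
	= h(\abs{\eta})\bigl(\abs{\xi}^{r-2}\xi - \abs{\eta}^{r-2}\eta\bigr)
	+ \bigl(h(\abs{\xi}) - h(\abs{\eta})\bigr)\abs{\xi}^{r-2}\xi.
\end{align*}
Taking the inner product with $\xi - \eta$, the first summand becomes $h(m)$ times the standard $r$-Laplacian monotonicity scalar product. The second summand is non-negative since $h(\abs{\xi}) - h(\abs{\eta}) \geq 0$ by monotonicity of $h$, and
\begin{align*}
	\abs{\xi}^{r-2}\xi \cdot (\xi - \eta)
	= \abs{\xi}^{r-2}\bigl(\abs{\xi}^2 - \xi \cdot \eta\bigr)
	\geq \abs{\xi}^{r-2}\bigl(\abs{\xi}^2 - \abs{\xi}\abs{\eta}\bigr)
	\geq 0
\end{align*}
by Cauchy--Schwarz combined with $\abs{\eta} \leq \abs{\xi}$. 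Hence the left-hand side of the target inequality is bounded below by $h(m)\bigl(\abs{\xi}^{r-2}\xi - \abs{\eta}^{r-2}\eta\bigr) \cdot (\xi - \eta)$.

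The remaining step is the classical Simon-type monotonicity estimate for $\xi \mapsto \abs{\xi}^{r-2}\xi$: for $r \geq 2$ one has $\bigl(\abs{\xi}^{r-2}\xi - \abs{\eta}^{r-2}\eta\bigr) \cdot (\xi - \eta) \geq C_r\abs{\xi-\eta}^r$ with $C_r = \min\{2^{2-r}, 2^{-1}\}$ (this bound can be obtained by combining the sharp $2^{2-r}$ estimate with the trivial $2^{-1}$ bound that follows from expanding and using $\abs{\xi}^{r-2} + \abs{\eta}^{r-2} \geq \tfrac12(\abs{\xi}+\abs{\eta})^{r-2}$), and for $1 < r < 2$ the sub-quadratic version $(\abs{\xi}+\abs{\eta})^{2-r}\bigl(\abs{\xi}^{r-2}\xi - \abs{\eta}^{r-2}\eta\bigr) \cdot (\xi - \eta) \geq (r-1)\abs{\xi-\eta}^2$. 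Multiplying through by $h(m) \geq 0$, respectively by $h(m)(\abs{\xi}+\abs{\eta})^{2-r} \geq 0$, gives the two claimed inequalities; the factor $(\abs{\xi}+\abs{\eta})^{2-r}$ commutes with the decomposition above, so the sub-quadratic case follows by the same argument.

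The main obstacle is ensuring that the error term in the decomposition has a controlled sign. The normalisation $\abs{\eta} \leq \abs{\xi}$ is essential: it simultaneously forces $h(\abs{\xi}) - h(\abs{\eta}) \geq 0$ and $\xi \cdot (\xi - \eta) \geq 0$, so their product is non-negative and can be dropped. With the opposite normalisation, the analogous decomposition yields an error term of undetermined sign and one cannot immediately reduce to the pure $r$-Laplacian estimate; the whole proof therefore hinges on making the right choice of splitting rather than on any delicate analytic estimate.
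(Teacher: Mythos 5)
Your argument is correct, and it is genuinely different from (and arguably cleaner than) the one in the paper. The paper treats the two regimes with two separate ad hoc arguments: for $r\ge 2$ it uses the symmetric splitting $\tfrac12\bigl(h(\abs\xi)\abs\xi^{r-2}+h(\abs\eta)\abs\eta^{r-2}\bigr)\abs{\xi-\eta}^2 + \tfrac12\bigl(h(\abs\xi)\abs\xi^{r-2}-h(\abs\eta)\abs\eta^{r-2}\bigr)\bigl(\abs\xi^2-\abs\eta^2\bigr)$ and argues directly on the surviving term, while for $1<r<2$ it rederives Simon's bound in coordinates with $h$ carried along, doing a case split on the sign of $\eta_1$. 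You instead use a single asymmetric decomposition, peeling off $h(m)$ in front of the pure $r$-Laplacian field $\abs\xi^{r-2}\xi-\abs\eta^{r-2}\eta$ and discarding a remainder $\bigl(h(\abs\xi)-h(\abs\eta)\bigr)\abs\xi^{r-2}\xi\cdot(\xi-\eta)$ whose sign is controlled by the normalisation $\abs\eta\le\abs\xi$; both inequalities then follow uniformly by quoting the classical $r$-Laplacian monotonicity estimates. This buys a shorter, unified proof that treats the nonhomogeneity cleanly and isolates the role of the monotonicity of $h$; the paper's approach is more self-contained in that it reproves the underlying estimates rather than citing them. One small imprecision worth fixing: the inequality $\abs\xi^{r-2}+\abs\eta^{r-2}\ge\tfrac12(\abs\xi+\abs\eta)^{r-2}$ in your parenthetical is false once $r$ is large (the correct convexity bound only yields the factor $2^{3-r}$), but it is true precisely in the range $2\le r<3$ where the $\min$ in $C_r$ equals $2^{-1}$, which is the only range where you need it; for $r\ge 3$ the sharp constant $2^{2-r}$ already dominates, so the conclusion stands.
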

\begin{proof}
	For the case $r \geq 2$, we obtain the identity
	\begin{align*}
		 & \left( h( \abs{\xi} ) \abs{\xi}^{r-2} \xi - h( \abs{\eta} ) \abs{\eta}^{r-2} \eta \right)
		\cdot \left( \xi - \eta \right) \\
		 & = \left( h( \abs{\xi} ) \abs{\xi}^{r-2} \xi \right)
		\cdot \left( \xi - \eta \right) +
		\left( - h( \abs{\eta} ) \abs{\eta}^{r-2} \eta \right)
		\cdot \left( \xi - \eta \right) \\
		 & = h( \abs{\xi} ) \abs{\xi}^{r-2} \left( \frac{1}{2} \xi - \frac{1}{2} \eta \right) \cdot \left( \xi - \eta \right)
		+ h( \abs{\eta} ) \abs{\eta}^{r-2} \left( \frac{1}{2} \xi - \frac{1}{2} \eta \right) \cdot \left( \xi - \eta \right)        \\
		 & \quad + h( \abs{\xi} ) \abs{\xi}^{r-2} \left( \frac{1}{2} \xi + \frac{1}{2} \eta \right) \cdot \left( \xi - \eta \right)
		+ h( \abs{\eta} ) \abs{\eta}^{r-2} \left( - \frac{1}{2} \xi - \frac{1}{2} \eta \right) \cdot \left( \xi - \eta \right)      \\
		 & = \frac{1}{2} \left( h( \abs{\xi} ) \abs{\xi}^{r-2} + h( \abs{\eta} ) \abs{\eta}^{r-2} \right)
		\abs{ \xi - \eta }^2  \\
		 & \quad + \frac{1}{2} \left( h( \abs{\xi} ) \abs{\xi}^{r-2} - h( \abs{\eta} ) \abs{\eta}^{r-2} \right)
		\left( \abs{\xi}^2 - \abs{\eta}^2 \right) .
	\end{align*}
	Since $h$ is an increasing function, the second term is nonnegative if $r \geq 2$. Hence the inequality follows when $r \geq 2.$ This is the same strategy in which one proves the usual inequality without $h$ (see for example, Chapter 12, (I) in the book by Lindqvist \cite{Lindqvist-2019}). From here the inequality follows.

	For the case $1 < r < 2$, we follow the argument of equation (2.2) from the paper by Simon \cite{Simon-1978}. The main difference now is that the expression is nonhomogeneous and therefore cannot be scaled, but it is still invariant under rotations. For this reason it is enough to consider the case $\abs{\xi} \geq \abs{\eta}$, $\xi = \abs{\xi} e_1$, $\eta= \eta_1 e_1 + \eta_2 e_2$. We split the argument in two cases. First, if $\eta_1 \leq 0$
	\begin{align*}
		\left( h( \abs{\xi} ) \abs{\xi}^{r-1} - h( \abs{\eta} ) \abs{\eta}^{r-2} \eta_1 \right)
		\geq h( \abs{\eta} ) \abs{\xi}^{r-2} \left( \abs{\xi} - \eta_1 \right)
	\end{align*}
	and if $0 \leq \eta_1 \leq \abs{\xi}$, by the mean value theorem
	\begin{align*}
		\left( h( \abs{\xi} ) \abs{\xi}^{r-1} - h( \abs{\eta} ) \abs{\eta}^{r-2} \eta_1 \right)
		 & \geq h( \abs{\xi} ) \left( \abs{\xi}^{r-1} - \eta_1^{r-1} \right) \\
		 & \geq h( \abs{\eta} ) (r-1) \abs{\xi}^{r-2} \left( \abs{\xi} - \eta_1 \right).
	\end{align*}
	Altogether, this yields
	\begin{align*}
		 & \left( h( \abs{\xi} ) \abs{\xi}^{r-2} \xi - h( \abs{\eta} ) \abs{\eta}^{r-2} \eta \right)
		\cdot \left( \xi - \eta \right)   \\
		 & = \left( h( \abs{\xi} ) \abs{\xi}^{r-1} - h( \abs{\eta} ) \abs{\eta}^{r-2} \eta_1 \right) \left( \abs{\xi} - \eta_1 \right)
		+ h( \abs{\eta} ) \abs{\eta}^{r-2} \eta_2^2  \\
		 & \geq  h( \abs{\eta} ) (r-1) \left( \abs{\xi} + \abs{\eta} \right) ^{r-2} \left( [\abs{\xi} - \eta_1]^2 + \eta_2^2 \right)   \\
		 & = h( \abs{\eta} ) (r-1) \left( \abs{\xi} + \abs{\eta} \right) ^{r-2} \abs{\xi - \eta}^2.
	\end{align*}
\end{proof}

The second lemma is concerned with a version of Young's inequality specially tailored for our line of work. It becomes indispensable in the proof of the \textnormal{(S$_+$)}-property.

\begin{lemma}[Young's inequality for the product of a power-law and a logarithm] \label{Le:YoungIneqLog}
	Let $s,t \geq 0$, $r > 1$ then
	\begin{align*}
		s t^{r-1} \left[ \log (e + t ) + \frac{ t }{r (e + t)} \right]
		\leq \frac{s^r}{r} \log (e + s )
		+ t^r \left[ \frac{r - 1}{r} \log (e + t ) + \frac{ t }{r (e + t)} \right] .
	\end{align*}
\end{lemma}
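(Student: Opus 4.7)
The plan is to recognize the inequality as a convex tangent-line estimate for a carefully chosen auxiliary function, after which the Young-type inequality follows automatically.

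First I would introduce the one-variable function
\begin{equation*}
	\Phi(t) = \frac{t^r \log(e+t)}{r}, \qquad t \geq 0,
\end{equation*}
and compute its derivative, obtaining
\begin{equation*}
	\Phi'(t) = t^{r-1}\log(e+t) + \frac{t^r}{r(e+t)} = t^{r-1}\left[\log(e+t) + \frac{t}{r(e+t)}\right].
\end{equation*}
Thus the left-hand side of the inequality is exactly $s\,\Phi'(t)$. Next, using $t \Phi'(t) = r\Phi(t) + \frac{t^{r+1}}{r(e+t)}$ one rewrites
\begin{equation*}
	t^r \left[\frac{r-1}{r}\log(e+t) + \frac{t}{r(e+t)}\right] = (r-1)\Phi(t) + \frac{t^{r+1}}{r(e+t)} = t\Phi'(t) - \Phi(t),
\end{equation*}
so the right-hand side is $\Phi(s) + t\Phi'(t) - \Phi(t)$. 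The full inequality is therefore equivalent to the tangent-line estimate
\begin{equation*}
	\Phi(t) + (s - t)\Phi'(t) \leq \Phi(s) \qquad \text{for all } s,t \geq 0.
\end{equation*}

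The proof then reduces to verifying that $\Phi$ is convex on $[0,\infty)$ for every $r>1$, since a convex $C^1$ function always lies above its tangent lines. I would carry this out by direct differentiation: a short computation gives
\begin{equation*}
	\Phi''(t) = (r-1) t^{r-2} \log(e+t) + \frac{2 t^{r-1}}{e+t} - \frac{t^r}{r (e+t)^2}
\end{equation*}
for $t>0$, and after multiplying by $r (e+t)^2 / t^{r-2}$ the nonnegativity reduces to
\begin{equation*}
	r(r-1)(e+t)^2 \log(e+t) + 2 r t (e+t) \geq t^2,
\end{equation*}
which is immediate since $2rt(e+t) \geq 2 t^2 > t^2$ for $r>1$ and $t\geq 0$. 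Continuity of $\Phi$ at $0$ handles the boundary, and the boundary cases $s=0$ or $t=0$ of the original inequality are verified trivially.

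The only place requiring a moment of care is the algebraic identification of the right-hand side with $\Phi(s) - \Phi(t) + t\Phi'(t)$; once this reformulation is noticed, the rest is a routine convexity check. I do not expect any substantive obstacle beyond this bookkeeping.
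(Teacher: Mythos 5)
Your proof is correct and takes essentially the same route as the paper's. Both arguments amount to the observation that, with $\Phi(t) = \tfrac{t^r}{r}\log(e+t)$, the left-hand side equals $s\,\Phi'(t)$ and the right-hand side equals $\Phi(s) + t\,\Phi'(t) - \Phi(t)$, so the stated estimate is exactly the tangent-line inequality for the convex function $\Phi$. The paper reaches this by citing the Hardy--Littlewood--P\'olya version of Young's inequality for increasing $h$ together with the identity $\int_0^{h(t)} h^{-1} = t\,h(t) - \int_0^t h$, applied to $h = \Phi'$ --- which is precisely the same reformulation --- whereas you make the reduction explicit and then certify convexity by computing $\Phi'' \geq 0$. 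This is a valid and self-contained way of verifying what the paper tacitly leaves to the reader (namely that $h$ is increasing). A small shortcut if you want to avoid the $\Phi''$ computation: $h(t) = t^{r-1}\bigl[\log(e+t) + \tfrac{t}{r(e+t)}\bigr]$ is a product of two nonnegative increasing functions of $t$ on $[0,\infty)$ (the second since its derivative $\tfrac{1}{e+t} + \tfrac{e}{r(e+t)^2}$ is positive), hence increasing, giving the convexity of $\Phi$ directly.
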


\begin{proof}
	The result is a consequence of the general version of Young's inequality for functions that are positive, continuous, strictly increasing and vanish at zero, see for example Theorem 156 of the classical book by Hardy--Littlewood--P\'olya \cite{Hardy-Littlewood-Polya-1934}. Let $h \colon [0, \infty) \to [0, \infty)$ satisfy all of the above, then for any $s,t \geq 0$ it holds
	\begin{align*}
		s h(t) \leq \int_0^s h(y) \dy + \int_0^{h(t)} h^{-1}(y) \dy.
	\end{align*}
	We also make use of another general result about the primitive of the inverse of a function. For $h$ as above and $t>0$ it holds
	\begin{align*}
		\int_0^{h(t)}  h^{-1}(y) \dy = t h(t) - \int_0^t h(y) \dy.
	\end{align*}
	Choosing
	\begin{align*}
		h(t) = t^{r-1} \left[ \log (e + t ) + \frac{ t }{r(e + t)} \right] ,
	\end{align*}
	hence $\int_0^t h(y) \dy = t^r/r \log (e + t)$, one obtains the desired result.
\end{proof}

Now we can state the main properties of the operator $A$.

\begin{theorem}
	\label{Th:PropertiesOperator}
	Let \eqref{Assump:Space} be satisfied, then the operator $A$ is bounded, continuous and strictly monotone. If we further assume \eqref{Assump:Poincare}, then it also is of type \textnormal{(S$_+$)}, coercive and a homeomorphism.
\end{theorem}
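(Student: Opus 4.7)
The plan is to establish the six properties in order of increasing difficulty, leaning on the lemmas already proven in this section (Lemmas \ref{Le:MonotoneInequality} and \ref{Le:YoungIneqLog}) as the backbone for the delicate parts.

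For \emph{boundedness} I would split $\langle A(u),v\rangle$ into the $p(x)$-term and the $q(x)$-log term and estimate each by H\"older's inequality. The $p(x)$-piece is handled by the standard variable-exponent H\"older estimate applied to $|\nabla u|^{p(x)-1}\in L^{p'(\cdot)}(\Omega)$ paired with $|\nabla v|\in L^{p(\cdot)}(\Omega)$; the logarithmic piece is controlled in the Musielak--Orlicz H\"older inequality (Proposition after the abstract embeddings), by decomposing the integrand as $\mu(x)^{(q(x)-1)/q(x)}|\nabla u|^{q(x)-1}[\log(e+|\nabla u|)+\tfrac{|\nabla u|}{q(x)(e+|\nabla u|)}]$ times $\mu(x)^{1/q(x)}|\nabla v|$, exactly as in the computation in the proof of Theorem~\ref{Th:C1functional}. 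Proposition~\ref{Prop:oneHlogModularNorm} then converts modular bounds into norm bounds. For \emph{continuity}, the argument from Theorem~\ref{Th:C1functional} already yields continuity of $A=I'$; one just reuses the Vitali--convergence argument given there.

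For \emph{strict monotonicity} I would apply Lemma~\ref{Le:MonotoneInequality} pointwise. The $p(x)$-term is the classical case with $h\equiv 1$. The $q(x)$-log term has the form $h(|\nabla u|)|\nabla u|^{q(x)-2}\nabla u$ with
\[
h(t)=\log(e+t)+\frac{t}{q(x)(e+t)},\qquad h'(t)=\frac{1}{e+t}+\frac{e}{q(x)(e+t)^{2}}>0,
\]
so $h$ is (strictly) increasing and Lemma~\ref{Le:MonotoneInequality} gives a strict sign on the integrand whenever $\nabla u\neq\nabla v$, yielding $\langle A(u)-A(v),u-v\rangle>0$ for $u\neq v$ (using Proposition~\ref{Prop:Poincare} to pass from $\nabla u=\nabla v$ a.e.\ to $u=v$).

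The \emph{main obstacle} is the \textnormal{(S$_+$)}-property. Assume $u_n\rightharpoonup u$ in $\WHlogzero$ and $\limsup_n\langle A(u_n)-A(u),u_n-u\rangle\le0$; by monotonicity the limit is $0$. The strategy is to extract pointwise a.e.\ convergence $\nabla u_n\to\nabla u$ using Lemma~\ref{Le:MonotoneInequality} (applied separately on $\{p(x)\ge2\}$ and $\{p(x)<2\}$, and analogously for the $q(x)$-term with $h$ as above), which gives nonnegative integrands converging to $0$ in $L^1(\Omega)$. Here is where Lemma~\ref{Le:YoungIneqLog} becomes indispensable: the mixed term $\int_\Omega\mu(x)[\log(e+|\nabla u_n|)+\frac{|\nabla u_n|}{q(x)(e+|\nabla u_n|)}]|\nabla u_n|^{q(x)-2}\nabla u_n\cdot\nabla u\,dx$ is estimated by applying Young's inequality with $s=|\nabla u|$, $t=|\nabla u_n|$, $r=q(x)$, producing an $L^1$ bound in terms of $\varrho_{\hlog}(\nabla u_n)$ plus $\varrho_{\hlog}(\nabla u)$. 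Combining this with Fatou on the ``diagonal'' terms $\langle A(u_n),u_n\rangle$ one obtains $\modHlog{\nabla u_n}\to\modHlog{\nabla u}$. Together with pointwise a.e.\ convergence, Vitali's theorem then gives $\varrho_\hlog(\nabla u_n-\nabla u)\to0$, and Proposition~\ref{Prop:oneHlogModularNorm}(v) yields $u_n\to u$ in $\WHlogzero$.

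Finally, \emph{coercivity} is immediate from $\langle A(u),u\rangle\ge\varrho_\hlog(\nabla u)$ combined with Proposition~\ref{Prop:oneHlogModularNorm}(iii) (since $p_-,q_++\kappa>1$, the quantity $\langle A(u),u\rangle/\|u\|_{1,\hlog,0}$ tends to $+\infty$). The \emph{homeomorphism} property then follows from the standard Browder--Minty-type argument for $\textnormal{(S$_+$)}$ operators: continuity plus strict monotonicity plus coercivity give that $A$ is a bijection (injectivity from strict monotonicity, surjectivity from coercivity plus the Browder--Minty theorem), and the inverse is continuous because $A$ is continuous and $\textnormal{(S$_+$)}$ (any sequence with $A(u_n)\to A(u)$ is bounded by coercivity, admits a weakly convergent subsequence $u_{n_k}\rightharpoonup\tilde u$, then $\langle A(u_{n_k})-A(\tilde u),u_{n_k}-\tilde u\rangle\to0$ forces $u_{n_k}\to\tilde u$ strongly, and continuity plus injectivity identify $\tilde u=u$).
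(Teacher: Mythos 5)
Your proposal is broadly correct and follows the same architecture as the paper: continuity from $I'=A$, strict monotonicity from Lemma~\ref{Le:MonotoneInequality}, the $\textnormal{(S$_+$)}$-property via the gradient-in-measure Claim plus the logarithmic Young inequality and Vitali/Fatou, coercivity from a modular lower bound, and the homeomorphism from Minty--Browder together with $\textnormal{(S$_+$)}$. Two points of genuine divergence are worth recording. First, for \emph{boundedness} you propose routing the logarithmic term through the H\"older inequality in $L^{q(\cdot)}(\Omega)$ with the $g_n$-type weight borrowed from the continuity proof; the paper instead normalizes by $\min\{\|u\|_{1,\hlog}^{-q_+},\|u\|_{1,\hlog}^{1-p_-}\}$, applies the classical and weighted Young inequalities directly, and controls everything through the modular $\modHlog{\nabla u/\|u\|_{1,\hlog,0}}=1$ via Proposition~\ref{Prop:HlogModularNorm}~(i). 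Both routes work; the modular route avoids the somewhat delicate bookkeeping that the $g_n$-weighted H\"older factorization entails. Second, for \emph{strict monotonicity} you apply Lemma~\ref{Le:MonotoneInequality} to both the $p$- and $q$-terms and verify $h'>0$ for $h(t)=\log(e+t)+t/(q(x)(e+t))$; the paper only needs the nonnegativity of the $q$-term from Lemma~\ref{Le:MonotoneInequality} and takes the strict sign from the $p$-term's classical pointwise inequality, which is simpler. Two small inaccuracies: the paper's Claim in the $\textnormal{(S$_+$)}$-proof extracts $\nabla u_n\to\nabla u$ in measure solely from the $p$-part (the $q$-part is discarded by nonnegativity), so the ``analogously for the $q(x)$-term'' is redundant; and your coercivity step should cite Proposition~\ref{Prop:HlogModularNorm}~(iii) (applied to the gradient modular $\modHlog{\nabla u}$) rather than the Sobolev-modular version. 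Also note that for strict monotonicity under \eqref{Assump:Space} alone you should not invoke Proposition~\ref{Prop:Poincare} (which requires \eqref{Assump:Poincare}); the identification $\nabla u=\nabla v$ a.e.\,$\Rightarrow u=v$ in $\WHlogzero$ already follows from the embedding into $\Wpzero{p_-}$. None of these affect the validity of the argument.
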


\begin{proof}
	The continuity follows directly from Theorem \ref{Th:C1functional}. The strict monotonicity follows from Lemma \ref{Le:MonotoneInequality} and the widely-known inequality
	\begin{align*}
		\left( \abs{\xi}^{r-2}\xi -\abs{\eta}^{r-2}\eta \right) \cdot (\xi-\eta) > 0 \quad\text{if } r >1 \text{ for all }\xi,\eta \in \R^N \text{ with }\xi\neq\eta,
	\end{align*}
	since for $u,v \in \WHlogzero$ with $u \neq v$ they imply
	\begin{align*}
		\left\langle A(u) - A (v) , u - v \right\rangle
		\geq \into \left( \abs{\nabla u} ^{p(x)-2}\nabla u - \abs{\nabla v}^{p(x)-2}\nabla v \right)  \cdot \left( \nabla u - \nabla v \right) \dx > 0.
	\end{align*}
	For the boundedness, let us take $u,v \in \WHlogzero \setminus \{0\}$ with $\normoneHlog{v} = 1$. Then, use \eqref{Eq:LogGrowthProduct} in the case $\normoneHlog{u} \geq 1$ and that $t \mapsto \log (e + t)$ is increasing and $\normoneHlog{u} ^{1 - p_-} \leq \normoneHlog{u} ^{1 - q_-} $ in the case $\normoneHlog{u} \leq 1$. Together with Young's inequality it holds
	\begin{align*}
		&\min \left\lbrace  \normoneHlog{u}^{- q_+} , \normoneHlog{u}^{1 - p_-} \right\rbrace  \abs{ \langle A(u),v \rangle }\\
		& \leq  \min \left\lbrace  \normoneHlog{u}^{- q_+} , \normoneHlog{u}^{1 - p_-} \right\rbrace \\
		& \qquad \times\into \left( |\nabla u|^{p(x)-1} \abs{ \nabla v } \mathop{+} \mu(x) \left[ \log (e + \abs{ \nabla u }) + 1 \right]  |\nabla u|^{q(x)-1} \abs{ \nabla v } \right) \dx \\
		& \leq \into  \abs{ \frac{\nabla u}{ \normoneHlog{u} } } ^{p(x)-1} \abs{ \nabla v } \dx \\
		& \quad + \into \left(\mu(x)   \left[  \log \left( e + \frac{\abs{ \nabla u }}{ \normoneHlog{u} } \right)  + 1 \right]  \abs{ \frac{\nabla u}{ \normoneHlog{u} } } ^{q(x)-1}  \abs{ \nabla v } \right) \dx \\
		& \leq \frac{p_+-1}{p_-} \into \abs{ \frac{\nabla u}{ \normoneHlog{u} } } ^{p(x)} \dx+ \frac{1}{p_-} \into \abs{ \nabla v }^{p(x)} \dx\\
		& \quad +\frac{q_+-1}{q_-} \into \mu(x) \left[  \log \left( e + \frac{\abs{ \nabla u }}{ \normoneHlog{u} } \right)  + 1 \right] \left|\frac{\nabla u}{ \normoneHlog{u} }\right|^{q(x)} \dx                 \\
		& \quad + \frac{1}{q_-} \into \mu(x) \left[  \log \left( e + \frac{\abs{ \nabla u }}{ \normoneHlog{u} } \right)  + 1 \right] \abs{ \nabla v } ^{q(x)} \dx.
	\end{align*}
	We can estimate the last addend by splitting $\Omega$ in the set where $\abs{ \nabla v }$ is greater than $ \abs{ \nabla u } / \normoneHlog{u} $ and its complement. This together with Proposition \ref{Prop:oneHlogModularNorm} \textnormal{(i)} leads to
	\begin{align*}
		& \min \left\lbrace  \normoneHlog{u}^{- q_+} , \normoneHlog{u}^{1 - p_-} \right\rbrace  \abs{ \langle A(u),v \rangle } \\
		& \leq \max \left\lbrace \frac{q^+}{q_-}, \frac{p_+-1}{p_-} \right\rbrace  \modHlog{ \frac{\nabla  u}{ \normoneHlog{u} }}+\frac{1}{p_-} \modHlog{ \nabla v }   \\
		& \leq   \max \left\lbrace \frac{q^+}{q_-}, \frac{p_+-1}{p_-} \right\rbrace\modoneHlog{ \frac{  u}{ \normoneHlog{u} }}+ \frac{1}{p_-} \modoneHlog{ v }  \\
		& = \max \left\lbrace \frac{q^+}{q_-}, \frac{p_+-1}{p_-} \right\rbrace + \frac{1}{p_-} = \max \left\lbrace  \frac{q_+ p_- + q_-}{q_- p_-} , \frac{p_+}{p_-}  \right\rbrace.
	\end{align*}
	As a consequence
	\begin{align*}
		\|A(u)\|_*
		 & = \sup_{ \normoneHlog{v} = 1 } \left\langle A(u) , v \right\rangle                             \\
		 & \leq \max \left\lbrace  \frac{q_+ p_- + q_-}{q_- p_-} , \frac{p_+}{p_-}  \right\rbrace
		\max \left\lbrace  \normoneHlog{u}^{q_+} , \normoneHlog{u}^{p_- - 1} \right\rbrace.
	\end{align*}

	From now on we assume that \eqref{Assump:Poincare} holds for the proof of the remaining properties. Now we will deal with the \textnormal{(S$_+$)}-property. Let $\{u_n\}_{n \in \N} \subseteq \WHlogzero$ be a sequence such that
	\begin{align}
		\label{Eq:SequenceSplus}
		u_n \weak u \quad\text{in }  \WHlogzero
		\quad\text{and}\quad
		\limsup_{n \to \infty}{\left\langle A(u_n), u_n - u \right\rangle } \leq 0.
	\end{align}
	By the strict monotonicity of $A$ and the weak convergence of $u_n$, we obtain
	\begin{align*}
		0
		\leq \liminf_{n \to \infty} \left\langle A(u_n) - A(u) , u_n - u \right\rangle
		 & \leq \limsup_{n \to \infty}\left\langle A(u_n) - A(u) , u_n - u \right\rangle \\
		 & = \limsup_{n \to \infty}\left\langle A(u_n) , u_n - u \right\rangle
		\leq 0,
	\end{align*}
	which means
	\begin{align*}
		\lim_{n \to \infty} \left\langle A(u_n) - A(u) , u_n - u \right\rangle
		= 0.
	\end{align*}

	{\bf Claim: } $\nabla u_n \to \nabla u$ in measure.

	In particular, as the previous expression can be decomposed in the sum of nonnegative terms, it follows
	\begin{align}
		\label{Eq:CasePgeq2}
		\lim_{n \to \infty} \int_{ \{p \geq 2 \}}
		\left( \abs{\nabla u_n}^{p(x)-2} \nabla u_n - \abs{\nabla u}^{p(x)-2} \nabla u \right) \cdot \left( \nabla u_n - \nabla u \right) \dx = 0,
		\\
		\label{Eq:CaseP<2}
		\lim_{n \to \infty} \int_{ \{p < 2 \}}
		\left( \abs{\nabla u_n}^{p(x)-2} \nabla u_n - \abs{\nabla u}^{p(x)-2} \nabla u \right) \cdot \left( \nabla u_n - \nabla u \right) \dx = 0.
	\end{align}
	From Lemma \ref{Le:MonotoneInequality} with $h = 1$ and \eqref{Eq:CasePgeq2}, we can directly derive that
	\begin{align*}
		\lim_{n \to \infty} \int_{ \{p \geq 2 \}} \abs{\nabla u_n - \nabla u}^{p(x)} \dx = 0,
	\end{align*}
	hence $ \nabla u_n 1_{ \{p \geq 2 \}} \to \nabla u 1_{ \{p \geq 2 \}}$ in measure. On the other hand, let
	\begin{align*}
		E_n = \{ \nabla u_n \neq 0\} \cup \{ \nabla u \neq 0\},
	\end{align*}
	then for any $\eps > 0$ we know that
	\begin{align*}
		 & \left\lbrace 1_{ \{p < 2 \}} (p_- - 1) \abs{\nabla u_n - \nabla u}^2 1_{E_n} \left( \abs{\nabla u_n} + \abs{\nabla u} \right)^{p(x) - 2} \geq \eps \right\rbrace   \\
		 & \subseteq \left\lbrace 1_{ \{p < 2 \}} \left( \abs{\nabla u_n}^{p(x)-2} \nabla u_n - \abs{\nabla u}^{p(x)-2} \nabla u \right) \cdot \left( \nabla u_n - \nabla u \right) \geq \eps \right\rbrace.
	\end{align*}
	From \eqref{Eq:CaseP<2} and the previous expression we obtain that
	\begin{align*}
		1_{ \{p < 2 \}} \abs{\nabla u_n - \nabla u}^2 1_{E_n} \left( \abs{\nabla u_n} + \abs{\nabla u} \right)^{p(x) - 2} \rightarrow 0 \quad \text{in measure.}
	\end{align*}
	Hence the same convergence is true a.e.\,along a subsequence $u_{n_k}$ and then for a.a.\,$x \in \Omega$ there exists $M(x) > 0$ such that for all $k \in \N$
	\begin{align*}
		M(x) & \geq 1_{ \{p < 2 \}} \abs{\nabla u_{n_k} - \nabla u}^2 1_{E_{n_k}} \left( \abs{\nabla u_{n_k}} + \abs{\nabla u} \right)^{p(x) - 2} \\
		& \geq 1_{ \{p < 2 \}} \abs{ \abs{\nabla u_{n_k}} - \abs{\nabla u} }^2 1_{E_{n_k}} \left( \abs{\nabla u_{n_k}} + \abs{\nabla u} \right)^{p(x) - 2}.
	\end{align*}
	Note that, given any $c > 0$ and $0 < P < 1$, the function $h(t) = \abs{t - c}^2 (t + c)^{P-2}$ satisfies $\lim_{t \to +\infty} h(t) = +\infty$. Therefore, there exists $m(x) > 0$ such that $\abs{ \nabla u_{n_k} } \leq m(x)$ for a.a. $x \in \Omega$ and all $k \in \N$. As a consequence
	\begin{align*}
		& 1_{ \{p < 2 \}} \abs{\nabla u_{n_k} - \nabla u}^2 1_{E_{n_k}} \left( \abs{\nabla u_{n_k}} + \abs{\nabla u} \right)^{p(x) - 2} \\
		& \geq 1_{ \{p < 2 \}} \abs{\nabla u_{n_k} - \nabla u}^2 1_{E_{n_k}} \left( m(x) + \abs{\nabla u} \right)^{p(x) - 2}
	\end{align*}
	and the convergence a.e.\,to zero of the left-hand side
	yields $1_{ \{p < 2 \}} \abs{ \nabla u_{n_k} - \nabla u } \to 0$ a.e., since
	\begin{align*}
		& \text{ for all } x \in \Omega \text{ such that } \nabla u (x) \neq 0, 1_{E_{n_k}} (x) = 1 \text{ for all } k \in \N; \\
		& \text{ for all } x \in \Omega \text{ such that } \nabla u (x) = 0, \text{ along any subsequence such that }\\
		& \abs{\nabla u_{n_{k'}} (x)} > \eps > 0 \text{ it holds that }1_{E_{n_{k'}}} (x) = 1 \text{ for all } k' \in \N.
	\end{align*}
	Then $1_{ \{p < 2 \}} \abs{ \nabla u_{n_k} - \nabla u } \to 0$ in measure and by the subsequence principle, this is also true for the whole sequence $u_n$. Together with the case on $\{p \geq 2 \}$, this finishes the proof of the Claim.

	By the usual Young's inequality and Lemma \ref{Le:YoungIneqLog} it follows
	\begin{align*}
			& \into\abs{\nabla u_n}^{p(x)-2}\nabla u_n \cdot \nabla (u_n - u )  \dx\\
			&
			\quad+\into  \mu(x) \left[ \log (e + \abs{\nabla u_n} ) + \frac{\abs{\nabla u_n}}{q(x) (e + \abs{\nabla u_n})} \right]
			\abs{\nabla u_n}^{q(x)-2}\nabla u_n  \cdot\nabla (u_n - u ) \dx\\
			& = \into \abs{\nabla u_n}^{p(x)} \dx- \into \abs{\nabla u_n}^{p(x)-2}\nabla u_n \cdot \nabla u \dx\\
			& \quad + \into \mu(x) \left[ \log (e + \abs{\nabla u_n} ) + \frac{\abs{\nabla u_n}}{q(x) (e + \abs{\nabla u_n})} \right] \abs{\nabla u_n}^{q(x)} \dx \\
			& \quad - \into \mu(x) \left[ \log (e + \abs{\nabla u_n} ) + \frac{\abs{\nabla u_n}}{q(x) (e + \abs{\nabla u_n})} \right] \abs{\nabla u_n}^{q(x)-2}\nabla u_n \cdot \nabla u \dx\\
			& \geq \into \abs{\nabla u_n}^{p(x)} \dx- \into \abs{\nabla u_n}^{p(x)-1} \abs{\nabla u} \dx\\
			& \quad + \into \mu(x) \left[ \log (e + \abs{\nabla u_n} ) + \frac{\abs{\nabla u_n}}{q(x) (e + \abs{\nabla u_n})} \right] \abs{\nabla u_n}^{q(x)} \dx \\
			& \quad - \into \mu(x) \left[ \log (e + \abs{\nabla u_n} ) + \frac{\abs{\nabla u_n}}{q(x) (e + \abs{\nabla u_n})} \right] \abs{\nabla u_n}^{q(x)- 1} \abs{\nabla u} \dx\\
			& \geq \into \abs{\nabla u_n}^{p(x)} \dx - \into \left( \frac{p(x)-1}{p(x)}\abs{\nabla u_n}^{p(x)}+\frac{1}{p(x)} \abs{\nabla u}^{p(x)} \right) \dx\\
			& \quad + \into \mu(x) \left[ \log (e + \abs{\nabla u_n} ) + \frac{\abs{\nabla u_n}}{q(x) (e + \abs{\nabla u_n})} \right] \abs{\nabla u_n}^{q(x)} \dx \\
			& \quad - \into \mu(x)\left( \left[ \frac{q(x)-1}{q(x)} \log (e + \abs{\nabla u_n} ) + \frac{\abs{\nabla u_n}}{q(x) (e + \abs{\nabla u_n})} \right] \abs{\nabla u_n}^{q(x)} \right. \\
			& \quad \left. + \frac{1}{q(x)}\abs{\nabla u}^{q(x)} \log (e + \abs{ \nabla u }) \right) \dx\\
			& = \into \frac{1}{p(x)}\abs{\nabla u_n}^{p(x)} \dx- \into \frac{1}{p(x)} \abs{\nabla u}^{p(x)} \dx\\
			& \quad + \into \frac{\mu(x)}{q(x)}\abs{\nabla u_n}^{q(x)} \log (e + \abs{ \nabla u_n }) \dx- \into \frac{\mu(x)}{q(x)}\abs{\nabla u}^{q(x)} \log (e + \abs{ \nabla u }) \dx .
	\end{align*}
	As a consequence, by \eqref{Eq:SequenceSplus}
	\begin{align*}
		 & \limsup_{n \to \infty} \int_\Omega \left(\frac{\abs{\nabla u_n}^{p(x)}}{p(x)} + \mu(x) \frac{ \abs{\nabla u_n}^{q(x)}}{q(x)} \log (e + \abs{ \nabla u_n }) \right) \dx \\
		 & \leq \int_\Omega \left(\frac{\abs{\nabla u}^{p(x)}}{p(x)} + \mu(x) \frac{ \abs{\nabla u}^{q(x)}}{q(x)} \log (e + \abs{ \nabla u }) \right) \dx.
	\end{align*}
	By Fatou's Lemma one can obtain that the limit inferior satisfies the opposite inequality, so all in all
	\begin{align*}
		 & \lim_{n \to \infty} \int_\Omega \left(\frac{\abs{\nabla u_n}^{p(x)}}{p(x)} + \mu(x) \frac{ \abs{\nabla u_n}^{q(x)}}{q(x)} \log (e + \abs{ \nabla u_n }) \right) \dx \\
		 & = \int_\Omega \left(\frac{\abs{\nabla u}^{p(x)}}{p(x)} + \mu(x) \frac{ \abs{\nabla u}^{q(x)}}{q(x)} \log (e + \abs{ \nabla u }) \right) \dx.
	\end{align*}
	By the previous Claim, passing to a.e.\,convergence along a subsequence and using the subsequence principle, we can prove that the integrand of the left-hand side converges in measure to the integrand of the right-hand side. Then we can derive from the so-called converse of Vitali's theorem its $L^1$ convergence, and in particular the uniform integrability of the sequence
	\begin{align*}
		\left\lbrace \frac{\abs{\nabla u_n}^{p(x)}}{p(x)} + \mu(x) \frac{ \abs{\nabla u_n}^{q(x)}}{q(x)} \log (e + \abs{ \nabla u_n }) \right\rbrace _{n \in \N} .
	\end{align*}
	On the other hand, by \eqref{Eq:LogGrowthSum} we know that
	\begin{align*}
		 & \abs{\nabla u_n - \nabla u}^{p(x)} + \mu(x)  \abs{\nabla u_n - \nabla u}^{q(x)} \log (e + \abs{ \nabla u_n - \nabla u }) \\
		 & \leq 2 ^{q_+ + 1} q_+ \left(  \frac{\abs{ \nabla u_n }^{p(x)}}{p(x)} + \frac{\abs{ \nabla u }^{p(x)}}{p(x)} \right.  \\
		 & \phantom{\leq 2 ^{q_+ + 1} q_+ } \quad \left. + \frac{ \abs{ \nabla u_n }^{q(x)} }{q(x)} \log(e + \abs{ \nabla u_n } ) + \frac{ \abs{ \nabla u }^{q(x)} }{q(x)} \log(e + \abs{ \nabla u } )  \right),
	\end{align*}
	which yields the uniform integrability of the sequence
	\begin{align*}
		\left\lbrace \abs{\nabla u_n - \nabla u}^{p(x)} + \mu(x)  \abs{\nabla u_n - \nabla u}^{q(x)} \log (e + \abs{ \nabla u_n - \nabla u }) \right\rbrace _{n \in \N} .
	\end{align*}
	As above, by the Claim, passing to a.e.\,convergence along a subsequence and using the subsequence principle, we can prove that this sequence converges in measure to zero. By Vitali's theorem, these two facts imply
	\begin{align*}
		 & \lim_{n \to \infty} \modHlog{ \nabla u_n - \nabla u } \\
		 & = \lim_{n \to \infty} \into \left( \abs{\nabla u_n - \nabla u}^{p(x)} + \mu(x)  \abs{\nabla u_n - \nabla u}^{q(x)} \log (e + \abs{ \nabla u_n - \nabla u }) \right) \dx
		= 0.
	\end{align*}
	By Proposition \ref{Prop:HlogModularNorm} \textnormal{(v)}, this is equivalent to $\normoneHlogzero{ u_n - u } = \normHlog{ \nabla u_n - \nabla u } \to 0$, i.e.\,by Proposition \ref{Prop:Poincare} we know that $u_n \to u$ in $\WHlogzero$.

	Let us now prove that the operator is coercive. For any $u \in \WHlog$ with $\normoneHlogzero{u} \geq 1$,  using the fact
	that $t \mapsto \log (e + t)$ is increasing, by Proposition \ref{Prop:HlogModularNorm} \textnormal{(i)} it follows
	\begin{align*}
		\frac{\left\langle A(u) , u \right\rangle}{ \normoneHlogzero{u} }
		 & = \into \normoneHlogzero{u}^{p(x)-1} \left( \frac{\abs{\nabla u}}{ \normoneHlogzero{u} } \right)^{p(x)} \dx \\
		 & \quad +\into \normoneHlogzero{u}^{q(x)-1} \mu(x) \left( \frac{\abs{\nabla u}}{ \normoneHlogzero{u} } \right)^{q(x)} \log (  e + \abs{\nabla u} )  \dx \\
		 & \geq \normoneHlogzero{u}^{p_- - 1}
		\modoneHlogzero{ \frac{\nabla u}{ \normoneHlogzero{u} } } = \normoneHlogzero{u}^{p_- - 1}  \to + \infty
	\end{align*}
	as $\normoneHlogzero{u} \to + \infty$.

	Finally, we show that $A$ is a homeomorphism. By the previously proven properties and the Minty-Browder theorem (see, for example, Zeidler \cite[Theorem 26.A]{Zeidler-1990}), we know that $A$ is invertible and that $A^{-1}$ is strictly monotone, demicontinuous and bounded. It is only left to see that $A^{-1}$ is continuous. With this purpose in mind, let $\{y_n\}_{n \in \N} \subseteq [ \WHlogzero ] ^*$ be a sequence such that $y_n \to y$ in $[ \WHlogzero ] ^*$ and let $u_n=A^{-1}(y_n)$ as well as $u=A^{-1}(y)$. The strong convergence of $\{y_n\}_{n \in \N}$ and the boundedness of $A^{-1}$ imply that $u_n$
	is bounded in $\WHlogzero$. Thus, there exists a subsequence $\{u_{n_k}\}_{k\in\N}$ such that $u_{n_k} \rightharpoonup u_0$ in $\WHlogzero$. All these properties yield
	\begin{align*}
		 & \lim_{k\to \infty} \left\langle A(u_{n_k})-A(u_0),u_{n_k}-u_0\right\rangle \\
		 & = \lim_{k \to \infty} \left\langle y_{n_k}-y,u_{n_k}-u_0\right\rangle+\lim_{k \to
			\infty} \left\langle y-A(u_0),u_{n_k}-u_0\right\rangle=0.
	\end{align*}
	By the \textnormal{(S$_+$)}-property of $A$ we obtain that $u_{n_k}\to u_0$ in $\WHlogzero$. The operator $A$ is also continuous, so
	\begin{align*}
		A(u_0)=\lim_{k \to \infty} A(u_{n_k})=\lim_{k \to \infty} y_{n_k}=y=A(u).
	\end{align*}
	As $A$ is injective, this proves that $u=u_0$. By the subsequence principle we can show that this convergence holds for the whole sequence.
\end{proof}

When we consider the operator $\tilde{A} \colon \WHlog \to \left[ \WHlog \right] ^*$ given by the same expression as $A$, one has the following result.

\begin{theorem}
	Let \eqref{Assump:Space} be satisfied, then the operator $\tilde{A}$ is bounded, continuous and monotone. If we further assume \eqref{Assump:Poincare}, then it also is of type \textnormal{(S$_+$)}.
\end{theorem}

\begin{proof}
	For the \textnormal{(S$_+$)}-property, repeat the same argument as before for the gradient part together with the compact embedding $\WHlog \hookrightarrow \LHlog$ from Proposition \ref{Prop:Poincare}. The rest of the assertions follow in the same way as before.
\end{proof}

\begin{remark}
	As seen in the previous theorem, the operator $\tilde{A}$ has weaker properties than $A$. However, by choosing the space appropriately one can recover the original strong properties. For example, one could define the operator $\tilde{A}$ on the space of zero mean functions, i.e.
	\begin{align*}
		L^{\hlog}_{\langle \cdot \rangle} (\Omega) &= \left\lbrace u \in \LHlog \colon \into u \dx = 0 \right\rbrace , \\
		W^{1,\hlog}_{\langle \cdot \rangle} (\Omega) &= \left\lbrace u \in \WHlog \colon \into u \dx = 0 \right\rbrace.
	\end{align*}
	Following analogous steps as in Proposition \ref{Prop:Poincare} it is possible to prove the compact embedding $W^{1,\hlog}_{\langle \cdot \rangle} (\Omega) \hookrightarrow L^{\hlog}_{\langle \cdot \rangle} (\Omega)$ and the Poincar\'{e} inequality. As a consequence, one can repeat all the steps from Theorem \ref{Th:PropertiesOperator} for $\tilde{A}$ and obtain exactly the same properties.
\end{remark}

For completeness, we consider the operator $B \colon \WHlog \to \left[ \WHlog \right] ^*$ given by
\begin{align*}
	\left\langle B(u),v \right\rangle
	 & = \into \abs{\nabla u}^{p(x)-2}\nabla
	u \cdot \nabla v  \vphantom{\frac{\abs{\nabla u}}{q(x) (e + \abs{\nabla u})}}\dx\\
	 & \quad +\into \mu(x) \left[ \log (e + \abs{\nabla u} ) + \frac{\abs{\nabla u}}{q(x) (e + \abs{\nabla u})} \right]  \abs{\nabla u}^{q(x)-2}\nabla u \cdot \nabla v\dx \\
	 & \quad + \into \abs{ u}^{p(x)-2}
	u v  \dx\\
	 & \quad +\into \mu(x) \left[ \log (e + \abs{ u} ) + \frac{\abs{ u}}{q(x) (e + \abs{ u})} \right]  \abs{ u}^{q(x)-2} u  v\dx,
\end{align*}
and the operator $\tilde{B} \colon \WHlogzero \to \left[ \WHlogzero \right] ^*$ given by the same expression. Then we have the following result without the need for the condition \eqref{Assump:Poincare}.

\begin{theorem}
	Let \eqref{Assump:Space} be satisfied, then the operators $B$ and $\tilde{B}$ are bounded, continuous, strictly monotone, of type \textnormal{(S$_+$)}, coercive and homeomorphisms.
\end{theorem}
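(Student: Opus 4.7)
The plan is to mirror the structure of the proof of Theorem \ref{Th:PropertiesOperator}, with the observation that the new zero-order pieces in $B$ play exactly the role previously reserved for the Poincar\'{e} inequality of Proposition \ref{Prop:Poincare}; this is why \eqref{Assump:Poincare} is no longer required. All statements need only be proved for $B$ on $\WHlog$, since the arguments for $\tilde{B}$ on the closed subspace $\WHlogzero$ are identical.

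Continuity of $B$ would be obtained by showing that the associated functional
\begin{align*}
	J(u) = \into \left[ \frac{\abs{\nabla u}^{p(x)}}{p(x)} + \mu(x) \frac{\abs{\nabla u}^{q(x)}}{q(x)} \log(e+\abs{\nabla u}) + \frac{\abs{u}^{p(x)}}{p(x)} + \mu(x) \frac{\abs{u}^{q(x)}}{q(x)}\log(e+\abs{u}) \right] \dx
\end{align*}
is of class $C^1$ with $J'=B$; one repeats verbatim the mean value and dominated convergence argument of Theorem \ref{Th:C1functional}, now applied also to the two extra zero-order integrands, which are structurally identical to the gradient ones. Boundedness comes from the Young-type estimate of Theorem \ref{Th:PropertiesOperator} performed on each of the four pieces separately. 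Strict monotonicity combines Lemma \ref{Le:MonotoneInequality} on the two gradient terms with the classical inequality $(\abs{s}^{r-2}s - \abs{t}^{r-2}t)(s-t) > 0$ for $s \neq t$ and $r > 1$ applied to the zero-order $p(x)$-term; this already forces $\langle B(u) - B(v), u - v \rangle > 0$ whenever $u \neq v$ in $\WHlog$.

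The bulk of the work will be the \textnormal{(S$_+$)}-property. Suppose $u_n \weak u$ in $\WHlog$ with $\limsup \langle B(u_n), u_n - u \rangle \leq 0$; since $B(u) \in [\WHlog]^*$ one deduces $\langle B(u_n) - B(u), u_n - u \rangle \to 0$ exactly as in Theorem \ref{Th:PropertiesOperator}. This quantity splits into four nonnegative monotonicity pieces. Applying Lemma \ref{Le:MonotoneInequality} to the two gradient ones gives $\nabla u_n \to \nabla u$ in measure as in the Claim of that earlier proof, and the same argument applied to the two zero-order ones now also yields $u_n \to u$ in measure. Next, applying the usual Young inequality to the two $p(x)$-terms of $\langle B(u_n), u_n - u \rangle$ and Lemma \ref{Le:YoungIneqLog} to the two logarithmic $q(x)$-terms, then combining with the hypothesis $\limsup \langle B(u_n), u_n - u \rangle \leq 0$ and Fatou's lemma, one concludes $J(u_n) \to J(u)$. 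The converse of Vitali's theorem then delivers uniform integrability of the integrand of $J(u_n)$, and by \eqref{Eq:LogGrowthSum} also of $\hlog(x,\abs{u_n - u}) + \hlog(x,\abs{\nabla u_n - \nabla u})$; Vitali's theorem together with the two convergences in measure then yields $\modoneHlog{u_n - u} \to 0$, hence $u_n \to u$ in $\WHlog$ by Proposition \ref{Prop:oneHlogModularNorm}(v). Crucially, one arrives at convergence in the full $\WHlog$-norm directly, without any appeal to Poincar\'{e}, and this is precisely where the zero-order pieces of $B$ are indispensable.

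Coercivity is immediate from the bound
\begin{align*}
	\langle B(u), u \rangle \geq \modoneHlog{u} \geq \normoneHlog{u}^{p_-}
\end{align*}
valid whenever $\normoneHlog{u} \geq 1$ by Proposition \ref{Prop:oneHlogModularNorm}(iii); once again no Poincar\'{e} inequality is needed. Finally, the homeomorphism property follows from the Minty-Browder theorem combined with the \textnormal{(S$_+$)}, continuity, strict monotonicity and coercivity properties already established, exactly as in the closing paragraph of the proof of Theorem \ref{Th:PropertiesOperator}. The main (indeed only) nontrivial point in the whole argument is the \textnormal{(S$_+$)} step described above; the rest is essentially routine transcription of the earlier proof with two extra integrals carried along.
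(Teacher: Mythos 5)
Your proof is correct and follows the same approach the paper intends, since the paper's own proof is a one-line reference back to Theorem \ref{Th:PropertiesOperator}. You have in particular correctly identified the key point that the paper leaves implicit: the zero-order terms of $B$ supply the full modular $\modoneHlog{\cdot}$ in the coercivity estimate and in the final step of the \textnormal{(S$_+$)} argument, so that neither the Poincar\'e inequality nor the compact embedding from Proposition \ref{Prop:Poincare} is needed and hypothesis \eqref{Assump:Poincare} can be dropped.
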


\begin{proof}
	Repeat the arguments of Theorem \ref{Th:PropertiesOperator} separately for the part with the gradients and the part without the gradients.
\end{proof}

One direct consequence of the previous theorems is the following existence and uniqueness result. We say that $u \in \WHlogzero$ is a weak solution of \eqref{Eq:Problem} if for all $v \in \WHlogzero$ it holds that
\begin{align*}
	& \into \abs{\nabla u}^{p(x)-2}\nabla u \cdot \nabla v \dx\\
	& \quad +\into \mu(x) \left[ \log (e + \abs{\nabla u} ) + \frac{\abs{\nabla u}}{q(x) (e + \abs{\nabla u})} \right]  \abs{\nabla u}^{q(x)-2}\nabla u  \cdot \nabla v \dx \\
	& = \into f(x,u) v \dx.
\end{align*}

\begin{theorem}
	Let \eqref{Assump:Poincare} hold and $f \in \Lp{\frac{ r(\cdot) }{ r(\cdot) - 1 }}$, where either $r \in C_+ (\close)$ and $r(x) < p^*(x)$ for all $x \in \close$ or $r \in C_+ (\close) \cap C^{0, \frac{1}{ |\log t| }} (\close)$ and $r(x) \leq p^*(x)$ for all $x \in \close$. Then there exists a unique weak solution $u \in \WHlogzero$ of problem \eqref{Eq:Problem}.
\end{theorem}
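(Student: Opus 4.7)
The plan is to reduce the problem to the abstract equation $A(u) = F$ in $[\WHlogzero]^*$, where $A$ is the operator studied in Section \ref{energy-functional-operator} and $F$ is the bounded linear functional induced by $f$. Since Theorem \ref{Th:PropertiesOperator} already tells us that $A$ is a homeomorphism (under \eqref{Assump:Poincare}), existence and uniqueness will follow at once; the only real work is checking that the right-hand side produces an element of the dual space.

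First I would verify that $F \colon \WHlogzero \to \R$, $F(v) = \into f v \dx$, is well-defined and bounded. This amounts to establishing a continuous embedding $\WHlogzero \hookrightarrow \Lp{r(\cdot)}$. In the case $r(x) < p^*(x)$ for all $x \in \close$, Proposition \ref{Prop:EmbeddingHlogSobolev}(iii) provides a compact (in particular continuous) embedding $\WHlogzero \hookrightarrow \Lp{r(\cdot)}$. In the borderline case $r(x) \leq p^*(x)$ with $r \in C^{0, \frac{1}{|\log t|}}(\close)$, the log-H\"older regularity of $r$ combined with Proposition \ref{Prop:EmbeddingHlogSobolev}(ii) (together with the continuous inclusion $\Lp{p^*(\cdot)} \hookrightarrow \Lp{r(\cdot)}$ whenever $r \leq p^*$) yields the same embedding. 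Applying the variable exponent H\"older inequality one gets
\begin{align*}
	\abs{F(v)} \leq 2 \norm{f}_{\frac{r(\cdot)}{r(\cdot)-1}} \norm{v}_{r(\cdot)} \leq C \normoneHlogzero{v},
\end{align*}
so $F \in [\WHlogzero]^*$.

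Next I would invoke Theorem \ref{Th:PropertiesOperator}: under \eqref{Assump:Poincare}, the operator $A \colon \WHlogzero \to [\WHlogzero]^*$ is a homeomorphism, hence in particular bijective. Therefore there exists a unique $u \in \WHlogzero$ with $A(u) = F$, and unraveling the definitions of $A$ and $F$ one sees that this $u$ is exactly the unique weak solution of \eqref{Eq:Problem} in the sense of the definition preceding the statement.

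The only delicate point is the borderline embedding when $r(x) = p^*(x)$ somewhere, which is the reason the log-H\"older hypothesis on $r$ is imposed; aside from that, everything is a direct consequence of the structural properties of $A$ (boundedness, strict monotonicity, coercivity, type \textnormal{(S$_+$)}) already established, repackaged through the Minty--Browder machinery encoded in Theorem \ref{Th:PropertiesOperator}.
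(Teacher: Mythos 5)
Your proposal is correct and follows essentially the same path as the paper's own (very terse) proof: establish the embedding $\WHlogzero \hookrightarrow \Lp{r(\cdot)}$ via Proposition \ref{Prop:EmbeddingHlogSobolev}(ii)--(iii), deduce $f \in \Lp{r'(\cdot)} = [\Lp{r(\cdot)}]^* \hookrightarrow [\WHlogzero]^*$, and conclude by the bijectivity of $A$ from Theorem \ref{Th:PropertiesOperator}. The one imprecision you inherit from the statement is the invocation of Proposition \ref{Prop:EmbeddingHlogSobolev}(ii), which formally requires $p$ (not $r$) to be log-H\"older continuous; the paper's own proof glosses over this as well.
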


\begin{proof}
	As by Proposition \ref{Prop:EmbeddingHlogSobolev} \textnormal{(ii)} and \textnormal{(iii)} $\WHlogzero \hookrightarrow \Lp{r(\cdot)}$, then $f \in \Lp{r'(\cdot)} = \left[ \Lp{r(\cdot)} \right] ^* \hookrightarrow \left[ \WHlogzero \right] ^*$. The result follows because $A$ is a bijection by Theorem \ref{Th:PropertiesOperator}.
\end{proof}

\section{Constant sign solutions}\label{constant-sign-solutions}

After the comprehensive list of properties of the function space and the logarithmic double phase operator given in the previous sections, we are now in the position to prove our first multiplicity result for problem \eqref{Eq:Problem}. We impose the following assumptions on the right-hand side $f$ and some slightly stricter requirements on the exponents than on \eqref{Assump:Poincare}:

\begin{enumerate}[label=\textnormal{(H$_2$)},ref=\textnormal{H$_2$}]
	\item\label{Assump:H2}
		$\Omega \subseteq \R^N$, with $N \geq 2$, is a bounded domain with Lipschitz boundary $\partial \Omega$, $p,q \in C_+ (\close)$ with $p(x) \leq q(x) \leq q_+ < p^*_-$ for all $x \in \close$ and $0\leq\mu(\cdot) \in \Lp{\infty}$.
\end{enumerate}

\begin{enumerate}[label=\textnormal{(H$_\textnormal{f}$)},ref=\textnormal{H$_\textnormal{f}$}]
	\item\label{Hf}
		Let $f \colon \Omega \times \R \to \R$ and $F(x,t) = \int_{0}^{t} f(x,s) \ds$.
		\begin{enumerate} [label=\textnormal{(f$_{\arabic*}$)},ref=\textnormal{f$_{\arabic*}$}]
			\item\label{Asf:Caratheodory}
				The function $f$ is Carath\'eodory type, i.e.\,$t \mapsto f(x,t)$ is continuous for a.a.\,$x \in \Omega$ and $x \mapsto f(x,t)$ is measurable for all $t \in \R$.
			\item\label{Asf:WellDef}
				There exists $r \in C_+(\close)$ with $r_+ < p^*_-$ and $C>0$ such that
				\begin{align*}
					\abs{f(x,t)} \leq C \left( 1 + \abs{t}^{r(x)-1} \right)
				\end{align*}
				for a.a.\,$x \in \Omega$ and for all $t \in \R$.
			\item\label{Asf:GrowthInfty}
				\begin{align*}
					\lim\limits_{s \to \pm \infty} \frac{F(x,s)}{\abs{s}^{q_+} \log(e + \abs{s}) } = + \infty \quad \text{uniformly for a.a.\,} x \in \Omega.
				\end{align*}
			\item\label{Asf:GrowthZero}
				There exists $\theta > 0$ such that $F(x,t) \leq 0$ for $\abs{t} \leq \theta$ and for a.a.\,$x \in \Omega$ and $f(x,0)=0$ for a.a.\,$x \in \Omega$.
			\item\label{Asf:CeramiAssumption}
				There exists $l, \widetilde{l} \in C_+(\close)$ such that $\min \{l_-, \widetilde{l}_- \} \in \left( (r_+ - p_-) \frac{N}{p_-} , r_+ \right) $ and $K > 0$ with
				\begin{align*}
					0 < K \leq \liminf_{s \to + \infty} \frac{f(x,s)s - q_+ \left( 1 + \frac{\kappa}{q_-} \right) 	F(x,s)}{\abs{s}^{l(x)}}
				\end{align*}
				uniformly for a.a.\,$ x\in \Omega$, and
				\begin{align*}
					0 < K \leq \liminf_{s \to - \infty} \frac{f(x,s)s - q_+ \left( 1 + \frac{\kappa}{q_-} \right) 	F(x,s)}{\abs{s}^{\widetilde{l}(x)}}
				\end{align*}
				uniformly for a.a.\,$x \in \Omega$, where $\kappa$ is the same one as in Lemma \ref{Le:fepsilon}.
		\end{enumerate}
\end{enumerate}

As a consequence the function $f$ has the following properties.

\begin{lemma}
	\label{Le:Propf}
	Let $f \colon \Omega \times \R \to \R$.
	\begin{enumerate}[label=(\roman*),font=\normalfont]
		\item\label{Propf:q<r}
			If $f$ fulfills \eqref{Asf:WellDef} and  \eqref{Asf:GrowthInfty}, then $q_+ < r_-$.
		\item\label{Propf:Boundedbelow}
			If $f$ fulfills \eqref{Asf:WellDef} and \eqref{Asf:GrowthInfty}, then there exist some $M>0$ such that
			\begin{align*}
				F(x,t) > - M \quad \text{for a.a.\,} x \in \Omega \text{ and for all } t \in \R.
			\end{align*}
		\item\label{Propf:EpsilonUpperBoundF}
			If $f$ fulfills \eqref{Asf:WellDef} and \eqref{Asf:GrowthZero}, then there exists $C > 0$ such that
			\begin{align*}
				F(x,t) \leq C \abs{t}^{r(x)} \quad \text{for a.a.\,} x \in \Omega.
			\end{align*}
		\item\label{Propf:EpsilonLowerBoundF}
			If $f$ fulfills \eqref{Asf:WellDef} and \eqref{Asf:GrowthInfty}, then for each $\eps > 0$ there exists $C_\eps > 0$ such that
			\begin{align*}
				F(x,t) \geq \frac{\eps}{q_+} \abs{t}^{q_+} \log( e + \abs{t} ) - C_\eps  \quad \text{for a.a.\,} x \in \Omega.
			\end{align*}
		\item\label{Propf:ConvergenceFterm}
			If $f$ fulfills \eqref{Asf:Caratheodory} and \eqref{Asf:WellDef}, then the functional $I_f \colon \WHlogzero \to \R$ given by
			\begin{align*}
				I_f(u) = \into F(x,u) \dx
			\end{align*}
			and its derivative $I_f ' \colon \WHlogzero \to \left[ \WHlogzero \right] ^*$, given by
			\begin{align*}
				\left \langle I_f ' (u) , v \right \rangle = \into f(x,u) v \dx,
			\end{align*}
			are strongly continuous, i.e.\,$u_n \weak u$ in $\WHlogzero$ implies $I_f(u_n) \to I_f(u)$ in $\R$ and $I_f '(u_n) \to I_f ' (u)$ in $\left[ \WHlogzero \right] ^*$.
	\end{enumerate}
\end{lemma}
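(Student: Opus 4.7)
The plan is to treat the five parts in order, with most of the technical effort concentrated on \ref{Propf:q<r} and \ref{Propf:ConvergenceFterm}. Integrating the bound in \eqref{Asf:WellDef} gives $|F(x,t)| \leq C(|t| + |t|^{r(x)}/r(x))$ for a.a.\,$x \in \Omega$ and all $t \in \R$. For \ref{Propf:q<r}, I would combine this with \eqref{Asf:GrowthInfty} to obtain, for every $M > 0$ and $|s|$ above some $s_M$,
\begin{align*}
	M \log(e + |s|) \leq C |s|^{1 - q_+} + C |s|^{r(x) - q_+}/r(x) \quad\text{for a.e.\,} x \in \Omega.
\end{align*}
Pointwise in $x$, letting $|s| \to \infty$ shows $r(x) > q_+$ for a.e.\,$x \in \Omega$. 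To upgrade this to the strict inequality $r_- > q_+$, I would exploit the uniformity in $x$: if $r_- = q_+$, continuity of $r$ on $\close$ and openness of $\Omega$ produce $x_n \in \Omega$, outside the exceptional null set, with $r(x_n) \to q_+$; fixing a sufficiently large $|s|$ and passing $n \to \infty$ in the display above forces $M \log(e + |s|) \leq C$ for arbitrary $M > 0$, the desired contradiction.

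Parts \ref{Propf:Boundedbelow}, \ref{Propf:EpsilonUpperBoundF} and \ref{Propf:EpsilonLowerBoundF} are then short. For \ref{Propf:Boundedbelow}, apply \eqref{Asf:GrowthInfty} with $M = 1$ to find $s_0$ so that $F(x,s) > 0$ for $|s| > s_0$ and a.e.\,$x$, while \eqref{Asf:WellDef} bounds $|F|$ uniformly on $|s| \leq s_0$. For \ref{Propf:EpsilonUpperBoundF}, split into $|t| \leq \theta$, where $F \leq 0 \leq C|t|^{r(x)}$ by \eqref{Asf:GrowthZero}, and $|t| > \theta$, where the linear term in the estimate from \eqref{Asf:WellDef} is dominated by a constant multiple of $|t|^{r(x)}$ using the uniform bound $|t| \leq C_\theta |t|^{r(x)}$ on $\{|t| \geq \theta\}$ (with $C_\theta$ depending on $\theta$, $r_-$, $r_+$). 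For \ref{Propf:EpsilonLowerBoundF}, apply \eqref{Asf:GrowthInfty} with parameter $\eps$ to produce $s_0$ so that $F(x,s) \geq (\eps/q_+) |s|^{q_+} \log(e + |s|)$ for $|s| > s_0$, and absorb the complementary region $|s| \leq s_0$ (where the right-hand side is uniformly bounded) into a constant $C_\eps$.

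The main work is \ref{Propf:ConvergenceFterm}. The key ingredient is the compact embedding $\WHlogzero \hookrightarrow \Lp{r(\cdot)}$ from Proposition \ref{Prop:EmbeddingHlogSobolev} \textnormal{(iii)}, available since $r_+ < p^*_-$. Given $u_n \weak u$ in $\WHlogzero$, this yields $u_n \to u$ strongly in $\Lp{r(\cdot)}$. From \eqref{Asf:Caratheodory} and \eqref{Asf:WellDef}, the Nemytskii operators $u \mapsto F(\cdot, u)$ and $u \mapsto f(\cdot, u)$ are continuous from $\Lp{r(\cdot)}$ into $\Lp{1}$ and $\Lp{r'(\cdot)}$ respectively (the latter because $(r(x)-1) r'(x) = r(x)$). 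Integration then yields $I_f(u_n) \to I_f(u)$, while the generalized H\"older inequality in variable exponent Lebesgue spaces combined with the embedding $\WHlogzero \hookrightarrow \Lp{r(\cdot)}$ gives
\begin{align*}
	\| I_f'(u_n) - I_f'(u) \|_* \leq C \| f(\cdot, u_n) - f(\cdot, u) \|_{r'(\cdot)},
\end{align*}
which vanishes by Nemytskii continuity. I expect the main obstacle to be \ref{Propf:q<r}: the pointwise a.e.\,bound, combined only with continuity of $r$ on $\close$, yields no more than $r_- \geq q_+$, so the strict inequality genuinely requires the uniformity built into \eqref{Asf:GrowthInfty}.
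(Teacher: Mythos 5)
The paper states Lemma \ref{Le:Propf} without proof, so there is no authors' argument to compare against; judged on its own, your proposal is correct and is exactly the standard argument one would expect here. Your treatment of part \ref{Propf:q<r} is appropriately careful: the pointwise bound $F(x,s) \leq C(|s| + |s|^{r(x)}/r(x))$ together with the pointwise divergence in \eqref{Asf:GrowthInfty} only yields $r(x) > q_+$ a.e., hence (by continuity) $r_- \geq q_+$, and you are right that upgrading this to a strict inequality genuinely requires the uniformity hypothesis. Your contradiction argument — picking $x_n \in \Omega$ outside the exceptional null set with $r(x_n) \to q_+$ (which is possible since $r$ is continuous on the compact $\close$ and any neighborhood of the minimizer meets $\Omega$ in a set of positive measure), then fixing $|s| > s_M$ and letting $n \to \infty$ to obtain $M \leq 2C$ for all $M$ — is correct. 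Parts \ref{Propf:Boundedbelow}–\ref{Propf:EpsilonLowerBoundF} are routine consequences of the integrated growth bound and the two asymptotic conditions, as you observe (for \ref{Propf:EpsilonLowerBoundF}, note $|F| \geq F$ makes the absolute value harmless). Part \ref{Propf:ConvergenceFterm} is the standard argument: compact embedding $\WHlogzero \hookrightarrow \Lp{r(\cdot)}$ (valid since $r_+ < p^*_-$) plus continuity of the Nemytskii maps $u \mapsto F(\cdot,u)$ into $\Lp{1}$ and $u \mapsto f(\cdot,u)$ into $\Lp{r'(\cdot)}$, followed by H\"older and the embedding to control $\|I_f'(u_n) - I_f'(u)\|_*$.
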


\begin{remark}
	As in \eqref{Assump:H2} we assumed $q_+ < p^*_-$, we can always find at least one $r \in C_+(\close)$ such that $q_+ < r_- \leq r_+ < p^+_-$.
\end{remark}

In order to find weak solutions we will work on the associated energy functional to problem \eqref{Eq:Problem}, since they coincide with the critical points of this functional. It is defined as the functional $\ph \colon \WHlogzero \to \R$ given by
\begin{align*}
	\ph(u)
	= \into \left(  \frac{\abs{\nabla u}^{p(x)}}{p(x)} + \mu(x) \frac{\abs{ \nabla u}^{q(x)}}{q(x)} \log( e + \abs{\nabla u} ) \right)  \dx  \;
	- \into F(x,u) \dx.
\end{align*}
In particular, as we are interested in constant sign solutions, we consider the truncated functionals $\ph_\pm \colon \WHlogzero \to \R$ defined by
\begin{align*}
	\ph_\pm (u)
	= \into \left(  \frac{\abs{\nabla u}^{p(x)}}{p(x)} + \mu(x) \frac{\abs{ \nabla u}^{q(x)}}{q(x)} \log( e + \abs{\nabla u} ) \right)  \dx  \;
	- \into F(x, \pm u^\pm ) \dx.
\end{align*}

\begin{remark}
	\label{Re:StrongCont}
	Note that by the second half of \eqref{Asf:GrowthZero}, $F(x, \pm t^\pm) = \int_0^t f(x, \pm s ^\pm) \ds$ for a.a.\,$x \in \Omega$ and all $t \in \R$. This will allow us to use Lemma \ref{Le:Propf} \ref{Propf:ConvergenceFterm} on the truncated functionals both for the differentiability and the strong continuity.
\end{remark}

In the remaining part of this section we check the assumptions of the mountain pass theorem, see Theorem \ref{Th:MPT}, for the truncated functionals $\ph_\pm$. We start by checking the compactness-type property. For this purpose, we first need the following lemma. Its proof is straightforward.

\begin{lemma}
	\label{Le:QuotientFracLog}
	Let $Q>1$ and $h \colon [0,\infty) \to [0,\infty)$ given by $h(t) = \frac{t}{Q (e + t) \log(e + t) }$. Then $h$ attains its maximum value at $t_0$ and the value is $\frac{\kappa}{Q}$, where $t_0$ and $\kappa$ are the same as in Lemma \ref{Le:fepsilon}.
\end{lemma}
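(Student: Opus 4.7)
The plan is immediate: observe that $h(t) = g(t)/Q$, where $g(t) = \frac{t}{(e+t)\log(e+t)}$ is precisely the auxiliary function analyzed inside the proof of Lemma~\ref{Le:fepsilon}. Since $Q > 0$ is a constant, maximizing $h$ and maximizing $g$ are the same problem up to the factor $1/Q$, so the statement reduces to recalling that $g$ attains its global maximum at the unique $t_0 > 0$ solving $t_0 = e\log(e+t_0)$, with $g(t_0) = \kappa = e/(e+t_0)$.

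For a self-contained verification, I would differentiate $h$ and note that $h'(t)$ has the same sign as $e\log(e+t) - t$; this expression is strictly decreasing in $t$ (its derivative is $e/(e+t) - 1 < 0$ for $t > 0$), strictly positive at $t=0$, and tends to $-\infty$ as $t \to \infty$. Hence it vanishes at a unique $t_0 > 0$, which is then the unique critical point of $h$ in $(0,\infty)$ and a strict global maximum. Substituting $\log(e+t_0) = t_0/e$ (from the defining equation of $t_0$) into $h(t_0)$ gives
\[
h(t_0) \;=\; \frac{t_0}{Q(e+t_0)\cdot (t_0/e)} \;=\; \frac{e}{Q(e+t_0)} \;=\; \frac{\kappa}{Q}.
\]

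There is no genuine obstacle here; the lemma is just a repackaging of the constant identified in Lemma~\ref{Le:fepsilon}, with the factor $Q$ inserted in the exact form that will be convenient for the later Cerami-condition estimates, where ratios of the shape $\kappa/q_-$ already appear in hypothesis \eqref{Asf:CeramiAssumption}. The only thing worth being careful about is consistently citing the sign analysis from the proof of Lemma~\ref{Le:fepsilon}, so that the identification of $t_0$ as the unique maximizer is not reproved but quoted.
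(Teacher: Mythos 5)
Your proof is correct and matches the paper's (implicit) approach exactly: the paper simply asserts the lemma is straightforward, and indeed the content is nothing more than recognizing $h = g/Q$ with $g$ the auxiliary function already analyzed in the proof of Lemma~\ref{Le:fepsilon}, then reading off $g(t_0)=\kappa$. Your explicit substitution using $\log(e+t_0)=t_0/e$ to get $h(t_0)=e/\big(Q(e+t_0)\big)=\kappa/Q$ is the same computation, just written out.
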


\begin{proposition}
	\label{Prop:CeramiCondition}
	Let \eqref{Assump:H2} be satisfied and $f$ fulfill \eqref{Asf:Caratheodory}, \eqref{Asf:WellDef}, \eqref{Asf:GrowthZero} and \eqref{Asf:CeramiAssumption}. Then the functionals $\ph_\pm$ satisfy the \textnormal{C}-condition.
\end{proposition}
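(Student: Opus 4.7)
The plan is to follow the standard strategy for the Cerami condition adapted to our logarithmic setting. Let $\{u_n\}_{n \in \N} \subseteq \WHlogzero$ be a Cerami sequence for $\ph_+$ (the argument for $\ph_-$ is symmetric, using the second half of \eqref{Asf:CeramiAssumption}), so $|\ph_+(u_n)| \leq C_1$ and $(1 + \normoneHlogzero{u_n})\|\ph_+'(u_n)\|_* \to 0$, which in particular gives $|\langle \ph_+'(u_n), u_n \rangle| = o(1)$. I will first prove boundedness of $\{u_n\}$ in $\WHlogzero$, then extract a weakly convergent subsequence and conclude strong convergence via the \textnormal{(S$_+$)}-property of $A$ established in Theorem \ref{Th:PropertiesOperator}.

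The heart of the proof is the boundedness step, where the mixture of power-law and logarithmic gradient terms blocks a direct Ambrosetti--Rabinowitz argument. The trick is to form the combination $q_+ (1 + \kappa/q_-) \ph_+(u_n) - \langle \ph_+'(u_n), u_n \rangle \leq C + o(1)$. When expanded, the $p$-gradient part has coefficient $q_+(1+\kappa/q_-)/p(x) - 1 \geq \kappa/q_-$ by $p(x) \leq q_+$. The $\mu$-weighted gradient part yields a positive logarithmic contribution with coefficient $q_+(1+\kappa/q_-)/q(x) - 1$ and a negative contribution $\mu(x) |\nabla u_n|^{q(x)+1}/[q(x)(e+|\nabla u_n|)]$ coming from differentiating the logarithm. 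Here Lemma \ref{Le:QuotientFracLog} is crucial: it gives $|\nabla u_n|/[q(x)(e+|\nabla u_n|)] \leq (\kappa/q(x)) \log(e + |\nabla u_n|)$, which lets me absorb the negative term into the positive logarithmic one with nonnegative remainder (thanks to $q(x) \leq q_+$). What survives is the clean estimate
\begin{align*}
	\tfrac{\kappa}{q_-} \into |\nabla u_n|^{p(x)} \dx + \into \bigl[ f(x, u_n^+) u_n^+ - q_+ \bigl( 1 + \tfrac{\kappa}{q_-} \bigr) F(x, u_n^+) \bigr] \dx \leq C + o(1),
\end{align*}
where I also used $f(x,0) = 0$ (from \eqref{Asf:GrowthZero}) to replace $u_n$ by $u_n^+$ in the reaction term. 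Invoking \eqref{Asf:CeramiAssumption} together with \eqref{Asf:WellDef} on the bounded range, the reaction integrand is bounded below by $\tfrac{K}{2}(u_n^+)^{l(x)} - C'$, so both $\into |\nabla u_n|^{p(x)} \dx$ and $\into (u_n^+)^{l(x)} \dx$ are bounded independently of $n$.

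From the first bound and the Poincar\'e inequality in $\Wpzero{p(\cdot)}$, the sequence is bounded in $\Wpzero{p(\cdot)}$, and since $r_+ < p^*_-$ by \eqref{Assump:H2} and \eqref{Asf:WellDef}, Proposition \ref{Prop:classicalembedd} gives $\norm{u_n}_{r(\cdot)} \leq C$, hence $\into (u_n^+)^{r(x)} \dx \leq C$ by Proposition \ref{Prop:ModularNormVarExp}. Returning to $\langle \ph_+'(u_n), u_n \rangle = o(1)$ and discarding the nonnegative term $\mu(x)|\nabla u_n|^{q(x)+1}/[q(x)(e+|\nabla u_n|)]$ from the left-hand side, I obtain
\begin{align*}
	\modoneHlogzero{u_n} \leq \into f(x, u_n^+) u_n^+ \dx + o(1) \leq C \bigl( 1 + \into (u_n^+)^{r(x)} \dx \bigr) + o(1) \leq C'',
\end{align*}
so Proposition \ref{Prop:oneHlogModularNorm} yields boundedness of $\{u_n\}$ in $\WHlogzero$.

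Now extract a subsequence $u_n \weak u$ in $\WHlogzero$. By the compact embedding $\WHlogzero \hookrightarrow \LHlog$ of Proposition \ref{Prop:Poincare} (together with the compact embedding into $L^{r(\cdot)}$) and Lemma \ref{Le:Propf}\ref{Propf:ConvergenceFterm} applied to the Carath\'eodory function $(x,t) \mapsto f(x,t^+)$ (see Remark \ref{Re:StrongCont}), we get $\into f(x, u_n^+)(u_n - u) \dx \to 0$. Combined with $\langle \ph_+'(u_n), u_n - u \rangle \to 0$ from the Cerami hypothesis and the boundedness of $\{u_n - u\}$, this produces $\langle A(u_n), u_n - u \rangle \to 0$, and the \textnormal{(S$_+$)}-property of $A$ in Theorem \ref{Th:PropertiesOperator} yields $u_n \to u$ strongly in $\WHlogzero$. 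The main obstacle throughout is the coupling of logarithmic and power-law pieces of $A$, which forces the specific constant $1 + \kappa/q_-$ and the use of Lemma \ref{Le:QuotientFracLog} precisely at the step where the negative log-derivative term would otherwise ruin the sign of the estimate.
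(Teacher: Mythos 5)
Your proof is correct and takes a genuinely different route to boundedness than the paper's. The paper first shows $-u_n^- \to 0$ (testing with $v=-u_n^-$), then pairs the test $v = u_n^+$ with the energy bound $|\ph_+(u_n)| \leq C_1$ so that the two gradient modulars cancel exactly (both estimated by $(1 + \kappa/q_-)\modoneHlogzero{u_n^+}$ via Lemma \ref{Le:QuotientFracLog}); what remains is only $\into [f(x,u_n^+)u_n^+ - q_+(1+\kappa/q_-)F(x,u_n^+)]\dx \leq C$, which gives $\norm{u_n^+}_{l_-} \leq C$, after which an interpolation between $L^{l_-}(\Omega)$ and $L^{p_-^*}(\Omega)$ closes the bound -- and this is precisely where the lower constraint on $\min\{l_-,\widetilde{l}_-\}$ in \eqref{Asf:CeramiAssumption} is used, to force $tr_+ < p_-$. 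Your single combination $q_+(1+\kappa/q_-)\ph_+(u_n) - \langle\ph_+'(u_n),u_n\rangle$ instead keeps the $p(x)$- and $q(x)$-dependent coefficients explicit; after absorbing the negative log-derivative term via Lemma \ref{Le:QuotientFracLog} a residual $\tfrac{\kappa}{q_-}\into|\nabla u_n|^{p(x)}\dx$ survives, giving a direct $W_0^{1,p(\cdot)}(\Omega)$-bound, hence $\norm{u_n}_{r(\cdot)} \leq C$ by Sobolev embedding (only $r_+ < p^*_-$ is needed), and returning to $\langle\ph_+'(u_n),u_n\rangle = o(1)$ then closes the modular bound with no interpolation. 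A welcome by-product is that only boundedness below of $f(x,s)s - q_+(1+\kappa/q_-)F(x,s)$ is actually used, so the lower end of the interval constraint in \eqref{Asf:CeramiAssumption} plays no role in your argument. The passage from boundedness to strong convergence via the \textnormal{(S$_+$)}-property of $A$ is the same in both proofs.
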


\begin{proof}
	Here we give the argument only for $\ph_+$, the case of $\ph_-$ is almost the same. Let $C_1 > 0$ and $\{u_n\}_{n \in \N} \subseteq \WHlogzero$ be a sequence such that
	\begin{align}
		\abs{\ph_+(u_n)} & \leq C_1 \quad \text{for all } n \in \N, \label{Eq:C-bound} \\
		\label{Eq:C-conv}
		(1 + \normoneHlogzero{u_n}) \ph_+ ' (u_n) & \to 0 \quad \text{in } \left[ \WHlogzero \right] ^*.
	\end{align}

	From \eqref{Eq:C-conv} we know that there exists a sequence $\eps_n \to 0$ such that for all $v \in \WHlogzero$
	\begin{equation}
		\label{Eq:C-convEps}
		\begin{aligned}
			& \left| \into \abs{\nabla u_n}^{p(x)-2}\nabla
			u_n  \cdot \nabla v \dx \right. \\
			& \left. +\into \mu(x) \left[ \log (e + \abs{\nabla u_n} ) + \frac{\abs{\nabla u_n}}{q(x) (e + \abs{\nabla u_n})} \right]  \abs{\nabla u_n}^{q(x)-2}\nabla u_n  \cdot \nabla v \dx \right. \\
			& \left. - \into f(x,u_n^+) v \dx \right|
			\leq \frac{\eps_n \normoneHlogzero{v}}{1 + \normoneHlogzero{u_n}} \quad \text{for all } n \in \N.
		\end{aligned}
	\end{equation}

	For any $v \in \WHlogzero$ we know that $v^\pm \in \WHlogzero$ by Proposition \ref{Prop:Truncations}. In particular we can take $v = - u_n^- \in \WHlogzero$ in \eqref{Eq:C-convEps}. As the fraction in the brackets is nonnegative and $f(x,u_n^+)u_n^- = 0$ for a.a.\,$x \in \Omega$, we have
	\begin{align*}
		& \modoneHlogzero{u_n^-} \\
		& \leq \into \left( \abs{\nabla u_n^-}^{p(x)}
		+ \mu(x) \left[ \log (e + \abs{\nabla u_n^-} ) + \frac{\abs{\nabla u_n^-}}{q(x) (e + \abs{\nabla u_n^-})} \right]  \abs{\nabla u_n^-}^{q(x)}\right) \dx \\
		& \leq \eps_n \quad \text{for all } n \in \N,
	\end{align*}
	or equivalently (see Proposition \ref{Prop:HlogModularNorm} \textnormal{(v)})
	\begin{align}
		\label{Eq:NegativeToZero}
		- u_n^- \to 0 \quad \text{in } \WHlogzero.
	\end{align}

	{\bf Claim:} There exists $M>0$ such that $\normoneHlogzero{u_n^+} \leq M$ for all $n \in \N$.

	By Lemma \ref{Le:QuotientFracLog} and taking $v = u_n^+ \in \WHlogzero$ in \eqref{Eq:C-convEps} we have
	\begin{align*}
		& -  \left( 1 + \frac{\kappa}{q_-} \right)   \modoneHlogzero{u_n^+} + \into f(x,u_n^+) u_n^+ \dx\\
		& \leq - \into \left( \abs{\nabla u_n^+}^{p(x)}
		+ \mu(x) \left[ \log (e + \abs{\nabla u_n^+} ) + \frac{\abs{\nabla u_n^+}}{q(x) (e + \abs{\nabla u_n^+})} \right]  \abs{\nabla u_n^+}^{q(x)}\right) \dx \\
		& \quad + \into f(x,u_n^+) u_n^+ \dx
		\leq \eps_n \quad \text{for all } n \in \N.
	\end{align*}
	On the other hand, from \eqref{Eq:C-bound} we also know that
	\begin{align*}
		\left( 1 + \frac{\kappa}{q_-} \right) \modoneHlogzero{u_n^+}
		- \into q_+ \left( 1 + \frac{\kappa}{q_-} \right) F(x,u_n^+) \dx \leq C_2 \quad \text{for all } n \in \N.
	\end{align*}
	Adding both inequalities one gets
	\begin{align*}
		\into \left(f(x,u_n^+) u_n^+ - q_+ \left( 1 + \frac{\kappa}{q_-} \right) F(x,u_n^+)\right) \dx \leq C_3 \quad \text{for all } n \in \N.
	\end{align*}
	By \eqref{Asf:CeramiAssumption}, where we assume, without loss of generality, that $l_- \leq \widetilde{l}_-$, there exist $C_4,C_5 > 0$ such that
	\begin{align*}
		C_4 |s|^{l_-} - C_5
		\leq f(x,s)s - q_+ \left( 1 + \frac{\kappa}{q_-} \right) F(x,s)
	\end{align*}
	for all $s \in \R$ and for a.a.\,$x \in \Omega$. The combination of the last two inequalities imply
	\begin{align}
		\label{Eq:BoundedL-}
		\norm{u_n^+}_{l_-} \leq C_6 \quad \text{for all } n \in \N.
	\end{align}
	Note that $l_- < r_+ < p^*_-$ because of \eqref{Asf:WellDef} and \eqref{Asf:CeramiAssumption}, so one can find $t \in (0,1)$ such that
	\begin{align*}
		\frac{1}{r_+} = \frac{t}{p^*_-} + \frac{1-t}{l_-}.
	\end{align*}
	This situates us in the appropriate setting to apply an interpolation inequality, like the one found in Proposition 2.3.17 in the book by Papageorgiou--Winkert \cite{Papageorgiou-Winkert-2018}. Together with \eqref{Eq:BoundedL-}, this gives us
	\begin{align*}
		\norm{u^+_n}_{r_+}^{r_+}
		\leq \left( \norm{u^+_n}^{t}_{p^*_-} \norm{u^+_n}^{1-t}_{l_-} \right)^{r_+}
		\leq C_6^{(1-t)r_+} \norm{u^+_n}^{t r_+}_{p^*_-} \quad \text{for all } n \in \N.
	\end{align*}
	For simplicity, we consider the case that $\normoneHlogzero{u_n^+} \geq 1$ for all $n \in \N$. From Proposition \ref{Prop:HlogModularNorm} \textnormal{(iii)}, and then by \eqref{Eq:C-convEps} with $v = u_n^+ \in \WHlogzero$ and \eqref{Asf:WellDef} we have
	\begin{align*}
		\normoneHlogzero{u^+_n}^{p_-}
		\leq \modoneHlogzero{ u^+_n }
		\leq C_7 (1 + \norm{u_n^+}_1 + \norm{u^+_n}^{r_+}_{r_+}) \quad \text{for all } n \in \N.
	\end{align*}
	These last two inequalities together with the embeddings (see Proposition \ref{Prop:EmbeddingHlogSobolev} (i))
	\begin{align*}
		\WHlogzero \hookrightarrow \Wp{p_-} \hookrightarrow \Lp{p_-^*}, \qquad \Lp{r_+} \hookrightarrow \Lp{1}
	\end{align*}
	yield
	\begin{align*}
		\normoneHlogzero{u^+_n}^{p_-}
		\leq C_9 \left( 1 + \normoneHlogzero{u^+_n}^{tr_+} \right)  \quad \text{for all } n \in \N.
	\end{align*}
	From the interval assumption in \eqref{Asf:CeramiAssumption}, we know that
	\begin{align*}
		tr_+
		& = \frac{p^*_-(r_+ - l_-)}{p^*_- - l_-}
		= \frac{N p_-(r_+ - l_-)}{Np_- - N l_- + p_- l_-}  \\
		& < \frac{N p_-(r_+ - l_-)}{Np_- - N l_- + p_- (r_+ - p_-) \frac{N}{p_-}} = p_-,
	\end{align*}
	thus there exists $M>0$ such that $\normoneHlogzero{u^+_n} \leq M$ for all $n \in \N$ and this completes the proof of the Claim.

	The boundedness of $\{u_n\}_{n \in \N}$ in $\WHlogzero$ is established by \eqref{Eq:NegativeToZero} and the previous Claim. Hence there is a subsequence $\{u_{n_k}\}_{k \in \N}$ such that
	\begin{align*}
		u_{n_k} \weak u \quad \text{in } \WHlogzero.
	\end{align*}
	Now taking $v = u_{n_k} - u \in \WHlogzero$ in \eqref{Eq:C-convEps}, we know that
	\begin{align*}
		\lim\limits_{k \to \infty} \langle \ph_+ ' (u_{n_k}) , u_{n_k} - u \rangle = 0,
	\end{align*}
	and from the weak convergence of $\{u_{n_k}\}_{k \in \N}$ and Lemma \ref{Le:Propf} \ref{Propf:ConvergenceFterm} (check remark \ref{Re:StrongCont}) we obtain
	\begin{align*}
		\lim\limits_{k \to \infty} \into f(x,u_{n_k}^+) ( u_{n_k} - u ) \dx = 0.
	\end{align*}
	The last two limits together yield
	\begin{align*}
		\lim\limits_{k \to \infty} \langle A(u_{n_k}) , u_{n_k} - u \rangle = 0
	\end{align*}
	and the \textnormal{(S$_+$)}-property of the operator $A$ (see Theorem \ref{Th:PropertiesOperator}) implies that
	\begin{align*}
		u_{n_k} \to u \quad \text{in } \WHlogzero.
	\end{align*}
\end{proof}

Now we have to check the mountain pass geometry.

\begin{proposition}
	\label{Prop:PhLowerBound}
	Let \eqref{Assump:H2} be satisfied and $f$ fulfill \eqref{Asf:Caratheodory}, \eqref{Asf:WellDef} and \eqref{Asf:GrowthZero}. Then there exist constants $C_1,C_2,C_3 >0$ such that for all $\eps > 0$
	\begin{align*}
		\ph(u), \ph_\pm (u) \geq
		\begin{cases}
			C_1 a_\eps^{-1} \normoneHlogzero{u}^{q_+ + \eps} - C_2 \normoneHlogzero{u}^{r_-}, & \text{if } \normoneHlogzero{u} \leq \min\{1,C_3\}, \\
			C_1 \normoneHlogzero{u}^{p_-} - C_2 \normoneHlogzero{u}^{r_+}, & \text{if } \normoneHlogzero{u} \geq \max\{1,C_3\},
		\end{cases}
	\end{align*}
	where $a_\eps$ is the same one as in Lemma \ref{Le:fepsilon}.
\end{proposition}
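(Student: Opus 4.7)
The plan is to estimate the two pieces of $\ph$ (and $\ph_\pm$) separately: we will bound the modular-type principal part from below and the right-hand side integral from above.

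\textbf{Step 1 (Lower bound on the principal part).} Since $p(x) \leq q_+$ and $q(x) \leq q_+$ pointwise, we simply factor out $1/q_+$ to get
\begin{align*}
\into \left( \frac{\abs{\nabla u}^{p(x)}}{p(x)} + \mu(x) \frac{\abs{\nabla u}^{q(x)}}{q(x)} \log(e+\abs{\nabla u}) \right) \dx
\geq \frac{1}{q_+} \modoneHlogzero{u}.
\end{align*}
Next we convert the modular to the norm via Proposition \ref{Prop:oneHlogModularNorm}. For $\normoneHlogzero{u}\le 1$ we apply part \textnormal{(iv)} with a fixed $0<\eps<\kappa$, obtaining $\modoneHlogzero{u}\geq a_\eps^{-1}\normoneHlogzero{u}^{q_+ +\eps}$; for $\normoneHlogzero{u}\geq 1$ we apply part \textnormal{(iii)}, which yields $\modoneHlogzero{u}\geq \normoneHlogzero{u}^{p_-}$ without the $a_\eps$ factor (this is where the two different constants in the two regimes come from).

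\textbf{Step 2 (Upper bound on the nonlinear part).} By Lemma \ref{Le:Propf} \ref{Propf:EpsilonUpperBoundF}, there is $C>0$ with $F(x,t)\leq C\abs{t}^{r(x)}$ for a.a.\,$x\in\Omega$, so the same holds for $F(x,\pm u^\pm)$ since $|\pm u^\pm|\leq |u|$. Thus
\begin{align*}
\into F(x,u)\dx,\ \into F(x,\pm u^\pm)\dx \;\leq\; C\varrho_{r(\cdot)}(u).
\end{align*}
Since $r_+<p^*_-$, assumption \eqref{Assump:H2} and Proposition \ref{Prop:EmbeddingHlogSobolev} \textnormal{(iii)} give a continuous embedding $\WHlogzero\hookrightarrow \Lp{r(\cdot)}$ with some constant $C_\text{emb}>0$. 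Using Proposition \ref{Prop:ModularNormVarExp}, when $\|u\|_{r(\cdot)}\leq 1$ we have $\varrho_{r(\cdot)}(u)\leq \|u\|_{r(\cdot)}^{r_-} \leq C_\text{emb}^{r_-}\normoneHlogzero{u}^{r_-}$, and when $\|u\|_{r(\cdot)}\geq 1$ we have $\varrho_{r(\cdot)}(u)\leq \|u\|_{r(\cdot)}^{r_+}\leq C_\text{emb}^{r_+}\normoneHlogzero{u}^{r_+}$. Choosing $C_3>0$ so that $\normoneHlogzero{u}\leq C_3$ forces $\|u\|_{r(\cdot)}\leq 1$ (and conversely for the other side) splits the estimate cleanly into the two regimes.

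\textbf{Step 3 (Assembly).} Combining Steps 1 and 2, for $\normoneHlogzero{u}\leq \min\{1,C_3\}$ we obtain
\begin{align*}
\ph(u),\ \ph_\pm(u)\ \geq\ \frac{1}{q_+ a_\eps}\normoneHlogzero{u}^{q_+ +\eps} - C\,C_\text{emb}^{r_-}\normoneHlogzero{u}^{r_-},
\end{align*}
and for $\normoneHlogzero{u}\geq \max\{1,C_3\}$ we obtain
\begin{align*}
\ph(u),\ \ph_\pm(u)\ \geq\ \frac{1}{q_+}\normoneHlogzero{u}^{p_-} - C\,C_\text{emb}^{r_+}\normoneHlogzero{u}^{r_+},
\end{align*}
which establishes the claim with $C_1=1/q_+$ and $C_2$ the maximum of the two embedding constants.

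The only delicate point is keeping the two regimes consistent: both the modular/norm switch inside $\WHlogzero$ (at norm $1$) and the modular/norm switch inside $\Lp{r(\cdot)}$ (also at norm $1$, but shifted by the embedding constant) must happen in the correct direction so that, in the small-norm regime, the leading exponent $q_+ +\eps$ is strictly larger than $r_-$ \emph{only when} $r_->q_+$ (which is Lemma \ref{Le:Propf} \ref{Propf:q<r}, allowing us to pick $\eps$ small enough), and in the large-norm regime the $r_+>p_-$ gap is the correct direction for the eventual mountain-pass construction. This bookkeeping is routine once $C_3$ is chosen and $\eps$ is fixed.
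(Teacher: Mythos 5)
Your proof is correct and follows essentially the same route as the paper: bound the principal part from below by $\frac{1}{q_+}\modoneHlogzero{u}$, convert to a power of the norm via the modular--norm comparison from Lemma \ref{Le:PropertiesHlog} (parts (ii)/(iii) there feed into Proposition \ref{Prop:HlogModularNorm} (iii)/(iv)), and bound $\into F(x,u)\dx$ above by $C\varrho_{r(\cdot)}(u)$ using Lemma \ref{Le:Propf} \ref{Propf:EpsilonUpperBoundF}, the embedding into $\Lp{r(\cdot)}$, and Proposition \ref{Prop:ModularNormVarExp}.

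One small imprecision in Step~2: the choice $C_3 = 1/C_{\text{emb}}$ guarantees that $\normoneHlogzero{u}\le C_3$ forces $\|u\|_{r(\cdot)}\le 1$, but the parenthetical ``and conversely for the other side'' is not literally true --- $\normoneHlogzero{u}\ge C_3$ does \emph{not} force $\|u\|_{r(\cdot)}\ge 1$, since the embedding only gives an upper bound on $\|u\|_{r(\cdot)}$. What actually works, and what the paper does, is to write $\varrho_{r(\cdot)}(u)\le\max\{\,(C_{\text{emb}}\normoneHlogzero{u})^{r_-},(C_{\text{emb}}\normoneHlogzero{u})^{r_+}\,\}$ directly and note that when $\normoneHlogzero{u}\ge C_3$ one has $C_{\text{emb}}\normoneHlogzero{u}\ge 1$, so the max is attained at the $r_+$ exponent; alternatively, if $\|u\|_{r(\cdot)}<1$ while $\normoneHlogzero{u}\ge\max\{1,C_3\}$, the estimate $\varrho_{r(\cdot)}(u)\le\|u\|_{r(\cdot)}^{r_-}<1\le(C_{\text{emb}}\normoneHlogzero{u})^{r_+}$ still holds. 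Either way the conclusion stands, so this is a cosmetic fix rather than a genuine gap.
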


\begin{proof}
	We do the argument only for $\ph$, as for $\ph_\pm$ we can use $\varrho_{r(\cdot)} ( \pm u^\pm ) \leq \varrho_{r(\cdot)} ( u )$. Applying Lemma \ref{Le:Propf} \ref{Propf:EpsilonUpperBoundF}, the embedding of $\WHlogzero \hookrightarrow \Lp{r(\cdot)}$ with constant $C_{\hlog}$ from Proposition \ref{Prop:EmbeddingHlogSobolev} \textnormal{(iii)} and Proposition \ref{Prop:ModularNormVarExp} \textnormal{(iii)}, \textnormal{(iv)}, we have for any $u \in \WHlogzero$
	\begin{align*}
		\ph (u)
		 & \geq \frac{1}{q_+} \modoneHlogzero{u}
		- C \varrho_{r(\cdot)} ( u )             \\
		 & \geq \frac{1}{q_+} \modoneHlogzero{u}
		- C_\eps \max_{k \in \{ r_+, r_- \} } \{ C_{\hlog} ^k \normoneHlogzero{u}^k \}.
	\end{align*}
	Choosing $C_3 = 1/C_{\hlog}$,
	the result follows from Proposition \ref{Prop:HlogModularNorm} \textnormal{(iii)} and \textnormal{(iv)} with
	\begin{align*}
		C_1 = \frac{1}{q_+}
		\quad \text{and}\quad
		C_2 =
		\begin{cases}
			C_\eps C_{\hlog}^{r_-} & \text{for } \normoneHlogzero{u} \leq C_3, \\
			C_\eps C_{\hlog}^{r_+} & \text{for } \normoneHlogzero{u} > C_3.
		\end{cases}
	\end{align*}
\end{proof}

\begin{corollary}
	\label{Cor:RingOfMountains}
	Let \eqref{Assump:H2} be satisfied and $f$ fulfill \eqref{Asf:Caratheodory}, \eqref{Asf:WellDef} with $q_+<r_-$ and \eqref{Asf:GrowthZero}. Then there exist $\delta >0$ such that
	\begin{align*}
		\inf_{\normoneHlogzero{u} = \delta} \,\ph(u) >0
		\quad \text{and} \quad
		\inf_{\normoneHlogzero{u} = \delta} \,\ph_\pm(u) > 0.
	\end{align*}
	Alternatively, there exists $\delta' > 0 $ such that $\ph(u) > 0$ for $0 < \normoneHlogzero{u} < \delta'$.
\end{corollary}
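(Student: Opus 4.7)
The plan is to deduce this corollary as a direct consequence of Proposition \ref{Prop:PhLowerBound}, by choosing the parameter $\eps$ and the radius $\delta$ appropriately so that the lower bound proven there becomes strictly positive on the sphere of radius $\delta$.

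First I would fix $\eps > 0$ small enough so that $\eps < \kappa$ (so that Proposition \ref{Prop:PhLowerBound} applies) and also $q_+ + \eps < r_-$ (this is possible precisely because we now assume $q_+ < r_-$). With this choice, Proposition \ref{Prop:PhLowerBound} yields constants $C_1, C_2, C_3 > 0$ such that for all $u \in \WHlogzero$ with $\normoneHlogzero{u} \leq \min\{1, C_3\}$,
\begin{align*}
    \ph(u), \ph_\pm(u) \geq C_1 a_\eps^{-1} \normoneHlogzero{u}^{q_+ + \eps} - C_2 \normoneHlogzero{u}^{r_-}
    = \normoneHlogzero{u}^{q_+ + \eps} \left( C_1 a_\eps^{-1} - C_2 \normoneHlogzero{u}^{r_- - (q_+ + \eps)} \right).
\end{align*}

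Next I would define
\begin{align*}
    \delta = \frac{1}{2} \min \left\{ 1, C_3, \left( \frac{C_1}{2 C_2 a_\eps} \right)^{\frac{1}{r_- - (q_+ + \eps)}} \right\}.
\end{align*}
Since $r_- - (q_+ + \eps) > 0$, the exponent in the third term is strictly positive and hence this quantity is a well-defined strictly positive number. For any $u \in \WHlogzero$ with $\normoneHlogzero{u} = \delta$ the bracketed factor above is bounded below by $C_1 a_\eps^{-1} / 2 > 0$, which yields
\begin{align*}
    \inf_{\normoneHlogzero{u} = \delta} \,\ph(u), \; \inf_{\normoneHlogzero{u} = \delta} \,\ph_\pm(u) \geq \frac{C_1}{2 a_\eps} \delta^{q_+ + \eps} > 0,
\end{align*}
proving the first assertion.

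The alternative assertion follows by the same inequality applied to every $u$ with $0 < \normoneHlogzero{u} < \delta'$ where $\delta'$ is chosen like $\delta$ above (but with a strict inequality), since the bracketed term is strictly positive for all such $u$. I do not expect any genuine obstacle here: all the analytic work has been absorbed into Proposition \ref{Prop:PhLowerBound}, and this corollary is essentially a bookkeeping step extracting the interval of radii on which the lower bound is positive.
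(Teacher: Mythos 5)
Your proof is correct and matches what the paper clearly intends: the corollary is stated without proof as an immediate consequence of Proposition~\ref{Prop:PhLowerBound}, and your argument carries out exactly that reduction. Fixing $0<\eps<\kappa$ with $q_++\eps<r_-$ (possible precisely because the corollary imposes $q_+<r_-$), factoring the lower bound as $\normoneHlogzero{u}^{q_++\eps}\bigl(C_1a_\eps^{-1}-C_2\normoneHlogzero{u}^{r_--(q_++\eps)}\bigr)$, and then shrinking the radius so the bracket stays away from zero is the intended bookkeeping, and your explicit choice of $\delta$ does the job.
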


\begin{proposition}
	\label{Prop:PointBeyondMountains}
	Let \eqref{Assump:H2} be satisfied and $f$ fulfill \eqref{Asf:Caratheodory}, \eqref{Asf:WellDef} and \eqref{Asf:GrowthInfty}. Let $0 \neq u \in \WHlogzero$, then $\ph(tu) \xrightarrow{t \to \pm \infty} - \infty$. Furthermore, if $u \geq 0$ a.e.\,in $\Omega$, $\ph_\pm (tu) \xrightarrow{t \to \pm \infty} - \infty$.
\end{proposition}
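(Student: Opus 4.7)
The plan is to show that the nonlinearity, which grows faster than $|s|^{q_+} \log(e+|s|)$ at infinity, dominates the leading order of the gradient terms in $\ph(tu)$ as $|t| \to \infty$. First I would estimate the modular part from above. Using the inequality $\log(e+|t||\nabla u|) \leq \log(e+|t|) + \log(e + |\nabla u|)$ from \eqref{Eq:LogGrowthProduct} together with $p_+ \leq q_+$ (which follows from $p(x) \leq q(x)$), one gets, for $|t| \geq 1$,
\begin{align*}
	& \into \frac{|\nabla(tu)|^{p(x)}}{p(x)} \dx + \into \mu(x) \frac{|\nabla(tu)|^{q(x)}}{q(x)} \log(e+|\nabla(tu)|) \dx\\
	& \leq C_1(u)\bigl[ |t|^{p_+} + |t|^{q_+} + |t|^{q_+} \log(e+|t|) \bigr],
\end{align*}
where $C_1(u)$ depends on $\modoneHlogzero{u}$ but is independent of $t$.

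Next I would bound $\into F(x,tu)\dx$ from below. By \eqref{Asf:GrowthInfty}, for any $M>0$ there exists $s_M > 0$ such that $F(x,s) \geq M|s|^{q_+}\log(e+|s|)$ for $|s| \geq s_M$ and a.a.\,$x \in \Omega$. Combined with the upper bound in \eqref{Asf:WellDef} (which controls $F$ on bounded intervals), this yields a constant $C_M>0$ with
\begin{align*}
	F(x,s) \geq M |s|^{q_+}\log(e+|s|) - C_M \quad \text{for a.a.\,} x \in \Omega \text{ and all } s \in \R.
\end{align*}
Since $u \neq 0$, there exists $\eta > 0$ such that $A_\eta := \{x \in \Omega \,:\, |u(x)| \geq \eta\}$ has positive measure, and for all sufficiently large $|t|$ one has $\log(e+\eta|t|) \geq \tfrac{1}{2}\log(e+|t|)$. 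Restricting the integral to $A_\eta$ gives
\begin{align*}
	\into F(x,tu)\dx \geq M\, C_2(u,\eta)\, |t|^{q_+}\log(e+|t|) - C_M |\Omega|,
\end{align*}
with $C_2(u,\eta) := \tfrac{1}{2}\eta^{q_+}|A_\eta| > 0$ independent of $t$ and $M$.

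Combining these bounds,
\begin{align*}
	\ph(tu) \leq C_1(u)\bigl[|t|^{p_+} + |t|^{q_+} + |t|^{q_+}\log(e+|t|)\bigr] - M\, C_2(u,\eta)\, |t|^{q_+}\log(e+|t|) + C_M|\Omega|.
\end{align*}
Choosing $M$ so large that $M\, C_2(u,\eta) > 2 C_1(u)$, the dominant term becomes negative and of order $|t|^{q_+}\log(e+|t|)$, which, since $p_+ \leq q_+$ and $\log(e+|t|) \to \infty$, overwhelms all remaining terms. Hence $\ph(tu) \to -\infty$ as $|t| \to \infty$.

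Finally, the statement for $\ph_\pm$ reduces to the one for $\ph$: when $u \geq 0$ and $t > 0$, one has $(tu)^- = 0$ and $(tu)^+ = tu$, so $\ph_+(tu) = \ph(tu)$; when $u \geq 0$ and $t < 0$, $(tu)^+ = 0$ and $-(tu)^- = tu$, so again $\ph_-(tu) = \ph(tu)$. The main technical point is the interplay between the logarithmic factor and the power $q_+$ on both sides of the estimate: one must ensure that the logarithmic gain from \eqref{Asf:GrowthInfty} is genuinely retained after integration, which is why the argument goes through the set $A_\eta$ where $|u|$ is bounded below, rather than via a naive pointwise comparison that would lose the logarithm.
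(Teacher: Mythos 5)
Your proof is correct and follows the same overall strategy as the paper: estimate the modular part of $\ph(tu)$ from above by terms of order $|t|^{p_+}$, $|t|^{q_+}$ and $|t|^{q_+}\log(e+|t|)$ with $u$-dependent constants, then bound $\into F(x,tu)\dx$ from below using the superlinear growth so that a large multiple of $|t|^{q_+}\log(e+|t|)$ dominates the rest. The difference is in how the lower bound on the $F$-term is obtained. The paper applies Lemma \ref{Le:Propf} \ref{Propf:EpsilonLowerBoundF} and writes the resulting term directly as $-\tfrac{\eps|t|^{q_+}}{q_+}\log(e+|t|)\norm{u}_{q_+}^{q_+}$, which tacitly uses $\into|u|^{q_+}\log(e+|t||u|)\dx\geq\log(e+|t|)\norm{u}_{q_+}^{q_+}$; this pointwise comparison is not valid on the set where $|u|<1$, where the logarithm is lost. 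You avoid this by restricting the integral to the positive-measure set $A_\eta=\{|u|\geq\eta\}$ and using that $\log(e+\eta|t|)\geq\tfrac{1}{2}\log(e+|t|)$ for $|t|$ large, which genuinely retains the logarithmic factor; the resulting coefficient $\tfrac{1}{2}\eta^{q_+}|A_\eta|$ is smaller than $\norm{u}_{q_+}^{q_+}$ but still a positive constant depending only on $u$, which is all the argument needs since the constant $M$ (the paper's $\eps$) can then be taken arbitrarily large. So your version is more careful on exactly the point you flag, and the final reduction of $\ph_\pm(tu)$ to $\ph(tu)$ when $u\geq 0$ is handled identically in both proofs.
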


\begin{proof}
	As before, we only show the argument for $\ph$. We can do this because if $u \geq 0$ a.e.\,in $\Omega$, $\ph_\pm (tu) = \ph(tu)$ for $\pm t > 0$.

	Fix any $0 \neq u \in \WHlogzero$ and let $t, \eps \geq 1$. First note that $ \norm{u}_{q_+} < \infty$ due to Proposition \ref{Prop:EmbeddingHlogSobolev} \textnormal{(iii)} and Lemma \ref{Le:Propf} \ref{Propf:q<r}. From \eqref{Eq:LogGrowthProduct} and Lemma \ref{Le:Propf} \ref{Propf:EpsilonLowerBoundF} we obtain
	\begin{align*}
		\ph(tu)
		& \leq \frac{\abs{t}^{p_+}}{p_-} \varrho_{p(\cdot)} ( \nabla u )
		+ \frac{\abs{t}^{q_+}}{q_-} \log (e + \abs{t}) \into \mu(x) \abs{\nabla u}^{q(x)} \dx \\
		& \quad + \frac{\abs{t}^{q_+}}{q_-} \into \mu(x) \abs{\nabla u}^{q(x)} \log(e + \abs{\nabla u}) \dx \\
		& \quad - \frac{\eps \abs{t}^{q_+}}{q_+} \into|u|^{q_+} \log (e + t\abs{u}) \dx \; + \; C_\eps |\Omega|.
	\end{align*}
	Note that
	\begin{align*}
		\int_{\{ u \geq 1 \}} |u|^{q_+} \log (e + t\abs{u}) \dx
		\geq \log (e + t) \int_{\{ u \geq 1 \}} |u|^{q_+} \dx
	\end{align*}
	and again by \eqref{Eq:LogGrowthProduct}
	\begin{align*}
		\int_{\{ 0 < u < 1 \}} |u|^{q_+} \frac{1/\abs{u}}{1/\abs{u}} \log (e + t\abs{u}) \dx
		\geq \log (e + t) \int_{\{ 0 < u < 1 \}} |u|^{q_+ + 1} \dx.
	\end{align*}
	Applying these two inequalities in the first inequality yields
	\begin{align*}
		\ph(tu)
		& \leq \frac{\abs{t}^{p_+}}{p_-} \varrho_{p(\cdot)} ( \nabla u )
		+ \abs{t}^{q_+} \log (e + \abs{t})\\
		&\quad \times \left( \frac{1}{q_-} \into \mu(x) \abs{\nabla u}^{q(x)} \dx - \eps \frac{1}{q_+} \left[ \int_{\{ u \geq 1 \}} |u|^{q_+} \dx + \int_{\{ 0 < u < 1 \}} |u|^{q_+ + 1} \dx \right]  \right)\\
		& \quad + \frac{\abs{t}^{q_+}}{q_-} \into \mu(x) \abs{\nabla u}^{q(x)} \log(e + \abs{\nabla u}) \dx + C_\eps |\Omega|.
	\end{align*}
	The second term is negative for values of $\eps$ large enough. Setting such a value directly yields $\ph(tu) \xrightarrow{t \to \pm \infty} - \infty$.
\end{proof}

Now we have all the required properties to apply the mountain pass theorem.

\begin{theorem}
	\label{Th:ConstantSignSolutions}
	Let \eqref{Assump:H2} be satisfied and $f$ fulfill \eqref{Asf:Caratheodory}, \eqref{Asf:WellDef}, \eqref{Asf:GrowthInfty}, \eqref{Asf:GrowthZero} and \eqref{Asf:CeramiAssumption}. Then there exist nontrivial weak solutions $u_0,v_0 \in \WHlogzero $ of problem \eqref{Eq:Problem} such that $u_0 \geq 0$ and $v_0 \leq 0$ a.e.\,in $\Omega$.
\end{theorem}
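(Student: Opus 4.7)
The plan is to apply the mountain pass theorem (Theorem \ref{Th:MPT}) separately to the truncated functionals $\ph_+$ and $\ph_-$, and then use a sign-extraction argument to conclude that the critical points obtained have the desired constant sign and are therefore genuine weak solutions of \eqref{Eq:Problem}. Both $\ph_\pm$ are of class $C^1$ (by Theorem \ref{Th:C1functional} for the principal part together with Lemma \ref{Le:Propf} \ref{Propf:ConvergenceFterm} and Remark \ref{Re:StrongCont} for the reaction part); they satisfy the \textnormal{C}-condition by Proposition \ref{Prop:CeramiCondition}; the mountain ring geometry $\inf_{\normoneHlogzero{u}=\delta} \ph_\pm(u) > 0 = \ph_\pm(0)$ holds by Corollary \ref{Cor:RingOfMountains} (note that $q_+ < r_-$ is automatic from Lemma \ref{Le:Propf} \ref{Propf:q<r}); and fixing any $0 \neq w \in \WHlogzero$ with $w \geq 0$ a.e., Proposition \ref{Prop:PointBeyondMountains} gives $\ph_+(tw) \to -\infty$ as $t \to +\infty$ and $\ph_-(tw) \to -\infty$ as $t \to -\infty$, so for $t$ large enough one obtains points $u_1 = tw$ with $\normoneHlogzero{u_1} > \delta$ and $\ph_+(u_1) < 0$, and analogously on the other side. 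Theorem \ref{Th:MPT} then produces critical points $u_0, v_0 \in \WHlogzero$ with $\ph_\pm(u_0),\ph_\pm(v_0) \geq m_\delta > 0$, so in particular $u_0, v_0 \neq 0$.

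The crucial step is the sign extraction. Since $\ph_+'(u_0)=0$, testing with $-u_0^- \in \WHlogzero$ (which belongs to the space by Proposition \ref{Prop:Truncations} \textnormal{(iii)}) and using $f(x,u_0^+)(-u_0^-) = 0$ a.e., I would obtain
\begin{align*}
\into \left( |\nabla u_0^-|^{p(x)} + \mu(x)\left[\log(e+|\nabla u_0^-|) + \frac{|\nabla u_0^-|}{q(x)(e+|\nabla u_0^-|)}\right]|\nabla u_0^-|^{q(x)}\right) \dx = 0,
\end{align*}
using that $\nabla u_0^+$ and $\nabla u_0^-$ have disjoint supports, so that $\nabla u_0 \cdot \nabla(-u_0^-) = |\nabla u_0^-|^2$ pointwise a.e. Because the integrand is nonnegative, $|\nabla u_0^-|^{p(x)} = 0$ a.e., hence $\nabla u_0^- \equiv 0$. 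Combined with the Poincaré inequality in $\Wpzero{p_-}$ (after invoking the embedding $\WHlogzero \hookrightarrow \Wpzero{p_-}$ from Proposition \ref{Prop:EmbeddingHlogSobolev} \textnormal{(i)}), this yields $u_0^- = 0$, i.e.\,$u_0 \geq 0$ a.e. The analogous argument with $v_0^+$ applied to $\ph_-'(v_0)=0$ shows $v_0 \leq 0$ a.e.

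Finally, since $u_0 \geq 0$ we have $u_0^+ = u_0$, so $f(x,u_0^+) = f(x,u_0)$ and the equation $\ph_+'(u_0) = 0$ is exactly the weak formulation of $\mathcal{G} u_0 = f(x,u_0)$; likewise for $v_0$. This produces the two desired constant-sign nontrivial weak solutions. The only genuinely non-routine ingredient is the sign argument, which works precisely because the extra logarithmic factor $\log(e+|\nabla u_0^-|) + \frac{|\nabla u_0^-|}{q(x)(e+|\nabla u_0^-|)}$ is strictly positive and the $|\nabla u_0^-|^{p(x)}$ term by itself controls $\nabla u_0^-$; all other steps reduce to assembling the three propositions that were established above.
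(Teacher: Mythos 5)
Your proposal is correct and follows essentially the same route as the paper: apply the mountain pass theorem to the truncated functionals $\ph_\pm$ using Corollary \ref{Cor:RingOfMountains}, Proposition \ref{Prop:PointBeyondMountains} and Proposition \ref{Prop:CeramiCondition}, then test the critical point equation with $\mp u_0^\mp$ to obtain constant sign. The paper concludes $u_0^- = 0$ by observing that the resulting nonnegative integral forces $\modoneHlogzero{u_0^-} = 0$ and hence $\normoneHlogzero{u_0^-} = 0$; your variant via $|\nabla u_0^-|^{p(x)} = 0$ a.e.\,followed by the Poincar\'e inequality in $\Wpzero{p_-}$ is a minor stylistic difference leading to the same conclusion.
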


\begin{proof}
	Due to the combination of Propositions \ref{Cor:RingOfMountains}, \ref{Prop:PointBeyondMountains} and \ref{Prop:CeramiCondition}, the assumptions of Theorem \ref{Th:MPT} are satisfied for the truncated energy functionals $\ph_\pm$. This yields the existence of $u_0,v_0 \in \WHlogzero$ such that $\ph_+'(u_0) = 0 = \ph_-'(v_0)$ and
	\begin{align*}
		\ph_+(u_0), \ph_-(v_0) \geq \inf_{\normoneHlogzero{u} = \delta} \,\ph_\pm(u) > 0 = \ph_+ (0),
	\end{align*}
	which proves that $u_0 \neq 0 \neq v_0$. Furthermore, testing $\ph_+'(u_0) = 0$ with $-u_0^-$ we obtain $\modoneHlogzero{u^-} = 0$, which by Proposition \ref{Prop:HlogModularNorm} \textnormal{(i)} means that $-u_0^- = 0$ a.e.\,in $\Omega$, so $u_0 = u_0^+ \geq 0$ a.e.\,in $\Omega$. Analogously, $v_0 \leq 0$ a.e.\,in $\Omega$.
\end{proof}

Alternatively, we could have used the following assumptions instead of \eqref{Assump:H2} and \eqref{Asf:GrowthZero}.

\begin{enumerate}[label=\textnormal{(H$_2$')},ref=\textnormal{H$_2$'}]
	\item\label{Assump:H2prime}
	$\Omega \subseteq \R^N$, with $N \geq 2$, is a bounded domain with Lipschitz boundary $\partial \Omega$, $p,q \in C_+ (\close)$ with $p(x) \leq q(x) \leq q_+ < p^*_-$ for all $x \in \close$, $p$ satisfies a monotonicity condition, that is, there exists a vector $l \in \R^N \setminus \{ 0 \}$ such that for all $x \in \Omega$ the function
	\begin{align*}
		h_x (t) = p(x + tl) \quad \text{with } t \in I_x = \{ t \in \R \colon x + tl \in \Omega\}
	\end{align*}
	is monotone, and $0\leq\mu(\cdot) \in \Lp{\infty}$.
\end{enumerate}

\begin{enumerate} [label=\textnormal{(f$_{\arabic*}$')},ref=\textnormal{f$_{\arabic*}$'}]
	\setcounter{enumi}{3}
	\item\label{Asf:GrowthZeroAlt}
	\begin{align*}
		\lim\limits_{s \to 0} \frac{F(x,s)}{\abs{s}^{p(x)}} = 0 \quad
		\text{uniformly for a.a.\,} x \in \Omega.
	\end{align*}
\end{enumerate}
This new assumption in \eqref{Asf:GrowthZeroAlt} has a slightly different consequence than its counterpart earlier in this section.

\begin{lemma}
	\label{Le:PropLimftozero}
	Let $f \colon \Omega \times \R \to \R$ .
	\begin{enumerate}[label=(\roman*),font=\normalfont]
		\item
			\label{Propf:Zeroatzero}
			If $f$ fulfills \eqref{Asf:Caratheodory} and \eqref{Asf:GrowthZeroAlt}, then $f(x,0)=0$ for a.a.\,$x \in \Omega$.
		\item
			\label{Propf:EpsilonUpperBoundAlt}
			If $f$ fulfills \eqref{Asf:WellDef} and \eqref{Asf:GrowthZeroAlt}, then for each $\eps > 0$ there exists $C_\eps > 0$ such that
			\begin{align*}
				\abs{F(x,t)} \leq \frac{\eps}{p(x)} \abs{t}^{p(x)} + C_\eps \abs{t}^{r(x)} \quad \text{for a.a.\,} x \in \Omega.
			\end{align*}
	\end{enumerate}
\end{lemma}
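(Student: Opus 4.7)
\textbf{Plan for Lemma~\ref{Le:PropLimftozero}.} For part \ref{Propf:Zeroatzero}, the plan is to read off $f(x,0)$ from the limit in \eqref{Asf:GrowthZeroAlt}. Since $f$ is Carath\'eodory, for a.a.\,$x\in\Omega$ the section $f(x,\cdot)$ is continuous, and so $F(x,\cdot)$ is differentiable at $0$ with $F'(x,0)=f(x,0)$; in particular $F(x,s)/s\to f(x,0)$ as $s\to 0$. On the other hand I would factor
\[
\frac{F(x,s)}{s}=\frac{F(x,s)}{|s|^{p(x)}}\cdot |s|^{p(x)-1}\operatorname{sign}(s),
\]
observe that the first factor tends to $0$ by \eqref{Asf:GrowthZeroAlt} (its pointwise form suffices) and that the second tends to $0$ because $p(x)>1$ (as $p\in C_+(\close)$). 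Consequently $F(x,s)/s\to 0$, and identifying this with $f(x,0)$ yields $f(x,0)=0$ for a.a.\,$x\in\Omega$.

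For part \ref{Propf:EpsilonUpperBoundAlt}, the idea is to split the bound on $F$ into a region near zero, where \eqref{Asf:GrowthZeroAlt} gives the $|t|^{p(x)}$ term, and a region away from zero, where \eqref{Asf:WellDef} gives the $|t|^{r(x)}$ term. Fix $\eps>0$. The uniform limit in \eqref{Asf:GrowthZeroAlt} supplies a threshold $\delta\in(0,1)$ such that for a.a.\,$x\in\Omega$,
\[
|F(x,t)|\leq \frac{\eps}{p(x)}|t|^{p(x)}\qquad\text{whenever }|t|\leq\delta.
\]
For $|t|>\delta$, I would integrate the bound in \eqref{Asf:WellDef} to obtain $|F(x,t)|\leq C|t|+(C/r(x))|t|^{r(x)}$; since $|t|>\delta$ and $1\leq r_-\leq r(x)\leq r_+<\infty$, both $|t|$ and $|t|^{r(x)}/r(x)$ are dominated by a constant multiple, depending only on $\delta$, $r_-$ and $r_+$, of $|t|^{r(x)}$. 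Choosing $C_\eps$ to be this constant (enlarged if needed), and noting that in the near-zero regime the term $C_\eps|t|^{r(x)}$ only helps, the two regimes glue to the desired inequality
\[
|F(x,t)|\leq \frac{\eps}{p(x)}|t|^{p(x)}+C_\eps|t|^{r(x)}
\]
valid for all $t\in\R$ and a.a.\,$x\in\Omega$.

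There is no substantive obstacle in either part: \ref{Propf:Zeroatzero} is a one-line argument based on the strict inequality $p(x)>1$, and \ref{Propf:EpsilonUpperBoundAlt} is elementary bookkeeping with two complementary growth estimates. The only mild point to watch is uniformity in $x$ in part \ref{Propf:EpsilonUpperBoundAlt}, which is harmless because $r\in C(\close)$ forces $r_-$ and $r_+$ to be finite and the threshold $\delta$ is fixed once $\eps$ is fixed.
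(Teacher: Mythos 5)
Your proof is correct, and since the paper states Lemma \ref{Le:PropLimftozero} without proof (it is one of the routine consequences of the growth hypotheses), there is no paper argument to compare against. In part \ref{Propf:Zeroatzero} the factorization $F(x,s)/s = \bigl(F(x,s)/|s|^{p(x)}\bigr)\,|s|^{p(x)-1}\operatorname{sign}(s)$ together with the continuity of $f(x,\cdot)$ and $p(x)>1$ does the job, and in part \ref{Propf:EpsilonUpperBoundAlt} the split at a threshold $\delta$ (with the observation that for $|t|>\delta$ one has $|t|\le \delta^{1-r_+}|t|^{r(x)}$ and $1/r(x)\le 1/r_-$, so the integrated bound from \eqref{Asf:WellDef} is absorbed into $C_\eps|t|^{r(x)}$) is the natural and complete argument; the only micro-step you gloss over is that the uniform limit in \eqref{Asf:GrowthZeroAlt} directly produces a constant independent of $x$, so one should apply it with tolerance $\eps/p_+$ and then use $\eps/p_+\le\eps/p(x)$ to obtain the $x$-dependent coefficient, which is immediate.
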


\begin{remark}
	Without the assumption \eqref{Asf:GrowthZero} one needs to prove a result like Lemma \ref{Le:PropLimftozero} \ref{Propf:Zeroatzero}, since this condition is necessary to ensure that $\ph_\pm$ are differentiable among other important properties, see Remark \ref{Re:StrongCont}.
\end{remark}

In this case, all the propositions are identical except for the behavior close to zero, for which we provide the following proof.

\begin{proposition}
	\label{Prop:PhLowerBoundAlt}
	Let \eqref{Assump:H2prime} be satisfied and $f$ fulfill \eqref{Asf:Caratheodory}, \eqref{Asf:WellDef} and \eqref{Asf:GrowthZeroAlt}. Then there exist constants $C_1,C_2,C_3 >0$ such that for all $\eps > 0$
	\begin{align*}
		\ph(u), \ph_\pm (u) \geq
		\begin{cases}
			C_1 a_\eps^{-1} \normoneHlogzero{u}^{q_+ + \eps} - C_2 \normoneHlogzero{u}^{r_-}, & \text{if } \normoneHlogzero{u} \leq \min\{1,C_3\}, \\
			C_1 \normoneHlogzero{u}^{p_-} - C_2 \normoneHlogzero{u}^{r_+}, & \text{if } \normoneHlogzero{u} \geq \max\{1,C_3\},
		\end{cases}
	\end{align*}
	where $a_\eps$ is the same as in Lemma \ref{Le:fepsilon}.
\end{proposition}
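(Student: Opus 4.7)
The plan is to follow the same strategy as in Proposition~\ref{Prop:PhLowerBound}, reducing again to the untruncated functional $\ph$ via the inequalities $\varrho_{p(\cdot)}(\pm u^\pm)\le\varrho_{p(\cdot)}(u)$ and $\varrho_{r(\cdot)}(\pm u^\pm)\le\varrho_{r(\cdot)}(u)$. The difference is that in place of Lemma~\ref{Le:Propf}~\ref{Propf:EpsilonUpperBoundF} we now invoke Lemma~\ref{Le:PropLimftozero}~\ref{Propf:EpsilonUpperBoundAlt}, yielding for any $\delta>0$ the pointwise bound $|F(x,t)|\le \frac{\delta}{p(x)}|t|^{p(x)}+C_\delta |t|^{r(x)}$. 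Starting from $\ph(u)\ge \frac{1}{q_+}\modoneHlogzero{u}-\into F(x,u)\dx$, I would therefore arrive at a lower estimate of the form
\begin{align*}
\ph(u)\ge \frac{1}{q_+}\,\modoneHlogzero{u}\;-\;\frac{\delta}{p_-}\into |u|^{p(x)}\dx\;-\;C_\delta\into |u|^{r(x)}\dx.
\end{align*}

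The genuinely new ingredient, and the only real obstacle, is controlling the term $\into|u|^{p(x)}\dx$, which did not appear in the previous proposition. This is exactly the reason why \eqref{Assump:H2prime} upgrades \eqref{Assump:H2} by requiring the monotonicity condition on $p(\cdot)$: it allows me to apply the modular Poincar\'e inequality of Proposition~\ref{Prop:PoincareModular} to get a constant $C_P>0$ such that
\begin{align*}
\varrho_{p(\cdot)}(u)\;\le\; C_P\,\varrho_{p(\cdot)}(\nabla u)\;\le\; C_P\,\modoneHlogzero{u}\qquad\text{for all } u\in\WHlogzero.
\end{align*}
Choosing $\delta$ sufficiently small (for instance $\delta=\tfrac{p_-}{2q_+ C_P}$), the first two terms combine to produce at least $\tfrac{1}{2q_+}\modoneHlogzero{u}$, while only the dependence of $C_\delta$ on $\delta$ is affected.

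For the remaining term $\into|u|^{r(x)}\dx=\varrho_{r(\cdot)}(u)$, I would proceed exactly as in the proof of Proposition~\ref{Prop:PhLowerBound}: apply the continuous embedding $\WHlogzero\hookrightarrow\Lp{r(\cdot)}$ from Proposition~\ref{Prop:EmbeddingHlogSobolev}~\textnormal{(iii)} (whose existence is guaranteed by $r_+<p^*_-$ built into \eqref{Asf:WellDef} together with \eqref{Assump:H2prime}) with some constant $C_{\hlog}$, and then use Proposition~\ref{Prop:ModularNormVarExp}~\textnormal{(iii)},~\textnormal{(iv)} to bound the modular by $\max\{C_{\hlog}^{r_-}\normoneHlogzero{u}^{r_-},\,C_{\hlog}^{r_+}\normoneHlogzero{u}^{r_+}\}$.

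Finally, taking $C_3=1/C_{\hlog}$ and translating the gradient-modular lower bound into the norm via Proposition~\ref{Prop:HlogModularNorm}~\textnormal{(iii)} (for $\normoneHlogzero{u}\ge 1$, giving the $\normoneHlogzero{u}^{p_-}$ term) and~\textnormal{(iv)} (for $\normoneHlogzero{u}\le 1$, giving the $a_\eps^{-1}\normoneHlogzero{u}^{q_++\eps}$ term), I obtain the claimed piecewise lower bound with constants $C_1=\tfrac{1}{2q_+}$ and $C_2$ determined by $C_\delta$ and the appropriate power of $C_{\hlog}$. Since $r_-, r_+<p^*_-$, the subtracted powers are strictly larger than the leading power in each regime, which is what will later yield the mountain pass geometry in Corollary~\ref{Cor:RingOfMountains} under these alternative hypotheses.
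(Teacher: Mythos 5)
Your proof is correct and follows essentially the same route as the paper's: the key new ingredient is the same — absorbing the $\varrho_{p(\cdot)}(u)$ term coming from Lemma~\ref{Le:PropLimftozero}~\ref{Propf:EpsilonUpperBoundAlt} via the modular Poincar\'e inequality of Proposition~\ref{Prop:PoincareModular}, then concluding as in Proposition~\ref{Prop:PhLowerBound}. The only (immaterial) difference is that you first crudely bound both gradient terms by $\frac{1}{q_+}\modoneHlogzero{u}$ before absorbing, giving $C_1=\frac{1}{2q_+}$, while the paper keeps $\frac{1}{p_+}\varrho_{p(\cdot)}(\nabla u)$ separate until after absorption and then takes a minimum, which lets it choose $\lambda$ to recover $C_1=\frac{1}{q_+}$ when $p_+<q_+$.
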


\begin{proof}
	As in Proposition \ref{Prop:PhLowerBound} we only do the argument for $\ph$ for the same reasons. Using Lemma \ref{Le:PropLimftozero} \ref{Propf:EpsilonUpperBoundAlt}, Poincar\'e inequality for the modular in $W_0^{1,p(\cdot)} (\Omega)$ with constant $C_{p(\cdot)}$ from Proposition \ref{Prop:PoincareModular}, the embedding of $\WHlogzero \hookrightarrow \Lp{r(\cdot)}$ with constant $C_{\hlog}$ from Proposition \ref{Prop:EmbeddingHlogSobolev} \textnormal{(iii)} and Proposition \ref{Prop:ModularNormVarExp} \textnormal{(iii)}, \textnormal{(iv)}, we obtain for any $u \in \WHlogzero$
	\begin{align*}
		& \ph (u) \\
		& \geq \frac{1}{p_+} \varrho_{p(\cdot)} ( \nabla u ) + \frac{1}{q_+} \into \mu(x) \abs{\nabla u}^{q(x)} \log(e + \abs{\nabla u}) \dx
		- \frac{\lambda}{p_-} \varrho_{p(\cdot)} ( u ) - C_{\lambda} \varrho_{r(\cdot)} ( u )  \\
		& \geq \left( \frac{1}{p_+} - \frac{C_{p(\cdot)} \lambda}{p_-} \right) \varrho_{p(\cdot)} ( \nabla u ) +\frac{1}{q_+} \into \mu(x) \abs{\nabla u}^{q(x)} \log(e + \abs{\nabla u}) \dx \\
		& \quad - C_{\lambda} \max_{k \in \{ r_+, r_- \} } \{ C_{\hlog} ^k \normoneHlogzero{u}^k \}  \\
		& \geq \min \left\{ \frac{1}{p_+} - \frac{C_{p(\cdot)} \lambda}{p_-} , \frac{1}{q_+} \right\} \modoneHlogzero{u} - C_{\lambda} \max_{k \in \{ r_+, r_- \} } \{ C_{\hlog} ^k \normoneHlogzero{u}^k \}.
	\end{align*}
	If $p_+ = q_+$, any $\lambda > 0$ works, otherwise choose $0 < \lambda < \frac{p_-(q_+-p_+)}{C_{p(\cdot)} q_+ p_+}$. By Proposition \ref{Prop:HlogModularNorm} \textnormal{(iii)} and \textnormal{(iv)} the result follows with $C_3 = 1/C_{\hlog}$,
	\begin{align*}
		C_1 = \frac{1}{q_+} \quad\text{and}\quad
		C_2 =
		\begin{cases}
			C_{\lambda} C_{\hlog}^{r_-} & \text{for } \normoneHlogzero{u} \leq C_3, \\
			C_{\lambda} C_{\hlog}^{r_+} & \text{for } \normoneHlogzero{u} > C_3.
		\end{cases}
	\end{align*}
\end{proof}

With a reasoning identical to Theorem \ref{Th:ConstantSignSolutions} but using Proposition \ref{Prop:PhLowerBoundAlt}, we obtain the next result.

\begin{theorem}
	\label{Th:ConstantSignSolutionsAlt}
	Let \eqref{Assump:H2prime} be satisfied and $f$ fulfill \eqref{Asf:Caratheodory}, \eqref{Asf:WellDef}, \eqref{Asf:GrowthInfty}, \eqref{Asf:GrowthZeroAlt} and \eqref{Asf:CeramiAssumption}. Then there exist nontrivial weak solutions $u_0,v_0 \in \WHlogzero $ of problem \eqref{Eq:Problem} such that $u_0 \geq 0$ and $v_0 \leq 0$ a.e.\,in $\Omega$.
\end{theorem}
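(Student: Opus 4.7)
The plan is to mimic the proof of Theorem \ref{Th:ConstantSignSolutions} verbatim, applying the mountain pass theorem (Theorem \ref{Th:MPT}) to the truncated functionals $\ph_\pm$, and only substitute the role of Proposition \ref{Prop:PhLowerBound} by Proposition \ref{Prop:PhLowerBoundAlt}. First, I would collect the three ingredients of the mountain pass geometry. The condition $\ph_\pm(0) = 0$ follows because Lemma \ref{Le:PropLimftozero} \ref{Propf:Zeroatzero} gives $f(x,0) = 0$ under \eqref{Asf:GrowthZeroAlt}, which plays the role that the second half of \eqref{Asf:GrowthZero} had before (in particular, it ensures that $F(x,\pm t^\pm) = \int_0^t f(x, \pm s^\pm)\ds$, cf.\ Remark \ref{Re:StrongCont}). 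Since Lemma \ref{Le:Propf} \ref{Propf:q<r} gives $q_+ < r_-$ from \eqref{Asf:WellDef} and \eqref{Asf:GrowthInfty}, Proposition \ref{Prop:PhLowerBoundAlt} with a fixed small $\eps > 0$ furnishes the existence of $\delta > 0$ such that $\inf_{\normoneHlogzero{u} = \delta} \ph_\pm (u) > 0$. The point beyond the mountains comes from Proposition \ref{Prop:PointBeyondMountains}: fix any $0 \leq u \in \WHlogzero$ with $u \neq 0$ and observe that its proof only uses \eqref{Asf:Caratheodory}, \eqref{Asf:WellDef} and \eqref{Asf:GrowthInfty}, so it carries over without change to our setting.

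Second, I would verify that $\ph_\pm$ satisfies the Cerami condition. Here I claim that Proposition \ref{Prop:CeramiCondition} applies under the present hypotheses as well. Inspecting its proof, the ingredients used are the embeddings of Proposition \ref{Prop:EmbeddingHlogSobolev}, the norm-modular correspondence of Proposition \ref{Prop:oneHlogModularNorm}, the \textnormal{(S$_+$)}-property of $A$ from Theorem \ref{Th:PropertiesOperator} (whose hypothesis \eqref{Assump:Poincare} is implied by \eqref{Assump:H2prime}), the strong continuity of $I_f$ and $I_f'$ from Lemma \ref{Le:Propf} \ref{Propf:ConvergenceFterm}, and the asymptotic control from \eqref{Asf:CeramiAssumption}. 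The only place where \eqref{Asf:GrowthZero} entered was to guarantee $f(x,0)=0$ so that the truncated functionals are well defined and $C^1$ with the stated derivative; this is exactly what Lemma \ref{Le:PropLimftozero} \ref{Propf:Zeroatzero} provides under \eqref{Asf:GrowthZeroAlt}. Hence the Cerami condition holds for $\ph_\pm$.

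Third, applying Theorem \ref{Th:MPT} to each $\ph_\pm$ with the endpoints $0$ and $t_\ast u$ (for $|t_\ast|$ large enough) yields critical points $u_0, v_0 \in \WHlogzero$ with $\ph_+(u_0), \ph_-(v_0) \geq m_\delta > 0 = \ph_\pm(0)$, so in particular $u_0 \neq 0 \neq v_0$. To extract the sign, I would test $\ph_+ '(u_0) = 0$ against $-u_0^- \in \WHlogzero$ (valid by Proposition \ref{Prop:Truncations} \textnormal{(iii)} since $\mu \in \Lp{\infty}$). The nonlinear term vanishes because $f(x, u_0^+)(-u_0^-) = 0$ a.e., and the fraction $\abs{\nabla u_0}/[q(x)(e + \abs{\nabla u_0})]$ is nonnegative, so one deduces $\modoneHlogzero{u_0^-} \leq 0$ and hence $u_0^- = 0$ a.e.\ in $\Omega$ by Proposition \ref{Prop:oneHlogModularNorm} \textnormal{(i)}. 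The symmetric computation with $v_0^+$ gives $v_0 \leq 0$ a.e.

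There is no real obstacle here; the mildly delicate point is just to double-check that \eqref{Asf:GrowthZeroAlt} substitutes \eqref{Asf:GrowthZero} consistently in every invocation (chiefly for $f(x,0)=0$ and the mountain-pass-ring estimate of Proposition \ref{Prop:PhLowerBoundAlt}), and that the monotonicity condition on $p$ built into \eqref{Assump:H2prime} is the precise hypothesis needed to trigger the modular Poincar\'e inequality of Proposition \ref{Prop:PoincareModular} used inside Proposition \ref{Prop:PhLowerBoundAlt}.
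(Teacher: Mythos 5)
Your proposal is correct and takes exactly the same approach as the paper: the paper's own proof is a one-line remark deferring to Theorem \ref{Th:ConstantSignSolutions} with Proposition \ref{Prop:PhLowerBoundAlt} substituted for Proposition \ref{Prop:PhLowerBound}. You have simply written out the details of that substitution, including the observation that $f(x,0)=0$ now comes from Lemma \ref{Le:PropLimftozero} rather than from \eqref{Asf:GrowthZero}, and that the monotonicity condition on $p$ in \eqref{Assump:H2prime} is precisely what triggers the modular Poincar\'e inequality used in Proposition \ref{Prop:PhLowerBoundAlt}.
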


\section{Sign-changing solution}\label{sign-changing-solution}

The aim of this section is to prove the existence of a sign-changing solution on top of the other two solutions from last section, which were one positive and one negative. We need to substitute \eqref{Asf:GrowthInfty} with a stronger assumption and to restrict a bit more the assumptions on the functional space with respect to \eqref{Assump:H2prime}:

\begin{enumerate}[label=\textnormal{(H$_3$)},ref=\textnormal{H$_3$}]
	\item\label{Assump:H3}
	$\Omega \subseteq \R^N$, with $N \geq 2$, is a bounded domain with Lipschitz boundary $\partial \Omega$, $p,q \in C_+ (\close)$ with $p(x) \leq q(x) \leq q_+ < q_+ + 1 < p_-^*$ for all $x \in \close$, $p$ satisfies a monotonicity condition, that is, there exists a vector $l \in \R^N \setminus \{ 0 \}$ such that for all $x \in \Omega$ the function
	\begin{align*}
		h_x (t) = p(x + tl) \quad \text{ with } t \in I_x = \{ t \in \R \colon x + tl \in \Omega\}
	\end{align*}
	is monotone, and $0\leq\mu(\cdot) \in \Lp{\infty}$.
\end{enumerate}

\begin{enumerate} [label=\textnormal{(f$_{\arabic*}$')},ref=\textnormal{f$_{\arabic*}$'}]
	\setcounter{enumi}{2}
	\item\label{Asf:QuotientMono}
	The function $t \mapsto f(x,t)/\abs{t}^{q_+}$ is increasing in $(- \infty, 0)$ and in $(0, + \infty)$ for a.a.\,$x \in \Omega$.
\end{enumerate}

\begin{remark}
	A necessary condition for \eqref{Assump:H3} to be satisfied is that $p_+ + 1 \leq p_-^*$. For the case of constant exponents, which is the least restrictive case, this is equivalent to $\frac{- 1 + \sqrt{1 + 4 N}}{2} \leq p$. This strong assumption is required for \eqref{Asf:QuotientMono}.
\end{remark}

Similarly to the previous sections, we have some important consequences of these assumptions on the right-hand side.

\begin{lemma}
	\label{Le:Propq+1}
	Let $f \colon \Omega \times \R \to \R$ satisfy the following assumptions.
	\begin{enumerate}[label=(\roman*),font=\normalfont]
		\item
			If $f$ fulfills \eqref{Asf:WellDef} and  \eqref{Asf:QuotientMono}, then $q_+ + 1 \leq r_-$.
		\item\label{Propf:growthq+1}
			If $f$ fulfills \eqref{Asf:QuotientMono} then for any $\eps > 0$
			\begin{align*}
				\lim\limits_{s \to \pm \infty} \frac{F(x,s)}{\abs{s}^{q_+ + 1 - \eps}}  = + \infty \quad \text{uniformly for a.a.\,} x \in \Omega.
			\end{align*}
			In particular, \eqref{Asf:GrowthInfty} is satisfied.
	\end{enumerate}
\end{lemma}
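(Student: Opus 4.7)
My plan is to prove (ii) first and then deduce (i) by contradiction; the concluding assertion that \eqref{Asf:GrowthInfty} holds then follows from (ii) applied with any $\eps \in (0,1)$, since $|s|^{1-\eps}/\log(e + |s|) \to \infty$ as $|s| \to \infty$.

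For (ii), the key tool is the monotonicity \eqref{Asf:QuotientMono}. For $0 < s_0 \leq s$ it yields $f(x,s) \geq (f(x,s_0)/s_0^{q_+})\, s^{q_+}$, and integrating from $s_0$ to $s$ gives
\begin{align*}
    F(x,s) \geq F(x,s_0) + \frac{f(x,s_0)}{(q_+ + 1)\, s_0^{q_+}}\bigl(s^{q_+ + 1} - s_0^{q_+ + 1}\bigr).
\end{align*}
Dividing by $s^{q_+ + 1 - \eps}$ leaves a dominant term of order $s^{\eps}$ with positive coefficient, which diverges to $+\infty$ uniformly in $x$ as soon as $f(x, s_0)$ is bounded below by a positive constant $c_0$ for some fixed $s_0 > 0$. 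The branch $s \to -\infty$ is handled symmetrically via the monotonicity on $(-\infty, 0)$.

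For (i), I argue by contradiction: if $r_- < q_+ + 1$, pick $x_0 \in \close$ with $r(x_0) = r_-$ and $\eps > 0$ small enough that $r(x_0) < q_+ + 1 - \eps$. Part (ii) forces $F(x_0, s)/s^{q_+ + 1 - \eps} \to +\infty$, while direct integration of \eqref{Asf:WellDef} yields $\abs{F(x_0, s)} \leq C(1 + s^{r(x_0)})$, so the same quotient tends to $0$; a contradiction.

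The main obstacle is the uniform lower bound $f(x, s_0) \geq c_0 > 0$ needed in (ii): it cannot be extracted from \eqref{Asf:QuotientMono} in isolation (the zero function satisfies that condition vacuously) and must instead be obtained from the remaining standing hypotheses of the section. In particular \eqref{Asf:CeramiAssumption} forces $f(x,s)\, s - q_+(1+\kappa/q_-)\, F(x,s) \geq K\,|s|^{l(x)}$ for $|s|$ large, which rules out $f(x,\cdot) \equiv 0$ and, combined with \eqref{Asf:Caratheodory} and the monotonicity, supplies the required uniform positivity at some $s_0 > 0$.
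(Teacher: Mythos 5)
The integration argument you set up for (ii) is the natural approach and the computation is correct as far as it goes: \eqref{Asf:QuotientMono} gives $f(x,\tau) \geq [f(x,s_0)/s_0^{q_+}]\,\tau^{q_+}$ for $\tau \geq s_0 > 0$, and integrating yields the lower bound you wrote, whose dominant term after division by $s^{q_++1-\eps}$ has coefficient proportional to $f(x,s_0)$. However, the difficulty you flag at the end is not a side remark but a genuine gap: \eqref{Asf:QuotientMono} alone cannot produce a uniform bound $f(\cdot,s_0) \geq c_0 > 0$. With the paper's weak reading of ``increasing'', $f \equiv 0$ satisfies \eqref{Asf:QuotientMono} but $F \equiv 0$; even under strict monotonicity, the choice $f(x,t) = -\operatorname{sgn}(t)\,|t|^{q_+-1}$ makes $f(x,t)/|t|^{q_+} = -1/t$ strictly increasing on each half-line, yet $F(x,s) = -|s|^{q_+}/q_+$, so the quotient in (ii) tends to $0$, not $+\infty$. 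This second example moreover satisfies \eqref{Asf:Caratheodory}, \eqref{Asf:WellDef} with $r \equiv q_++1$, \eqref{Asf:GrowthZeroAlt} whenever $q_+ > p_+$, and even \eqref{Asf:Nonnegative}, so the failure is not ruled out by the other conditions of the section either.

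Your proposed repair via \eqref{Asf:CeramiAssumption} does not fit the statement: that condition is not a hypothesis of the lemma, and several of the downstream results that rely on (ii) (Propositions \ref{Prop:NehariManifoldProps}--\ref{Prop:Sing-changing-CritPoint} and Theorem \ref{Th:SignChangingSolution}) assume only \eqref{Asf:Caratheodory}, \eqref{Asf:WellDef}, \eqref{Asf:QuotientMono}, \eqref{Asf:GrowthZeroAlt}. Part (i), which you derive from (ii), inherits the same gap. A smaller issue there: you pick $x_0$ with $r(x_0)=r_-$, but (ii) is only claimed for a.a.\ $x$, and $\{r = r_-\}$ may be null; instead use that $\{r < r_- + \delta\}$ is open and nonempty, hence of positive measure, and choose $\delta,\eps$ with $r_-+\delta < q_++1-\eps$. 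The honest conclusion is that the lemma, and hence your proof, needs an explicit sign hypothesis not present in the list -- for instance a uniform positive lower bound for $f(\cdot,s_0)$ at some fixed $s_0>0$ together with the symmetric condition at $-s_0$, or $f(x,t)t>0$ for $t\neq 0$ with uniformity -- before the integration step can be closed. You correctly identified the obstruction; what is missing is an acknowledgment that it cannot be overcome with the assumptions as written.
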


\begin{remark}
	As in \eqref{Assump:H3} we assumed $q_+ + 1 < p^*_-$, we can always find at least one $r \in C_+(\close)$ such that $q_+ + 1 \leq r_- \leq r_+ < p^*_-$.
\end{remark}

We achieve the existence of such sign-changing solution making use of the Nehari manifold technique. Our treatment is inspired by the recent work by Crespo-Blanco--Winkert \cite{Crespo-Blanco-Winkert-2022}, a more comprehensive explanation of this technique can be found in the book chapter by Szulkin--Weth \cite{Szulkin-Weth-2010}. The Nehari manifold associated to $\ph$ is the set
\begin{align*}
	\mathcal{N} = \left\{ u \in \WHlogzero \colon \langle \ph'(u) , u \rangle = 0, \; u \neq 0 \right\}.
\end{align*}
A significant property of this set is that all weak solutions of \eqref{Eq:Problem} (or critical points of $\ph$) belong to this set, except for $u=0$, which can be studied separately. An important remark is that, although the most used name in the literature is Nehari manifold, it does not have to be one in general. For our purposes this is not relevant, so we do not do any discussion in this direction. We are looking for sign-changing solutions, so in the center of our results we work with the variation
\begin{align*}
	\mathcal{N}_0 = \left\{ u \in \WHlogzero \colon \pm u^\pm \in \mathcal{N} \right\}.
\end{align*}
Note that in our case $\mathcal{N}_0$ is a subset of $\mathcal{N}$, but this might not be true in general. Our treatment starts by establishing some structure on $\mathcal{N}$ which is needed for the work on $\mathcal{N}_0$. The following lemma is used in the proof and the main reason why we need \eqref{Asf:QuotientMono} as it is instead of with exponent $q_+ - 1$ (which would be much less restrictive).

\begin{lemma}
	\label{Le:QuotientMonotone}
	Let $b > 0$ and $Q > 1$, the mapping $t \mapsto \frac{ t^{1-\eps} b }{Q (e + t b)}$ is decreasing on $(0,\infty)$ only for $\eps \geq 1$.
\end{lemma}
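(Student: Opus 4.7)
The plan is to analyze the sign of the derivative of $f(t) := \dfrac{t^{1-\eps} b}{Q(e+tb)}$ on $(0,\infty)$ directly. A straightforward application of the quotient rule gives
\begin{equation*}
	f'(t) = \frac{b}{Q}\cdot\frac{(1-\eps)t^{-\eps}(e+tb) - t^{1-\eps}\cdot b}{(e+tb)^2}
	= \frac{b\, t^{-\eps}}{Q(e+tb)^2}\bigl[(1-\eps)e - \eps\, tb\bigr].
\end{equation*}
Since the prefactor $\dfrac{b\, t^{-\eps}}{Q(e+tb)^2}$ is strictly positive for $t>0$, the sign of $f'(t)$ equals the sign of
\begin{equation*}
	g(t) := (1-\eps)e - \eps\, tb.
\end{equation*}

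First I would handle the sufficiency: if $\eps\geq 1$, then both summands in $g(t)$ are non-positive for every $t>0$ (and at least one is strictly negative), so $f'(t)<0$ and $f$ is strictly decreasing on $(0,\infty)$. Next, for the necessity, I would observe that if $\eps<1$ then $g(0)=(1-\eps)e>0$, and by continuity of $g$ there exists $\delta>0$ such that $g(t)>0$ for all $t\in(0,\delta)$; hence $f'(t)>0$ on $(0,\delta)$, which rules out $f$ being decreasing on $(0,\infty)$.

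The two observations together establish the equivalence. There is no real obstacle here; the only mild care needed is to make sure the factorization of $f'$ cleanly isolates the linear expression $g(t)$ so that its sign can be read off from the value of $\eps$. The conclusion of the lemma, that $\eps\geq 1$ is the precise threshold for monotonic decrease, then follows immediately.
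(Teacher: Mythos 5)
Your proof is correct. The paper states this lemma without proof, treating it as a straightforward calculus fact, and your computation — differentiating, factoring out the positive prefactor $\frac{bt^{-\eps}}{Q(e+tb)^2}$ to isolate the linear expression $g(t)=(1-\eps)e-\eps tb$, and then reading off both sufficiency ($\eps\geq 1$ makes $g<0$ on $(0,\infty)$) and necessity ($\eps<1$ gives $g>0$ near $0$) — is exactly the elementary verification the authors have in mind.
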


\begin{proposition}\label{Prop:NehariManifoldProps}
	Let \eqref{Assump:H3} be satisfied  and $f$ fulfill \eqref{Asf:Caratheodory}, \eqref{Asf:WellDef}, \eqref{Asf:QuotientMono} and \eqref{Asf:GrowthZeroAlt}. Then for any $u \in \WHlogzero\setminus\{0\}$ there exists a unique $t_u > 0$ such that $t_u u \in \mathcal{N}$. Furthermore, $\ph(t_u u) > 0$, $\frac{\mathrm{d}}{\mathrm{d}t} \ph(tu) > 0$ for $0 < t < t_u$, $\frac{\mathrm{d}}{\mathrm{d}t} \ph(tu) = 0$ for $t = t_u$, $\frac{\mathrm{d}}{\mathrm{d}t} \ph(tu) < 0$ for $t > t_u$, and therefore $\ph(tu) < \ph(t_u u)$ for all $0 < t \neq t_u$.
\end{proposition}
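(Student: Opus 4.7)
The plan is to reduce everything to the fiber map $\psi_u\colon [0,\infty) \to \R$, $\psi_u(t) := \ph(tu)$, which is $C^1$ by Theorem~\ref{Th:C1functional} and Lemma~\ref{Le:Propf} \ref{Propf:ConvergenceFterm}, with
\begin{align*}
\psi_u'(t)
&= \into |\nabla u|^{p(x)}t^{p(x)-1}\dx \\
&\quad + \into \mu(x)|\nabla u|^{q(x)} t^{q(x)-1}\left[\log(e+t|\nabla u|) + \tfrac{t|\nabla u|}{q(x)(e+t|\nabla u|)}\right]\dx \\
&\quad - \into f(x,tu)u\,\dx.
\end{align*}
A positive critical point $t_u$ of $\psi_u$ is exactly the condition $t_u u \in \mathcal{N}$, and $\mathrm{sgn}\,\psi_u'(t) = \mathrm{sgn}\,\frac{d}{dt}\ph(tu)$. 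So the statement reduces to showing that $\psi_u$ admits a unique critical point $t_u > 0$, at which it attains a strictly positive global maximum, with $\psi_u' > 0$ before and $\psi_u' < 0$ after.

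For existence of $t_u$, I would combine $\psi_u(0)=0$, Proposition~\ref{Prop:PhLowerBoundAlt} (choosing $\eps$ small enough that $q_+ + \eps < r_-$, which is available by Lemma~\ref{Le:Propq+1} since $q_+ + 1 \leq r_-$) to get $\psi_u(t) > 0$ for small $t > 0$, and the argument of Proposition~\ref{Prop:PointBeyondMountains} (whose hypothesis \eqref{Asf:GrowthInfty} is implied by Lemma~\ref{Le:Propq+1} \ref{Propf:growthq+1} under \eqref{Asf:QuotientMono}) to get $\psi_u(t) \to -\infty$. Continuity then delivers a maximizer $t_u > 0$ with $\psi_u'(t_u) = 0$ and $\psi_u(t_u) > 0$.

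The heart of the proof is uniqueness and the sign information. For this I would show that $H(t) := \psi_u'(t)/t^{q_+}$ is strictly decreasing on $(0,\infty)$, treating its three pieces separately. The first integrand carries $t^{p(x)-1-q_+}$ with exponent $\leq -1$, hence strictly decreasing. The second splits into a log-part and a fraction-part: pointwise differentiation of the log-part gives
\[
t^{q(x)-2-q_+}\left[-\bigl(1+q_+-q(x)\bigr)\log(e+t|\nabla u|) + \tfrac{t|\nabla u|}{e+t|\nabla u|}\right],
\]
which is strictly negative because $s/(e+s) < 1 \leq \log(e+s)$ and $1+q_+-q(x) \geq 1$; the fraction-part is strictly decreasing by Lemma~\ref{Le:QuotientMonotone} with $\varepsilon := 1+q_+-q(x) \geq 1$. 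For the third piece, rewrite $f(x,tu(x))u(x)/t^{q_+} = \bigl[f(x,tu(x))/|tu(x)|^{q_+}\bigr]\cdot|u(x)|^{q_+}u(x)$ on $\{u\neq 0\}$; a short sign case split using \eqref{Asf:QuotientMono} separately on $(-\infty,0)$ and $(0,\infty)$ makes this quantity strictly increasing in $t$, so $-\into f(x,tu)u/t^{q_+}\dx$ is strictly decreasing. Since $u\not\equiv 0$, at least one piece is strictly monotone, so $H$ is strictly decreasing on $(0,\infty)$.

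Strict decrease of $H$ together with existence of a zero forces a unique zero $t_u$, with $H > 0$ on $(0,t_u)$ and $H < 0$ on $(t_u,\infty)$; since $t^{q_+} > 0$, this delivers uniqueness of $t_u$, the claimed sign behavior of $\frac{d}{dt}\ph(tu)$, and the fact that $\psi_u$ is strictly increasing up to $t_u$ and strictly decreasing after, whence $\ph(t_u u) > 0$ and $\ph(tu) < \ph(t_u u)$ for all $0 < t \neq t_u$. I expect the hardest step to be the strict decrease of $H$: the log-part hinges on the slack $1+q_+-q(x) \geq 1$ coupled with the elementary bound $s/(e+s) < \log(e+s)$, while the $f$-term crucially uses that \eqref{Asf:QuotientMono} has exponent $q_+$ rather than $q_+-1$, exactly the reason why the restrictive condition $q_++1 < p^*_-$ is built into \eqref{Assump:H3}.
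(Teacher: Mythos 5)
Your proposal is correct and follows essentially the same strategy as the paper: fiber map, existence via behavior near $0$ and at $\infty$, and uniqueness via strict monotonicity of $\theta_u'(t)/t^{q_+}$, split termwise. The one genuine variation is in the log-part of the second integrand. The paper cites Lemma~\ref{Le:fepsilon} (monotonicity of $t^\eps/\log(e+t)$ for $\eps\geq\kappa$, here applied with $\eps = 1+q_+-q(x)\geq 1 > \kappa$), whereas you differentiate directly and land on the elementary bound $s/(e+s) < 1 \leq \log(e+s)$ with the slack $1+q_+-q(x)\geq 1$. Your route is a bit more self-contained and avoids invoking the sharp constant $\kappa$ (which is overkill here, since $\eps\geq 1$ is available), while the paper's route reuses a lemma already needed elsewhere. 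Both are valid; the rest of your argument — the use of Lemma~\ref{Le:QuotientMonotone} with $\varepsilon=1+q_+-q(x)$ for the fraction part, the sign case split on $\{u>0\}$ vs.\ $\{u<0\}$ for the $f$-term, and the observation that strictness of the total comes from the $p$-term because $p_+ < q_+ + 1$ and $\nabla u\neq 0$ on a set of positive measure — matches the paper exactly. Your closing remark correctly identifies why \eqref{Asf:QuotientMono} needs exponent $q_+$ rather than $q_+-1$: it is the fraction term $t|\nabla u|/(q(x)(e+t|\nabla u|))$, governed by Lemma~\ref{Le:QuotientMonotone}, that forces the extra unit in the exponent, and this in turn is why \eqref{Assump:H3} requires $q_++1<p^*_-$.
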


\begin{proof}
	For any $u \in \WHlogzero \setminus \{0\}$ we define its associated fibering function $\theta_u \colon [0,\infty) \to \R$ as $\theta_u(t) = \ph(tu)$. This function is $C^1$ in $(0,\infty)$ and continuous in $[0,\infty)$ because it is the composition of such functions. Clearly $\theta_u(0) = 0$ and from Propositions \ref{Prop:PhLowerBoundAlt} and \ref{Prop:PointBeyondMountains} we know that there exist $\delta,K > 0$ with the properties
	\begin{align}
		\label{Eq:FiberingZeroInfty}
		\theta_u (t) > 0 \quad \text{for } 0 < t < \delta \quad \text{and} \quad \theta_u(t) < 0 \quad \text{for } t > K.
	\end{align}
	Then the extreme value theorem implies that the global maximum of $\theta_u$ is attained at some $t_u \in (0,K]$. Furthermore, it is a critical point of $\theta_u$ and by the chain rule we know
	\begin{align*}
		0 = \theta_u ' (t_u)
		= \langle \ph'(t_u u) , u \rangle,
	\end{align*}
	which proves that $t_u u \in \mathcal{N}$.

	In order to see the uniqueness and the sign of the derivatives, observe that for $t > 0$ and due to \eqref{Asf:QuotientMono}, it holds
	\begin{align*}
		\frac{f(x,tu)}{t^{q_+}\abs{u}^{q_+}} \text{ increasing in $t$, so } \frac{f(x,tu)u}{t^{q_+}} \text{ increasing in $t$} & \text{, for } x \in \Omega \text{ with } u(x) > 0, \\
		\frac{f(x,tu)}{t^{q_+}\abs{u}^{q_+}} \text{ decreasing in $t$, so } \frac{f(x,tu)u}{t^{q_+}} \text{ increasing in $t$} & \text{, for } x \in \Omega \text{ with } u(x) < 0.
	\end{align*}
	Similarly to above, we know that $tu \in \mathcal{N}$ implies $u \neq 0$ and $\theta_u '(t) = 0$. By multiplying this equation with $1/t^{q_+}$ it follows
	\begin{align*}
		 & \into \left( \frac{ 1 } {t^{q_+ + 1 - p(x)}}  \abs{\nabla u} ^{p(x)}\right.                                                                                                              \\
		 & \left.\qquad+ \frac{1}{ t^{q_+ - q(x)} } \mu(x) \abs{\nabla u}^{q(x)} \left[ \frac{\log (e + t \abs{\nabla u} )}{t} + \frac{\abs{\nabla u}}{q(x) (e + t \abs{\nabla u})} \right] \right. \\
		 & \qquad \left. - \frac{f(x,tu)u}{t^{q_+ }} \right) \dx = 0.
	\end{align*}
	On the set $\{ \nabla u \neq 0 \}$, the first term is strictly decreasing since $p_+ < q_+ + 1$, the second term is decreasing by Lemmas \ref{Le:fepsilon} and \ref{Le:QuotientMonotone}, and the third one is decreasing by the observation just above it. We know that $u \neq 0$, so the whole left-hand side of the equation is strictly decreasing as a function of $t$, which means that there can be at most one value $t_u>0$ such that $\theta_u'(t_u) = 0$, i.e.\,$t_u u \in \mathcal{N}$. Furthermore, $\theta_u'(t)$ cannot take value $0$ anywhere else, so it has constant sign on $(0,t_u)$ and $(t_u,\infty)$, and they must be positive and negative respectively by \eqref{Eq:FiberingZeroInfty}.
\end{proof}

The advantage of working in the Nehari manifold instead of the whole space is that our functional, even when it was not coercive in the original space, it does have some coercivity-type property on the restricted space.

\begin{proposition}
	\label{Prop:Coercivity}
	Let \eqref{Assump:H3} be satisfied  and $f$ fulfill \eqref{Asf:Caratheodory}, \eqref{Asf:WellDef}, \eqref{Asf:QuotientMono} and \eqref{Asf:GrowthZeroAlt}. Then the functional $\ph_{|_\mathcal{N}}$ is sequentially coercive, in the sense that for any sequence $\{u_n\}_{n \in \N} \subseteq \mathcal{N}$ such that $\normoneHlogzero{u_n} \xrightarrow{n \to \infty} + \infty$ it follows that $\ph(u_n) \xrightarrow{n \to \infty} + \infty$.
\end{proposition}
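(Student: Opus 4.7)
I would argue by contradiction. Suppose $\{u_n\}_{n \in \N} \subseteq \mathcal{N}$ satisfies $\normoneHlogzero{u_n} \to \infty$ but $\ph(u_n) \not\to +\infty$; after passing to a subsequence one may assume $\ph(u_n) \leq M$ for some $M > 0$. Setting $v_n = u_n/\normoneHlogzero{u_n}$ yields $\normoneHlogzero{v_n} = 1$ and hence $\varrho_{\hlog}(\nabla v_n) = 1$ by Proposition~\ref{Prop:HlogModularNorm}~\textnormal{(ii)}. Using the compact embedding of Proposition~\ref{Prop:EmbeddingHlogSobolev}~\textnormal{(iii)} with any $r \in C_+(\close)$ with $r_+ < p^*_-$, up to a further subsequence $v_n \weak v$ in $\WHlogzero$, $v_n \to v$ in $\Lp{r(\cdot)}$, and $v_n(x) \to v(x)$ a.e.\,in $\Omega$. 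The argument then splits into two cases according to whether $v \not\equiv 0$ or $v \equiv 0$.

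In the case $v \not\equiv 0$, set $\Omega^* = \{x \in \Omega \,:\, v(x) \neq 0\}$, which has positive measure, and on $\Omega^*$ one has $\abs{u_n(x)} = \normoneHlogzero{u_n}\abs{v_n(x)} \to \infty$ a.e. By Lemma~\ref{Le:Propq+1}, the assumption \eqref{Asf:QuotientMono} implies the superlinear condition \eqref{Asf:GrowthInfty}, so $F(x,u_n)/\bigl[\abs{u_n}^{q_+}\log(e + \abs{u_n})\bigr] \to +\infty$ on $\Omega^*$, while $\log(e + \normoneHlogzero{u_n}\abs{v_n})/\log(e + \normoneHlogzero{u_n})$ tends to $1$ a.e.\,on $\Omega^*$. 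Together with the uniform lower bound $F(x,\cdot) \geq -M_F$ from Lemma~\ref{Le:Propf}~\textnormal{(ii)}, Fatou's lemma then gives
\begin{align*}
	\frac{\into F(x,u_n)\dx}{\normoneHlogzero{u_n}^{q_+}\log(e + \normoneHlogzero{u_n})} \to +\infty.
\end{align*}
On the other hand, expanding $\varrho_{\hlog}(\nabla u_n)$ via $u_n = \normoneHlogzero{u_n} v_n$, splitting $\log(e + \normoneHlogzero{u_n}\abs{\nabla v_n}) \leq \log(e + \normoneHlogzero{u_n}) + \log(e + \abs{\nabla v_n})$ via \eqref{Eq:LogGrowthProduct}, and using $\varrho_{\hlog}(\nabla v_n) = 1$ furnish
\begin{align*}
	\varrho_{\hlog}(\nabla u_n) \leq C\,\normoneHlogzero{u_n}^{q_+}\log(e + \normoneHlogzero{u_n})
\end{align*}
for $n$ large. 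Dividing $\ph(u_n)$ by $\normoneHlogzero{u_n}^{q_+}\log(e + \normoneHlogzero{u_n})$ thus keeps the modular contribution bounded while sending the $F$-contribution to $-\infty$, contradicting $\ph \geq 0$ on $\mathcal{N}$ coming from Proposition~\ref{Prop:NehariManifoldProps}.

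In the case $v \equiv 0$, I would invoke the Nehari characterization in Proposition~\ref{Prop:NehariManifoldProps}: since $u_n \in \mathcal{N}$, the uniqueness of $t_{v_n}$ forces $t_{v_n} = \normoneHlogzero{u_n}$, and therefore $\ph(u_n) = \max_{t > 0} \ph(tv_n) \geq \ph(Rv_n)$ for every fixed $R > 0$. For $R \geq 1$, the pointwise estimates $R^{p(x)} \geq R^{p_-}$, $R^{q(x)}\log(e + R\abs{\nabla v_n}) \geq R^{p_-}\log(e + \abs{\nabla v_n})$, and $1/p(x),\,1/q(x) \geq 1/q_+$, combined with $\varrho_{\hlog}(\nabla v_n) = 1$, yield
\begin{align*}
	\into \left(\frac{\abs{\nabla (Rv_n)}^{p(x)}}{p(x)} + \mu(x)\frac{\abs{\nabla (Rv_n)}^{q(x)}}{q(x)}\log(e + \abs{\nabla (Rv_n)})\right)\dx \geq \frac{R^{p_-}}{q_+}.
\end{align*}
Since $Rv_n \weak 0$ in $\WHlogzero$, the strong continuity of $I_f$ from Lemma~\ref{Le:Propf}~\textnormal{(v)} implies $\into F(x,Rv_n)\dx \to 0$, hence $\liminf_n \ph(Rv_n) \geq R^{p_-}/q_+$. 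Choosing $R$ so large that $R^{p_-}/q_+ > M$ forces $\ph(u_n) > M$ for $n$ large, contradicting $\ph(u_n) \leq M$.

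I expect the case $v \not\equiv 0$ to be the main technical obstacle, where one must verify that the divisor $\normoneHlogzero{u_n}^{q_+}\log(e + \normoneHlogzero{u_n})$ simultaneously bounds the full modular from above while being outgrown by the $F$-integral from below, carefully tracking the asymptotics of the mismatched logarithms and the two-exponent structure of $\varrho_{\hlog}$.
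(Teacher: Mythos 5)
Your proposal is correct and follows essentially the same route as the paper's own proof: normalize by $\normoneHlogzero{u_n}$, extract a weakly convergent (a.e.\,convergent) subsequence, rule out a nonzero weak limit via the superlinear growth of $F$ and Fatou's lemma, and then in the zero-weak-limit case use the fibering/Nehari maximality $\ph(u_n) \geq \ph(Rv_n)$ together with the modular lower bound and the strong continuity of $I_f$ to push the value above any threshold. The only cosmetic difference is the choice of normalizing divisor in the nonzero-limit case: you divide by $\normoneHlogzero{u_n}^{q_+}\log(e+\normoneHlogzero{u_n})$ and track the logarithmic asymptotics explicitly, whereas the paper divides by $\normoneHlogzero{u_n}^{q_+ + \eps}$ and invokes Proposition~\ref{Prop:HlogModularNorm} \textnormal{(iv)} together with Lemma~\ref{Le:Propq+1}~\ref{Propf:growthq+1} to absorb the logarithm into the extra power $\eps$, which is slightly more economical; both versions are valid.
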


\begin{proof}
	Consider a sequence $\{u_n\}_{n \in \N}$ with the property that $\normoneHlogzero{u_n} \xrightarrow{n \to \infty} \infty$. Let us define $y_n = u_n / \normoneHlogzero{u_n}$. Due to Proposition \ref{Prop:EmbeddingHlogSobolev} \textnormal{(iii)}, we know that there exists a subsequence $\{y_{n_k}\}_{k \in \N}$ and $y \in \WHlogzero$ such that
	\begin{align*}
		y_{n_k} \weak y \quad\text{in } \WHlogzero,
		\quad y_{n_k} \to y \quad\text{in } \Lp{r(\cdot)}  \text{ and pointwisely a.e.\,in }\Omega.
	\end{align*}

	{\bf Claim:} $y = 0$

	We proceed by contradiction, so assume that $y \neq 0$. Let $\eps > 0$ and without loss of generality let $\normoneHlogzero{u_{n_k}} \geq 1$ for all $k \in \N$. Proposition \ref{Prop:oneHlogModularNorm} \textnormal{(iii)} and \textnormal{(iv)} yield for all $k \in \N$
	\begin{align*}
		\ph (u_{n_k}) & \leq
		\frac{1}{p_-} \modoneHlogzero{ u_{n_k} } - \into F(x, u_{n_k}) \dx \\
		& \leq \frac{a_\eps}{p_-} \normoneHlogzero{ u_{n_k} }^{q_+ + \eps} - \into F(x, u_{n_k}) \dx
	\end{align*}
	and dividing by $\normoneHlogzero{u_{n_k}}^{ q_+ + \eps }$ we obtain for all $k \in \N$
	\begin{align}
		\label{Eq:CoercivityClaim}
		\frac{\ph (u_{n_k})}{\normoneHlogzero{ u_{n_k} }^{q_+ + \eps}}
		\leq \frac{a_\eps}{p_-} - \into \frac{F(x, u_{n_k})}{\normoneHlogzero{ u_{n_k} }^{q_+ + \eps}} \dx.
	\end{align}
	On the other hand, by Lemma \ref{Le:Propq+1} \ref{Propf:growthq+1} we know
	\begin{align*}
		\lim_{k \to \infty} \frac{F(x, u_{n_k})}{\normoneHlogzero{ u_{n_k} }^{q_+ + \eps}}
		= \lim_{k \to \infty} \frac{F(x, u_{n_k})}{\abs{ u_{n_k} }^{q_+ + \eps}} \abs{ y_{n_k} }^{q_+ + \eps}
		= \infty
	\end{align*}
	for $x \in \Omega$ with $y(x) \neq 0$. So by Lemma \ref{Le:Propf} \ref{Propf:Boundedbelow} and Fatou's Lemma , where in the following $\Omega_0 = \{ x \in \Omega \colon y(x) = 0\}$
	\begin{align*}
		\into \frac{F(x, u_{n_k})}{\normoneHlogzero{ u_{n_k} }^{q_+ + \eps}}  \dx
		& = \int_{\Omega \setminus \Omega_0} \frac{F(x, u_{n_k})}{\normoneHlogzero{ u_{n_k} }^{q_+ + \eps}} \dx + \int_{\Omega_0} \frac{F(x, u_{n_k})}{\normoneHlogzero{ u_{n_k} }^{q_+ + \eps}}  \dx \\
		& \geq \int_{\Omega \setminus \Omega_0} \frac{F(x, u_{n_k})}{\normoneHlogzero{ u_{n_k} }^{q_+ + \eps}} \dx - \frac{M |\Omega| }{\normoneHlogzero{ u_{n_k} }^{q_+ + \eps}}
		\xrightarrow{k \to \infty} \infty.
	\end{align*}
	Together with \eqref{Eq:CoercivityClaim}, this implies that $\ph(u_{n_k}) < 0$ for $k$ large enough, which is a contradiction to the fact that $\ph(u_n) > 0$ for all $n \in \N$ given by Proposition \ref{Prop:NehariManifoldProps} and the Claim is proved.

	Take any $C>1$. By Proposition \ref{Prop:NehariManifoldProps}, as $u_{n_k} \in \mathcal{N}$ for all $k \in \N$, we know that $\ph(u_{n_k}) \geq \ph (C y_{n_k})$ for all $k \in \N$. Also by using Proposition \ref{Prop:oneHlogModularNorm} \textnormal{(iii)} one gets for all $k \in \N$
	\begin{align*}
		\ph (u_{n_k}) & \geq \ph (C y_{n_k})
		\geq \frac{1}{q_+}  \modoneHlogzero{C y_{n_k}}  - \into F(x, Cy_{n_k}) \dx\\
		& \geq \frac{1}{q_+} \normoneHlogzero{C y_{n_k}}^{p_-} - \into F(x, Cy_{n_k})\dx
		= \frac{C^{p_-}}{q_+} - \into F(x, Cy_{n_k}) \dx.
	\end{align*}
	We also know that $C y_{n_k} \weak 0$, which together with the strong continuity of Lemma \ref{Le:Propf} \ref{Propf:ConvergenceFterm} gives us a $k_0 \in \N$ such that for all $k \geq k_0$
	\begin{align*}
		\ph (u_{n_k}) \geq \frac{C^{p_-}}{q_+} - 1.
	\end{align*}
	In the previous argument, $C$ can be as big as desired and $k_0$ depends on $C$, so we derived that $\ph (u_{n_k}) \xrightarrow{k \to \infty} + \infty$. The subsequence principle yields $\ph (u_n) \xrightarrow{n \to \infty} + \infty$.
\end{proof}

Minimizing over the Nehari manifold we always stay strictly positive. This is useful to ensure that we are not converging to zero along a minimizing sequence.

\begin{proposition}
	\label{Prop:Infimum>0}
	Let \eqref{Assump:H3} be satisfied and $f$ fulfill \eqref{Asf:Caratheodory}, \eqref{Asf:WellDef}, \eqref{Asf:QuotientMono} and \eqref{Asf:GrowthZeroAlt}. Then
	\begin{align*}
		\inf_{u \in \mathcal{N}} \ph(u) > 0
		\quad \text{ and } \quad
		\inf_{u \in \mathcal{N}_0} \ph(u) > 0.
	\end{align*}
\end{proposition}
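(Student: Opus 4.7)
The plan is to show first that $\mathcal{N}_0 \subseteq \mathcal{N}$, so that the second infimum reduces to the first, and then combine the fibering picture of Proposition \ref{Prop:NehariManifoldProps} with the mountain-pass-type lower bound in Proposition \ref{Prop:PhLowerBoundAlt} to produce a uniform positive constant $c>0$ such that $\ph(u)\ge c$ for every $u\in\mathcal{N}$. No independent lower bound on $\normoneHlogzero{u}$ for $u\in\mathcal{N}$ will be needed.

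For the inclusion $\mathcal{N}_0\subseteq\mathcal{N}$, let $u\in\mathcal{N}_0$, so that $u^+,-u^-\in\mathcal{N}$; in particular both are nonzero, hence $u\neq 0$. Using Proposition \ref{Prop:Truncations} one has $\nabla u^+\cdot \nabla u^-=0$ a.e., so for any integrand depending only on $|\nabla u|$ the integral splits as the sum of the corresponding integrals over $u^+$ and $-u^-$. Since $f(x,0)=0$ by Lemma \ref{Le:PropLimftozero}\ref{Propf:Zeroatzero}, the same splitting holds for $\int_\Omega f(x,u)u\,\dx$. Therefore
\begin{align*}
\langle \ph'(u),u\rangle=\langle \ph'(u^+),u^+\rangle+\langle \ph'(-u^-),-u^-\rangle=0,
\end{align*}
and $u\in\mathcal{N}$; consequently $\inf_{\mathcal{N}_0}\ph\ge \inf_{\mathcal{N}}\ph$, and it is enough to prove the first inequality.

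By Lemma \ref{Le:Propq+1}\textnormal{(i)} we have $q_++1\le r_-$, so one can fix $\eps\in(0,\min\{1,\kappa\})$ with $q_++\eps<r_-$. Apply Proposition \ref{Prop:PhLowerBoundAlt} with this $\eps$: for every $u\in\WHlogzero$ with $\normoneHlogzero{u}\le\min\{1,C_3\}$,
\begin{align*}
\ph(u)\ge C_1 a_\eps^{-1}\normoneHlogzero{u}^{q_++\eps}-C_2\normoneHlogzero{u}^{r_-}.
\end{align*}
Because $r_-> q_++\eps$, the right-hand side is strictly positive for small enough $\normoneHlogzero{u}$, and we may choose a single $\delta\in(0,\min\{1,C_3\}]$ with
\begin{align*}
c:=C_1 a_\eps^{-1}\delta^{q_++\eps}-C_2\delta^{r_-}>0,
\end{align*}
so that $\ph(v)\ge c$ for every $v\in\WHlogzero$ satisfying $\normoneHlogzero{v}=\delta$.

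Finally, take an arbitrary $u\in\mathcal{N}$. By Proposition \ref{Prop:NehariManifoldProps} the fibering function $t\mapsto \ph(tu)$ attains its global maximum on $(0,\infty)$ at $t_u=1$, hence $\ph(u)\ge \ph(tu)$ for every $t>0$. Choosing $t:=\delta/\normoneHlogzero{u}>0$ yields $\normoneHlogzero{tu}=\delta$, and the previous paragraph gives $\ph(tu)\ge c$. Thus $\ph(u)\ge c$ for every $u\in\mathcal{N}$, which proves $\inf_{\mathcal{N}}\ph\ge c>0$ and, by the first paragraph, $\inf_{\mathcal{N}_0}\ph\ge c>0$ as well. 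The only delicate input is the good choice of $\eps$ and $\delta$ compatible with both Lemma \ref{Le:fepsilon} (so that $a_\eps$ exists) and $q_++\eps<r_-$ (so that the positive power dominates on the ball of radius $\delta$); the scaling argument then does everything else.
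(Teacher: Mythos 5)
Your proof is correct and takes essentially the same route as the paper: pull any $u\in\mathcal{N}$ down to the sphere of radius $\delta$ via the fibering from Proposition~\ref{Prop:NehariManifoldProps}, and use the mountain-pass lower bound near the origin to get a uniform positive constant. One small stylistic difference: for the $\mathcal{N}_0$ part, the paper uses the decomposition $\ph(u)=\ph(u^+)+\ph(-u^-)$ with $u^+,-u^-\in\mathcal{N}$ (which in fact yields $\inf_{\mathcal{N}_0}\ph\ge 2\inf_{\mathcal{N}}\ph$), whereas you prove the inclusion $\mathcal{N}_0\subseteq\mathcal{N}$ and then pass to the infimum; both are fine. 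You are also slightly more careful in invoking Proposition~\ref{Prop:PhLowerBoundAlt} directly with an explicit choice of $\eps$ and $\delta$ adapted to the \eqref{Assump:H3}/\eqref{Asf:GrowthZeroAlt} hypotheses, while the paper cites Corollary~\ref{Cor:RingOfMountains}, which was formally stated under \eqref{Assump:H2}/\eqref{Asf:GrowthZero}; your version of the step is the one the hypotheses at hand actually license.
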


\begin{proof}
	From Proposition \ref{Prop:NehariManifoldProps} and Corollary \ref{Cor:RingOfMountains}, for any $u \in \mathcal{N}$ we know that
	\begin{align*}
		\ph (u)
		\geq \ph \left( \frac{\delta}{\normoneHlogzero{u}} u \right)
		\geq \inf_{\normoneHlogzero{u} = \delta} \ph(u)
		> 0,
	\end{align*}
	which proves the first part. The second one follows from the first one since $\ph (u) = \ph(u^+) + \ph(-u^-)$ and $u^+, -u^- \in \mathcal{N}$ for $u \in \mathcal{N}_0$.
\end{proof}

We can now perform calculus of variations on $\mathcal{N}_0$ to obtain our candidate for a third solution.

\begin{proposition}
	\label{Prop:MinimizerExistence}
	Let \eqref{Assump:H3} be satisfied and $f$ fulfill \eqref{Asf:Caratheodory}, \eqref{Asf:WellDef}, \eqref{Asf:QuotientMono} and \eqref{Asf:GrowthZeroAlt}. Then there exists $w_0 \in \mathcal{N}_0$ such that $\ph(w_0) = \inf_{u \in \mathcal{N}_0} \ph(u)$.
\end{proposition}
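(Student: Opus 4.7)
My strategy is a direct minimization over $\mathcal{N}_0$ combined with the rescaling provided by the fiber maps in Proposition \ref{Prop:NehariManifoldProps}, which allows me to force a suitable rescaling of the weak limit to lie in $\mathcal{N}_0$ and attain the infimum.

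First I would take a minimizing sequence $\{u_n\}_{n\in\N}\subseteq\mathcal{N}_0$, so $\ph(u_n)\to m_0:=\inf_{u\in\mathcal{N}_0}\ph(u)$, where $m_0>0$ by Proposition \ref{Prop:Infimum>0}. Since $\mathcal{N}_0\subseteq\mathcal{N}$, Proposition \ref{Prop:Coercivity} forces $\{u_n\}$ to be bounded in $\WHlogzero$; otherwise $\ph(u_n)\to+\infty$, contradicting the finiteness of $m_0$. Passing to a subsequence I obtain $u_n\weak w_0$ in $\WHlogzero$ and, by the compact embedding of Proposition \ref{Prop:EmbeddingHlogSobolev}\,\textnormal{(iii)}, $u_n\to w_0$ in $\Lp{r(\cdot)}$ and pointwise a.e.\ in $\Omega$ along a further subsequence. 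Since by Proposition \ref{Prop:Truncations}\,\textnormal{(i)} the gradient of $u_n^{\pm}$ is just $\nabla u_n$ restricted to $\{\pm u_n>0\}$, the truncations $u_n^{\pm}$ are bounded in $\WHlogzero$, and reflexivity together with the a.e.\ identification of the limit yields $u_n^{\pm}\weak w_0^{\pm}$ in $\WHlogzero$.

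The crucial step is to exclude $w_0^+\equiv 0$ and $w_0^-\equiv 0$. Suppose for contradiction that $w_0^+\equiv 0$. Since $u_n^+\in\mathcal{N}$, writing out $\langle\ph'(u_n^+),u_n^+\rangle=0$ and dropping the nonnegative logarithmic correction gives
\begin{align*}
\modoneHlogzero{u_n^+}\leq\into f(x,u_n^+)\,u_n^+\,\dx.
\end{align*}
The right-hand side tends to zero by the strong continuity of $I_f'$ in Lemma \ref{Le:Propf}\,\textnormal{(v)} together with the boundedness of $\{u_n^+\}$, because $u_n^+\weak 0$. Hence $\normoneHlogzero{u_n^+}\to 0$ by Proposition \ref{Prop:oneHlogModularNorm}\,\textnormal{(v)}, so $\ph(u_n^+)\to 0$, contradicting $\ph(u_n^+)\geq\inf_{u\in\mathcal{N}}\ph(u)>0$ from Proposition \ref{Prop:Infimum>0}. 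The case $w_0^-\equiv 0$ is analogous.

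Finally, by Proposition \ref{Prop:NehariManifoldProps} applied to $w_0^+$ and $-w_0^-$ there exist unique $t_+,t_->0$ with $t_+w_0^+,\,-t_-w_0^-\in\mathcal{N}$, so $\hat w_0:=t_+w_0^+-t_-w_0^-\in\mathcal{N}_0$. The proof would conclude via the chain
\begin{align*}
m_0\leq\ph(\hat w_0)=\ph(t_+w_0^+)+\ph(-t_-w_0^-)&\leq\liminf_{n\to\infty}\bigl[\ph(t_+u_n^+)+\ph(-t_-u_n^-)\bigr]\\
&\leq\liminf_{n\to\infty}\bigl[\ph(u_n^+)+\ph(-u_n^-)\bigr]=m_0,
\end{align*}
where the middle inequality uses the weak lower semicontinuity of $\ph$ (the gradient integrand is convex in $\nabla u$, as the second-derivative computation in the proof of Lemma \ref{Le:PropertiesHlog} confirms, and the $F$-term is weakly continuous by Lemma \ref{Le:Propf}\,\textnormal{(v)}), and the last inequality uses $\ph(tu_n^+)\leq\ph(u_n^+)$ for every $t>0$, which follows from $u_n^+\in\mathcal{N}$ and the uniqueness in Proposition \ref{Prop:NehariManifoldProps} (so the fiber-map maximum occurs at $t=1$), and analogously for $-u_n^-$. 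I expect the main obstacle to be the nonvanishing of $w_0^{\pm}$, where Proposition \ref{Prop:Infimum>0}, the Nehari identity and the strong continuity of $I_f'$ must be combined carefully to prevent the truncations from collapsing in the weak limit.
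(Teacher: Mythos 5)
Your proof is correct and follows essentially the same strategy as the paper's: a minimizing sequence, boundedness from the coercivity on the Nehari manifold, extraction of weak limits of the positive and negative parts, exclusion of vanishing via the Nehari identity and the strong continuity of $I_f'$, and rescaling the limit into $\mathcal{N}_0$ by Proposition \ref{Prop:NehariManifoldProps} combined with weak lower semicontinuity and the fiber-map maximality. The only cosmetic difference is that you extract a single weak limit $w_0$ and use $w_0^\pm$ directly, while the paper works with separate weak limits $z_1, z_2$ of $u_n^+$ and $-u_n^-$ and then assembles $w_0 = t_1 z_1 + t_2 z_2$; these are equivalent routes.
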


\begin{proof}
	We prove the result via the direct method of calculus of variations. Let $\{u_n\}_{n \in \N} \subseteq \mathcal{N}_0$ be a minimizing sequence, i.e., a sequence such that $\ph(u_n) \searrow \inf_{u \in \mathcal{N}_0} \ph(u)$. Recall that, by Proposition \ref{Prop:Truncations}, $u_n^+, - u_n^- \in \WHlogzero$ for all $n \in \N$. Then the sequences $\{ \ph( u_n^+ ) \}_{n \in \N}$ and $\{ \ph( - u_n^- ) \}_{n \in \N}$ are bounded in $\R$, since $\ph(u_n) = \ph(u_n^+) \mathop{+} \ph(-u_n^-)$ for all $n \in \N$ and by Proposition \ref{Prop:NehariManifoldProps} we have that $\ph( u_n^+) > 0$ and $\ph(- u_n^-) > 0$ for all $n \in \N$. Hence the coercivity condition of Proposition \ref{Prop:Coercivity} yields the boundedness of $\{  u_n^+  \}_{n \in \N}$ and $\{  - u_n^-  \}_{n \in \N}$ in $\WHlogzero$. By Proposition \ref{Prop:EmbeddingHlogSobolev} \textnormal{(iii)}, we obtain subsequences $\{  u_{n_k} ^+\}_{k \in \N}$ and $\{- u_{n_k} ^-\}_{k \in \N}$, and $z_1, z_2 \in \WHlogzero$ such that
	\begin{equation}
		\label{Eq:MinimizingSubsequence}
		\begin{aligned}
			& u_{n_k} ^+ \weak z_1 \quad\text{in } \WHlogzero, \quad
			u_{n_k} ^+ \to z_1 \quad\text{in } \Lp{r(\cdot)} \quad
			\text{and a.e.\,in }\Omega, \\
			& - u_{n_k} ^- \weak z_2 \quad\text{in } \WHlogzero, \quad
			- u_{n_k} ^- \to z_2 \quad\text{in } \Lp{r(\cdot)} \quad
			\text{and a.e.\,in }\Omega, \\
			& \text{with } z_1 \geq 0, \quad
			z_2 \leq 0 \quad
			\text{and } \quad z_1 z_2 = 0.
		\end{aligned}
	\end{equation}

	{\bf Claim:} $z_1 \neq 0 \neq z_2$

	We only prove the assertion for $z_1$, the other one is exactly the same. Arguing by contradiction, let $z_1 = 0$. By assumption $u_{n_k} ^+ \in \mathcal{N}$, so we have
	\begin{align*}
		0
		& = \langle \ph'( u_{n_k} ^+) , u_{n_k} ^+  \rangle \\
		& =  \modoneHlogzero{ u_{n_k} ^+}
		+ \into \frac{\abs{ \nabla u_{n_k} ^+}}{q(x) (e + \abs{ \nabla u_{n_k} ^+})}  \abs{ \nabla u_{n_k} ^+}^{q(x)} \dx
		- \into f(x, u_{n_k}^+) u_{n_k} ^+  \dx\\
		& \geq  \modoneHlogzero{ u_{n_k} ^+}
		- \into f(x, u_{n_k}^+)  u_{n_k} ^+  \dx.
	\end{align*}
	The weak convergence of \eqref{Eq:MinimizingSubsequence} along with the strong continuity of Lemma \ref{Le:Propf} \ref{Propf:ConvergenceFterm} yield $\modoneHlogzero{ u_{n_k} ^+} \to 0$, i.e., $ u_{n_k} ^+ \to 0$ in $\WHlogzero$ by Proposition \ref{Prop:HlogModularNorm} \textnormal{(v)}. However, as $\ph$ is continuous and by Proposition \ref{Prop:Infimum>0}
	\begin{align*}
		0 < \inf_{u \in \mathcal{N}} \ph(u)
		\leq \ph ( u_{n_k} ^+)
		\longrightarrow \ph (0) = 0 \quad \text{as } k \to \infty.
	\end{align*}
	This is a contradiction and finishes the proof of the Claim.

	From the previous Claim and Proposition \ref{Prop:NehariManifoldProps}, we have $t_1, t_2 > 0$ such that $t_1 z_1, t_2 z_2 \in \mathcal{N}$. We define $w_0 = t_1 z_1 + t_2 z_2$, which by \eqref{Eq:MinimizingSubsequence} satisfies that $w_0^+ = t_1 z_1$ and $- w_0^- = t_2 z_2$, thus $w_0 \in \mathcal{N}_0$. In order to see that this is the minimizer, we note that $\ph$ is sequentially weakly lower semicontinuous. Indeed, the $F$ term is even strongly continuous by Lemma \ref{Le:Propf} \ref{Propf:ConvergenceFterm}, the part with exponent $p(\cdot)$ is weakly lower semicontinuous because it is continuous and convex, and the part with exponent $q(\cdot)$ and logarithm is also weakly lower semicontinuous for the same reasons. If we combine Proposition \ref{Prop:NehariManifoldProps} with the previous property, we get
	\begin{align*}
		\inf_{u \in \mathcal{N}_0} \ph(u)
		= \lim\limits_{k \to \infty} \ph(u_{n_k})
		& = \lim\limits_{k \to \infty} \ph(u_{n_k}^+) + \ph(- u_{n_k} ^-) \\
		& \geq \liminf\limits_{k \to \infty} \ph(t_1 u_{n_k}^+) + \ph(- t_2 u_{n_k} ^-) \\
		& \geq \ph(t_1 z_1) + \ph(t_2 z_2) \\
		& = \ph(w_0^+) + \ph(- w_0^-)
		= \ph(w_0)
		\geq \inf_{u \in \mathcal{N}_0} \ph(u).
	\end{align*}
\end{proof}

Our last step towards finding a sign-changing solution is to see that any minimizer like the one we obtained in the previous proposition is actually a critical point of $\ph$.

\begin{proposition}
	\label{Prop:Sing-changing-CritPoint}
	Let \eqref{Assump:H3} be satisfied and $f$ fulfill \eqref{Asf:Caratheodory}, \eqref{Asf:WellDef}, \eqref{Asf:QuotientMono} and  \eqref{Asf:GrowthZeroAlt}. Let $w_0 \in \mathcal{N}_0$ such that $\ph(w_0) = \inf_{u \in \mathcal{N}_0} \ph(u)$. Then $w_0$ is a critical point of $\ph$.
\end{proposition}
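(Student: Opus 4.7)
The plan is to argue by contradiction, combining the quantitative deformation lemma with the Poincar\'e-Miranda existence theorem, in the spirit of the treatment of \cite{Crespo-Blanco-Winkert-2022} adapted to the logarithmic setting. Suppose $\ph'(w_0) \neq 0$; since $\ph'$ is continuous by Theorem \ref{Th:C1functional}, one finds $\delta, \lambda > 0$ such that $\|\ph'(v)\|_* \geq \lambda$ for every $v \in \WHlogzero$ with $\normoneHlog{v - w_0} \leq 3\delta$.

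Next I would introduce the continuous two-parameter family $h\colon [0,\infty)^2 \to \WHlogzero$ defined by $h(s,t) = sw_0^+ - tw_0^-$, which satisfies $h(1,1)=w_0$ and, thanks to the disjoint supports of $w_0^\pm$, the decoupling $\ph(h(s,t)) = \ph(sw_0^+) + \ph(-tw_0^-)$. Applying Proposition \ref{Prop:NehariManifoldProps} separately to the nontrivial $w_0^+$ and $-w_0^-$, each summand is strictly maximized at $s=1$ and $t=1$ respectively, so $(1,1)$ is the unique strict global maximum of $\ph \circ h$ on $[0,\infty)^2$. I would then pick $\tau \in (0,1/2)$ small enough that $\normoneHlog{h(s,t) - w_0} < \delta$ for all $(s,t) \in \overline{D}$ with $D := (1-\tau, 1+\tau)^2$, set $c := \ph(w_0) = \inf_{\mathcal{N}_0} \ph$ (which is $>0$ by Proposition \ref{Prop:Infimum>0}), $\beta := \max_{\partial D} \ph \circ h < c$ and $\eps := \min\{ (c-\beta)/4,\; \lambda \delta/8 \}$. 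Lemma \ref{Le:DeformationLemma} with $S = \{w_0\}$, $\delta$ and $\eps$ then produces a continuous $\eta\colon [0,1]\times\WHlogzero\to\WHlogzero$ such that, by (vi), $\ph(\eta(1, h(s,t))) < c$ for all $(s,t)\in\overline{D}$ and, by (i), $\eta(1, h(s,t)) = h(s,t)$ on $\partial D$.

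To finish, I would produce $(s^*, t^*) \in D$ with $\eta(1, h(s^*,t^*)) \in \mathcal{N}_0$, since then $\ph(\eta(1, h(s^*,t^*))) < c$ contradicts $c = \inf_{\mathcal{N}_0} \ph$. For this I define $\Psi\colon \overline{D} \to \R^2$ by
\begin{equation*}
    \Psi(s,t) = \bigl( \langle \ph'(\eta(1,h(s,t))), \eta(1,h(s,t))^+ \rangle,\; -\langle \ph'(\eta(1,h(s,t))), \eta(1,h(s,t))^- \rangle \bigr),
\end{equation*}
which is continuous by Theorem \ref{Th:C1functional} and Proposition \ref{Prop:Truncations}. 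Using $\nabla v^\pm = \nabla v\, 1_{\{\pm v>0\}}$ from Proposition \ref{Prop:Truncations}\,(i), a support argument gives $\langle\ph'(v), v^+\rangle = \langle\ph'(v^+), v^+\rangle$ and analogously for $-v^-$. On $\partial D$, where $\eta$ acts as the identity, this yields $\Psi_1(s,t) = \langle \ph'(s w_0^+), s w_0^+\rangle$ and $\Psi_2(s,t) = \langle \ph'(-t w_0^-), -t w_0^-\rangle$, whose signs are dictated by the fibering analysis in Proposition \ref{Prop:NehariManifoldProps}: $\Psi_1 > 0$ on $\{s=1-\tau\}$, $\Psi_1 < 0$ on $\{s=1+\tau\}$, and similarly $\Psi_2$ with respect to $t$. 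After the shift $(s,t) \mapsto (s-1,t-1)$, Theorem \ref{Th:PoincareMiranda} applied to $-\Psi$ yields a zero $(s^*, t^*) \in D$; nontriviality of $\eta(1, h(s^*,t^*))^\pm$ will follow by taking $\delta$ small enough that the displacement bound in Lemma \ref{Le:DeformationLemma}\,(iv) cannot annihilate either sign part of $h(s^*,t^*)$, since $h(\overline{D})$ is uniformly bounded away from the one-signed cone.

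The main obstacle is the sign analysis of $\Psi$ on $\partial D$ that enables Poincar\'e-Miranda: this crucially rests on the decoupling $\ph(h(s,t)) = \ph(sw_0^+) + \ph(-tw_0^-)$ and on the localization identity $\langle\ph'(v),\pm v^\pm\rangle = \langle\ph'(\pm v^\pm),\pm v^\pm\rangle$ afforded by Proposition \ref{Prop:Truncations}. The delicate underlying input is the strict monotonicity of the fibering map along $s \mapsto \ph(sw_0^+)$ from Proposition \ref{Prop:NehariManifoldProps}, whose validity in the present framework is precisely what forces the exponent $q_+$ in \eqref{Asf:QuotientMono} (via Lemma \ref{Le:QuotientMonotone}) and the new Young-type inequality of Lemma \ref{Le:YoungIneqLog} that powers the \textnormal{(S$_+$)}-property of $A$.
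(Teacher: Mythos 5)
Your proposal is correct and follows essentially the same argument as the paper: contradiction via the quantitative deformation lemma, then the support decoupling $\ph(sw_0^+ - tw_0^-) = \ph(sw_0^+) + \ph(-tw_0^-)$, the fibering-map sign analysis from Proposition \ref{Prop:NehariManifoldProps} to pin down the signs of the Poincar\'e-Miranda map on $\partial D$, and Theorem \ref{Th:PoincareMiranda} to produce a point of $\mathcal{N}_0$ with energy below $\inf_{\mathcal{N}_0}\ph$. The only (harmless) differences are cosmetic: you choose $S = \{w_0\}$ and invoke part (vi) of Lemma \ref{Le:DeformationLemma}, whereas the paper takes $S = B(w_0,\delta)$ and uses part (ii), and you omit the normalization factors $1/s$, $1/t$ in the map fed to Poincar\'e-Miranda, which do not affect its zeros or boundary signs since $s,t>0$.
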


\begin{proof}
	This is a proof by contradiction. Assume that $\ph'(w_0) \neq 0$, one can find $\lambda, \delta_0 > 0$ such that
	\begin{align*}
		\norm{ \ph'(u) }_{*} \geq \lambda, \quad
		\text{for all } u \in \WHlogzero \text{ with }
		\normoneHlogzero{u - w_0} < 3 \delta_0.
	\end{align*}
	On the other hand, let $C_{\hlog}$ be the constant of the embedding $\WHlogzero \hookrightarrow \Lp{p_-}$ given by Proposition \ref{Prop:EmbeddingHlogSobolev} \textnormal{(i)} and the usual embedding $\Wpzero{p_-} \hookrightarrow \Lp{p_-}$. As $w_0^+ \neq 0 \neq w_0^-$, for any $v \in \WHlogzero$ we know that
	\begin{align*}
		\normoneHlogzero{ w_0 - v }
		\geq  C_{\hlog}^{-1} \norm{ w_0 - v}_{p_-} \geq
		\begin{cases}
			C_{\hlog}^{-1} \norm{ w_0^- }_{p_-}, & \text{if } v^- = 0, \\
			C_{\hlog}^{-1} \norm{ w_0^+ }_{p_-}, & \text{if } v^+ = 0.
		\end{cases}
	\end{align*}
	We can choose some value $0 < \delta_1 < \min \{ C_{\hlog}^{-1} \norm{ w_0^- }_{p_-} , C_{\hlog}^{-1} \norm{ w_0^+ }_{p_-} \}$. As a consequence, for any $v \in \WHlogzero$ with $\normoneHlogzero{ w_0 - v } < \delta_1$ we know that $v^+ \neq 0 \neq v^-$. For the rest of the proof we work with $\delta = \min \{ \delta_0, \delta_1 / 2\}$.

	Observe that the mapping $(s,t) \mapsto s w_0^+ - t w_0^-$ is a continuous mapping $[0,\infty)^2 \to \WHlogzero$. Hence, we can find some $0 < \alpha < 1$ such that for all $s,t \geq 0$ with $\max \{ \abs{s - 1}, \abs{t - 1} \} < \alpha$ it holds
	\begin{align*}
		\normoneHlogzero{ s w_0^+ - t w_0^- - w_0 } < \delta.
	\end{align*}
	Let $D = ( 1 - \alpha, 1 + \alpha)^2$. Note also that for any $s,t \geq 0$ with $s \neq 1 \neq t$, by Proposition \ref{Prop:NehariManifoldProps} we know
	\begin{equation}
		\label{Eq:ParametersEstimatedByInfimum}
		\begin{aligned}
			\ph(s w_0^+ - t w_0^-)
			 & = \ph(s w_0^+) + \ph(- t w_0^-) \\
			 & < \ph(w_0^+) + \ph(- w_0^-)
			= \ph(w_0)
			= \inf_{u \in \mathcal{N}_0} \ph(u).
		\end{aligned}
	\end{equation}
	In particular, this implies that
	\begin{align*}
		\zeta = \max_{(s,t) \in \partial D} \ph(s w_0^+ - t w_0^-)
		< \ph(w_0)
		= \inf_{u \in \mathcal{N}_0} \ph (u).
	\end{align*}

	At this point we are in the position to apply the quantitative deformation lemma given in Lemma \ref{Le:DeformationLemma}. With its notation, take
	\begin{align*}
		S = B (w_0, \delta), \quad c = \inf_{u \in \mathcal{N}_0} \ph(u), \quad \eps = \min \left\lbrace \frac{c - \zeta}{4}, \frac{\lambda \delta}{8} \right\rbrace , \quad \delta \text{ be as defined above.}
	\end{align*}
	The assumptions are satisfied because $S_{2 \delta} = B (w_0, 3 \delta)$ and the choice of $\eps$, so we know that there is a mapping $\eta$ with the properties stated in the lemma. Because of the choice of $\eps$ we also know that
	\begin{align}
		\label{Eq:2epsAtBoundary}
		\ph(s w_0^+ - t w_0^-)
		\leq \zeta + c - c
		< c - \left( \frac{c - \zeta}{2} \right)
		\leq c - 2 \eps
	\end{align}
	for all $(s,t) \in \partial D$. Let us define $h \colon [0,\infty)^2 \to \WHlogzero$ and $H \colon (0,\infty)^2 \to \R^2$ by
	\begin{align*}
		h(s,t)  & = \eta(1 , s w_0^+ - t w_0^-)                                                                                                        \\
		H (s,t) & = \left( \; \frac{1}{s} \langle \ph'(h^+(s,t)) , h^+(s,t) \rangle \; , \; \frac{1}{t} \langle \ph'(- h^- (s,t)) , -h^- (s,t) \rangle \; \right).
	\end{align*}
	As $\eta$ is continuous, so is $h$, and as $\ph$ is $C^1$, $H$ is also continuous. Because of Lemma \ref{Le:DeformationLemma} \textnormal{(i)} and \eqref{Eq:2epsAtBoundary}, for all $(s,t) \in \partial D$ we know that $h(s,t) = s w_0^+ - t w_0^-$ and
	\begin{align*}
		H (s,t) = \left( \; \langle \ph'(s w_0^+) , w_0^+ \rangle \; , \; \langle \ph'(- t w_0^-) , - w_0^- \rangle \; \right).
	\end{align*}
	Furthermore, if we also take into consideration the information on the derivatives from Proposition \ref{Prop:NehariManifoldProps} we have the componentwise inequalities
	\begin{align*}
		& H_1(1 - \alpha,t) > 0 > H_1 (1 + \alpha,t),\\
		& H_2 (t, 1 - \alpha) > 0 > H_2 (t,1 + \alpha) \quad
		\text{for all } t \in [1 - \alpha, 1 + \alpha],
	\end{align*}
	where $H=(H_1,H_2)$. With the information above, by the Poincar\'e-Miranda existence theorem given in Theorem \ref{Th:PoincareMiranda} applied to $d(s,t) = - H(1 + s, 1 + t)$, we find $(s_0,t_0) \in D$ such that $H(s_0,t_0) = 0$, or equivalently
	\begin{align*}
		\langle \ph'(h^+(s_0 , t_0)) , h^+(s_0 , t_0) \rangle = 0 = \langle \ph'(- h^- (s_0 , t_0)) , -h^- (s_0 , t_0) \rangle.
	\end{align*}
	From Lemma \ref{Le:DeformationLemma} \textnormal{(iv)} and the choice of $\alpha$, we also know that
	\begin{align*}
		\normoneHlogzero{h(s_0 , t_0) - w_0} \leq 2 \delta \leq \delta_1,
	\end{align*}
	which by the choice of $\delta_1$ gives us
	\begin{align*}
		h^+(s_0 , t_0) \neq 0 \neq - h^-(s_0 , t_0).
	\end{align*}
	Altogether, this means that $h(s_0 , t_0) \in \mathcal{N}_0$. However, by Lemma \ref{Le:DeformationLemma} \textnormal{(ii)}, the choice of $\alpha$ and \eqref{Eq:ParametersEstimatedByInfimum}, we also know that $\ph( h(s_0 , t_0) ) \leq c - \eps$, which is a contradiction and this finishes the proof.
\end{proof}

The combination of Propositions \ref{Prop:MinimizerExistence} and \ref{Prop:Sing-changing-CritPoint}, Lemma \ref{Le:Propq+1} \ref{Propf:growthq+1} and Theorem \ref{Th:ConstantSignSolutionsAlt} yield the following results.

\begin{theorem}
	\label{Th:SignChangingSolution}
	Let \eqref{Assump:H3} be satisfied and $f$ fulfill  \eqref{Asf:Caratheodory}, \eqref{Asf:WellDef}, \eqref{Asf:QuotientMono} and  \eqref{Asf:GrowthZeroAlt}. Then there exists a nontrivial weak solution $w_0 \in \WHlogzero$ of problem \eqref{Eq:Problem} with changing sign.
\end{theorem}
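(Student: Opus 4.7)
The plan is to combine Propositions \ref{Prop:MinimizerExistence} and \ref{Prop:Sing-changing-CritPoint} directly, since both require exactly the set of assumptions \eqref{Assump:H3}, \eqref{Asf:Caratheodory}, \eqref{Asf:WellDef}, \eqref{Asf:QuotientMono} and \eqref{Asf:GrowthZeroAlt} appearing in the theorem. First I would invoke Proposition \ref{Prop:MinimizerExistence} to obtain some $w_0 \in \mathcal{N}_0$ with $\ph(w_0) = \inf_{u \in \mathcal{N}_0} \ph(u)$. Before this is applicable, it is useful to note that \eqref{Asf:QuotientMono} together with \eqref{Asf:WellDef} already implies \eqref{Asf:GrowthInfty} via Lemma \ref{Le:Propq+1}\,\ref{Propf:growthq+1}, which is what allows the coercivity on $\mathcal{N}$ (Proposition \ref{Prop:Coercivity}) and the mountain-pass-type estimates (Proposition \ref{Prop:PointBeyondMountains}) that are the engine behind Proposition \ref{Prop:MinimizerExistence}.

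Next, I would apply Proposition \ref{Prop:Sing-changing-CritPoint} to this minimizer $w_0$ to conclude $\ph'(w_0) = 0$ in $[\WHlogzero]^*$. By Theorem \ref{Th:C1functional} the derivative of the energy functional is exactly $\ph'(u) = A(u) - I_f'(u)$ (with $I_f$ as in Lemma \ref{Le:Propf}\,\ref{Propf:ConvergenceFterm}), so $\ph'(w_0) = 0$ is literally the weak formulation of problem \eqref{Eq:Problem}. Hence $w_0$ is a weak solution.

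Finally, it remains to check that $w_0$ is sign-changing, and in particular nontrivial. By the definition of $\mathcal{N}_0$, membership $w_0 \in \mathcal{N}_0$ means $w_0^+, -w_0^- \in \mathcal{N}$; by the definition of $\mathcal{N}$ this forces both $w_0^+ \neq 0$ and $w_0^- \neq 0$. Therefore $w_0$ is strictly positive on a set of positive measure and strictly negative on another set of positive measure, which is precisely the sign-changing property claimed in the statement.

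The argument is short because the substantive work has been done in Propositions \ref{Prop:MinimizerExistence} and \ref{Prop:Sing-changing-CritPoint}; the main obstacle is not located here but rather in Proposition \ref{Prop:Sing-changing-CritPoint}, whose proof combines the quantitative deformation lemma (Lemma \ref{Le:DeformationLemma}) with the Poincar\'e--Miranda existence theorem (Theorem \ref{Th:PoincareMiranda}) in order to deform a would-be noncritical minimizer into an element of $\mathcal{N}_0$ with strictly smaller energy, contradicting its minimality. All we need to check for the present statement is the book-keeping: that the assumptions of the two propositions coincide with those of the theorem, and that the information packaged inside $\mathcal{N}_0$ automatically gives both sign changes.
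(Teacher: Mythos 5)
Your proposal is correct and matches the paper's own (very short) argument: the paper also states that Theorem \ref{Th:SignChangingSolution} follows by combining Propositions \ref{Prop:MinimizerExistence} and \ref{Prop:Sing-changing-CritPoint} together with Lemma \ref{Le:Propq+1}\,\ref{Propf:growthq+1}. Your additional book-keeping — identifying $\ph'(w_0)=0$ with the weak formulation via Theorem \ref{Th:C1functional} and unpacking the definition of $\mathcal{N}_0$ (hence $\mathcal{N}$) to read off that $w_0^+\neq 0\neq w_0^-$ — is exactly the content the paper leaves implicit.
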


\begin{theorem}
	\label{Th:ThreeSolutions}
	Let \eqref{Assump:H3} be satisfied and $f$ fulfill  \eqref{Asf:Caratheodory}, \eqref{Asf:WellDef}, \eqref{Asf:QuotientMono},  \eqref{Asf:GrowthZeroAlt} and \eqref{Asf:CeramiAssumption}. Then there exist nontrivial weak solutions $u_0,v_0,w_0 \in \WHlogzero$ of problem \eqref{Eq:Problem} such that $u_0 \geq 0$, $v_0 \leq 0$ and $w_0$ has changing sign.
\end{theorem}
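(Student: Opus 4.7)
The plan is to obtain the three solutions by assembling the two previously proven existence results, namely Theorem \ref{Th:ConstantSignSolutionsAlt} for the constant sign pair and Theorem \ref{Th:SignChangingSolution} for the sign-changing one, after verifying that the hypotheses of the present statement are indeed strong enough to invoke both. No new analysis on the Nehari manifold or on $\ph_\pm$ is needed; the work is a hypothesis-matching argument.

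First I would handle the constant sign part. Theorem \ref{Th:ConstantSignSolutionsAlt} asks for \eqref{Assump:H2prime} together with \eqref{Asf:Caratheodory}, \eqref{Asf:WellDef}, \eqref{Asf:GrowthInfty}, \eqref{Asf:GrowthZeroAlt} and \eqref{Asf:CeramiAssumption}. The condition \eqref{Assump:H3} assumed here is stricter than \eqref{Assump:H2prime}, since it additionally requires $q_+ + 1 < p^*_-$ (which trivially implies $q_+ < p^*_-$) while keeping the same monotonicity assumption on $p$ and the same regularity of $\mu$. The hypotheses \eqref{Asf:Caratheodory}, \eqref{Asf:WellDef}, \eqref{Asf:GrowthZeroAlt} and \eqref{Asf:CeramiAssumption} are assumed directly. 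The only nontrivial matching is \eqref{Asf:GrowthInfty}, which is not part of the current hypotheses: it is however a consequence of \eqref{Asf:QuotientMono} by Lemma \ref{Le:Propq+1}\ref{Propf:growthq+1}, which gives for every $\eps>0$ the stronger behavior $F(x,s)/|s|^{q_+ + 1 - \eps} \to + \infty$ as $|s| \to \infty$ uniformly in $x$, and thus in particular the superquadratic growth required in \eqref{Asf:GrowthInfty} at level $q_+$. Hence Theorem \ref{Th:ConstantSignSolutionsAlt} applies and yields two nontrivial weak solutions $u_0,v_0 \in \WHlogzero$ of \eqref{Eq:Problem} with $u_0 \geq 0$ and $v_0 \leq 0$ a.e.\,in $\Omega$.

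Next, I would apply Theorem \ref{Th:SignChangingSolution} to produce the third solution $w_0$. The hypotheses required there are exactly \eqref{Assump:H3} together with \eqref{Asf:Caratheodory}, \eqref{Asf:WellDef}, \eqref{Asf:QuotientMono} and \eqref{Asf:GrowthZeroAlt}, all of which are in the present assumption list. This provides a nontrivial weak solution $w_0 \in \WHlogzero$ of \eqref{Eq:Problem} with changing sign.

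Combining the two yields the triple $u_0, v_0, w_0$ with the stated sign properties, and the proof is complete. The only substantive point requiring comment is the verification that \eqref{Asf:QuotientMono} implies \eqref{Asf:GrowthInfty}, which is already captured by Lemma \ref{Le:Propq+1}; accordingly there is no genuine obstacle here, the result being a bookkeeping consequence of Sections \ref{constant-sign-solutions} and \ref{sign-changing-solution}.
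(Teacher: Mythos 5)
Your proposal is correct and follows exactly the approach the paper indicates in the sentence preceding the theorem, namely combining Theorem \ref{Th:ConstantSignSolutionsAlt} with the sign-changing existence result (Propositions \ref{Prop:MinimizerExistence} and \ref{Prop:Sing-changing-CritPoint}, packaged as Theorem \ref{Th:SignChangingSolution}) and using Lemma \ref{Le:Propq+1}\,\ref{Propf:growthq+1} to recover \eqref{Asf:GrowthInfty} from \eqref{Asf:QuotientMono}, with \eqref{Assump:H3} implying \eqref{Assump:H2prime}. One minor wording quibble: calling the growth required by \eqref{Asf:GrowthInfty} ``superquadratic'' is imprecise --- it is growth faster than $|s|^{q_+}\log(e+|s|)$, not faster than $|s|^2$ --- but this does not affect the correctness of the argument.
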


\section{Nodal domains} \label{nodal-domains}

In this last section, we provide more information on the sign-changing solution found in Section \ref{sign-changing-solution}. In particular we will determine the number of nodal domains, that is, the number of maximal regions where it does not change sign. The usual definition of nodal domains for $u \in C(\Omega,\R)$ is the connected components of $\Omega \setminus Z$, where the set $Z = \{ x \in \Omega \colon u(x) = 0 \}$ is called the nodal set of $u$. In our case, we have no information on the continuity of the solutions, so this definition is not meaningful for us. A similar situation was already noted in a previous work by Crespo-Blanco--Winkert \cite{Crespo-Blanco-Winkert-2022}, where the following definition was proposed.

Let $u \in \WHlogzero$ and $A$ be a Borelian subset of $\Omega$ with $|A| > 0$. We say that $A$ is a nodal domain of $u$ if
\begin{enumerate}
	\item[\textnormal{(i)}]
		$u \geq 0$ a.e.\,on $A$ or $u \leq 0$ a.e.\,on $A$;
	\item[\textnormal{(ii)}]
		$0 \neq u 1_A \in \WHlogzero$;
	\item[\textnormal{(iii)}]
		$A$ is minimal w.r.t.\,\textnormal{(i)} and \textnormal{(ii)}, i.e., if $B \subseteq A$ with $B$ being a Borelian subset of $\Omega$, $|B| > 0$ and $B$ satisfies \textnormal{(i)} and \textnormal{(ii)}, then $|A \setminus B| = 0$.
\end{enumerate}

For the purpose of determining the number of nodal domains, we need one last assumption on our right-hand side.

\begin{enumerate} [label=\textnormal{(f$_{\arabic*}$)},ref=\textnormal{f$_{\arabic*}$}]
	\setcounter{enumi}{5}
	\item\label{Asf:Nonnegative}
	$f(x,t)t - q_+ \left( 1 + \frac{\kappa}{q_-} \right) F(x,t) \geq 0$ for all $t \in \R$ and for a.a.\,$x \in \Omega$.
\end{enumerate}

\begin{proposition}
	\label{Prop:NodalDomains}
	Let \eqref{Assump:H3} be satisfied and $f$ fulfill \eqref{Asf:Caratheodory}, \eqref{Asf:WellDef}, \eqref{Asf:QuotientMono}, \eqref{Asf:GrowthZeroAlt} and \eqref{Asf:Nonnegative}. Then, any minimizer of $\ph_{|_{\mathcal{N}_0}}$ has exactly two nodal domains.
\end{proposition}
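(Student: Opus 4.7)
Since $w_0$ is sign-changing — both $w_0^+$ and $-w_0^-$ are nonzero elements of $\mathcal{N}$ — it clearly admits at least two nodal domains, one of each sign. The task is therefore to rule out the existence of three or more, which I will do by contradiction.

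Suppose $w_0$ admits three pairwise disjoint nodal domains $A_1,A_2,A_3$ (disjointness up to null sets is forced by the minimality clause (iii) in the definition). Set $u_i := w_0 \mathbf{1}_{A_i}$; by the very definition of a nodal domain, each $u_i \in \WHlogzero\setminus\{0\}$ has constant sign. Arguing as in Proposition \ref{Prop:Truncations} (extending the truncation identity from $u^{\pm}$ to arbitrary sign-definite Borelian restrictions with $u\mathbf{1}_A \in \WHlogzero$, via the same mollification plus density argument) one obtains $\nabla u_i = \nabla w_0 \mathbf{1}_{A_i}$ a.e., so the supports of both $u_i$ and $\nabla u_i$ are pairwise disjoint. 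Since $w_0$ is a critical point of $\ph$ by Proposition \ref{Prop:Sing-changing-CritPoint}, testing $\langle \ph'(w_0),u_i\rangle = 0$ and noting that, on $A_i$, $w_0 = u_i$ and off $A_i$ the integrands vanish, yields $\langle \ph'(u_i),u_i\rangle = 0$. Hence each $u_i \in \mathcal{N}$ and $\ph(u_i) > 0$ by Proposition \ref{Prop:Infimum>0}.

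By pigeonhole, two of the three $u_i$ share a sign, and both signs must occur because $w_0^+,-w_0^- \neq 0$. After relabeling, assume $u_1 \geq 0$ and $u_3 \leq 0$. Define $v := u_1 + u_3 \in \WHlogzero$; then $v^+ = u_1 \in \mathcal{N}$ and $-v^- = u_3 \in \mathcal{N}$, so $v \in \mathcal{N}_0$. The disjointness of supports, combined with $f(x,0)=0$ (Lemma \ref{Le:PropLimftozero}\ref{Propf:Zeroatzero}, consequence of \eqref{Asf:GrowthZeroAlt}), gives the additive decompositions
\begin{align*}
\ph(v) &= \ph(u_1) + \ph(u_3), \\
\ph(w_0) &= \ph(u_1) + \ph(u_2) + \ph(u_3) + \ph(r),
\end{align*}
where $r := w_0 - (u_1+u_2+u_3)= w_0 \mathbf{1}_{\Omega\setminus(A_1\cup A_2\cup A_3)}$. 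If $r \neq 0$, the same support-reduction argument used for the $u_i$ applied to $\langle \ph'(w_0),r\rangle = 0$ shows $\langle \ph'(r),r\rangle = 0$, so $r\in\mathcal{N}$ and Proposition \ref{Prop:Infimum>0} gives $\ph(r)>0$; in all cases $\ph(r)\geq 0$. Combining these with $\ph(u_2)>0$ produces
\begin{equation*}
\ph(v) \leq \ph(w_0) - \ph(u_2) < \ph(w_0) = \inf_{u\in\mathcal{N}_0}\ph(u),
\end{equation*}
contradicting $v\in\mathcal{N}_0$.

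The main obstacle is the rigorous justification of the truncation identity $\nabla u_i = \nabla w_0 \mathbf{1}_{A_i}$ a.e.\ for arbitrary sign-definite Borelian restrictions, which is what permits both the membership $u_i\in\mathcal{N}$ and the clean additive splitting of $\ph$ across disjoint supports; this requires extending the techniques underlying Proposition \ref{Prop:Truncations} from $u^{\pm}$ to general restrictions. Assumption \eqref{Asf:Nonnegative} enters in the background to guarantee that the quantity $f(x,t)t - q_+(1+\kappa/q_-)F(x,t)$ is nonnegative everywhere, which ensures $\ph \geq 0$ on all Nehari-type pieces (in particular on the leftover $r$), even in regimes where the Cerami-type assumption \eqref{Asf:CeramiAssumption} provides only asymptotic information.
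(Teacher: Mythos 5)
The paper proves this by a different route: it shows directly that $\Omega_{w_0>0}$ and $\Omega_{w_0<0}$ are minimal (hence \emph{are} the two nodal domains), by supposing a split $\Omega_{w_0<0}=B_1\,\dot\cup\,B_2$, setting $u=w_0^++w_01_{B_1}\in\mathcal{N}_0$, $v=w_01_{B_2}$, and then estimating $\ph(v)=\ph(v)-\frac{1}{q_+(1+\kappa/q_-)}\langle\ph'(v),v\rangle$ from below by $\big[\tfrac{1}{p_+}-\tfrac{1}{q_+(1+\kappa/q_-)}\big]\varrho_{p(\cdot)}(\nabla v)>0$ using \eqref{Asf:Nonnegative} and Lemma \ref{Le:QuotientFracLog}. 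Your strategy is genuinely different and, where it works, cleaner: you invoke Proposition \ref{Prop:Infimum>0} to get $\ph>0$ on each piece that lands in $\mathcal{N}$, which sidesteps \eqref{Asf:Nonnegative} entirely. In fact your argument never uses \eqref{Asf:Nonnegative} — your closing remark that it ``enters in the background'' is not accurate, and your proof would go through without it; the paper's use of it is what makes their $\ph(v)>0$ estimate self-contained rather than relying on Proposition \ref{Prop:Infimum>0}.

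However there is a genuine gap in the pigeonhole step. You assert ``both signs must occur because $w_0^+,-w_0^-\neq 0$'' among $u_1,u_2,u_3$. This does not follow: all three $A_i$ could lie in $\{w_0>0\}$ (up to null sets), with the entire negative part of $w_0$ sitting in the leftover $r$. In that case $v=u_1+u_3$ is one-signed, $-v^-=0\notin\mathcal{N}$, and $v\notin\mathcal{N}_0$, so the contradiction collapses. The fix is to use the pigeonhole only to find two same-signed pieces, say $u_1,u_2\geq 0$, and then take $v:=u_2$ and $u:=w_0-u_2$; one checks $u^+=w_0^+-u_2\supseteq u_1\neq 0$ and $-u^-=-w_0^-\neq 0$, both in $\mathcal{N}$ by the disjoint-support computation, so $u\in\mathcal{N}_0$, while $\ph(w_0)=\ph(u)+\ph(u_2)>\ph(u)\geq\inf_{\mathcal{N}_0}\ph=\ph(w_0)$. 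A second, smaller gap: the opening claim that $w_0$ ``clearly admits at least two nodal domains'' is not immediate, since a nodal domain must satisfy the minimality clause (iii), and it is not a priori clear that $\Omega_{w_0>0}$ is minimal or even contains a minimal subset. The paper addresses this head-on by proving minimality; your fixed contradiction argument implies it too (apply it to any hypothetical split $\Omega_{w_0>0}=B_1\,\dot\cup\,B_2$ together with $\Omega_{w_0<0}$), but this should be said rather than declared clear. Finally, a minor point: the identity $\nabla u_i=(\nabla w_0)1_{A_i}$ does not come from the mollification-and-density argument of Proposition \ref{Prop:Truncations} (which is tailored to $u^\pm$); it follows from locality of weak derivatives: both $u_i=w_01_{A_i}$ and $w_0-u_i$ lie in $\WHlogzero\hookrightarrow W^{1,1}(\Omega)$, and a Sobolev function has vanishing gradient a.e.\ on any measurable set where it vanishes a.e., which forces the claimed decomposition. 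The paper implicitly uses the same fact.
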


\begin{proof}
	Let $w_0$ be such that $\ph(w_0) = \inf_{u \in \mathcal{N}_0} \ph(u)$. The sets $\Omega_{w_0 > 0} = \{ x \in \Omega \colon  w_0 (x) > 0 \}$ and $\Omega_{w_0 < 0} = \{ x \in \Omega \colon  w_0 (x) < 0 \}$ are determined up to a zero measure set (for any choice of a representative of the class $w_0$ they may differ, but at most in a zero measure set), which is no problem for the definition above. Furthermore, by Proposition \ref{Prop:Truncations} and $w_0^+ = w_0 1_{\Omega_{w_0 > 0}}$, $-w_0^- = w_0 1_{\Omega_{w_0 < 0}}$, conditions \textnormal{(i)} and \textnormal{(ii)} hold for these sets. So it is only left to see their minimality.

	We proceed by contradiction. Without loss of generality, let $B_1, B_2$ be Borelian subsets of $\Omega$ with the following properties: disjoint sets with $\Omega_{w_0 < 0} = B_1 \dot{\cup} B_2$, both have positive measure and $B_1$ fulfills \textnormal{(i)} and \textnormal{(ii)} in the definition above. This implies that for $B_2$ conditions \textnormal{(i)} and \textnormal{(ii)} hold too, since $w_0 1_{B_2} = w_0 1_{\Omega_{w_0 < 0}} - w_0 1_{B_1} \in \WHlogzero$.

	Let $u = 1_{\Omega_{w_0 > 0}} w_0 + 1_{B_1} w_0$ and $v = 1_{B_2} w_0$, hence $u ^+= 1_{\Omega_{w_0 > 0}} w_0$ and $- u ^- = 1_{B_1} w_0$. By Proposition \ref{Prop:Sing-changing-CritPoint} we know that $\ph'(w_0) = 0$ and as the supports of $u^+ , - u^-$ and $v$ are disjoint, we get
	\begin{align*}
		0 = \langle \ph'(w_0) , u^+ \rangle = \langle \ph'(u^+) , u^+ \rangle,
	\end{align*}
	that is, $u^+ \in \mathcal{N}$. With a similar argument $- u^- \in \mathcal{N}$, so $u \in \mathcal{N}_0$; and also $\langle \ph'(v) , v \rangle = 0$.

	Altogether, by these properties, condition \eqref{Asf:Nonnegative} and Lemma \ref{Le:QuotientFracLog}, we get
	\begin{align*}
		\inf_{w \in \mathcal{N}_0} \ph(w)
		& = \ph (w_0)
		= \ph(u) + \ph(v) - \frac{1}{ q_+ \left( 1 + \frac{\kappa}{q_-} \right) } \langle \ph'(v) , v \rangle \\
		 & \geq \ph(u) + \left[ \frac{1}{p_+} - \frac{1}{ q_+ \left( 1 + \frac{\kappa}{q_-} \right) } \right] \varrho_{p(\cdot)} ( \nabla v ) \\
		& \quad + \into \left[ \frac{1}{ q_+ \left( 1 + \frac{\kappa}{q_-} \right) } f(x,v) v - F(x,v) \right] \dx  \\
		& \quad + \frac{1}{q_+} \into \mu(x) \abs{\nabla v}^{q(x)} \log(e + \abs{\nabla v}) \dx \\
		& \quad - \frac{1}{ q_+ \left( 1 + \frac{\kappa}{q_-} \right) } \into \mu(x) \abs{\nabla v}^{q(x)} \left[ \log (e + \abs{\nabla v}) + \frac{ \abs{\nabla v} }{ q(x) (e + \abs{\nabla v}) } \right] \dx \\
		& \geq \ph(u) + \left[ \frac{1}{p_+} - \frac{1}{ q_+ \left( 1 + \frac{\kappa}{q_-} \right) } \right] \varrho_{p(\cdot)} ( \nabla v )  \\
		& \geq \inf_{w \in \mathcal{N}_0} \ph(w) + \left[ \frac{1}{p_+} - \frac{1}{ q_+ \left( 1 + \frac{\kappa}{q_-} \right) } \right] \varrho_{p(\cdot)} ( \nabla v ).
	\end{align*}
	Since $p_+ \leq q_+$ and $v \neq 0$, we get the desired contradiction.
\end{proof}

Combining Theorems \ref{Th:SignChangingSolution} and \ref{Th:ThreeSolutions} with Proposition \ref{Prop:NodalDomains}, we obtain the final existence results of this work.

\begin{theorem}
	Let \eqref{Assump:H3} be satisfied and $f$ fulfill  \eqref{Asf:Caratheodory}, \eqref{Asf:WellDef}, \eqref{Asf:QuotientMono}, \eqref{Asf:GrowthZeroAlt} and \eqref{Asf:Nonnegative}. Then there exists a nontrivial weak solution $w_0 \in \WHlogzero$ of problem \eqref{Eq:Problem} with changing sign and two nodal domains.
\end{theorem}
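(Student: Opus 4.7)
The strategy is to simply combine the results already established in Sections \ref{sign-changing-solution} and \ref{nodal-domains}: the hypotheses of the statement are exactly what is required so that both the construction of the sign-changing solution via the Nehari manifold technique and the two-nodal-domain argument apply simultaneously.

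First, I would invoke Theorem \ref{Th:SignChangingSolution}, whose assumptions \eqref{Assump:H3}, \eqref{Asf:Caratheodory}, \eqref{Asf:WellDef}, \eqref{Asf:QuotientMono} and \eqref{Asf:GrowthZeroAlt} are all included in the hypotheses of the present statement. This produces a nontrivial weak solution $w_0 \in \WHlogzero$ of \eqref{Eq:Problem} that changes sign. It is crucial for the second step that $w_0$ is obtained as a minimizer of $\ph_{|_{\mathcal{N}_0}}$: indeed, Proposition \ref{Prop:MinimizerExistence} produces a minimizer $w_0 \in \mathcal{N}_0$ of $\ph_{|_{\mathcal{N}_0}}$, and Proposition \ref{Prop:Sing-changing-CritPoint} shows that this minimizer is a critical point of the (unrestricted) functional $\ph$, which is exactly the object fed into Theorem \ref{Th:SignChangingSolution}. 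So the solution provided is not just any sign-changing solution, but specifically one that minimizes the energy on $\mathcal{N}_0$.

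Second, I would apply Proposition \ref{Prop:NodalDomains} to this $w_0$. The extra hypothesis \eqref{Asf:Nonnegative}, together with the ones already used, is precisely what allows the contradiction argument in that proposition: assuming $w_0$ has three or more nodal domains lets one split, say, the negative part into two pieces supported on disjoint Borel sets $B_1, B_2 \subseteq \Omega_{w_0 < 0}$ of positive measure; then $u = w_0 1_{\Omega_{w_0 > 0}} + w_0 1_{B_1}$ lies in $\mathcal{N}_0$, while $v = w_0 1_{B_2}$ satisfies $\langle \ph'(v), v \rangle = 0$, and \eqref{Asf:Nonnegative} combined with Lemma \ref{Le:QuotientFracLog} yields $\ph(v) \geq [1/p_+ - 1/(q_+(1 + \kappa/q_-))] \varrho_{p(\cdot)}(\nabla v) > 0$, contradicting minimality of $w_0$ since then $\ph(w_0) = \ph(u) + \ph(v) > \ph(u) \geq \inf_{\mathcal{N}_0} \ph$. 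Since $w_0$ changes sign, its positive and negative parts already give two nodal domains, so we get exactly two.

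There is no genuine obstacle here beyond making sure that the same $w_0$ is used for both inputs: one must quote Proposition \ref{Prop:MinimizerExistence} and Proposition \ref{Prop:Sing-changing-CritPoint} rather than merely quoting Theorem \ref{Th:SignChangingSolution} as a black box, because Proposition \ref{Prop:NodalDomains} is stated for minimizers of $\ph_{|_{\mathcal{N}_0}}$ and not for arbitrary sign-changing critical points of $\ph$. Thus the proof is essentially a one-line assembly: take $w_0$ from Proposition \ref{Prop:MinimizerExistence}, note by Proposition \ref{Prop:Sing-changing-CritPoint} it is a weak solution of \eqref{Eq:Problem} with changing sign (as in Theorem \ref{Th:SignChangingSolution}), and apply Proposition \ref{Prop:NodalDomains} to conclude that it has exactly two nodal domains.
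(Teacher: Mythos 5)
Your proposal is correct and follows exactly the approach the paper takes: the paper's own proof is the one-line observation that the theorem is obtained by combining Theorem \ref{Th:SignChangingSolution} with Proposition \ref{Prop:NodalDomains}, and you have correctly unpacked this assembly. In particular, you rightly flag the only subtlety — that the sign-changing solution fed into Proposition \ref{Prop:NodalDomains} must be the specific minimizer $w_0$ of $\ph_{|_{\mathcal{N}_0}}$ produced by Propositions \ref{Prop:MinimizerExistence} and \ref{Prop:Sing-changing-CritPoint}, not an arbitrary sign-changing critical point — which is exactly what makes the combination legitimate.
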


\begin{theorem}
	Let \eqref{Assump:H3} be satisfied and $f$ fulfill  \eqref{Asf:Caratheodory}, \eqref{Asf:WellDef}, \eqref{Asf:QuotientMono},  \eqref{Asf:GrowthZeroAlt}, \eqref{Asf:CeramiAssumption} and \eqref{Asf:Nonnegative}. Then there exist nontrivial weak solutions $u_0,v_0,w_0 \in \WHlogzero$  of problem \eqref{Eq:Problem} such that $u_0 \geq 0$, $v_0 \leq 0$ and $w_0$ has changing sign with two nodal domains.
\end{theorem}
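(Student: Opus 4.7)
The plan is to assemble this final statement as an immediate corollary of the machinery already developed in Sections \ref{constant-sign-solutions}--\ref{nodal-domains}, with no new analytic work required. First, I would invoke Theorem \ref{Th:ThreeSolutions}: since \eqref{Assump:H3} implies \eqref{Assump:H2prime} and hypotheses \eqref{Asf:Caratheodory}, \eqref{Asf:WellDef}, \eqref{Asf:QuotientMono}, \eqref{Asf:GrowthZeroAlt} and \eqref{Asf:CeramiAssumption} are assumed (and \eqref{Asf:QuotientMono} plus \eqref{Asf:WellDef} force \eqref{Asf:GrowthInfty} through Lemma \ref{Le:Propq+1}\,\ref{Propf:growthq+1}), this produces three nontrivial weak solutions $u_0, v_0, w_0 \in \WHlogzero$ with $u_0 \geq 0$, $v_0 \leq 0$ a.e.\,in $\Omega$, and $w_0$ sign-changing.

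The only remaining point is the nodal structure of $w_0$. Here the crucial observation is that $w_0$ was not produced abstractly; it was constructed explicitly via Proposition \ref{Prop:MinimizerExistence} as a minimizer of $\ph$ restricted to $\mathcal{N}_0$, and then shown in Proposition \ref{Prop:Sing-changing-CritPoint} to be a critical point of $\ph$. Thus $w_0$ satisfies precisely the hypothesis of Proposition \ref{Prop:NodalDomains}, namely $\ph(w_0) = \inf_{u \in \mathcal{N}_0} \ph(u)$, and consequently $w_0$ has exactly two nodal domains. This uses condition \eqref{Asf:Nonnegative}, which is additionally assumed here.

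Putting these pieces together yields the conclusion. There is no genuine obstacle: the statement is a pure combination of Theorem \ref{Th:ThreeSolutions} (for existence of the three solutions and their signs) and Proposition \ref{Prop:NodalDomains} (for the nodal count), applied to the same $w_0$. The only thing worth emphasizing in the write-up is that the sign-changing solution produced in Theorem \ref{Th:ThreeSolutions} is indeed the minimizer $w_0$ from Proposition \ref{Prop:MinimizerExistence}, so that Proposition \ref{Prop:NodalDomains} is applicable to it.
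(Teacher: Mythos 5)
Your proposal is correct and matches the paper's approach: the theorem is obtained exactly by combining Theorem~\ref{Th:ThreeSolutions} with Proposition~\ref{Prop:NodalDomains}, and your observation that the sign-changing solution $w_0$ is precisely the minimizer of $\ph|_{\mathcal{N}_0}$ from Proposition~\ref{Prop:MinimizerExistence} (so that Proposition~\ref{Prop:NodalDomains} applies to it) is the right point to emphasize.
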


Let us finish this work with some examples of right-hand side functions that would fit in our assumptions.

\begin{example}
	For simplicity, assume not only \eqref{Assump:H3}, but also $q_+ \kappa / q_- < 1$, which implies $q_+ (1 + \kappa/q_-) < q_+ + 1 < p^*_-$.
	\begin{enumerate}[label=(\roman*),font=\normalfont]
		\item
			Let $0 < \eps < 1 - q_+ \kappa / q_-$ and take
			\begin{align*}
				f(x,t) = \abs{t}^{q_+ \left( 1 + \frac{\kappa}{q_-} \right) + \eps - 2 } t.
			\end{align*}
			This function satisfies \eqref{Asf:Caratheodory}, \eqref{Asf:WellDef}, \eqref{Asf:QuotientMono},  \eqref{Asf:GrowthZeroAlt}, \eqref{Asf:CeramiAssumption} and \eqref{Asf:Nonnegative}. For \eqref{Asf:WellDef} take $r = q_+ ( 1 + \kappa / q_- ) + \eps$ and for \eqref{Asf:CeramiAssumption} take $l = \widetilde{l} = q_+ ( 1 + \kappa / q_- )$.
		\item
			Let $l, \widetilde{l}, m \in C_+ (\Omega)$ such that $q_+ + 1 \leq \min\{ m_-, l_-, \widetilde{l}_- \}$, $\max\{ l_+, \widetilde{l}_+\} < p^*_-$ and
			\begin{align*}
				\frac{ \max\{ l_+, \widetilde{l}_+ \} }{p_-} - \frac{ \min\{ l_-, \widetilde{l}_- \} }{N} < 1.
			\end{align*}
			Then we can take
			\begin{align*}
				f(x,t) =
				\begin{cases}
					|t|^{\widetilde{l}(x)-2}t[1 + \log(-t)], & \text{if } \phantom{-1}t \leq -1, \\
					|t|^{m(x)-2}t,                           & \text{if } -1 < t < 1,            \\
					|t|^{l(x)-2}t[1 + \log(t)],              & \text{if } \phantom{-1} 1 \leq t,
				\end{cases}
			\end{align*}
			which also satisfies \eqref{Asf:Caratheodory}, \eqref{Asf:WellDef}, \eqref{Asf:QuotientMono},  \eqref{Asf:GrowthZeroAlt}, \eqref{Asf:CeramiAssumption} and \eqref{Asf:Nonnegative}. For \eqref{Asf:WellDef} take $r(x) = \max\{ l(x), \widetilde{l}(x) \} + \eps$, where $0 < \eps < p^*_- - \max\{ l_+, \widetilde{l}_+\}$ and is also small enough so that
			\begin{align*}
				\frac{r_+}{p_-} - \frac{ \min\{ l_-, \widetilde{l}_- \} }{N} < 1.
			\end{align*}
			For \eqref{Asf:CeramiAssumption} take $l$ and $\widetilde{l}$ to be the same ones as here. This inequality is why we need the compatibility assumption on $\max \{ l_+, \widetilde{l}_+ \}$ and $\min \{ l_-, \widetilde{l}_- \}$. In particular, when $l=\widetilde{l}$ and they are constant functions, this condition is exactly $l < p^*_-$, hence redundant in this case.
	\end{enumerate}
\end{example}

\section*{Acknowledgments}
The first author acknowledges the support of the Start-up Research Grant (SRG) SRG/2023/000308, Science and Engineering Research Board (SERB), India, and Seed grant IIT(BHU)/DMS/2023-24/493. The second author was funded by the Deutsche Forschungsgemeinschaft (DFG, German Research Foundation) under Germany's Excellence Strategy -
The Berlin Mathematics Research Center MATH+ and the Berlin Mathematical School (BMS) (EXC-2046/1, project ID: 390685689).

%
%
%
%
%



\begin{thebibliography}{99}	

\bibitem{Aberqi-Bennouna-Benslimane-Ragusa-2023}
	A. Aberqi, J. Bennouna, O. Benslimane, M.A. Ragusa,
	{\it Weak solvability of nonlinear elliptic equations involving variable exponents},
	Discrete Contin. Dyn. Syst. Ser. S {\bf 16} (2023), no. 6, 1142--1157.

\bibitem{Alves-daSilva-2023}
C.O. Alves, I.S. da Silva,
{\it Existence of a positive solution for a class of Schr\"odinger logarithmic equations on exterior domains},
Z. Angew. Math. Phys. 75 (2024), no. 3, Paper No. 77, 33 pp.

\bibitem{Alves-deMoraisFilho-2018}
C.O. Alves, D.C. de Morais Filho,
{\it Existence and concentration of positive solutions for a Schr\"{o}dinger logarithmic equation},
Z. Angew. Math. Phys. {\bf 69} (2018), no. 6, Paper No. 144, 22 pp.

\bibitem{Alves-Ji-2020}
C.O. Alves, C. Ji,
{\it Existence and concentration of positive solutions for a
logarithmic Schr\"{o}dinger equation via penalization method},
Calc. Var. Partial Differential Equations {\bf 59} (2020), no. 1, Paper No. 21, 27 pp.

\bibitem{Alves-Moussaoui-Tavares-2019}
C.O. Alves, A. Moussaoui, L.  Tavares,
{\it An elliptic system with logarithmic nonlinearity},
Adv. Nonlinear Anal. {\bf 8} (2019), no. 1, 928--945.

\bibitem{Arora-Shmarev-2023}
R. Arora, S. Shmarev,
{\it Double-phase parabolic equations with variable growth and nonlinear sources},
Adv. Nonlinear Anal. {\bf 12} (2023), no. 1, 304--335.

\bibitem{Arora-Shmarev-2023-b}
R. Arora, S. Shmarev,
{\it Existence and regularity results for a class of parabolic problems with double phase flux of variable growth},
Rev. R. Acad. Cienc. Exactas F\'{\i}s. Nat. Ser. A Mat. RACSAM {\bf 117} (2023), no. 1, Paper No. 34, 48 pp.

\bibitem{Bahrouni-Radulescu-Repovs-2019}
A. Bahrouni, V.D. R\u{a}dulescu, D.D. Repov\v{s},
{\it Double phase transonic flow problems with variable growth: nonlinear patterns and stationary waves},
Nonlinearity {\bf 32} (2019), no. 7, 2481--2495.

\bibitem{Bai-Papageorgiou-Zeng-2023}
Y. Bai, N.S. Papageorgiou, S. Zeng,
{\it Parametric singular double phase Dirichlet problems},
Adv. Nonlinear Anal. {\bf 12} (2023), no. 1, Paper No. 20230122, 20 pp.

\bibitem{Baroni-Colombo-Mingione-2015}
P. Baroni, M. Colombo, G. Mingione,
{\it Harnack inequalities for double phase functionals},
Nonlinear Anal. {\bf 121} (2015), 206--222.

\bibitem{Baroni-Colombo-Mingione-2016}
P. Baroni, M. Colombo, G. Mingione,
{\it Non-autonomous functionals, borderline cases and related function classes},
St. Petersburg Math. J. {\bf 27} (2016), 347--379.

\bibitem{Baroni-Kuusi-Mingione-2015}
P. Baroni, T. Kuusi, G. Mingione,
{\it Borderline gradient continuity of minima},
J. Fixed Point Theory Appl. {\bf 15} (2014), no. 2, 537--575.

\bibitem{Baroni-Colombo-Mingione-2018}
P. Baroni, M. Colombo, G. Mingione,
{\it Regularity for general functionals with double phase},
Calc. Var. Partial Differential Equations {\bf 57} (2018), no. 2, Art. 62, 48 pp.

\bibitem{Beck-Mingione-2020}
L. Beck, G. Mingione,
{\it Lipschitz bounds and nonuniform ellipticity},
Comm. Pure Appl. Math. {\bf 73} (2020), no. 5,  944--1034.

\bibitem{Benci-DAvenia-Fortunato-Pisani-2000}
V. Benci, P. D'Avenia, D. Fortunato, L. Pisani,
{\it Solitons in several space dimensions: {D}errick's problem and  infinitely many solutions},
Arch. Ration. Mech. Anal. {\bf 154} (2000), no. 4, 297--324.

\bibitem{Byun-Oh-2020}
S.-S. Byun, J. Oh,
{\it Regularity results for generalized double phase functionals},
Anal. PDE {\bf 13} (2020), no. 5,  1269--1300.

\bibitem{Byun-Ok-Song-2022}
S.-S. Byun, J. Ok, K. Song,
{\it H\"{o}lder regularity for weak solutions to nonlocal double phase problems},
J. Math. Pures Appl. (9) {\bf 168} (2022), 110--142.

\bibitem{Cherfils-Ilyasov-2005}
L. Cherfils, Y. Il'yasov,
{\it On the stationary solutions of generalized reaction diffusion  equations with {$p\&q$}-{L}aplacian},
Commun. Pure Appl. Anal. {\bf 4} (2005), no. 1, 9--22.

\bibitem{Clop-Gentile-Passarelli-2023}
A. Clop, A. Gentile, A. Passarelli di Napoli,
{\it Higher differentiability results for solutions to a class of non-homogeneous elliptic problems under sub-quadratic growth conditions},
Bull. Math. Sci. {\bf 13} (2023), no. 2, Paper No. 2350008, 55 pp.

\bibitem{Colasuonno-Squassina-2016}
F. Colasuonno, M. Squassina,
{\it Eigenvalues for double phase variational integrals},
Ann. Mat. Pura Appl. (4) {\bf 195} (2016), no. 6, 1917--1959.

\bibitem{Crespo-Blanco-Gasinski-Harjulehto-Winkert-2022}
\'{A}. Crespo-Blanco, L. Gasi\'{n}ski, P. Harjulehto, P. Winkert,
{\it A new class of double phase variable exponent problems: existence and uniqueness},
J. Differential Equations {\bf 323} (2022), 182--228.

\bibitem{Crespo-Blanco-Winkert-2022}
\'{A}. Crespo-Blanco, P. Winkert,
{\it Nehari manifold approach for superlinear double phase problems with variable exponents},
Ann. Mat. Pura Appl. (4) {\bf 203} (2024), no. 2, 605--634.

\bibitem{Colombo-Mingione-2015a}
M. Colombo, G. Mingione,
{\it Bounded minimisers of double phase variational integrals},
Arch. Ration. Mech. Anal. {\bf 218} (2015), no. 1, 219--273.

\bibitem{Colombo-Mingione-2015b}
M. Colombo, G. Mingione,
{\it Regularity for double phase variational problems},
Arch. Ration. Mech. Anal. {\bf 215} (2015), no. 2, 443--496.

\bibitem{Cupini-Marcellini-Mascolo-2023}
G. Cupini, P. Marcellini, E. Mascolo,
{\it Local boundedness of weak solutions to elliptic equations with $p,q$-growth},
Math. Eng. {\bf 5} (2023), no. 3, Paper No. 065, 28 pp.

\bibitem{DeFilippis-Mingione-2021-2}
C. De Filippis, G. Mingione,
{\it Lipschitz bounds and nonautonomous integrals},
Arch. Ration. Mech. Anal. {\bf 242} (2021), 973--1057.

\bibitem{DeFilippis-Mingione-2020-2}
C. De Filippis, G. Mingione,
{\it On the regularity of minima of non-autonomous functionals},
J. Geom. Anal. {\bf 30} (2020), no. 2, 1584--1626.

\bibitem{DeFilippis-Mingione-2023}
C. De Filippis, G. Mingione,
{\it Regularity for double phase problems at nearly linear growth},
Arch. Ration. Mech. Anal. {\bf 247} (2023), no. 5, 85.

\bibitem{De-Filippis-Palatucci-2019}
C. De Filippis, G. Palatucci,
{\it H\"{o}lder regularity for nonlocal double phase equations},
J. Differential Equations {\bf 267} (2019), no. 1,  547--586.

\bibitem{Dinca-Mawhin-2021}
G. Dinca, J. Mawhin,
``Brouwer Degree -- The Core of Nonlinear Analysis'',
Birkh\"{a}user/ Springer, Cham, 2021.

\bibitem{Diening-Harjulehto-Hasto-Ruzicka-2011}
L. Diening, P. Harjulehto, P. H\"{a}st\"{o}, M. R$\mathring{\text{u}}$\v{z}i\v{c}ka,
``Lebesgue and Sobolev Spaces with Variable Exponents'',
Springer, Heidelberg, 2011.

\bibitem{Fan-Shen-Zhao-2001}
X. Fan, J. Shen, D. Zhao,
{\it Sobolev embedding theorems for spaces $W^{k,p(x)}(\Omega)$},
J. Math. Anal. Appl. {\bf 262} (2001), no. 2, 749--760.

\bibitem{Fan-Zhang-Zhao-2005}
X. Fan, Q. Zhang, D. Zhao,
{\it Eigenvalues of $p(x)$-Laplacian Dirichlet problem},
J. Math. Anal. Appl. {\bf 302} (2005), no. 2, 306--317.

\bibitem{Fan-Zhao-2001}
X. Fan, D. Zhao,
{\it On the spaces $L^{p(x)}(\Omega)$ and $W^{m,p(x)}(\Omega)$},
J. Math. Anal. Appl. {\bf 263} (2001), no. 2, 424--446.

\bibitem{Fang-Radulescu-Zhang-2024}
Y. Fang, V.D. R\u{a}dulescu, C. Zhang,
{\it Equivalence of weak and viscosity solutions for the nonhomogeneous double phase equation},
Math. Ann. {\bf 388} (2024), no. 3, 2519--2559.

\bibitem{Farkas-Winkert-2021}
C. Farkas, P. Winkert,
{\it An existence result for singular Finsler double phase problems},
J. Differential Equations {\bf 286} (2021), 455--473.

\bibitem{Fuchs-Mingione-2000}
M. Fuchs, G. Mingione,
{\it Full {$C^{1,\alpha}$}-regularity for free and constrained local minimizers of elliptic variational integrals with nearly linear growth},
Manuscripta Math. {\bf 102} (2000), no. 2, 227--250.

\bibitem{Fuchs-Seregin-2000}
M. Fuchs, G. Seregin,
``Variational Methods for Problems from Plasticity Theory and for Generalized Newtonian Fluids'',
Springer-Verlag, Berlin, 2000.

\bibitem{Gao-Jiang-Liu-Wei-2023}
Y. Gao, Y. Jiang, L. Liu, N. Wei,
{\it Multiple positive solutions for a logarithmic {K}irchhoff type
problem in $\mathbb{R}^3$},
Appl. Math. Lett. {\bf 139} (2023), Paper No. 108539, 6 pp.

\bibitem{Gasinski-Papageorgiou-2019}
L. Gasi\'nski, N.S. Papageorgiou,
{\it Constant sign and nodal solutions for superlinear double phase problems},
Adv. Calc. Var. {\bf 14} (2021), no. 4, 613–626.

\bibitem{Gasinski-Winkert-2020}
L. Gasi\'nski, P. Winkert,
{\it Existence and uniqueness results for double phase problems with convection term},
J. Differential Equations {\bf 268} (2020), no. 8, 4183--4193.

\bibitem{Hardy-Littlewood-Polya-1934}
G.H. Hardy, J.E. Littlewood, G. P\'{o}lya,
``Inequalities'',
Cambridge University Press, 1934.

\bibitem{Harjulehto-Hasto-2019}
P. Harjulehto, P. H\"{a}st\"{o},
``Orlicz Spaces and Generalized Orlicz Spaces'',
Springer, Cham, 2019.

\bibitem{Harjulehto-Hasto-Toivanen-2017}
P. Harjulehto, P. H\"{a}st\"{o}, O. Toivanen,
{\it H\"{o}lder regularity of quasiminimizers under generalized growth conditions},
Calc. Var. Partial Differential Equations {\bf 56} (2017), no. 2, Paper No. 22, 26pp.

\bibitem{Hasto-Ok-2019}
P. H\"{a}st\"{o}, J. Ok,
{\it Maximal regularity for local minimizers of non-autonomous functionals},
J. Eur. Math. Soc. {\bf 24} (2022), no. 4, 1285--1334.

\bibitem{Heinonen-Kilpelainen-Martio-2006}
J. Heinonen, T. Kilpel\"{a}inen, O. Martio,
``Nonlinear Potential Theory of Degenerate Elliptic Equations'',
Dover Publications, Inc., Mineola, NY, 2006.

\bibitem{Ho-Kim-Winkert-Zhang-2022}
K. Ho, Y.-H. Kim, P. Winkert, C. Zhang,
{\it The boundedness and H\"{o}lder continuity of weak solutions to elliptic equations involving variable exponents and critical growth},
J. Differential Equations {\bf 313} (2022), 503--532.

\bibitem{Ho-Winkert-2023}
K. Ho, P. Winkert,
{\it New embedding results for double phase problems with variable exponents and a priori bounds for corresponding generalized double phase problems},
Calc. Var. Partial Differential Equations {\bf 62} (2023), no. 8, Paper No. 227, 38 pp.

\bibitem{Lindqvist-2019}
P. Lindqvist,
``Notes on the Stationary $p$-Laplace Equation'',
Springer, Cham, 2019.

\bibitem{Liu-Dai-2018}
W. Liu, G. Dai,
{\it Existence and multiplicity results for double phase problem},
J. Differential Equations {\bf 265} (2018), no. 9, 4311--4334.

\bibitem{Liu-Pucci-2023}
J. Liu, P. Pucci,
{\it Existence of solutions for a double-phase variable exponent equation without the Ambrosetti-Rabinowitz condition},
Adv. Nonlinear Anal. {\bf 12} (2023), no. 1, Paper No. 20220292, 18 pp.

\bibitem{Marcellini-2023}
P. Marcellini,
{\it  Local Lipschitz continuity for $p,q$-PDEs with explicit $u$-dependence},
Nonlinear Anal. {\bf 226} (2023), Paper No. 113066, 26 pp.

\bibitem{Marcellini-1991}
P. Marcellini,
{\it Regularity and existence of solutions of elliptic equations with $p,q$-growth conditions},
J. Differential Equations {\bf 90} (1991), no. 1, 1--30.

\bibitem{Marcellini-1989}
P. Marcellini,
{\it Regularity of minimizers of integrals of the calculus of variations with nonstandard growth conditions},
Arch. Rational Mech. Anal. {\bf 105} (1989), no. 3,  267--284.

\bibitem{Marcellini-Papi-2006}
P. Marcellini, G. Papi,
{\it Nonlinear elliptic systems with general growth},
J. Differential Equations {\bf 221} (2006), no. 2, 412--443.

\bibitem{Montenegro-deQueiroz}
M. Montenegro, O.S. de Queiroz,
{\it Existence and regularity to an elliptic equation with logarithmic nonlinearity},
J. Differential Equations {\bf 246} (2009), no. 2, 482--511.

\bibitem{Nehari-1960}
Z. Nehari,
{\it On a class of nonlinear second-order differential equations},
Trans. Am. Math. Soc. {\bf 95} (1960) 101-123.

\bibitem{Nehari-1961}
Z. Nehari,
{\it Characteristic values associated with a class of non-linear second-order differential equations},
Acta Math. {\bf 105} (1961) 141-175.

\bibitem{Nekvinda-2004}
A. Nekvinda,
{\it Hardy-Littlewood maximal operator on $L^{p(x)} (\mathbb{R})$},
Math. Inequal. Appl. {\bf 7} (2004), no. 2, 255--265.

\bibitem{Ok-2018}
J. Ok,
{\it Partial regularity for general systems of double phase type with continuous coefficients},
Nonlinear Anal. {\bf 177} (2018), 673--698.

\bibitem{Ok-2020}
J. Ok,
{\it Regularity for double phase problems under additional integrability assumptions},
Nonlinear Anal. {\bf 194} (2020), 111408.

\bibitem{Papageorgiou-Radulescu-Repovs-2019-a}
N.S. Papageorgiou, V.D. R\u{a}dulescu, D.D. Repov\v{s},
{\it Double-phase problems and a discontinuity property of the spectrum},
Proc. Amer. Math. Soc. {\bf 147} (2019), no. 7, 2899--2910.

\bibitem{Papageorgiou-Radulescu-Repovs-2020}
N.S. Papageorgiou, V.D. R\u{a}dulescu, D.D. Repov\v{s},
{\it Ground state and nodal solutions for a class of double phase problems},
Z. Angew. Math. Phys. {\bf 71} (2020), no. 1, Paper No. 15, 15 pp.

\bibitem{Papageorgiou-Radulescu-Repovs-2019a}
N.S. Papageorgiou, V.D. R\u{a}dulescu, D.D. Repov\v{s},
``Nonlinear Analysis -- Theory and Methods'',
Springer, Cham, 2019.

\bibitem{Papageorgiou-Winkert-2018}
N.S.  Papageorgiou, P. Winkert,
``Applied Nonlinear Functional Analysis'',
De Gruyter, Berlin, 2018.

\bibitem{Perera-Squassina-2018}
K. Perera, M. Squassina,
{\it Existence results for double-phase problems via Morse theory},
Commun. Contemp. Math. {\bf 20} (2018), no. 2, 1750023, 14 pp.

\bibitem{Ragusa-Tachikawa-2020}
M.A. Ragusa, A. Tachikawa,
{\it Regularity for minimizers for functionals of double phase with variable exponents},
Adv. Nonlinear Anal. {\bf 9} (2020), no. 1, 710--728.

\bibitem{Ragusa-Tachikawa-2024}
M.A. Ragusa, A. Tachikawa,
{\it Regularity of minimizers for double phase functionals of borderline case with variable exponents},
Adv. Nonlinear Anal. {\bf 13} (2024), no. 1, Paper No. 20240017, 27 pp.

\bibitem{Seregin-Frehse-1999}
G.A. Seregin, J. Frehse,
{\it Regularity of solutions to variational problems of the deformation theory of plasticity with logarithmic hardening},
in: ``Proceedings of the St. Petersburg Mathematical Society, Vol. {V}'',
Amer. Math. Soc., Providence, RI {\bf 193}, 1999, 127--152.

\bibitem{Shi-Radulescu-Repovs-Zhang-2020}
X. Shi, V.D. R\u{a}dulescu, D.D. Repov\v{s}, Q. Zhang,
{\it Multiple solutions of double phase variational problems with variable exponent},
Adv. Calc. Var. {\bf 13} (2020), no. 4, 385--401.

\bibitem{Shuai-2019}
W. Shuai,
{\it Multiple solutions for logarithmic {S}chr\"{o}dinger equations},
Nonlinearity {\bf 32} (2019), no. 6, 2201--2225.

\bibitem{Shuai-2023}
W. Shuai,
{\it Two sequences of solutions for the semilinear elliptic equations with logarithmic nonlinearities},
J. Differential Equations {\bf 343} (2023), 263--284.

\bibitem{Simon-1978}
J. Simon,
{\it R\'{e}gularit\'{e} de la solution d'une \'{e}quation non lin\'{e}aire dans $\mathbb{R}^{N}$},
Journ\'{e}es d'Analyse Non Lin\'{e}aire (Proc. Conf. Besan\c{c}on, 1977), Springer, Berlin {\bf 665} (1978), 205--227.

\bibitem{Squassina-Szulkin-2015}
M. Squassina, A. Szulkin,
{\it Multiple solutions to logarithmic {S}chr\"{o}dinger equations with
periodic potential},
Calc. Var. Partial Differential Equations {\bf 54} (2015), no. 1, 585--597.

\bibitem{Szulkin-Weth-2010}
A. Szulkin, T. Weth,
{\it The method of Nehari manifold},
in: ``Handbook of Nonconvex Analysis and Applications'',
Int. Press, Somerville, MA, 2010, 597--632.

\bibitem{Tachikawa-2024}
	A. Tachikawa,
	{\it Partial regularity of minimizers for double phase functionals with variable exponents},
	NoDEA Nonlinear Differential Equations Appl. {\bf 31} (2024), no. 2, Paper No. 24, 26 pp.

\bibitem{Willem-1996}
M. Willem,
``Minimax Theorems'',
Birkh\"{a}user Boston, Inc., Boston, MA, 1996.

\bibitem{Zeidler-1990}
E. Zeidler,
``Nonlinear Functional Analysis and its Applications. II/B'',
Springer-Verlag, New York, 1990.

\bibitem{Zeng-Bai-Gasinski-Winkert-2020}
S. Zeng, Y. Bai, L. Gasi\'{n}ski, P. Winkert,
{\it Existence results for double phase implicit obstacle problems involving multivalued operators},
Calc. Var. Partial Differential Equations {\bf 59} (2020), no. 5, 176.

\bibitem{Zeng-Radulescu-Winkert-2022}
S. Zeng, V.D. R\u{a}dulescu, P. Winkert,
{\it Double phase implicit obstacle problems with convection and multivalued mixed boundary value conditions},
SIAM J. Math. Anal. {\bf 54} (2022), 1898--1926.

\bibitem{Zhang-Radulescu-2018}
Q. Zhang, V.D. R\u{a}dulescu,
{\it Double phase anisotropic variational problems and combined effects of 	reaction and absorption terms},
J. Math. Pures Appl. (9) {\bf 118} (2018),  159--203.

\bibitem{Zhikov-1986}
V.V. Zhikov,
{\it Averaging of functionals of the calculus of variations and elasticity theory},
Izv. Akad. Nauk SSSR Ser. Mat. {\bf 50} (1986), no. 4, 675--710.

\bibitem{Zhikov-1995}
V.V. Zhikov,
{\it On Lavrentiev's phenomenon},
Russian J. Math. Phys. {\bf 3} (1995), no. 2, 249--269.

\bibitem{Zhikov-2011}
V.V. Zhikov,
{\it On variational problems and nonlinear elliptic equations with nonstandard growth conditions},
J. Math. Sci. {\bf 173} (2011), no. 5, 463--570.


\end{thebibliography}
\end{document}